\documentclass[12pt]{amsart}

\setlength{\textheight}{21.5cm}
\setlength{\textwidth}{15cm}
\setlength{\evensidemargin}{.5cm}
\setlength{\oddsidemargin}{.5cm}

 \usepackage[dvips]{graphicx}
\usepackage{amsmath, amssymb}
\usepackage{pifont}

\numberwithin{equation}{section}
\newtheorem{theorem}{Theorem}[section]  
\newtheorem{theorem?}{``Theorem''}[section]  

\newtheorem{proposition}[theorem]{Proposition}
\newtheorem{lemma}[theorem]{Lemma}

\theoremstyle{definition}
\newtheorem{definition}[theorem]{Definition}

\theoremstyle{remark}
\newtheorem{remark}[theorem]{Remark}  
\newcommand{\R}{{\mathbb R}}
\newcommand{\C}{{\mathbb C}}
\newcommand{\Q}{{\mathbb Q}}
\newcommand{\N}{{\mathbb N}}
\newcommand{\Z}{{\mathbb Z}}
\newcommand{\1}{{\bf 1}}
\renewcommand{\a}{\alpha}
\renewcommand{\b}{\beta}
\renewcommand{\d}{\partial}
\renewcommand{\1}{{\bf 1}}
\newcommand{\E}{\hat{\mathcal{E}}}

\begin{document}
\title[Weighted oscillatory integrals]
{
Newton polyhedra and weighted oscillatory integrals
with smooth phases
} 
\author{Joe Kamimoto and Toshihiro Nose}
\address{Faculty of Mathematics, Kyushu University, 
Motooka 744, Nishi-ku, Fukuoka, 819-0395, Japan} 
\email{joe@math.kyushu-u.ac.jp}
\address{Faculty of Mathematics, Kyushu University, 
Motooka 744, Nishi-ku, Fukuoka, 819-0395, Japan} 
\address{
{\it Current address} Faculty of Engineering, 
Kyushu Sangyo University,
Matsukadai 2-3-1, 
Higashi-ku, Fukuoka, 813-8503, Japan} 
\email{t-nose@ip.kyusan-u.ac.jp}
\keywords{weighted oscillatory integrals, 
oscillation index and its multiplicity, 
resolution of singularities, 
Newton polyhedron, Newton distance, Newton multiplicity, 
principal faces}
\subjclass[2000]{58K55 (42B20, 14M25).}
\thanks{
The first author was supported by 
Grant-in-Aid for Scientific Research (C) (No. 22540199), 
Japan Society for the Promotion of Science. 
}
\maketitle



\begin{abstract}
In his seminal paper, 
A. N. Varchenko precisely investigates
the leading term of 
the asymptotic expansion of an oscillatory integral
with real analytic phase. 
He expresses the order of this term by means of 
the geometry of the Newton polyhedron of the phase. 
The purpose of this paper is to generalize and 
improve his result. 
We are especially interested in the cases that
the phase is smooth and that the amplitude has a zero
at a critical point of the phase. 
In order to exactly treat the latter case, 
a weight function is introduced in the amplitude. 
Our results show that 
the optimal rates of decay for weighted oscillatory integrals, 
whose phases and weights are contained in a certain class 
of smooth functions including the real analytic class,
can be expressed by the Newton distance and multiplicity 
defined in terms of geometrical relationship of 
the Newton polyhedra of the phase and the weight. 
We also compute explicit formulae of the coefficient of
the leading term of the asymptotic expansion 
in the weighted case. 
Our method is based on the resolution of 
singularities constructed 
by using the theory of toric varieties, 
which naturally extends the resolution of Varchenko.  
The properties of poles of local zeta functions,
which are closely related to the behavior of
oscillatory integrals, are also 
studied under the associated situation.
The investigation of this paper 
improves on the earlier joint work with K. Cho. 
\end{abstract}


\tableofcontents


\section{Introduction}
In this paper, we investigate the asymptotic behavior of 
scalar oscillatory integrals of the weighted form
\begin{equation}\label{eqn:1.1}
I(t;\varphi)=
\int_{\R^n}e^{it f(x)}g(x)\varphi(x)dx
\end{equation}
for large values of the real parameter $t$,
where $f,g,\varphi$ are real-valued smooth 
(infinitely differentiable) functions
defined on 
an open neighborhood $U$ of the origin in $\R^n$ 
and 
\begin{itemize} 
   \item
$f$ is called the {\it phase} and 
satisfies that $f(0)=|\nabla f(0)|=0$;
   \item
$g$ is called the {\it weight} 
($g\varphi$ is called the {\it amplitude});
   \item
   The support of $\varphi$ is contained in $U$.
   \end{itemize}

By using a famous Hironaka's resolution of singularities \cite{hir64},
it is known 
(see \cite{jea70}, \cite{mal74} and  Section~8.2 in this paper) 
that
if $f$ is real analytic
and the support of $\varphi$ is contained in a sufficiently small 
open neighborhood of the origin,
then the integral $I(t;\varphi)$ has an asymptotic expansion
of the form
   \begin{equation}\label{eqn:1.2}
   I(t;\varphi)\sim 
   \sum_{\a}\sum_{k=1}^n
   C_{\a k}(\varphi) t^{\a} (\log t)^{k-1} \quad 
   \mbox{as $t \to +\infty$},
   \end{equation}
where $\a$ runs through 
a finite number of arithmetic progressions,
not depending on the amplitude,
which consist of negative rational numbers.
In special cases of the smooth phase,
$I(t;\varphi)$ also admits 
an asymptotic expansion 
of the same form as in (\ref{eqn:1.2}) 
(see \cite{sch91}, \cite{kn13} and Theorem~3.7 in this paper).
In order to see the decay property of $I(t;\varphi)$, 
we are interested in the leading term of (\ref{eqn:1.2})
and define the following index.
\begin{definition}
Let $f,g$ be smooth functions, for which the oscillatory integral
(\ref{eqn:1.1}) admits the asymptotic expansion 
of the form (\ref{eqn:1.2}).
The set $S(f,g)$ consists of pairs $(\alpha,k)$ such that 
for each neighborhood of the origin in $\R^n$, 
there exists a smooth function $\varphi$ 
with support contained in this neighborhood 
for which $C_{\alpha k}(\varphi)\neq 0$
in (\ref{eqn:1.2}). 
The maximum element of the set $S(f,g)$,
under the lexicographic ordering, 
is denoted by $(\b(f,g),\eta(f,g))$, i.e.,
$\b(f,g)$ is the maximum of values $\a$ for which
we can find $k$ so that $(\a,k)$ belongs to $S(f,g)$;
$\eta(f,g)$ is the maximum of integers $k$
satisfying that $(\b(f,g),k)$ belongs to $S(f,g)$.
We call $\b(f,g)$ the {\it oscillation index} of $(f,g)$
and $\eta(f,g)$ the {\it multiplicity} of its index.
(In the unweighted case, 
i.e. $g\equiv 1$, the multiplicity $\eta(f,1)$
is one less than the corresponding multiplicity in \cite{agv88},
p. 183.)
\end{definition}

The aim of this paper is to determine or precisely estimate
the oscillation index and its multiplicity
by means of appropriate information of 
the phase and the weight.
In the unweighted case, many strong results have 
been obtained.   
In particular, 
in a seminal work of Varchenko \cite{var76}
(see also \cite{agv88}),
the oscillation index and its multiplicity
are investigated in detail 
in the case when the phase is real analytic and
satisfies a certain nondegeneracy condition 
(see Theorem~\ref{thm:3.1} in Section~\ref{sec:3}). 
In his analysis,
the theory of toric varieties based on
the geometry of the Newton polyhedron of the phase
plays an important role.
Recently, it was shown in \cite{kn13} that 
the above result of Varchenko can be generalized 
to the case 
that the phase belongs to a wider class
of smooth functions,
denoted by $\hat{\mathcal{E}}(U)$, 
including the real analytic class
(see Theorem~\ref{thm:3.8} in Section~\ref{sec:3}).
On the other hand, 
another approach, 
which is inspired by the work of Phong and Stein 
on oscillatory integral operators
in the seminal paper \cite{ps97},
has been developed and succeeds to give many
strong results 
(\cite{gre09},\cite{gre10ma},\cite{gre10jam},
\cite{ikm10},\cite{im11jfaa},\cite{cgp13}, etc.).
In particular, the two-dimensional case has been 
deeply understood.  
In these papers, the importance of 
resolution of singularities constructed from 
the Newton polyhedron is also strongly recognized. 

Before explaining earlier works 
in the weighted case, 
we comment on significance for  
the investigation in this case. 
Since the weighted case may be considered 
as a special case of the unweighted case, 
unweighted results concerned with the upper bound 
estimates for oscillation index
are also available in the weighted case. 
But, these are ``uniformly'' satisfied 
with respect to the amplitude, so   
more precise results may be obtained
in the case of a specific amplitude. 
Therefore, for a given weight, 
one must try to get more accurate results 
by means of not only the information of the phase 
but also that of the weight. 

Until now, 
there are not so many studies about 
the weighted case, 
but some precise results have been obtained in   
\cite{vas79},\cite{agv88},\cite{py04},\cite{ckn13},\cite{ot13}. 
In these studies, 
the Newton polyhedra of both the phase 
and the weight play important roles. 
In \cite{vas79},\cite{agv88},\cite{ckn13},\cite{ot13}
it was made an attempt to generalize
the results of Varchenko in \cite{var76} 
as directly as possible in the weighted case 
under the nondegeneracy condition on the phase. 
Pramanik and Yang \cite{py04} consider 
the two-dimensional case with the weight of the form
$g(x)=|h(x)|^\epsilon$,
where $h$ is real analytic and $\epsilon$ is positive.
(This $g$ may not be smooth.)
Their approach is based on not only the method of 
Varchenko but also the above-mentioned work of 
Phong and Stein in \cite{ps97}.  
As a result, they succeed to remove the nondegeneracy 
hypothesis on the phase. 
Note that every study, mentioned above, requires 
the assumption of the real analyticity of the phase. 

The purpose of this paper is to improve results in 
our previous paper \cite{ckn13}, 
which generalizes  
the above-mentioned results of Varchenko in \cite{var76}
to the weighted case.  
First, we develop the investigation in \cite{ckn13}
into the case of smooth phases. 
When the case of smooth functions is treated, 
it must be noticed that the geometrical information
of the Newton polyhedra does not always give 
sufficient analytical information.
To be more specific, 
though flat functions do not appear in 
the information of the Newton polyhedron, 
they may affect the behavior 
of the oscillatory integrals. 
In particular, it must be careful to deal with the case 
when the complement of their Newton polyhedra in $\R_+^n$
is noncompact.
Actually, we recall that 
since smooth weights were considered in \cite{ckn13}, 
the results needed many intricate hypothesis 
to avoid various troubles induced 
by noncompact faces of the Newton polyhedra
(see Theorems~3.2 and 3.3 in this paper). 
Additionally considering the case of smooth phases, 
one must overcome more complicated problems. 
In this paper, 
we use the above-mentioned function class $\hat{\mathcal E}(U)$
in the investigation of the weighted case and 
succeed to generalize the above results 
in \cite{ckn13}.
Indeed, we show that
the real analyticity can be replaced by 
the $\hat{\mathcal E}$-property 
in the assumption on the phase;
many conditions can be written clearly
in terms of the $\hat{\mathcal E}$-property 
in the assumption on the weight
in the previous paper \cite{ckn13}. 
But, there is still room for improvement for
the refined condition of the weight and 
the subtlety of conditions of the weight 
will be explained  
in terms of some examples (see Section~\ref{sec:15}). 

Here, we briefly explain properties of 
the class $\hat{\mathcal E}(U)$, 
which were investigated in detail in \cite{kn13}.
In many earlier investigations 
of oscillatory integrals, 
the function $\gamma$-{\it part}, 
corresponding to each face $\gamma$ 
of the Newton polyhedron, 
plays an important role.
By means of summation,
the $\gamma$-part is simply defined as a function for 
every face $\gamma$ in the real analytic case.
From the viewpoint of this definition,    
the $\gamma$-part is considered as a formal power series 
when $\gamma$ is noncompact in the smooth case.   
This $\gamma$-part may not become a function, so
it is not useful for our analysis. 
From convex geometrical points of view 
(c.f. \cite{zie95}), we give another definition
of the $\gamma$-part, which always becomes 
a function defined near the origin
(see Section~\ref{subsec:2.4}).
This definition is a natural generalization
of that in the real analytic case.
We remark that not all smooth functions admit 
the $\gamma$-part for every face $\gamma$ of
their Newton polyhedra in our sense. 
The class $\hat{\mathcal E}(U)$ is defined to be the set of 
smooth functions admitting the $\gamma$-part
for every face $\gamma$ of its Newton polyhedron  
(see Section~\ref{subsec:2.5}).
Many kinds of $C^{\infty}$ functions are contained 
in this class. 
In particular, it is shown in \cite{kn13} that
the class $\hat{\mathcal E}(U)$ contains 
the Denjoy-Carleman quasianalytic classes,
which are
interesting classes of smooth functions and
have been studied 
from various points of view
(c.f. \cite{bm04},\cite{thi08}). 

Let us explain another
improvement on the preceded studies in \cite{ckn13}. 
As mentioned above, 
the importance of resolution of singularities
has been strongly recognized in
earlier successive investigations of the behavior 
of oscillatory integrals. 
Let us review our analysis from this point of view.  
The resolution in the work of Varchenko \cite{var76} 
is based on the theory of toric varieties. 
His method gives quantitative resolution 
by means of the geometry of the Newton polyhedron of the phase. 
In \cite{kn13}, we have directly generalized this resolution 
to the class $\hat{\mathcal E}(U)$ of smooth functions.   
Furthermore, in order to consider the weighted case, 
some kind of {\it simultaneous} resolution of singularities 
with respect to two functions, 
i.e. the phase and the weight,
must be constructed (see Section~\ref{subsec:8.2}). 
From the viewpoint of the theory of toric varieties, 
simultaneous resolution of singularities reflects
finer simplicial subdivision of a fan constructed 
from the Newton polyhedra of the above two functions.
Therefore, it is essentially important 
to investigate accurate relationship between 
cones of this subdivided fan and 
faces of the Newton polyhedra of the two functions.  
This situation has been investigated in \cite{ckn13}, 
but deeper understanding this relationship in this paper 
(see Section~11.2) gives
many stronger results about the behavior of oscillatory
integrals. 
In particular, we succeed to give 
explicit formulae of the coefficient of the leading term of 
the asymptotic expansion under some appropriate conditions, 
which reveals that the behavior of oscillatory integrals is
decided by some important faces,
which are called {\it principal faces}, of
the Newton polyhedra of the phase and the weight. 
By the way, 
Pramanik and Yang \cite{py04} also use 
simultaneous resolution of singularities in their analysis. 
Indeed, they extend the method 
of Phong and Stein \cite{ps97} 
by considering the expressions of 
both the phase and the weight 
in terms of the Puiseux serieses of their roots.

It is known 
(see, for instance, 
\cite{igu78},\cite{agv88}, 
and Section~14.1 in this paper) 
that the asymptotic analysis of 
oscillatory integral (\ref{eqn:1.1}) 
can be reduced to an investigation of 
the poles of the functions 
$Z_{+}(s;\varphi)$ and $Z_{-}(s;\varphi)$ 
(see (\ref{eqn:8.1}) below), 
which are similar to the (weighted) 
{\it local zeta function} 
\begin{equation}\label{eqn:1.3}
Z(s;\varphi)=
\int_{\R^n} |f(x)|^s g(x)\varphi(x)dx,
\end{equation}
where $f$, $g$, $\varphi$ are the same 
as in (\ref{eqn:1.1}).
The substantial analysis in this paper is to  
investigate properties of poles of 
the local zeta function $Z(s;\varphi)$ and 
the functions $Z_{\pm}(s;\varphi)$ by means of
the Newton polyhedra of the functions 
$f$ and $g$.
We will give analogous new results about 
properties of poles of these functions. 

This paper is organized as follows. 
In Section~2, 
we explain many important words
and their elementary properties, 
which are often used in this paper.  
First, fundamental notion in convex geometry
are recalled.
Second, after recalling the concept of 
Newton polyhedra of smooth functions, 
we give definitions of key words:
Newton distance, Newton multiplicity and 
principal faces.
Third, we define the classes 
$\hat{\mathcal E}[P](U)$
and $\hat{\mathcal E}(U)$ 
of smooth functions by means of 
{\it modified} $\gamma$-parts and
summarize important properties
of these classes. 
In Section~3, 
in order to clarify original parts of 
this paper, 
we roughly explain earlier corresponding results 
in 
\cite{var76},\cite{vas79},\cite{agv88},\cite{py04},\cite{ckn13},\cite{kn13}. 
In Section~4,
our main results relating to the behavior 
of oscillatory integrals are stated.
Note that the result concerning explicit
formulae of the coefficient of the leading term
of the asymptotic expansion (\ref{eqn:1.2}) 
will be given in Section~14. 
The goal of the next three sections is to 
construct resolution of singularities with respect to
several functions 
by using the theory of toric varieties.  
First, we briefly recall how to construct
a toric variety from a given fan
in Section~5.
In Section~6, 
the construction of fans from 
a given polyhedron is explained. 
Here, we give a few lemmas, 
which reveal important relationship of faces of polyhedron
and cones of the fan. 
Finally, 
simultaneous resolution of singularities 
with respect to several functions is constructed 
in Section~7.
In the next six sections, 
the properties of poles of 
local zeta function $Z(s;\varphi)$ and 
similar functions 
$Z_{+}(s;\varphi)$ and $Z_{-}(s;\varphi)$ are investigated 
under the assumption that 
$f$ belongs to the class $\hat{\mathcal E}(U)$
and satisfies some nondegeneracy
condition. 
In Section~8,
we roughly explain how to understand required properties
of poles by means of resolution of singularities. 
In Section~9, 
the model case: 
$f$ and $g$ in (\ref{eqn:1.3}) are monomials,
is exactly investigated. 
In this case, it is easy to compute the coefficient
of the Laurent expansion of the leading pole. 
In Section~10,
we treat the weight $g$ of a special form:
a monomial multiplied by a smooth function. 
By using results in this case,
we can generalize the results about unweighted local zeta
type functions due to Varchenko \cite{var76} and
Kamimoto and Nose \cite{kn13} in the case when
$f$ admits an asymptotic expansion at the origin
of the form of fractional power series.
In Section~11,
more general case when $g$ belongs to the class $\hat{\mathcal E}(U)$ 
is investigated and 
the strongest result concerning the poles of 
local zeta type functions is given. 
By the way, 
the case when $f$ is convenient (see Section~2.2) is easy to be treated. 
This situation is explained in Section~12.
In Section~13, we consider 
the question: what will happen in the positions of the leading poles
of $Z(s;\varphi)$, 
if $f$ and $g$ are exchanged? 
In Section~14, 
after an exact relationship between
oscillatory integrals and local zeta type functions
in terms of the Mellin transform is recalled, 
the results in the above six sections are 
translated into those 
in the case of oscillatory integrals. 
As a result, 
proofs of the theorems in Section~4 are given. 
In the last section, 
we give some examples, which shows the subtlety 
of the condition of the weight
in the assumption of the main theorems.

Some of the results in this paper have been announced 
in \cite{knhir13}.


{\it Notation and symbols.}\quad
\begin{itemize}
\item 
We denote by $\Z_+, \Q_+, \R_+$ the subsets consisting of 
all nonnegative numbers in $\Z,\Q,\R$, respectively.
We write $\R_{>0}:=\{x\in\R:x>0\}$.
For $s\in\C$, ${\rm Re}(s)$ expresses the real part of $s$.
\item
We use the multi-index as follows.
For $x=(x_1,\ldots,x_n), y=(y_1,\ldots,y_n) \in\R^n$, 
$\a=(\a_1,\ldots,\a_n)\in\Z_+^n$, 
$p=(p_1,\ldots,p_n)\in\N^n$,
define
\begin{eqnarray*}
&& 
|x|=\sqrt{|x_1|^{2}+\cdots +|x_n|^{2}}, \quad 
\langle x,y\rangle=x_1 y_1+\dots+x_n y_n, 
\\
&& 
x^{\a}=x_1^{\a_1}\cdots x_n^{\a_n}, \quad
\partial^{\a}=
\left(\frac{\partial}{\partial x_1}\right)^{\a_1}\cdots
\left(\frac{\partial}{\partial x_n}\right)^{\a_n},  
\\
&&
\langle \a\rangle=\a_1+\cdots+\a_n, \quad
\a!=\a_1 !\cdots \a_n!, \quad
0 !=1,
\\
&&
\alpha/p=(\alpha_1/p_1,\ldots,\alpha_n/p_n),
\quad 
x^{\alpha/p}
=x_1^{\alpha_1/p_1}\cdots x_n^{\alpha_n/p_n}.
\end{eqnarray*}
\item
For $A,B\subset \R^n$ and $c\in\R$, 
we set 
$$
A+B=\{a+b\in\R^n: a\in A \mbox{ and } b\in B\},\quad
c\cdot A=\{ca\in\R^n: a\in A\}.
$$
Moreover, ${\rm Int}(A)$ expresses the interior of the set $A$.
\item
We express by $\1$ the vector $(1,\ldots,1)$ or the set 
$\{(1,\ldots,1)\}$. 
For $x=(x_1,\ldots,x_n)\in\R^n$, 
$x^{\1}$ means $x_1\cdots x_n$.
\item
For a finite set $A$, 
$\# A$ means the cardinality of $A$. 
\item
For a nonnegative real number $r$ and 
a subset $I$ in $\{1,\ldots,n\}$, the map 
$T_I^{r}:\R^n\to\R^n$ is defined by 
\begin{equation}\label{eqn:1.4}
(z_1,\ldots,z_n)=T_I^{r}(x_1,\ldots,x_n)\,\, 
\mbox{ with }\,\, z_j:=\begin{cases}
r& 
\quad \mbox{for $j\in I$}, \\
x_j&
\quad \mbox{otherwise}.
\end{cases}
\end{equation}
We define $T_I:=T_I^0$.
For a set $A$ in $\R^n$, 
the image of $A$ by $T_I$ is denoted by $T_I(A)$.  
When $A=\R^n$ or $\Z_+^n$, 
its image is expressed as 
\begin{equation}\label{eqn:1.5}
T_I(A)=\{x\in A: x_j=0 \mbox{ for $j\in I$}\}.
\end{equation}
\item
For a smooth function $f$, 
we denote by Supp($f$) the support of $f$, i.e.,
Supp($f$) is the closure of the set $\{x\in \R^n: f(x)\neq 0\}$. 
\end{itemize}


\section{Newton polyhedra and the class 
$\hat{\mathcal{E}}(U)$}\label{sec:2}

\subsection{Polyhedra}\label{subsec:2.1}

Let us explain fundamental notions 
in the theory of convex polyhedra, 
which are necessary for our investigation.
Refer to \cite{zie95}   
for general theory of convex polyhedra.  

For $(a,l)\in \R^n\times\R$, 
let $H(a,l)$ and $H^+(a,l)$ be  
a hyperplane and 
a closed halfspace in $\R^n$ 
defined by
\begin{equation}\label{eqn:2.1}
\begin{split}
&H(a,l):=\{x\in\R^n:\langle a,x\rangle =l\},\\
&H^+(a,l):=\{x\in\R^n:\langle a,x\rangle \geq l\},
\end{split}
\end{equation} 
respectively. 
A ({\it convex rational}) {\it polyhedron} is  
an intersection of closed halfspaces:
a set $P\subset\R^n$ presented in the form
$
P=\bigcap_{j=1}^N H^+(a^j,l_j)
$
for some $a^1,\ldots,a^N\in\Z^n$ and 
$l_1,\ldots,l_N \in\Z$.

Let $P$ be a polyhedron in $\R^n$. 
A pair $(a,l)\in \Z^n\times\Z$ is said to be 
{\it valid} for $P$ 
if $P$ is contained in $H^+(a,l)$.
A {\it face} of $P$ is any set of the form 
$
F=P\cap H(a,l),
$
where $(a,l)$ is valid for $P$. 
Since $(0,0)$ is always valid, 
we consider $P$ itself as a trivial face of $P$;
the other faces are called {\it proper faces}.  
Conversely, 
it is easy to see that any face is a polyhedron. 
Considering the valid pair $(0,-1)$, 
we see that the empty set is always a face of $P$. 
Indeed, $H^+(0,-1)=\R^n$, but $H(0,-1)=\emptyset$.
We write
\begin{equation}\label{eqn:2.2}
{\mathcal F}[P]
=\mbox{the set of all nonempty faces of $P$.}
\end{equation}
The {\it dimension} of a face $F$ is the dimension of 
its affine hull
(i.e., the intersection of all affine flats that 
contain $F$), which is denoted by $\dim(F)$. 
The faces of dimensions $0,1$ and $\dim(P)-1$
are called {\it vertices}, {\it edges} and 
{\it facets}, respectively. 
The {\it boundary} of a polyhedron $P$, denoted by 
$\d P$,  
is the union of all proper faces of $P$.  
For a face $F$, $\d F$ is similarly defined. 

\begin{remark}\label{rem:2.1}
It follows from the definition of $H^+(\cdot,\cdot)$ that
for $a\in\Z^n$, $l\in\R$, 
\begin{equation*}
\begin{split}
&H^+(a,dl)=d\cdot H^+(a,l) \,\,
\mbox{ for $d> 0$,} \\
&H^+(a,l+\langle a,b\rangle)=H^+(a,l)+b \,\,
\mbox{ for $b\in\Z^n$.} 
\end{split}
\end{equation*}
In the case of hyperplanes $H(a,l)$,
analogous equations can be obtained.
\end{remark}

Every polyhedron treated in this paper 
satisfies a condition in the following lemma. 
\begin{lemma}\label{lem:2.2}
Let $P\subset\R_+^n$ be a polyhedron. 
Then the following conditions are equivalent.
\begin{enumerate}
\item $P+\R_+^n\subset P;$
\item There exists a finite set of pairs 
$\{(a^j,l_j)\}_{j=1}^N\subset \Z_+^n\times \Z_+$ such that
$P=\bigcap_{j=1}^N H^+(a^j,l_j).$
\end{enumerate} 
\end{lemma}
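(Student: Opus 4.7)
The plan is to prove the two directions separately, with (ii)$\Rightarrow$(i) essentially by inspection and (i)$\Rightarrow$(ii) by extracting positivity from a facet-defining halfspace representation of $P$.

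For (ii)$\Rightarrow$(i), I would simply compute: given $x \in P$ and $y \in \R_+^n$, for each $j$ we have $\langle a^j, y\rangle \geq 0$ because both $a^j$ and $y$ have nonnegative entries, so $\langle a^j, x+y\rangle \geq \langle a^j, x\rangle \geq l_j$. Hence $x+y \in H^+(a^j,l_j)$ for every $j$, which gives $x+y \in P$. No further work is needed here.

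For (i)$\Rightarrow$(ii), the case $P = \emptyset$ is handled by the trivial representation $\emptyset = H^+(0,1)$, so I would assume $P \neq \emptyset$. The hypothesis $P + \R_+^n \subset P$ means the recession cone of $P$ contains $\R_+^n$, so $P$ is full-dimensional. By standard polyhedral theory (see Ziegler \cite{zie95}) a full-dimensional polyhedron has an irredundant representation $P = \bigcap_{j=1}^N H^+(a^j,l_j)$ in which each $H(a^j,l_j)$ is the affine hull of a facet $F_j = P \cap H(a^j,l_j)$, and one may choose $a^j \in \Z^n$ primitive and $l_j \in \Z$. Now I extract the signs. Pick $x \in F_j$; for any $y \in \R_+^n$, condition (i) gives $x+y \in P \subset H^+(a^j,l_j)$, so
\[
\langle a^j, y\rangle = \langle a^j, x+y\rangle - \langle a^j, x\rangle \geq l_j - l_j = 0.
\]
Taking $y$ to be each standard basis vector shows $a^j_k \geq 0$ for every $k$, so $a^j \in \Z_+^n$. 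Finally, since $x \in F_j \subset P \subset \R_+^n$ and $a^j \in \Z_+^n$, we get $l_j = \langle a^j, x\rangle \geq 0$, and integrality of $l_j$ then yields $l_j \in \Z_+$.

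The only nontrivial ingredient is the existence of the facet-defining halfspace representation for a full-dimensional rational polyhedron, which I would quote from Ziegler rather than reprove; everything else is a few lines of pairing inequalities. The main conceptual point to emphasize is that the recession cone containing $\R_+^n$ simultaneously forces full-dimensionality (so that the facet representation is available and unique up to positive scaling of the $a^j$) and forces each outer normal $a^j$ into $\Z_+^n$ via the test with standard basis vectors.
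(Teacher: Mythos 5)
Your proof is correct and takes essentially the same route as the paper: (ii)$\Rightarrow$(i) is the identical direct computation, and for (i)$\Rightarrow$(ii) the paper likewise passes to a representation in which every hyperplane $H(a^j,l_j)$ meets $P$ and then uses $P+\R_+^n\subset P$ (tested along coordinate directions) to force $a^j\in\Z_+^n$ and $P\subset\R_+^n$ to force $l_j\geq 0$, merely phrasing this as a contradiction argument instead of via a facet-defining representation. One cosmetic repair: for a rational polyhedron you cannot always take $a^j$ primitive and $l_j\in\Z$ simultaneously (e.g.\ $\{x\in\R:2x\geq 1\}$ has facet hyperplane $x=1/2$), so drop the word ``primitive'' or clear denominators after the sign argument; this does not affect the rest of your proof.
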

\begin{proof}
(i) $\Longrightarrow$ (ii). \quad
Suppose that (ii) does not hold. 
From the definition of the polyhedron, 
$P$ is expressed as 
$P=\bigcap_{j=1}^N H^+(a^j,l_j)$ with 
$(a^j,l_j)\in\Z^n\times \Z$. 
Here, it may be assumed that
the set $A:=\{(a^j,l_j)\}_{j=1}^N$ satisfies 
that $P\cap H(a^j,l_j)\neq\emptyset$ for all $j$.
If $(a,l)\in A$ belongs to $\Z_+^n\times(-\N)$, 
then $P\cap H(a,l)=\emptyset$. 
On the other hand, if there exists $(a,l)\in A$
with $a\in \Z^n\setminus\Z_+^n$, then
the nonempty face $\gamma:=P\cap H(a,l)$ satisfies
$\gamma+\R_+^n\not\subset H^+(a,l)$, 
which implies $P+\R_+^n\not\subset P$.

(ii) $\Longrightarrow$ (i). \quad 
This implication easily follows from the following:
For any $(a,l)\in \Z_+^n\times\Z_+$, 
if $\alpha\in H^+(a,l)$, then
$\alpha+\R_+^n\subset H^+(a,l)$.
\end{proof}

\subsection{Newton polyhedra}\label{subsec:2.2}

Let us define the Newton polyhedron 
and some important classes of smooth functions 
characterized in terms of the Newton polyhedron.

Let $f$ 
be a smooth function defined 
on a neighborhood of the origin in $\R^n$, 
which has the Taylor series at the origin:  
\begin{equation}\label{eqn:2.3}
f(x)\sim
\sum_{\alpha\in{\Z}_+^n} c_{\alpha}x^{\alpha} 
\quad\quad \mbox{ with $c_{\a}=\dfrac{\partial^\a f(0)}{\alpha!}$.}
\end{equation}
\begin{definition}\label{def:2.3}
The {\it Newton polyhedron} $\Gamma_+(f)$ of $f$
is defined to be the convex hull of the set 
$\bigcup\{\a+\R_+^n:c_\a \neq 0\}$.
\end{definition}
It is known that the Newton polyhedron 
is a polyhedron (see \cite{zie95}). 
The union of the compact faces of 
the Newton polyhedron $\Gamma_+(f)$ is called 
the {\it Newton diagram} $\Gamma(f)$ of $f$, 
while the topological boundary of $\Gamma_+(f)$ 
is denoted by 
$\d\Gamma_+(f)$. 
The polynomial 
\begin{equation}\label{eqn:2.4}
f_{\Gamma(f)}(x):=\sum_{\alpha\in\Gamma(f)\cap\Z_+^n}
c_{\alpha}x^{\alpha}
\end{equation}
is an important part of the series (\ref{eqn:2.3}). 
The following classes of smooth functions
often appear in this paper.

\begin{itemize}
\item
$f$ 
is said to be {\it flat} if 
$\Gamma_+(f)=\emptyset$  
(i.e. all derivatives of $f$ vanish at the origin).
\item
$f$ is said to be {\it convenient} if 
the Newton polyhedron $\Gamma_+(f)$
intersects all the coordinate axes.
\end{itemize}

\subsection{Newton distance and multiplicity}\label{subsec:2.3}

Let $f,g$ be nonflat smooth functions 
defined on a neighborhood of the origin in $\R^n$.
We define 
the Newton distance and 
the Newton multiplicity with respect to the pair $(f,g)$. 
At the same time, consider 
important faces of $\Gamma_+(f)$ and  
$\Gamma_+(g)$, which will initially affect  
the analysis of oscillatory integrals. 
Hereafter, we assume that $f(0)=0$.
\begin{definition}\label{def:2.4}
The {\it Newton distance} of the pair $(f,g)$ is defined by 
\begin{equation}\label{eqn:2.5}
d(f,g):=\max\{d>0:
\d\Gamma_+(f)\cap d\cdot(\Gamma_+(g)+\1)\neq\emptyset\}.
\end{equation}
\end{definition}
This distance will be crucial to determine or estimate the 
oscillation index. 
\begin{remark}\label{rem:2.5}
It is easy to see the following. 
\begin{itemize}
\item 
$d(f,g)=\min\{d>0: d\cdot(\Gamma_+(g)+\1)\subset\Gamma_+(f)\}$;
\item
$d(f,g)=
\max\{d(f,x^{\beta}):\beta\in\Gamma_+(g)\}$;
\item
$d(f,g)\leq 
\min\{d(x^{\alpha},g):\alpha\in\Gamma_+(f)\}$
(notice that ``$<$'' is possible).
\end{itemize}
\end{remark}
In \cite{agv88}, p.254, 
the number $d(f,g)$ is called 
the {\it coefficient of inscription} of $\Gamma_+(g)$ in $\Gamma_+(f)$.  
(In \cite{agv88}, this number is defined by 
$\min\{d>0: d\cdot \Gamma_+(g)\subset\Gamma_+(f)\}$,
which must be corrected as in (\ref{eqn:2.5}).)


%
We define the map $\Phi:\R^n\to\R^n$ as
\begin{equation}\label{eqn:2.6}
\Phi(\beta):=d(f,g)(\beta+\1).
\end{equation}
The image of $\Gamma_+(g)$ by the map $\Phi$ comes 
in contact with
the boundary of $\Gamma_+(f)$. 
We denote by $\Gamma_0(f)$ this contacting set on 
$\partial\Gamma_+(f)$ and 
by $\Gamma_0(g)$ the image of $\Gamma_0(f)$ 
by the inverse map of $\Phi$, i.e.,
\begin{eqnarray*}
&&
\Gamma_0(f):=\partial\Gamma_+(f)\cap\Phi(\Gamma_+(g))
\,\,(=\partial\Gamma_+(f)\cap d(f,g)\cdot(\Gamma_+(g)+\1)); \\
&&
\Gamma_0(g):=\Phi^{-1}(\Gamma_0(f))
\,\,\left(
=
\left(
\frac{1}{d(f,g)}\cdot\partial\Gamma_+(f)-\1
\right)\cap\Gamma_+(g)
\right).
\end{eqnarray*}
Note that 
$\Gamma_0(g)$ is a certain union of faces of $\Gamma_+(g)$.
It is easy to see the following. 
\begin{itemize}
\item 
The map $\Phi:\R^n\to\R^n$ 
is a bijection and so is its restricted map 
$\Phi |_{\Gamma_0(g)}:\Gamma_0(g)\to\Gamma_0(f)$;
\item
If $\beta\in\Gamma_0(g)$, 
then $d(f,g)=d(f,x^{\beta})$;
\item
If $\alpha\in\Gamma_0(f)$, 
then
$d(f,g)\leq d(x^{\alpha},g)$ (notice that ``$<$'' is possible).
\end{itemize}
Let us define the Newton multiplicity and important faces of 
$\Gamma_+(f)$ and $\Gamma_+(g)$,  
which will play important roles in the investigation of 
multiplicity of the oscillation index, 
by using the map:
\begin{equation}\label{eqn:2.7}
\tau_f:\partial\Gamma_+(f)\to{\mathcal F}[\Gamma_+(f)]
\end{equation}
defined as follows (see the definition (\ref{eqn:2.2}) of 
${\mathcal F}[\cdot]$).
For $\alpha\in\partial\Gamma_+(f)$, let
$\tau_f(\alpha)$ be the smallest face of 
$\Gamma_+(f)$ containing $\alpha$. 
In other words,
$\tau_f(\alpha)$ is the face whose relative interior
contains the point $\alpha\in\partial\Gamma_+(f)$.
Define
\begin{equation}\label{eqn:2.8}
{\mathcal F}_0[\Gamma_+(f)]:=
\{\tau_f(\alpha)\in{\mathcal F}[\Gamma_+(f)]:
\alpha\in\Gamma_0(f)\}.
\end{equation}
%
\begin{definition}\label{def:2.6}
The {\it Newton multiplicity} of the pair $(f,g)$ is defined by  
\begin{equation}\label{eqn:2.9}
m(f,g):=\max\{
n-\dim(\tau): \tau\in{\mathcal F}_0[\Gamma_+(f)]
\}.
\end{equation}
\end{definition}
\begin{remark}\label{rem:2.7}
It is easy to see the following.
\begin{itemize}
\item $m(f,g)=\max\{m(f,x^{\beta}):\beta\in\Gamma_0(g)\};$
\item
If $\Gamma_0(f)$ contains a vertex
of $\Gamma_+(f)$, then
$m(f,g)=n$. 
\end{itemize}
\end{remark}
%
\begin{definition}\label{def:2.8}
Define
\begin{equation}\label{eqn:2.10}
{\mathcal F}_*[\Gamma_+(f)]:=
\{\tau\in{\mathcal F}_0[\Gamma_+(f)]:
n-\dim(\tau)=m(f,g)\}.
\end{equation}
The elements of the above set 
are called the {\it principal faces of} 
$\Gamma_+(f)$.  
Define
\begin{equation}\label{eqn:2.11}
{\mathcal F}_*[\Gamma_+(g)]
:=\{
\Phi^{-1}(\tau_*)\cap\Gamma_+(g):\tau_*\in
{\mathcal F}_*[\Gamma_+(f)]
\}.
\end{equation}
It is easy to see that
every element of the above set is a face of $\Gamma_+(g)$,
which is called a {\it principal face of} $\Gamma_+(g)$.
The map
$\Psi_*:
{\mathcal F}_*[\Gamma_+(f)]\to
{\mathcal F}_*[\Gamma_+(g)]$
is defined as 
$\Psi_*(\tau_*):=\Phi^{-1}(\tau_*)\cap\Gamma_+(g)$.
It is easy to see that this map is bijective. 
We say that $\tau_*\in{\mathcal F}_*[\Gamma_+(f)]$ 
(resp. $\gamma_*\in {\mathcal F}_*[\Gamma_+(g)]$) 
{\it is associated to} 
$\gamma_*\in {\mathcal F}_*[\Gamma_+(g)]$ 
(resp. $\tau_*\in {\mathcal F}_*[\Gamma_+(f)]$), 
if 
$\gamma_*=\Psi_*(\tau_*)$.
\end{definition}
%
\begin{remark}\label{rem:2.9}
In \cite{ckn13}, the union of the faces belonging to 
${\mathcal F}_*[\Gamma_+(g)]$ 
was called the {\it essential set} on $\Gamma_0(g)$. 
It is shown in \cite{ckn13} that
every two faces belonging to ${\mathcal F}_*[\Gamma_+(g)]$ 
are disjoint. 
\end{remark}

\begin{remark}\label{rem:2.10}
It is easy to see the following. 
\begin{itemize}
\item
If $\tau_*\in{\mathcal F}_*[\Gamma_+(f)]$, 
then
$\Psi_*(\tau_*)=
\{\beta\in\Gamma_0(g):
(\tau_f\circ\Phi)(\beta)=\tau_*
\};$
\item If $\gamma_*=\Psi_*(\tau_*)
\in{\mathcal F}_*[\Gamma_+(g)]$, then 
$\dim(\gamma_*)\leq\dim(\tau_*)=n-m(f,g)$;
\item If $\gamma_*=\Psi_*(\tau_*)
\in{\mathcal F}_*[\Gamma_+(g)]$, then 
the compactness of $\gamma_*$ is equivalent to that of $\tau_*$; 
\item If $f$ is convenient, then
every principal face of $\Gamma_+(g)$ is compact.
\end{itemize}
\end{remark}



\begin{remark}\label{rem:2.11}
Let us consider the case 
$g(0)\neq 0$. 
Then $\Gamma_+(g)=\R_+^n$. 
In this case, since $d(f,g)$ and $m(f,g)$ are independent
of $g$, we simply denote them by $d(f)$ and $m(f)$, respectively. 
It is easy to see the following:
\begin{itemize}
\item $d(f,g)\leq d(f)$ for general $g$;
\item 
The Newton distance $d(f)$ is determined by 
the point $q_*=(d(f),\ldots,d(f))$, which is the 
intersection of the line $\alpha_1=\cdots=\alpha_n$ 
with $\partial\Gamma_+(f)$;
\item
$\Gamma_0(g)$ and the principal face of $\Gamma_+(g)$ 
contain the point $\{0\}$. 
(Note that they may not equal $\{0\}$.);
\item  
The principal face of $\Gamma_+(f)$ 
is the smallest face $\tau_*$ of $\Gamma_+(f)$ 
containing the point $q_*$;
\item
$m(f)=n-\dim(\tau_*)$.
\end{itemize}
More generally, in the case when 
$\Gamma_+(g)=\{p\}+\R_+^n$ with $p\in\Z_+^n$, 
the geometrical meanings of 
the quantities $d(f,g)$ and $m(f,g)$ 
will be considered 
in Lemmas~\ref{lem:10.4} and \ref{lem:10.5}, below. 
\end{remark}


\subsection{The $\gamma$-part}\label{subsec:2.4}

Let $f$ be a ($C^{\infty}$) smooth function
defined on a neighborhood $V$ of the origin
whose Taylor series at the origin is as in (\ref{eqn:2.3}),
$P\subset\R_+^n$ a nonempty polyhedron in $\R_+^n$
containing $\Gamma_+(f)$ and
$\gamma$ a face of $P$.
Note that $P$ satisfies the condition:
$P+\R_+^n\subset P$ (see Lemma~\ref{lem:2.2}).
\begin{definition}\label{def:2.12}
We say that $f$ 
{\it admits the $\gamma$-part} 
on an open neighborhood $U\subset V$ of the origin
if for any $x$ in $U$ the limit:
\begin{equation}\label{eqn:2.12}
\lim_{t\to 0}\frac{f(t^{a_1}x_1,\ldots,t^{a_n}x_n)}{t^l}
\end{equation}
exists for {\it all} valid pairs 
$(a,l)=((a_1,\ldots,a_n),l)\in \Z_+^n\times\Z_+$
defining $\gamma$. 
When $f$ admits the $\gamma$-part,
it is known in \cite{kn13}, Proposition~5.2 (iii), that 
the above limits take the same value for any $(a,l)$,
which is denoted by $f_{\gamma}(x)$.
We consider 
$f_{\gamma}$ as a function on $U$,
which is called
the $\gamma$-part of $f$ on $U$.
\end{definition}

\begin{remark}\label{rem:2.13}
From the condition $P+\R_+^n\subset P$,
Lemma~\ref{lem:2.2} implies that
it is sufficient to consider valid pairs 
$(a,l)$ belonging to $``\Z_+^n\times\Z_+$''.
\end{remark}

\begin{remark}\label{rem:2.14}
We summarize important properties of 
the $\gamma$-part.
See \cite{kn13} for the details.  
\begin{enumerate}
\item 
The $\gamma$-part $f_{\gamma}$  
is a $C^{\infty}$ smooth function defined on $U$. 
\item 
If $f$ admits the $\gamma$-part $f_{\gamma}$
on $U$, then
$f_{\gamma}$ has the quasihomogeneous property:
\begin{equation}\label{eqn:2.13}
f_{\gamma}(t^{a_1} x_1,\ldots, t^{a_n} x_n)=t^l f_{\gamma}(x) 
\mbox{\,\, for \,\, 
$0<t<1$ and $x\in U$},
\end{equation}
where $(a,l)\in\Z_+^n\times\Z_+$ is a valid pair
defining $\gamma$.  
\item
For a compact face $\gamma$ of $\Gamma_+(f)$, 
$f$ always admits the $\gamma$-part near the origin
and $f_{\gamma}(x)$ equals the polynomial 
$\sum_{\a\in\gamma\cap\Z_+^n}
c_{\alpha}x^{\alpha}$, which is the same as 
the well-known $\gamma$-part of $f$ in 
\cite{var76},\cite{agv88}. 
Note that $\gamma$ is a compact face if and only if
every valid pair $(a,l)=(a_1,\ldots,a_n)$ defining $\gamma$
satisfies $a_j>0$ for any $j$. 
\item 
Let $f$ be a smooth function and $\gamma$ 
a noncompact face of 
$\Gamma_+(f)$. 
Then, $f$ does not admit the $\gamma$-part
in general
(see Section~\ref{subsec:2.6}). 
If $f$ admits the $\gamma$-part, then
the Taylor series of 
$f_{\gamma}(x)$ at the origin is 
$\sum_{\a\in\gamma\cap\Z_+^n}
c_{\alpha}x^{\alpha}$, 
where the Taylor series of $f$ is as in (\ref{eqn:2.3}).
\item
Let $f$ be a smooth function and 
$\gamma$ a face defined by 
the intersection of $\Gamma_+(f)$ and 
some coordinate hyperplane. 
More exactly, 
there exists a nonempty subset 
$I\subset\{1,\ldots,n\}$ such that
$\gamma=\Gamma_+(f)\cap T_I(\R_+^n)$
(see (\ref{eqn:1.5})).
Altough $\gamma$ is a noncompact face
if $\gamma\neq\emptyset$,
$f$ always admits the $\gamma$-part.
Indeed, $T_I(\R_+^n)$ can be regarded 
as a face of $\R_+^n$, 
which is expressed by $\R_+^n\cap H(a_I,0)$,
where $a_I=(a_1,\ldots,a_n)\in\R_+^n$ 
satisfies that $a_j>0$ iff $j\in I$.
Thus every valid pair defining $\gamma$ takes
the form $(a,l)=(a_I,0)$, 
so $l=0$ implies the existence of the limit
(\ref{eqn:2.12}) (see Section~2.6.2, below).
\item 
If $f$ is real analytic and $\gamma$ is a face of 
$\Gamma_+(f)$, then
$f$ admits the $\gamma$-part.
Moreover, $f_{\gamma}(x)$ is real analytic and 
is equal to a convergent power series
$\sum_{\a\in\gamma\cap\Z_+^n}
c_{\alpha}x^{\alpha}$ on some neighborhood of the origin.
\end{enumerate}
\end{remark}
\begin{remark}\label{rem:2.15}
The $\gamma$-part $f_{\gamma}$ can be uniquely extended
to be the smooth function defined on a 
much wider region 
$U\cup\{x\in\R^n:
|x_j|<\delta \mbox{ for $j\in V(\gamma)$} \}
$
with the property (\ref{eqn:2.13}) 
for $t\in\R$, 
where $\delta$ is a positive number and 
$V(\gamma):=\{j:a_j=0\}$. 
Here $(a,l)$ is a valid pair defining $\gamma$.
In particular,
if $\gamma$ is compact 
($\Leftrightarrow V(\gamma)=\emptyset$), 
then $f_{\gamma}$ becomes a polynomial and 
is defined on the whole space $\R^n$.
Note that $V(\gamma)$ is independent of the selection 
of valid pairs. 
Hereafter, this extended function will be also 
denoted by $f_{\gamma}$.
\end{remark}

\subsection{The classes $\hat{\mathcal E}[P](U)$ and $\hat{\mathcal E}(U)$}  
\label{subsec:2.5}

Let 
$P$ be a polyhedron (possibly an empty set)
in $\R^n$ satisfying 
$P+\R_+^n\subset P$ when $P\neq\emptyset$.
Let $U$ be an open neighborhood of the origin. 
\begin{definition}\label{def:2.16}
Denote by 
${\mathcal E}[P](U)$
the set of smooth functions on $U$ 
whose Newton polyhedra are contained in $P$.
Moreover, when $P\neq \emptyset$, we denote by
$
\hat{{\mathcal E}}[P](U)
$
the set of the elements $f$
in ${\mathcal E}[P](U)$
such that $f$ admits the $\gamma$-part on some neighborhood of 
the origin for any face 
$\gamma$ of $P$. 
When $P=\emptyset$, 
$\hat{{\mathcal E}}[P](U)$ is defined to be the set 
$\{0\}$, i.e., 
the set consisting of only the function identically 
equaling zero on $U$. 
\end{definition}

We summarize properties of 
the classes ${\mathcal E}[P](U)$ and 
$\hat{\mathcal E}[P](U)$, which 
can be directly seen from their definitions:
\begin{enumerate}
\item 
$\hat{{\mathcal E}}[\R_+^n](U)=
{\mathcal E}[\R_+^n](U)=C^{\infty}(U);$
\item 
If $P_1$, $P_2\subset\R_+^n$ 
are polyhedra with $P_1\subset P_2$, then 
${\mathcal E}[P_1](U)\subset {\mathcal E}[P_2](U)$ and 
$\hat{{\mathcal E}}[P_1](U)\subset \hat{{\mathcal E}}[P_2](U);$ 
\item 
$(C^{\omega}(U)\cap{\mathcal E}[P](U))\subsetneq 
\hat{{\mathcal E}}[P](U)\subsetneq{\mathcal E}[P](U);$
\item
${\mathcal E}[P](U)$ and $\hat{{\mathcal E}}[P](U)$
are $C^{\infty}(U)$-modules and ideals of $C^{\infty}(U)$.
\end{enumerate}
The following is an important property of the class 
$\hat{\mathcal E}[P](U)$.
\begin{proposition}[\cite{kn13}] \label{pro:2.17}
If $P$ is a nonempty polyhedron, then
the following conditions are equivalent.
\begin{enumerate}
\item 
$f$ belongs to the class 
$\hat{{\mathcal E}}[P](U)$;
\item 
There exist a finite set $S$ in 
$P\cap \Z_+^n$ and 
$\psi_{p}\in C^{\infty}(U)$ for $p\in S$ 
such that
\begin{equation}
f(x)=\sum_{p\in S} x^{p}\psi_{p}(x).
\label{eqn:2.14}
\end{equation} 
\end{enumerate}
Note that the above expression is not unique. 
\end{proposition}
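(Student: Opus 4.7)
My plan is to prove the two implications separately. The direction (ii) $\Rightarrow$ (i) reduces to a direct limit computation; the direction (i) $\Rightarrow$ (ii) is where the substantial work lies.

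For (ii) $\Rightarrow$ (i), assume $f(x)=\sum_{p\in S}x^p\psi_p(x)$ with $S\subset P\cap\Z_+^n$ finite and $\psi_p\in C^\infty(U)$. Each summand has Newton polyhedron in $p+\R_+^n\subset P$ (using $p\in P$ and $P+\R_+^n\subset P$), so $\Gamma_+(f)\subset P$. For a face $\gamma$ of $P$ and a valid pair $(a,l)\in\Z_+^n\times\Z_+$ defining it, I would expand
$$\frac{f(t^{a_1}x_1,\ldots,t^{a_n}x_n)}{t^l}=\sum_{p\in S}t^{\langle a,p\rangle-l}\,x^p\,\psi_p(t^{a_1}x_1,\ldots,t^{a_n}x_n).$$
Because $p\in P\subset H^+(a,l)$, each exponent $\langle a,p\rangle-l$ is nonnegative; terms with $\langle a,p\rangle>l$ vanish as $t\to 0$ by continuity of $\psi_p$, while those with $\langle a,p\rangle=l$ (equivalently $p\in\gamma\cap S$) tend to $x^p\psi_p(\bar x)$ where $\bar x_j=x_j$ for $j\in V(\gamma)$ and $\bar x_j=0$ otherwise. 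Since $V(\gamma)$ depends only on $\gamma$ (Remark~\ref{rem:2.15}), the limit is independent of the chosen valid pair, so $f_\gamma$ exists and $f\in\E[P](U)$.

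For (i) $\Rightarrow$ (ii), my first step is to select a canonical $S$: let $S:=\min(P\cap\Z_+^n)$ under componentwise ordering. By Dickson's lemma, $S$ is finite, and every $\alpha\in P\cap\Z_+^n$ satisfies $\alpha\geq p$ for some $p\in S$. Since $\Gamma_+(f)\subset P$, every $\alpha$ in the Taylor support of $f$ admits an assignment $\alpha\mapsto p(\alpha)\in S$ with $\alpha\geq p(\alpha)$. Borel's theorem then supplies smooth $\psi_p^{(0)}$ whose Taylor series at the origin are $\sum_{p(\alpha)=p}c_\alpha x^{\alpha-p}$, so that $f_0:=f-\sum_{p\in S}x^p\psi_p^{(0)}$ is flat at the origin; by the forward direction already established, $f_0\in\E[P](U)$.

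The hard step is expressing the flat remainder $f_0$ in the form $\sum_{p\in S}x^p\phi_p$ with $\phi_p\in C^\infty(U)$. If $0\in P$, then $0\in S$ and setting $\phi_0=f_0$, $\phi_p=0$ otherwise finishes the proof. If $0\notin P$, flatness at the origin alone is insufficient—flat functions can be nonzero on coordinate hyperplanes, obstructing naive division by any $x^p$. The crucial leverage is that every $\gamma$-part of $f_0$ must vanish identically: $(f_0)_\gamma$ has trivial Taylor series yet satisfies the quasihomogeneous relation (\ref{eqn:2.13}), forcing $(f_0)_\gamma\equiv 0$. Applied to the faces cut out by coordinate subspaces (Remark~\ref{rem:2.14}(v)), this yields flatness of $f_0$ along every coordinate subspace disjoint from $P$, which is precisely the condition needed for membership in the $C^\infty$-ideal $(x^p:p\in S)$. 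I would implement the division chart-by-chart on a toric resolution constructed from the normal fan of $P$ (anticipating the toric machinery developed later in the paper), where $\{x^p:p\in S\}$ pulls back to a principal ideal so that division is legitimate, and then reassemble the local quotients into global smooth $\phi_p$ via a toric partition of unity; the vanishing of the $\gamma$-parts supplies the decay across exceptional divisors required to ensure that the reassembled $\phi_p$ are smooth on all of $U$.
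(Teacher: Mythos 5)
Your (ii) $\Rightarrow$ (i) direction is fine: it is the expected limit computation, and the fact that the limit only depends on $V(\gamma)$ and $\gamma\cap S$ settles both existence and consistency of the $\gamma$-parts. The gap is in (i) $\Rightarrow$ (ii), at exactly the step you call the crucial leverage. It is not true that every $\gamma$-part of the flat remainder $f_0$ vanishes: a trivial Taylor series together with the quasihomogeneity (\ref{eqn:2.13}) forces $(f_0)_\gamma\equiv 0$ only when all weights $a_j$ are strictly positive (compact $\gamma$); when some $a_j=0$ the relation leaves the dependence on the unscaled variables completely free. Concretely, take $P=\{\alpha\in\R_+^2:\alpha_1\geq 1\}$ and $f_0(x)=x_1e^{-1/x_2^2}$: this is flat at the origin, belongs to $\hat{\mathcal E}[P](U)$ (it is already of the form (ii)), and for the noncompact facet $\gamma=\{\alpha\in P:\alpha_1=1\}$ with valid pair $((1,0),1)$ one has $(f_0)_\gamma=f_0\not\equiv 0$, although its Taylor series is trivial and it is quasihomogeneous of degree $1$. (For the trivial face $\gamma=P$ your claim would even read $f_0\equiv 0$.) What the hypothesis $f_0\in\hat{\mathcal E}[P](U)$ really provides is different: the mere \emph{existence} of the limits (\ref{eqn:2.12}) for the faces whose valid pairs have some $a_j=0$ and $l>0$ forces $f_0$ itself to vanish to appropriate finite orders along the corresponding coordinate subspaces; it does not annihilate the $\gamma$-parts, and your argument conflates these two statements.

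Since this false claim is the sole input to your last step, the division argument collapses as written; and even with the corrected input, that step remains a sketch rather than a proof. Deciding membership of a smooth function in the monomial ideal generated by $\{x^p:p\in S\}$ from vanishing conditions along coordinate subspaces is essentially the whole content of (i) $\Rightarrow$ (ii), so invoking it is circular unless proved; and the toric implementation leaves two genuine issues untouched: the locally divided quotients live on the charts $\R^n(\sigma)$ and must descend through the non-injective map $\pi$ to smooth functions on $U$, and one must produce a splitting of $f_0$ into a \emph{finite sum} indexed by $S$, not merely a divisibility statement chart by chart. Note finally that the present paper contains no proof of Proposition~\ref{pro:2.17} (it is quoted from \cite{kn13}), so there is nothing here to compare your route against; on its own terms, however, your (i) $\Rightarrow$ (ii) is not yet a proof.
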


\begin{definition}\label{def:2.18}
$
\hat{\mathcal E}(U):=
\{f\in C^{\infty}(U):f\in\hat{\mathcal E}[\Gamma_+(f)](U)\}.
$ 
\end{definition}
It is easy to see the following properties of 
the class $\hat{\mathcal E}(U)$.
\begin{enumerate}
\item 
$C^{\omega}(U)\subsetneq        
\hat{{\mathcal E}}(U)\subsetneq
C^{\infty}(U);$
\item
When $f$ is flat but $f\not\equiv 0$, 
$f$ does not belong to $\hat{\mathcal E}(U)$.
\end{enumerate}

The class $\hat{\mathcal E}(U)$ contains many kinds of 
smooth functions.
\begin{itemize}
\item
$\hat{{\mathcal E}}(U)$ contains 
the function identically equaling zero
on $U$. 
\item
Every real analytic function defined on $U$ 
belongs to $\hat{\mathcal E}(U)$. 
(From (vi) in Remark~\ref{rem:2.14}.)
\item 
If $f\in C^{\infty}(U)$ is convenient, then
$f$ belongs to $\hat{\mathcal E}(U)$. 
(In this case, every proper noncompact face of $\Gamma_+(f)$ 
can be expressed by the intersection of $\Gamma_+(f)$ and 
some coordinate hyperplane. Therefore,
(iii), (v) in Remark~\ref{rem:2.14} imply this assertion.)
\item
In the one-dimensional case, 
every nonflat smooth function
belongs to $\hat{\mathcal E}(U)$. 
(This is a particular case of the above convenient case.) 
\item
The Denjoy-Carleman (quasianalytic) classes
are contained in $\hat{\mathcal E}(U)$. 
(See Proposition~6.10 in \cite{kn13}.) 
\end{itemize}

Unfortunately, 
the algebraic structure of $\hat{\mathcal{E}}(U)$ is poor.
Indeed, it is not
closed under addition.
For example, consider
$f_1(x_1,x_2)=x_1+x_1 \exp (-1/x_2^2)$
and
$f_2(x_1,x_2)=-x_1$.
Indeed, both
$f_1$ and $f_2$ belong to $\hat{\mathcal{E}}(U)$, but
$f_1+f_2(=\exp(-1/x_2^2))$ 
does not belong to $\hat{\mathcal{E}}(U)$.

\subsection{Examples}\label{subsec:2.6}

\subsubsection{Example 1}
In order to show what kinds of functions belong to the classes
$\hat{\mathcal{E}}(U)$, $\hat{\mathcal{E}}[P](U)$, 
let us consider the following two-dimensional example, 
which lives near the border of these classes:

\begin{equation*}
\begin{split}
&f_k(x)=f_k(x_1,x_2):=x_1^2x_2^2+x_1^ke^{-1/x_2^2},
\quad 
k\in\Z_+; \\
&P=
\{(\alpha_1,\alpha_2)\in\R_+^2:
\alpha_1\geq 1, \,\, \alpha_2\geq 1\}.
\end{split}
\end{equation*}
Of course, $f_k$ is not real analytic around the origin. 
The set of the proper faces of $\Gamma_+(f_k)$ and $P$ consists of  
$\gamma_1,\gamma_2,\gamma_3$ and $\tau_1,\tau_2,\tau_3$, where
\begin{equation*}
\begin{split}
&\gamma_1=\{(2,\a_2):\a_2\geq 2\},
\,\,
\gamma_2=\{(2,2)\},\,\,
\gamma_3=\{(\a_1,2):\a_1\geq 2\},\\
&
\tau_1=\{(1,\a_2):\a_2\geq 1\}, \,\,
\tau_2=\{(1,1)\},\,\,
\tau_3=\{(\a_1,1):\a_1\geq 1\}.
\end{split}
\end{equation*}
It is easy to see that if $j=2,3$, 
then $f_k$ admits the $\gamma_j$-part
and $\tau_j$-part near the origin for all $k\in\Z_+$ and 
they are written as 
$
(f_k)_{\gamma_2}(x)=(f_k)_{\gamma_3}(x)=x_1^2x_2^2$
and 
$ 
(f_k)_{\tau_2}(x)=(f_k)_{\tau_3}(x)\equiv 0.
$
Consider the $\gamma_1$-part and $\tau_1$-part of $f_k$ for
$k\in\Z_+$.
The situation depends on the parameter $k$ as follows.
\begin{itemize}
\item $(f_0)_{\gamma_1}$ and $(f_0)_{\tau_1}$ cannot be defined.
\item $(f_1)_{\gamma_1}$ cannot be defined but 
$(f_1)_{\tau_1}(x)=x_1e^{-1/x_2^2}$.
\item $(f_2)_{\gamma_1}(x)=f_2(x)$ and $(f_2)_{\tau_1}(x)\equiv 0$.
\item If $k\geq 3$, then 
$(f_k)_{\gamma_1}(x)=x_1^2x_2^2$ and $(f_k)_{\tau_1}(x)\equiv 0$.
\end{itemize}
From the above, we see that
$f_k\in\hat{{\mathcal E}}(U)$ if and only if $k\geq 2$; 
$f_k\in\hat{{\mathcal E}}[P](U)$ if and only if $k\geq 1$.
Notice that  
$\tau_1\cap \Gamma_+(f_1)=\emptyset$ but
$(f_1)_{\tau_1}(x)=x_1e^{-1/x_2^2}\not\equiv 0$.

\subsubsection{Example 2}

Let $f$ be a smooth function defined near the origin in $\R^3$.
As mentioned in Remark~2.14 (v), 
when $\gamma$ is a face defined by 
the intersection of $\Gamma_+(f)$ and 
some coordinate hyperplane, 
$f$ always admits the $\gamma$-part. 
But, the condition that 
$\gamma$ is a noncompact face of $\Gamma_+(f)$ contained 
in some coordinate hyperplane
is not sufficient for the admission of the $\gamma$-part.
Indeed, the following three-dimensional example shows this 
subtle situation:
$$f(x_1,x_2,x_3)=x_1^2+e^{-1/x_2^2} \mbox{ and }
\gamma=\{(2,\alpha_2,0):\alpha_2\geq 0\}.$$
It is easy to check that the valid pair $((1,0,1),2)$ 
defines the face $\gamma$, but
the limit (\ref{eqn:2.12}) does not exists 
when $x_2\neq 0$.
This implies that the above $f$ does not belong to
the class $\hat{\mathcal{E}}(U)$.

\section{Earlier studies}\label{sec:3}

As mentioned in the Introduction, 
there are many kinds of generalizations of 
the work of Varchenko in \cite{var76} 
relating to the behavior of
the oscillatory integral $I(t;\varphi)$
in (\ref{eqn:1.1}).
In order to reveal a motivation of our investigation,
after stating results of Varchenko, 
we explain some earlier results in 
\cite{var76},\cite{vas79},\cite{agv88},\cite{py04},\cite{ckn13},\cite{kn13}, 
which are deeply connected with our study.
Note that some conditions in the assumptions of the previous results
can be considered as typical cases of the assumptions 
in our new theorems in Section~\ref{sec:4}, so
they are sometimes more useful for the practical applications.

Throughout this section, 
the following three conditions are assumed:
Let $U$ be an open neighborhood of the origin in $\R^n$.
\begin{enumerate}
\item[(A)] 
$f$ is a nonflat smooth function defined on $U$ satisfying 
that $f(0)=|\nabla f(0)|=0$; 
\item[(B)] 
$g$ is a nonflat smooth function defined on $U$; 
\item[(C)]
$\varphi$ is a smooth function 
whose support is contained in $U$. 
\end{enumerate}
%

\subsection{Results of Varchenko}\label{subsec:3.1}
Let us
recall a part of famous results due to 
Varchenko in \cite{var76} 
and Arnold, Gusein-Zade and Varchenko \cite{agv88}
in the case 
when $f$ is real analytic on $U$ and $g\equiv 1$.
These results require the following condition. 
\begin{enumerate}
\item[(D)] $f$ is real analytic on $U$
and is {\it nondegenerate} over $\R$ with respect to 
the Newton polyhedron $\Gamma_+(f)$, i.e.,
for every compact face $\gamma$ of $\Gamma_+(f)$, 
the $\gamma$-part $f_{\gamma}$ satisfies 
   \begin{equation}\label{eqn:3.1}
   \nabla f_{\gamma}=
   \left(
   \frac{\d f_{\gamma}}{\d x_1},\ldots,
   \frac{\d f_{\gamma}}{\d x_n}\right)
   \neq (0,\ldots,0)\quad
   \mbox{on the set $(\R\setminus\{0\})^n$.}
   \end{equation}
\end{enumerate}

\begin{theorem}
[\cite{var76},\cite{agv88}]\label{thm:3.1}
If $f$ satisfies the condition $(D)$, 
then the following hold$:$
\begin{enumerate}
\item[(i)]
The progression $\{\a\}$
in $(\ref{eqn:1.2})$
belongs to finitely many arithmetic progressions,
which are obtained by using the theory of 
toric varieties based on the geometry of
the Newton polyhedron $\Gamma_+(f)$.
$($See Remark~\ref{rem:4.2}, below.$)$
\item[(ii)] $\beta(f,1)\leq -1/d(f);$
\item[(iii)] If at least one of the following conditions 
is satisfied:
\begin{enumerate}
\item $d(f)>1;$
\item $f$ is nonnegative or nonpositive on $U;$
\item 
$1/d(f)$ is not an odd integer and $f_{\tau_*}$ 
does not vanish on $U\cap(\R\setminus\{0\})^n$,
where $\tau_*$ is the principal face of $\Gamma_+(f)$
$($see Remark~\ref{rem:2.11}$)$,
\end{enumerate}
then 
$\beta(f,1)=-1/d(f)$ and $\eta(f,1)=m(f)$.
\end{enumerate}
\end{theorem}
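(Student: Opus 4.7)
The plan is to prove all three parts simultaneously by constructing a resolution of singularities via toric geometry and then analyzing the associated local zeta function $Z(s) = \int |f(x)|^s \varphi(x)\,dx$, which is related to $I(t;\varphi)$ by the Mellin transform. The strategy is classical: position of the rightmost pole of $Z(s)$ (and its order) controls the leading term of the asymptotic expansion.

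\textbf{Step 1: Toric resolution.} Starting from $\Gamma_+(f)$, form the dual fan in $\R_+^n$ and choose a nonsingular simplicial subdivision $\Sigma$, which yields a proper birational map $\pi: Y_\Sigma \to \R^n$ from a smooth toric variety. On each top-dimensional cone $\sigma \in \Sigma$, spanned by primitive integer vectors $a^{(1)},\ldots,a^{(n)}$, the chart map takes $y = (y_1,\ldots,y_n)$ to $x_j = \prod_k y_k^{a^{(k)}_j}$. A standard computation using $\pi$ gives
\begin{equation*}
f(\pi(y)) = y^{m(\sigma)} \tilde f_\sigma(y), \qquad \mathrm{Jac}(\pi)(y) = y^{\langle \mathbf{1}, a^{(k)}\rangle_k - \mathbf{1}} \cdot (\text{unit}),
\end{equation*}
where $m(\sigma)_k = \min_{\alpha \in \Gamma_+(f)} \langle a^{(k)}, \alpha \rangle$. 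The key point is that the nondegeneracy hypothesis (D) forces $\tilde f_\sigma$ to be nonvanishing on the relevant stratum where the exceptional divisors meet the proper transform; this is the standard fact that Newton-nondegeneracy ensures only normal-crossing vanishing after the toric blowup.

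\textbf{Step 2: Meromorphic continuation of $Z(s)$.} Pull back $Z(s)$ through $\pi$, partition of unity over the charts, and on each chart factor the amplitude to reduce to model integrals $\int y^{sm(\sigma) + b(\sigma)} \psi(y)\,dy$ with $\psi$ smooth (using $\tilde f_\sigma$ nonvanishing to absorb it into the amplitude up to $|\cdot|^s$). Each such integral has a known meromorphic continuation with simple poles along the arithmetic progressions $s = -(b(\sigma)_k + 1 + \nu)/m(\sigma)_k$ for $\nu \in \Z_+$. Taking the union over cones yields part (i), and the rightmost candidate pole is determined by the cone(s) dual to the face where the diagonal ray $\R_+ \cdot \mathbf{1}$ meets $\partial\Gamma_+(f)$, giving rightmost pole at $s = -1/d(f)$ of order at most $m(f) = n - \dim(\tau_*)$. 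Combined with the Mellin-transform link between $Z_\pm(s;\varphi)$ and $I(t;\varphi)$, this yields part (ii): $\beta(f,1) \leq -1/d(f)$.

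\textbf{Step 3: Non-cancellation for part (iii).} The inequality in (ii) can fail to be sharp only if contributions from different cones cancel, or if the residue at the leading pole vanishes identically in $\varphi$. The three listed hypotheses are the classical conditions that preclude such cancellation. For (a), when $d(f) > 1$, the leading pole lies in $(-1,0)$, so lower-order terms from nearby progressions cannot collide with it and cancellation between $Z_+$ and $Z_-$ is impossible. For (b), sign-definiteness of $f$ means $Z_+(s;\varphi)$ alone (or $Z_-$) governs $I(t;\varphi)$, so there is no $\pm$ cancellation, and one can choose $\varphi \geq 0$ supported near the origin to make the residue positive. For (c), one constructs $\varphi$ concentrated on a chart associated to the principal face $\tau_*$; on that chart the residue is expressed as an integral over $(\R\setminus\{0\})^n$ of a function involving $f_{\tau_*}^s$, which is nonzero by the hypothesis that $f_{\tau_*}$ is nonvanishing there, together with the parity condition ensuring the $s$-power does not introduce a cancelling sign. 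In each case the residue at $s = -1/d(f)$ is nonzero of maximal order $m(f)$, giving $\beta(f,1) = -1/d(f)$ and $\eta(f,1) = m(f)$.

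\textbf{Main obstacle.} The hardest step is Step 3(c): producing a test function $\varphi$ for which the top-order residue at the leading pole is provably nonzero. One must show that after toric resolution the leading residue factors as a product of a one-variable integral of $|f_{\tau_*}|^s$-type along the stratum transverse to $\tau_*$ times a smooth integral on $\tau_*$, and then use the parity hypothesis together with nonvanishing of $f_{\tau_*}$ on $(\R\setminus\{0\})^n$ to rule out cancellation in the transverse integral. Handling the contributions from cones meeting the principal cone but not equal to it, and verifying they contribute only to strictly smaller poles or to the same pole with matching-signed residue, requires careful bookkeeping with the face structure of $\Gamma_+(f)$.
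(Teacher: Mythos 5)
Your overall route---a toric resolution built from $\Gamma_+(f)$, meromorphic continuation of the local zeta functions, the Mellin-transform link to $I(t;\varphi)$, and a non-cancellation argument at the leading pole---is exactly the Varchenko/AGV strategy, and it is the same machinery this paper develops (Sections~5--11 and 14) for its generalizations. The problem is in your Step~2. Nondegeneracy does \emph{not} make $f_\sigma$ (your $\tilde f_\sigma$) nonvanishing on the strata where the exceptional divisors live; it only makes the zero set of $f_\sigma$ (the proper transform) nonsingular and transverse to those strata, so that $f\circ\pi(\sigma)$ becomes normal crossing only after a further local change of variables (Theorem~\ref{thm:7.2} and Remark~\ref{rem:7.3}). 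For instance $f=x_1^2-x_2^2$ is nondegenerate, and in the chart $x_1=y_1y_2$, $x_2=y_1$ one gets $f\circ\pi(\sigma)=y_1^2(y_2^2-1)$, so $f_\sigma=y_2^2-1$ vanishes on the exceptional divisor $\{y_1=0\}$. At such points you cannot absorb $f_\sigma$ into the amplitude ``up to $|\cdot|^s$''; instead an extra factor $u_i^{\,s}$ appears (the $J^{(l)}_{\pm,\sigma}$-type integrals of Section~10), producing candidate poles at \emph{all} negative integers and possibly raising the order of the pole at $s=-1/d(f)$ to $m(f)+1$ when $1/d(f)$ is an integer (cf.\ Theorem~\ref{thm:10.1}(c)). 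Consequently your claim that the rightmost pole of $Z_{\pm}$ is at $-1/d(f)$ of order at most $m(f)$ is not what the resolution gives, and assertion (ii) does not follow from your Step~2: since $d(f)$ can be smaller than $1$ for $n\ge 3$ (e.g.\ $f=x_1^2+x_2^2+x_3^2$, $d(f)=2/3$), the zeta functions can genuinely have poles at negative integers strictly to the right of $-1/d(f)$.

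The missing ingredient is the cancellation between $Z_+$ and $Z_-$ at negative integers: the top Laurent coefficients at $s=\lambda\in-\N$ satisfy $a_\lambda^{+}=(-1)^{\lambda-1}a_\lambda^{-}$ (Lemma~\ref{lem:9.4}, Proposition~\ref{pro:10.8}), and combined with the Gamma-factor relation (\ref{eqn:14.5}) this gives $e^{i\pi\lambda/2}a_\lambda^{+}+e^{-i\pi\lambda/2}a_\lambda^{-}=0$, so these poles never contribute to the expansion of $I(t;\varphi)$; this is what rescues both the bound in (ii) and the multiplicity statement $\eta(f,1)=m(f)$ in (iii) when $1/d(f)$ is an integer, and it is also the precise mechanism behind the exclusion of odd integers $1/d(f)$ in hypothesis (iii)(c) (Remark~\ref{rem:14.1}), which your sketch only gestures at. Your reading of hypothesis (a) is also off: $d(f)>1$ is used to guarantee convergence of the limiting residue integrals (so the leading coefficient can be computed and shown to be of one sign for $\varphi\ge 0$), not because ``nearby progressions cannot collide''. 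With the proper-transform contributions and this cancellation argument supplied, the rest of your outline matches the paper's proof of the corresponding results (Theorems~\ref{thm:10.1}, \ref{thm:10.7} and Section~\ref{subsec:14.3}).
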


There have been many kinds of studies, 
which generalize or improve the above theorem. 
Now,
let us consider the following two kinds of generalizations of 
the Varchenko's result: 
\begin{itemize}
\item Consider the weighted case 
under the real analytic assumption on the phase.
\item Weaken the regularity of the phase 
in the unweighted case. 
\end{itemize}

\subsection{Weighted case}\label{subsec:3.2}

The following theorem naturally generalizes the assertion (ii) 
in Theorem~3.1.
\begin{theorem}[\cite{ckn13}]\label{thm:3.2}
Suppose that {\rm (i)} $f$ satisfies the condition $(D)$ and
{\rm (ii)} at least one of the following conditions is 
satisfied$:$
   \begin{enumerate}
   \item[(a)] 
      $f$ is convenient$;$
   \item[(b)] 
      $g$ is convenient$;$
   \item[(c)] 
      $g$ is real analytic on $U;$
   \item[(d)] 
      $g$ is expressed as 
      $g(x)=x^p\tilde{g}(x)$ 
      on $U$, where 
      $p\in\Z_+^n$ and $\tilde{g}$ is a smooth function 
      defined on $U$ with $\tilde{g}(0)\neq 0$.
   \end{enumerate}  
Then, we have
$\b(f,g)\leq -1/d(f,g)$.   
\end{theorem}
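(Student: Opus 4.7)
The plan is to prove $\beta(f,g)\leq -1/d(f,g)$ by studying the meromorphic continuation of the local zeta function $Z(s;\varphi)$ and its signed relatives $Z_\pm(s;\varphi)$, and then invoking the Mellin-transform dictionary between these zeta functions and the asymptotic expansion of $I(t;\varphi)$. Concretely, if $Z_\pm(s;\varphi)$ admits meromorphic continuation to the halfplane $\{\mathrm{Re}(s)>-1/d(f,g)\}$ with all poles having real part $\leq -1/d(f,g)$, then the leading term in the expansion \eqref{eqn:1.2} of $I(t;\varphi)$ cannot decay slower than $t^{-1/d(f,g)}$ (modulo logarithmic factors), which delivers the claimed estimate.

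To produce such a continuation, I would build a \emph{simultaneous} toric resolution: let $\Sigma$ be a simplicial fan in $\R_+^n$ that refines both the dual fans of $\Gamma_+(f)$ and of $\Gamma_+(g)$, and let $\pi:\widetilde{X}\to U$ be the associated toric morphism. On each chart indexed by a maximal cone $\sigma\in\Sigma$ with primitive generators $a^1,\dots,a^n\in\Z_+^n$, $\pi$ is a monomial map $x_i=\prod_j y_j^{a^j_i}$, and provided $f$ and $g$ admit the relevant face parts one obtains
\begin{equation*}
f\circ\pi(y)=y^{L(\sigma)}F_\sigma(y),\qquad g\circ\pi(y)=y^{M(\sigma)}G_\sigma(y),
\end{equation*}
with $L(\sigma)_j=l(a^j):=\min\{\langle a^j,\alpha\rangle:\alpha\in\Gamma_+(f)\}$, $M(\sigma)_j=m(a^j):=\min\{\langle a^j,\beta\rangle:\beta\in\Gamma_+(g)\}$, and $F_\sigma,G_\sigma$ smooth. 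The Jacobian of $\pi$ contributes the factor $\prod_j y_j^{\langle a^j,\1\rangle-1}$. Using the nondegeneracy of $f$ from hypothesis (D) to keep $F_\sigma$ bounded away from zero on the support-relevant part of the chart, the chart integrals extend meromorphically, with candidate poles located at
\begin{equation*}
s=-\frac{m(a^j)+\langle a^j,\1\rangle}{l(a^j)},\qquad l(a^j)>0.
\end{equation*}

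The key arithmetic step is to verify that every such pole satisfies $s\leq -1/d(f,g)$. By Remark~\ref{rem:2.5} we have $d(f,g)\cdot(\Gamma_+(g)+\1)\subset\Gamma_+(f)$; applying the linear functional $\langle a^j,\cdot\rangle$ and taking infima over each polyhedron yields $d(f,g)\bigl(m(a^j)+\langle a^j,\1\rangle\bigr)\geq l(a^j)$, which is precisely the desired inequality. The main obstacle, and the reason for the alternatives (a)--(d) in hypothesis (ii), is obtaining the factorization $g\circ\pi(y)=y^{M(\sigma)}G_\sigma(y)$ with \emph{smooth} $G_\sigma$ on every chart when $g$ is only smooth, since the $\gamma$-part of $g$ along a noncompact face of $\Gamma_+(g)$ need not exist in the sense of Section~\ref{subsec:2.4}. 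Each alternative neutralizes this pathology in its own way: (a) eliminates the noncompact cones that cause trouble by making $f$ convenient; (b) forces the relevant faces of $\Gamma_+(g)$ to be compact; (c) guarantees that the $\gamma$-parts of $g$ all exist by real analyticity (cf.\ Remark~\ref{rem:2.14}); and (d) exhibits the monomial factor of $g$ directly, so that $G_\sigma=\tilde g\circ\pi$ is automatically smooth. Under any one of (a)--(d), the factorization and the pole estimate go through, and the Mellin transfer developed in Section~14 completes the argument.
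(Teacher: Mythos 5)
Your overall route---simultaneous toric resolution adapted to $\Gamma_+(f)$ and $\Gamma_+(g)$, candidate poles of $Z_{\pm}(s;\varphi)$ on each chart, the arithmetic inequality $d(f,g)\bigl(l_g(a)+\langle a\rangle\bigr)\geq l_f(a)$ deduced from $d(f,g)\cdot(\Gamma_+(g)+\1)\subset\Gamma_+(f)$, and the Mellin transfer---is exactly the paper's route (Lemma~\ref{lem:6.1} ff., Lemma~11.2, Sections~10--12 and 14). But there is a genuine gap at the step where you assert that nondegeneracy keeps $F_\sigma$ ``bounded away from zero on the support-relevant part of the chart.'' Condition (D) does not give this: it only makes the zero set of $f_\sigma$ \emph{nonsingular} (Theorem~\ref{thm:7.2}); the zeros themselves persist. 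Near such a zero one must perform a further change of variables putting $f\circ\pi(\sigma)$ into normal crossing form with a factor $(u_i-b_i)$ (Remark~\ref{rem:7.3}), and the resulting chart integrals (the $J^{(l)}_{\pm,\sigma}$ of the paper) carry the extra factor $u_i^{\,s}$, which contributes candidate poles at \emph{every} negative integer, not only at the points $-\bigl(l_g(a^j)+\langle a^j\rangle+\nu\bigr)/l_f(a^j)$. Your inequality does not control these: whenever $d(f,g)<1$ (which happens routinely in the weighted setting, e.g.\ when $g$ vanishes to high order, so that $-1/d(f,g)<-1$), the integers $-1,-2,\dots$ lying to the right of $-1/d(f,g)$ are genuine candidate poles of $Z_{\pm}$, so the premise of your plan (no poles with real part $>-1/d(f,g)$) is false, and as written your argument only yields $\beta(f,g)\leq\max\{-1/d(f,g),-1\}$.

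The missing ingredient is the cancellation between $Z_+$ and $Z_-$ at those integer locations: the paper shows (Propositions~\ref{pro:10.8} and \ref{pro:11.11}) that at $\lambda\in-\N$ with $\lambda>-1/d(f,g)$ the poles are at most simple and the leading Laurent coefficients satisfy $a_\lambda^{+}=(-1)^{\lambda-1}a_\lambda^{-}$, and then formula (\ref{eqn:14.5}) forces the corresponding terms of $I(t;\varphi)$ to cancel; only after this does $\beta(f,g)\leq-1/d(f,g)$ follow (this is exactly the remark in the proof of Theorem~\ref{thm:4.1}). A secondary looseness is your treatment of alternative (a): when $f$ is convenient and $g$ is merely smooth, the factorization $g\circ\pi(\sigma)=y^{M(\sigma)}G_\sigma$ with exponents $l_g(a^j(\sigma))$ can fail on charts whose skeleton vectors have zero components; the paper instead factors with the modified exponents $\tilde l_g$ (Lemma~\ref{lem:12.2}) and uses that convenience of $f$ forces $a^j(\sigma)\in\N^n$ whenever $l_f(a^j(\sigma))\neq0$ (Lemma~\ref{lem:12.4}), so the exponents agree precisely at the indices that produce poles. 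Your readings of (b),(c) (reduction to $g\in\hat{\mathcal E}(U)$) and (d) (monomial weight, Section~\ref{sec:10}) are fine.
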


The following theorem partially generalizes the assertion (iii)
in Theorem~3.1.
\begin{theorem}[\cite{ckn13}]\label{thm:3.3}
   Suppose that 
{\rm (i)} 
$f$ satisfies the condition $(D)$,
{\rm (ii)} 
      at least one of the following two conditions 
      is satisfied$:$ 
      \begin{enumerate}
      \item[(a)] 
      $f$ is convenient and $g_{\gamma_*}$ is nonnegative or nonpositive 
     on $U$ for \underline{all} principal faces $\gamma_*$ 
     of $\Gamma_+(g);$          
     \item[(b)] 
         $g$ is expressed as $g(x)=x^p\tilde{g}(x)$ on $U$, 
         where every component of $p\in\Z_+^n$ is even and 
         $\tilde{g}$ is a smooth function defined on $U$ 
         with $\tilde{g}(0)\neq 0$
      \end{enumerate}
and   
{\rm (iii)}
      at least one of the following two conditions 
      is satisfied$:$ 
      \begin{enumerate}
      \item[(c)]
         $d(f,g)>1;$
      \item[(d)]
         $f$ is nonnegative or nonpositive on $U$.
      \end{enumerate} 
   Then  
   the equations 
   $\beta(f,g)=-1/d(f,g)$ and 
   $\eta(f,g)= m(f,g)$ hold. 
\end{theorem}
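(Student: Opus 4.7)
The plan is to transfer the problem from the asymptotic expansion \eqref{eqn:1.2} to the meromorphic structure of the local zeta functions $Z_{\pm}(s;\varphi)$ via the Mellin transform correspondence sketched in Section~14.1: the oscillation index $\beta(f,g)$ equals the largest real part of a pole of $Z_{+}(s;\varphi)+Z_{-}(s;\varphi)$ as $\varphi$ varies, while $\eta(f,g)$ equals the order of that pole. Since Theorem~\ref{thm:3.2} already supplies the upper bound $\beta(f,g)\leq -1/d(f,g)$, the task is to exhibit, in every neighborhood of the origin, a smooth $\varphi$ supported there for which $s=-1/d(f,g)$ is a pole of $Z_{+}+Z_{-}$ of order exactly $m(f,g)$.

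First I would construct a simultaneous resolution of singularities for the pair $(f,g)$ via the toric variety associated with a common regular simplicial refinement of the fans dual to $\Gamma_{+}(f)$ and $\Gamma_{+}(g)$, as carried out in Section~7. On each $n$-dimensional affine chart $U_{\sigma}$ indexed by a simplicial cone $\sigma$ with primitive generators $a^{1},\ldots,a^{n}$, the pullbacks take the monomial form $y^{A(\sigma)}\tilde{f}_{\sigma}(y)$ and $y^{B(\sigma)}\tilde{g}_{\sigma}(y)$, where $A(\sigma)_{k}$ and $B(\sigma)_{k}$ are the values of the support functions of $\Gamma_{+}(f)$ and $\Gamma_{+}(g)$ at $a^{k}$, and $\tilde{f}_{\sigma},\tilde{g}_{\sigma}$ are smooth functions (this uses the $\hat{\mathcal{E}}$-property of $f$ to make sense of the relevant $\gamma$-parts in the noncompact case). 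Substituting into $Z_{\pm}(s;\varphi)$ produces elementary integrals whose potential poles lie at the ratios $s=-(B(\sigma)_{k}+1)/A(\sigma)_{k}$. A convex-geometric inspection, based on the definition of $d(f,g)$ and of the principal faces $\mathcal{F}_{*}[\Gamma_{+}(f)]$, shows that $-1/d(f,g)$ is attained precisely when $a^{k}$ lies in the normal cone of a principal face $\tau_{*}$, and that the maximal number of such $a^{k}$ in a single cone equals $n-\dim(\tau_{*})=m(f,g)$. This matches the claimed order of the pole from above.

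The main obstacle is ruling out cancellation among the leading-order contributions from the various cones attached to principal faces. I would choose a test amplitude of the form $\varphi(x)=\chi(x)\prod_{j=1}^{n}\rho_{j}(x_{j})$, with $\chi$ a small bump near the origin and the $\rho_{j}$ of prescribed sign on each quadrant. Under hypothesis~(a), for \emph{every} principal face $\gamma_{*}\in\mathcal{F}_{*}[\Gamma_{+}(g)]$, the $\gamma_{*}$-part $g_{\gamma_{*}}$ has constant sign on $U\cap(\R\setminus\{0\})^{n}$, so after pullback the contribution coming from the weight assembles as a sum of same-sign terms; under~(b), the even-monomial factor $x^{p}$ in $g$ produces the same effect directly. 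On the phase side, condition~(c) $d(f,g)>1$ places the pole $-1/d(f,g)$ strictly to the right of $-1$, so the residues of $Z_{+}$ and $Z_{-}$ combine without destructive interference in the Mellin inversion; condition~(d) simply makes one of $Z_{+},Z_{-}$ identically zero, and the other carries the full residue. The nondegeneracy assumption~(D) guarantees $f_{\tau_{*}}\neq 0$ on $(\R\setminus\{0\})^{n}$ for every principal face $\tau_{*}$, hence $\tilde{f}_{\sigma}$ is a smooth nonvanishing factor near some point of the exceptional divisor, which ensures the extracted residue is genuinely nonzero.

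Putting the counted order of the pole together with the non-cancellation yields $\beta(f,g)=-1/d(f,g)$ and $\eta(f,g)=m(f,g)$. I expect the non-cancellation step to be the most delicate: without the sign hypotheses on the $g_{\gamma_{*}}$ (or the monomial structure in~(b)), residues attached to different principal faces could interfere destructively, and without either (c) or (d) the $Z_{+}$ and $Z_{-}$ contributions could combine to produce only a strictly lower-order pole; the interplay of these conditions with the bijection $\Psi_{*}$ between principal faces of $\Gamma_{+}(f)$ and $\Gamma_{+}(g)$ is exactly what the proof needs to exploit.
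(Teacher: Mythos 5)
Your overall route coincides with the paper's: reduce to the poles of $Z_{\pm}(s;\varphi)$ via the Mellin correspondence of Section~\ref{subsec:14.1}, resolve simultaneously by the toric map built from the fans of $\Gamma_+(f)$ and $\Gamma_+(g)$ (Theorem~\ref{thm:7.2}), read off the candidate poles and the bound $m(f,g)$ on their order from the principal faces, and then rule out cancellation using the sign hypotheses on $g_{\gamma_*}$ (or the even monomial in (ii-b)) together with (iii-c)/(iii-d); this is exactly how Sections~9--14 prove the generalization Theorem~\ref{thm:4.4}, of which the quoted Theorem~\ref{thm:3.3} from \cite{ckn13} is a special case. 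But there is a genuine gap at the decisive non-vanishing step. You assert that condition (D) ``guarantees $f_{\tau_*}\neq 0$ on $(\R\setminus\{0\})^n$ for every principal face $\tau_*$'' and rest the nonvanishing of the extracted residue on the resulting nonvanishing of $\tilde f_{\sigma}$ near the exceptional divisor. This is false: nondegeneracy controls $\nabla f_{\gamma}$, not $f_{\gamma}$. For instance $f(x_1,x_2)=x_1^2-x_2^2$ is nondegenerate while its principal face part vanishes on the diagonals; this is precisely why ``$f_{\tau_*}$ does not vanish on $U\cap(\R\setminus\{0\})^n$'' appears as a \emph{separate} hypothesis in Theorem~\ref{thm:3.1}~(iii-c) and Theorem~\ref{thm:4.4}~(iv-c) rather than being deduced from (D). In general the resolved factor $f_{\sigma}$ does vanish on the chart; nondegeneracy only makes its zero set nonsingular (Theorem~\ref{thm:7.2}, Remark~\ref{rem:7.3}), and the contributions supported near $\{f_{\sigma}=0\}$ (the $J$-type integrals, Lemma~\ref{lem:9.4}) are the delicate part of the lower-bound argument, which your proposal leaves unsupported.

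What actually neutralizes those contributions are the hypotheses you relegate to the Mellin-inversion step. If $d(f,g)>1$, the coefficient integrals containing $f_{\sigma}^{-1/d(f,g)}$ converge as improper integrals and the pieces near $\{f_{\sigma}=0\}$ become negligible as the cut-offs shrink (Propositions~\ref{pro:9.3}, \ref{pro:10.6}, \ref{pro:11.9}); in addition $1/d(f,g)<1$ is not an odd integer, so the combination $e^{i\pi\lambda/2}C_+ + e^{-i\pi\lambda/2}C_-$ in (\ref{eqn:14.5}) does not vanish once $C_++C_-\neq 0$ (Remark~\ref{rem:14.1}). If instead $f$ has constant sign, a contradiction argument via Theorem~\ref{thm:7.2} shows that $f_{\sigma}\circ T_{A(\sigma)}$ cannot vanish, and one of $Z_{\pm}$ is identically zero, so the single remaining term carries the coefficient; the sign hypotheses on $g_{\gamma_*}$ (or the evenness of $p$ in (ii-b), which makes the orthant contributions add with one sign) then give $C_++C_-\neq 0$ as you intended. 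Your reading of (c) as merely ``placing the pole to the right of $-1$'' captures only this secondary interference point and misses its main analytic function. Finally, your opening identification of $\beta(f,g)$ with the largest pole of $Z_++Z_-$ is only an inequality until the factor in (\ref{eqn:14.5}) is shown nonzero; the upper bound itself you may legitimately import from Theorem~\ref{thm:3.2}, as you do.
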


\begin{remark}\label{rem:3.4}
Similar results to the above two theorems 
have been obtained in 
\cite{vas79}, \cite{agv88}.
In our language, the results in \cite{agv88} 
can be stated as follows. 

\vspace{0.5em}
(Theorem 8.4 in \cite{agv88}, p 254)
{\it If 
$f$ is real analytic and is nondegenerate over $\R$ 
with respect to its Newton polyhedron,
then 
\begin{enumerate}
\item $\beta(f,g)\leq -1/d(f,g)$;
\item If $d(f,g)>1$ and 
$\Gamma_+(g)=\{p\}+\R_+^n$ with $p\in\Z_+^n$, 
then $\beta(f,g)= -1/d(f,g).$
\end{enumerate}
}
\vspace{0.5em}

Unfortunately, more additional assumptions are 
necessary to obtain the above assertions (i), (ii). 
Indeed, consider 
the following two-dimensional example: 
\begin{equation*}
f(x_1,x_2)=x_1^4;\quad
g(x_1,x_2)=x_1^2x_2^2+e^{-1/x_2^2},
\end{equation*}
which is a special case of the example in 
Section~15.1.
It follows from the computation in Section~15.1
that this example violates (i), (ii). 
As for (ii), even if $g$ is real analytic, 
the one-dimensional case indicates 
that at least some condition on the power $p$ 
is needed. 
Indeed, consider the case that
$f(x)=x^2$ and $g(x)=x^p$ 
where $p$ is an odd positive integer.
In this case, the coefficient of the first candidate 
term $t^{-(p+1)/2}$ of the asymptotic expansion 
(\ref{eqn:})
vanishes (see \cite{ckn13}, Section~7.4 for the details).
It is easy 
to find counterexamples in higher dimensional case.
\end{remark}

\begin{remark}\label{rem:3.5}
Pramanik and Yang \cite{py04} obtained a similar result 
relating to the above equation 
``$\beta(f,g)=-1/d(f,g)$'' 
in the case when 
the dimension is two  
and the weight has the form $g(x)=|h(x)|^{\epsilon}$ where
$h$ is real analytic and $\epsilon$ is positive. 
Their approach is based on the Puiseux series expansions 
of the roots of $f$ and $h$, 
which is inspired by the work of
Phong and Stein in \cite{ps97}. 
Their definition of {\it Newton distance}, 
which is different from ours, 
is given through the process of 
a good choice of coordinate system.
As a result, their result does not 
need the nondegeneracy condition of the phase.  
\end{remark}

The following theorem shows an interesting 
``symmetry property'' with respect to 
the phase and the weight. 
\begin{theorem}[\cite{ckn13}]\label{thm:3.6}
Suppose that $f$, $g$ satisfy the condition $(D)$ 
and that they are convenient and nonnegative or nonpositive on $U$.
Then we have
$\b(x^{\1}f,g)\b(x^{\1}g,f)\geq 1$,
where $x^{\1}=x_1\cdots x_n$.
Moreover, the following two conditions are equivalent$:$  
\begin{enumerate}
\item $\b(x^{\1}f,g)\b(x^{\1}g,f)=1$$;$ 
\item There exists a positive rational number $d$ such that 
$\Gamma_+(x^{\1} f)=d\cdot\Gamma_+(x^{\1} g)$. 
\end{enumerate}
If the condition {\rm (i)} or {\rm (ii)} is satisfied, 
then we have
$\eta(x^{\1}f,g)=\eta(x^{\1}g,f)=n$.
\end{theorem}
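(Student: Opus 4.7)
My plan is to combine a weighted Varchenko-type identity $\beta=-1/d$ for each of the two pairs with a purely geometric comparison between the Newton distances.

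First I would apply Theorem~\ref{thm:3.3} to the pair $(x^{\1}f,g)$ to conclude $\beta(x^{\1}f,g)=-1/d(x^{\1}f,g)$ and $\eta(x^{\1}f,g)=m(x^{\1}f,g)$. The phase $x^{\1}f$ is real analytic, and condition~(D) for it follows from that of $f$: every compact face $\tau$ of $\Gamma_+(x^{\1}f)$ has the form $\tau=\gamma+\1$ for a compact face $\gamma$ of $\Gamma_+(f)$, so $(x^{\1}f)_\tau=x^{\1}f_\gamma$, and combining the Euler relation for the quasihomogeneous polynomial $(x^{\1}f)_\tau$ with the nondegeneracy of $f_\gamma$ forces $\nabla(x^{\1}f)_\tau\ne 0$ on $(\R\setminus\{0\})^n$. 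Under the convenient hypothesis on $f$ and $g$, the principal face $\tau_*$ of $\Gamma_+(x^{\1}f)$ with respect to $g$ lies in $\1+\R_{>0}^n$ and is compact; the associated principal face $\gamma_*=\Psi_*(\tau_*)$ of $\Gamma_+(g)$ is compact as well, and since $g\geq 0$ on $U$ the quasihomogeneous limit characterizing $g_{\gamma_*}$ gives $g_{\gamma_*}\geq 0$, so condition~(ii)(a) of Theorem~\ref{thm:3.3} is in force (the only use of convenience of the phase in~(a) is to secure compactness of the principal face, which is present here automatically). Condition~(iii) is supplied either by $d(x^{\1}f,g)>1$ or by applying~(d) to the face polynomial on $\tau_*$, whose sign is controlled by $f\geq 0$. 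By symmetry the same identity holds for $(x^{\1}g,f)$.

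Next I would prove the purely geometric comparison between $d_1:=d(x^{\1}f,g)$ and $d_2:=d(x^{\1}g,f)$. Set $A=\Gamma_+(x^{\1}f)=\Gamma_+(f)+\1$ and $B=\Gamma_+(x^{\1}g)=\Gamma_+(g)+\1$. By Remark~\ref{rem:2.5}, $d_1$ (resp.\ $d_2$) is the minimal positive scalar with $d_1 B\subset A$ (resp.\ $d_2 A\subset B$), so chaining the two inclusions yields $d_1 d_2\,B\subset B$. Because $B\subset\1+\R_+^n$ has recession cone $\R_+^n$ and its distinguished diagonal vertex is pinned at $\1$, the inclusion $d_1d_2\,B\subset B$ translates, via the support-function description of $B$ in the positive orthant, into the required comparison of $d_1 d_2$ with $1$, together with the characterization that equality is attained exactly when $d_1 B=A$, i.e., $\Gamma_+(x^{\1}f)=d\cdot\Gamma_+(x^{\1}g)$ for $d=d_1$. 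Conversely, if $A=d\,B$ then an elementary minimization shows $d_1=d$ and $d_2=1/d$, so $d_1 d_2=1$ and equality holds; this gives the equivalence (i)$\Leftrightarrow$(ii). Multiplying the two identities $\beta=-1/d$ yields $\beta(x^{\1}f,g)\beta(x^{\1}g,f)=1/(d_1 d_2)$, from which the desired inequality follows, with equality exactly as described.

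It remains to read off the multiplicity in the equality case. When $A=d\,B$, tracing the definitions in Section~\ref{subsec:2.3} shows that the image $\Phi(\Gamma_+(g))$ meets $\partial A$ only along the boundary face of $A$ that is a single vertex on the diagonal ray, so $\tau_*$ is zero-dimensional; hence $m(x^{\1}f,g)=n-0=n$ and similarly $m(x^{\1}g,f)=n$, which combined with the identity from the first step gives $\eta(x^{\1}f,g)=\eta(x^{\1}g,f)=n$.

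The main obstacle I expect is in the first step: formally applying Theorem~\ref{thm:3.3} to $(x^{\1}f,g)$ even though $x^{\1}f$ is not convenient (its Newton polyhedron sits strictly inside $\1+\R_+^n$), so (ii)(a) does not hold verbatim. The remedy is to inspect the role played by convenience in the proof of Theorem~\ref{thm:3.3}(a) and observe that it enters only to ensure compactness of the principal face and a tameness condition on the noncompact faces, both of which are automatic under the hypotheses of Theorem~\ref{thm:3.6} once the $+\1$ shift is made.
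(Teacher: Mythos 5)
Your strategy has a fatal directional flaw, and its key intermediate claim is false. Your own geometric step is correct: with $A=\Gamma_+(x^{\1}f)$, $B=\Gamma_+(x^{\1}g)$, chaining the minimal inclusions $d_1B\subset A$ and $d_2A\subset B$ gives $d_1d_2B\subset B$, and since $B\subset\1+\R_+^n$ this forces $d_1d_2\geq 1$, with equality iff $A=d_1B$ (this is exactly Lemma~\ref{lem:13.3}). But then, if you really had $\beta(x^{\1}f,g)=-1/d_1$ and $\beta(x^{\1}g,f)=-1/d_2$, the product would be $1/(d_1d_2)\leq 1$, which is the \emph{opposite} of the assertion $\geq 1$; having both equalities would contradict the theorem whenever $d_1d_2>1$. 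And those equalities do fail in general: Theorem~\ref{thm:3.3} does not apply to the pair $(x^{\1}f,g)$, since the phase $x^{\1}f$ is never convenient and never of one sign (it changes sign across coordinate hyperplanes), $g$ is not assumed of the form (ii)(b), and (iii) need not hold when $d(x^{\1}f,g)\leq 1$. The paper's own example in Remark~\ref{rem:4.6} ($f=x_1x_2=x^{\1}\cdot 1$, $g=x_2^2$, where $d=1$ but $\beta=-3$) exhibits precisely this failure, caused by cancellation when $1/d$ is an odd integer. A concrete instance inside the hypotheses of Theorem~\ref{thm:3.6}: $f=x_1^2+x_2^2$, $g=x_1^4+x_2^4$ gives $d(x^{\1}f,g)=1$ and $d(x^{\1}g,f)=3/2$, so your two identities would yield a product $2/3<1$; hence at least one of them must be false, and your first step collapses.

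The true mechanism is the opposite of what you use, and it is where all the work lies. For the local zeta functions one \emph{does} get the leading poles at $-1/d_1$ and $-1/d_2$ under the sign hypotheses (Theorem~\ref{thm:11.10} together with Lemma~\ref{lem:13.2}), whence the product of largest poles is $\leq 1$ (Theorem~\ref{thm:13.1}); passing to oscillatory integrals via the Mellin relation of Section~\ref{subsec:14.1}, the oscillation index can fall below $-1/d$ only when the combination $e^{i\pi\lambda/2}B_++e^{-i\pi\lambda/2}B_-$ cancels, which (for nonnegative data) can occur only when $1/d$ is an odd integer, and the substance of the proof — carried out in \cite{ckn13}, to which the statement is attributed, and generalized here through Theorem~\ref{thm:13.1} and Theorem~\ref{thm:4.7} — is to show that when $d_1d_2>1$ such cancellation forces the index of at least one pair to drop far enough that the product of oscillation indices becomes $\geq 1$, with equality exactly in the proportional case. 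None of this cancellation analysis appears in your proposal, and the inequality $\geq 1$ is not a formal consequence of two identities $\beta=-1/d$. Your closing remark on the equality case (principal faces of the weight's polyhedron become vertices, so $m=n$) is in the right spirit, but it rests on the same unjustified identities.
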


\subsection{Weakened regularity of the phase}\label{subsec:3.3}

Let us consider the case when the phase satisfies a weaker 
regularity condition:
\begin{enumerate}
\item[(E)] $f$ belongs to the class $\hat{\mathcal{E}}(U)$
and is nondegenerate over $\R$ with respect to 
its Newton polyhedron. 
\end{enumerate}
It is known in \cite{kn13} that
$I(t;\varphi)$ also has an asymptotic expansion 
in the case when the phase satisfies the above condition.
The following theorem generalizes the assertion (i)
in Theorem~\ref{thm:3.1}.

\begin{theorem}[\cite{kn13}]\label{thm:3.7}
If $f$ satisfies the condition $(E)$
and the support of $\varphi$ is contained in a 
sufficiently small neighborhood of the origin,
then $I(t;\varphi)$ admits an asymptotic expansion
of the form $(\ref{eqn:1.2})$,
where
$\{\a\}$ belongs to the same progressions 
as in the case when the phase is $f_{\Gamma(f)}$
as in $(\ref{eqn:2.4})$.
$($Since $f_{\Gamma(f)}$ is a polynomial,
the progressions can be exactly constructed 
as in \cite{var76}.$)$
\end{theorem}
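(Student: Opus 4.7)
The plan is to adapt the toric resolution of singularities of Varchenko \cite{var76} to the smooth setting $\hat{\mathcal{E}}(U)$ and to observe that only combinatorial data attached to $\Gamma_+(f)$ enter into the exponents. First I would fix a simplicial rational fan $\Sigma \subset \R_+^n$ that refines the normal fan of $\Gamma_+(f)$ and consider the associated proper, birational map $\pi: Y(\Sigma) \to \R^n$; this is exactly the resolution Varchenko used in the analytic case, and it is covered by finitely many affine charts on each of which $\pi$ takes the monomial form $\pi_\sigma(y) = (y^{a^1_\sigma}, \ldots, y^{a^n_\sigma})$ determined by the primitive generators of the top-dimensional cone $\sigma$.

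Next, on each chart I need to show that $f \circ \pi_\sigma$ factors as a monomial times a smooth factor that is controlled by nondegeneracy. This is where hypothesis (E) enters. Because $f \in \hat{\mathcal{E}}(U) = \hat{\mathcal{E}}[\Gamma_+(f)](U)$, Proposition~\ref{pro:2.17} supplies a \emph{finite} representation $f(x) = \sum_{p \in S} x^p \psi_p(x)$ with $S \subset \Gamma_+(f) \cap \Z_+^n$ and $\psi_p \in C^\infty(U)$. Pulling back by $\pi_\sigma$ and extracting the common monomial factor indexed by the compact face $\gamma_\sigma$ of $\Gamma_+(f)$ dual to $\sigma$ yields
\[
(f \circ \pi_\sigma)(y) = y^{m_\sigma} F_\sigma(y), \qquad F_\sigma \in C^\infty,
\]
where the restriction of $F_\sigma$ to the exceptional locus is expressible in terms of the $\gamma_\sigma$-part $f_{\gamma_\sigma}$. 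The nondegeneracy over $\R$ of $f$ with respect to $\Gamma_+(f)$ then guarantees that, after a partition of unity on $Y(\Sigma)$, the support of the pulled-back cut-off is covered by pieces on which $F_\sigma$ is either bounded away from zero or on which some additional coordinate derivative of the phase is nonzero.

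On each such piece the integral reduces to a monomial oscillatory integral with smooth amplitude of the form $\int e^{it y^{m_\sigma} u(y)} A(y)\, dy$ with $u$ a smooth unit (the other pieces contribute only rapidly decaying terms via integration by parts). Classical results on monomial oscillatory integrals, or equivalently Mellin-transform analysis of monomial local zeta functions, yield an asymptotic expansion of the form (\ref{eqn:1.2}) whose candidate exponents come from the pairs $(\langle a, \1\rangle, \ell(a))$, with $a$ running over primitive edge generators of $\Sigma$ and $\ell(a) = \min_{\alpha \in \Gamma_+(f)} \langle a, \alpha\rangle$. Summing over charts gives the expansion for the original $I(t;\varphi)$; since $\Sigma$ and the functionals $\ell$ depend only on $\Gamma_+(f)$, whose compact faces coincide with those of $\Gamma_+(f_{\Gamma(f)})$, the resulting progressions are identical to those produced by the polynomial phase $f_{\Gamma(f)}$.

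The main obstacle is the factorization step of the second paragraph. In the analytic setting it is immediate from convergence of the Taylor series, but for merely smooth $f$ one cannot pull back a formal power series along $\pi_\sigma$ and hope for a smooth function. The $\hat{\mathcal{E}}$-property, encoded in the finite decomposition of Proposition~\ref{pro:2.17}, is precisely what makes the pulled-back phase a bona fide smooth function to which monomial-phase asymptotics and stationary phase arguments apply; without it the scheme collapses, which is why Theorem~\ref{thm:3.7} must be stated within the class $\hat{\mathcal{E}}(U)$ rather than for general $C^\infty$ phases.
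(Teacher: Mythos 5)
Your overall strategy is the one the paper (following \cite{kn13}) actually uses: a simplicial subdivision of the fan of $\Gamma_+(f)$, the toric map $\pi(\sigma)$, the finite representation $f=\sum_{p\in S}x^p\psi_p$ from Proposition~\ref{pro:2.17} to obtain the factorization $f\circ\pi(\sigma)=\bigl(\prod_j y_j^{l_f(a^j(\sigma))}\bigr)f_\sigma$ with $f_\sigma(0)\neq 0$ (Lemma~\ref{lem:7.1}), nondegeneracy to control the zero set of $f_\sigma$ (Theorem~\ref{thm:7.2}), and then Mellin/zeta analysis of elementary integrals, with the exponents $-(\langle a\rangle+\nu)/l_f(a)$ depending only on $\Gamma_+(f)$, hence the same progressions as for the polynomial phase $f_{\Gamma(f)}$.

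However, your treatment of the charts where $F_\sigma$ vanishes contains a genuine gap. You cover the support by pieces where $F_\sigma$ is a unit and pieces where ``some coordinate derivative of the phase is nonzero,'' and you dismiss the latter as rapidly decaying via integration by parts. This fails: at a point of the exceptional divisor where $F_\sigma=0$, the gradient of the pulled-back phase $y^{m_\sigma}F_\sigma(y)$ vanishes identically (each term contains either a factor $F_\sigma$ or the full monomial $y^{m_\sigma}$), so such points are degenerate critical points not covered by either of your two cases, and their neighborhoods are \emph{not} negligible. Nondegeneracy only guarantees, via Theorem~\ref{thm:7.2} and the implicit function theorem (Remark~\ref{rem:7.3}), that near such a point the phase can be put in the normal crossing form $(u_i-b_i)\prod_{j\in I}u_j^{l_f(a^j(\sigma))}$; the corresponding elementary integrals (the $J$-type integrals in Section~\ref{sec:10}, analyzed through Lemma~\ref{lem:9.4}) produce poles at negative integers, i.e.\ genuine terms $t^{-k}(\log t)^{j}$ in the expansion rather than $O(t^{-N})$ remainders. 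This is exactly the source of the $(-\N)$ component of the progressions in Remark~\ref{rem:4.2}, so your argument must include these contributions (and check they lie in the stated progressions, which also occur for the polynomial phase) instead of discarding them; with that correction the proof closes along the lines of the paper.
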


Furthermore, 
Varchenko's results can be directly generalized to
the case when the phase belongs to the class $\hat{\mathcal{E}}(U)$.
\begin{theorem}[\cite{kn13}]\label{thm:3.8}
If $f$ satisfies the condition $(E)$ and $g\equiv 1$, then
the assertions {\rm (ii),(iii)} in Theorem~3.1 hold.
\end{theorem}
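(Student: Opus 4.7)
My plan is to reduce the theorem to a pole analysis of the local zeta-type functions $Z_\pm(s;\varphi)$ via the Mellin transform formalism (Section~14.1), and then to execute that pole analysis using the $\hat{\mathcal{E}}$-resolution of singularities recalled in Sections~5--7 with $g\equiv 1$. Concretely, $\beta(f,1)$ equals the supremum of the real parts of the poles of $Z_+(s;\varphi)+Z_-(s;\varphi)$ (for $\varphi$ supported near $0$), and $\eta(f,1)$ equals the maximal order occurring at any such rightmost pole. Thus (ii) and (iii) become quantitative statements about the largest real pole of $Z_\pm$ and the non-vanishing of its leading Laurent coefficient.

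To analyze those poles, I would apply a toric resolution $\pi:Y\to U$ attached to a simplicial subdivision $\Sigma^*$ of the dual fan of $\Gamma_+(f)$. Because $f\in\hat{\mathcal{E}}(U)$, in each chart $U_\sigma$ of $Y$ the pullback has the structure $f\circ\pi(y)=y^{a_\sigma}F_\sigma(y)$, with $a_\sigma\in\Z_+^n$ determined by the face of $\Gamma_+(f)$ dual to $\sigma$, and $F_\sigma$ a smooth function whose restriction to each coordinate hyperplane $\{y_j=0\}$ corresponding to a compact-face ray is non-vanishing by the nondegeneracy hypothesis on $f_\gamma$. The Jacobian is likewise monomial, $y^{b_\sigma}$, so $Z_\pm(s;\varphi)$ decomposes into a finite sum of integrals of exactly the monomial type handled in Section~9. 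The poles are then controlled by the ratios $-(b_{\sigma,j}+1)/a_{\sigma,j}$ over all rays of $\Sigma^*$. By the defining property of $d(f)$ and the principal face $\tau_*$ (Remark~\ref{rem:2.11}), the largest of these real parts is $-1/d(f)$, and the maximal possible multiplicity there is $m(f)$, yielding (ii) at once.

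For (iii) the main issue is to rule out cancellation of the Laurent coefficient at the candidate pole $-1/d(f)$. This is precisely where the three alternative hypotheses enter. If $f$ is sign-definite (case (b)), only $Z_+$ or only $Z_-$ carries the pole, and its leading coefficient is a strictly positive multiple of $\int \varphi$, hence nonzero for suitable $\varphi$. If $1/d(f)$ is not an odd integer (case (c)), the contributions of $Z_+$ and $Z_-$ from the chart dual to $\tau_*$ cannot combine to zero because cancellation would force $1/d(f)$ into $2\Z+1$; the additional hypothesis that $f_{\tau_*}$ does not vanish on $U\cap(\R\setminus\{0\})^n$ (interpreted via the extension of $\gamma$-parts in Remark~\ref{rem:2.15} when $\tau_*$ is noncompact) ensures that the integral over the principal chart is a nonzero constant times $\varphi(0)$. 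If $d(f)>1$ (case (a)), then $-1/d(f)>-1$, so boundary terms arising from charts whose rays are not dual to $\tau_*$ contribute poles of strictly smaller real part and cannot interfere, and an explicit evaluation in the monomial model of Section~9 gives a nonzero leading coefficient.

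The main obstacle is the non-cancellation step (iii), especially when $\tau_*$ is noncompact: one must legitimately regard $f_{\tau_*}$ as a function on an open set and verify that the resolution actually ``sees'' the principal face. This is exactly what the $\hat{\mathcal{E}}$-property of $f$ secures and what the geometric lemmas of Sections~5--7 are designed to deliver; once the relationship between the cones of $\Sigma^*$ dual to $\tau_*$ and the coordinate strata of $Y$ is pinned down, the remaining work is a careful bookkeeping argument on the Laurent coefficients of the monomial chart integrals, strictly parallel to Varchenko's original calculation in~\cite{var76}.
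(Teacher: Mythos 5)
Your overall strategy (Mellin reduction to the poles of $Z_{\pm}(s;\varphi)$, toric resolution built from $\Gamma_+(f)$ using the $\hat{\mathcal E}$-property, reduction to the elementary integrals of Section~9) is the same route the paper and \cite{kn13} take, but there is a genuine gap in the step where you claim that, by nondegeneracy, the unit $F_\sigma$ in $f\circ\pi(\sigma)=y^{a_\sigma}F_\sigma(y)$ is \emph{non-vanishing} on the coordinate strata, so that everything becomes ``exactly the monomial type handled in Section~9.'' Nondegeneracy does not give this: Theorem~\ref{thm:7.2} only yields that the zero set $\{f_\sigma=0\}$ on the relevant strata $\tilde T_I(\R^n)$ is a \emph{nonsingular} hypersurface, not an empty one (take $f=x_1^2-x_2^2$, or $f=x_1^2+x_2^2-x_3^2$). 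Consequently $Z_\pm^{(\sigma)}$ must be split, as in (\ref{eqn:10.8}), into monomial-type pieces $I^{(k)}_{\pm,\sigma}$ and pieces $J^{(l)}_{\pm,\sigma}$ supported near $\{f_\sigma=0\}$, where the implicit function theorem produces an extra factor $(u_i-b_i)_\pm^{s}$; these contribute candidate poles at \emph{all} negative integers and can raise the order at $s=-1/d(f)$ to $m(f)+1$ when $1/d(f)\in\N$. This matters already for assertion (ii): when $d(f)<1$ (e.g.\ $f=x_1^2+x_2^2-x_3^2$, $d(f)=2/3$), $Z_\pm$ genuinely has poles at $s=-1,-2,\dots$ lying strictly to the right of $-1/d(f)$, so the bound $\beta(f,1)\le -1/d(f)$ cannot be read off from the pole locations of $Z_\pm$ alone.

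The missing ingredient is the cancellation mechanism: one must show that the top-order Laurent coefficients of $Z_+$ and $Z_-$ at $s=\lambda\in-\N$ satisfy $a_\lambda^+=(-1)^{\lambda-1}a_\lambda^-$ (Lemma~\ref{lem:9.4}, Propositions~\ref{pro:10.8} and \ref{pro:11.11}), so that, via the relation (\ref{eqn:14.5}) with its factor $e^{i\pi\lambda/2}B_++e^{-i\pi\lambda/2}B_-$, the corresponding terms drop out of the expansion of $I(t;\varphi)$; this is exactly how the paper disposes of the integer poles to the right of $-1/d(f)$ and of the possible order-$(m(f)+1)$ term at $-1/d(f)$. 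Your third paragraph gestures at the right non-vanishing criteria for (iii) (the three alternatives (a),(b),(c), which in the paper guarantee that $f_\sigma\circ T_{A(\sigma)}$ has no zeros on the relevant chart and that the explicit leading coefficients $C_\pm$ do not cancel, cf.\ Theorems~\ref{thm:10.7} and \ref{thm:11.10} and Remark~\ref{rem:14.1}), but as written the proof of (ii), and the multiplicity claim $\eta(f,1)=m(f)$ in (iii), rest on the false non-vanishing claim rather than on this parity cancellation, so the argument as proposed would not go through.
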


\begin{remark}\label{rem:3.9}
In \cite{kn13}, more precise results are obtained. 
\end{remark}

\begin{remark}\label{rem:3.10}
Some kind of restrictions to the regularity of the phase
is necessary in the above theorem. 
Consider the following two-dimensional example:
$
f(x_1,x_2)
=
x_1^2+ 
e^{-1/|x_2|^{\alpha}}$ $(\alpha>0)$ and 
$g\equiv 1$,
which is given by 
Iosevich and Sawyer in \cite{is97}.
Note that the above $f$ 
satisfies the nondegeneracy condition 
(\ref{eqn:3.1}) but
it does not belong to $\hat{\mathcal E}(U)$. 
It is easy to see the following:
$d(f)=2$, $m(f)=1$,
$f_{\tau_*}(x_1,x_2)=x_1^2$.
It was shown in \cite{is97} that  
$
|I(t;\varphi)|\leq Ct^{-1/2}(\log t)^{-1/\alpha} 
$
for $t\geq 2$.
In particular, 
we have $\lim_{t\to\infty} t^{1/2}I(t;\varphi)=0$.
The pattern of an asymptotic expansion 
of $I(t;\varphi)$ 
in this example might be different from that in (\ref{eqn:1.2}).
\end{remark}

\section{Main results}\label{sec:4}

By mixing two kinds of generalizations in the previous section
and deeper understanding the resolution of singularities
for the phase and the weight,
we can generalize and improve the results in \cite{ckn13}. 
Furthermore, 
the results can be stated in more clear form by using the class 
$\hat{\mathcal{E}}(U)$. 
This means that properties of $\hat{\mathcal{E}}(U)$
play crucial roles in the sufficient condition on 
the phase and the weight. 
Throughout this section, 
the three conditions (A), (B), (C) 
at the beginning of Section~3 are assumed, where
$U$ is an open neighborhood of the origin in $\R^n$.

First, let us give a sharp estimate for $I(t;\varphi)$. 
Since the class $\hat{\mathcal{E}}(U)$ contains 
many kinds of smooth functions as in Section~2.5, 
the following theorem generalizes and improves 
Theorem~\ref{thm:3.2} in Section~3.
\begin{theorem}\label{thm:4.1}
Suppose that 
{\rm (i)} $f$ satisfies the condition $(E)$
(see Section~3.3) 
 and
{\rm (ii)} 
at least one of the following two conditions is satisfied$:$
\begin{enumerate}
\item[(a)]
$g$ belongs to the class
$\hat{\mathcal E}(U);$
\item[(b)]
$f$ is convenient.
\end{enumerate}
If the support of $\varphi$ is
contained in a sufficiently small neighborhood 
of the origin, then
there exists a positive constant $C(\varphi)$ independent of 
$t$ such that 
\begin{equation}\label{eqn:4.1}
   |I(t;\varphi)|\leq C(\varphi)t^{-1/d(f,g)}
   (\log t)^{m(f,g)-1}
   \quad \mbox{
   for $t\geq 2$.}
\end{equation}
In particular, we have
$\b(f,g)\leq -1/d(f,g)$.
\end{theorem}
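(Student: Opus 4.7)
The plan is to deduce Theorem~\ref{thm:4.1} from the pole analysis of the local zeta type functions $Z_{\pm}(s;\varphi)$, as foreshadowed in Sections 8 and 14 of the paper. First I would invoke the Mellin transform identity (Section 14.1) to relate the rate of decay of $I(t;\varphi)$ as $t\to+\infty$ to the location and order of the rightmost pole of $Z_{\pm}(s;\varphi)$: an estimate of the form $|I(t;\varphi)|\le C t^{-\sigma}(\log t)^{k-1}$ is equivalent to the statement that $Z_{\pm}(s;\varphi)$ is holomorphic in $\{\mathrm{Re}(s)>-\sigma\}$ with at most a pole of order $k$ at $s=-\sigma$. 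Thus it suffices to show that every pole of $Z_{\pm}(s;\varphi)$ satisfies $\mathrm{Re}(s)\ge -1/d(f,g)$, with the pole at $s=-1/d(f,g)$ of order at most $m(f,g)$.

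Next, I would construct a simultaneous resolution of singularities $\pi:Y\to U$ with respect to the pair $(f,g)$ using the toric machinery of Sections 5--7. Concretely, take a smooth simplicial fan $\Sigma$ that refines the normal fans of both $\Gamma_+(f)$ and $\Gamma_+(g)$; this produces a toric manifold $Y$ together with a proper map $\pi$ such that on each coordinate chart $U_\sigma$ indexed by a maximal cone $\sigma$ of $\Sigma$ with primitive generators $a^1,\ldots,a^n$, we have
\begin{equation*}
(f\circ\pi)(y)=y^{A}\,u_f(y),\qquad (g\circ\pi)(y)=y^{B}\,u_g(y),
\end{equation*}
where $A_j=\min\{\langle a^j,\alpha\rangle:\alpha\in\Gamma_+(f)\}$, $B_j=\min\{\langle a^j,\beta\rangle:\beta\in\Gamma_+(g)\}$, and the Jacobian contributes $y^{C-\mathbf{1}}$ with $C_j=\langle a^j,\mathbf{1}\rangle$. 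The nondegeneracy hypothesis (E) on $f$ together with $f\in\hat{\mathcal E}(U)$ ensures that $u_f$ is smooth and nonvanishing on the chart (for $\varphi$ supported sufficiently near $0$), via the $\gamma$-part factorization of Proposition~\ref{pro:2.17}; and hypothesis (ii) ensures that $u_g$ is smooth, using either the $\gamma$-part machinery for $g$ (case (a)) or the fact that convenience of $f$ restricts the faces of $\Gamma_+(g)$ that actually play a role (case (b)).

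With this setup, substituting into $Z_{\pm}(s;\varphi)$ and partitioning $U$ along $\pi$, each chart contributes an integral of the form $\int |y^A|^s y^{B+C-\mathbf{1}}\,U(y,s)\,dy$ where $U$ is smooth, so the meromorphic continuation and the locations of its poles reduce to the monomial model of Section 9. That model gives candidate poles at $s=-(B_j+C_j)/A_j$ for each $j$ with $A_j>0$, and a pole of order equal to the number of $j$ realizing the maximum of $-(B_j+C_j)/A_j$ over this chart. Taking the maximum over all charts translates precisely, by the dictionary between cones of $\Sigma$ and faces of $\Gamma_+(f),\Gamma_+(g)$ developed in Section 6 and Section 11.2, into the combinatorial quantity $-1/d(f,g)$, realized on the cones dual to faces in $\mathcal F_*[\Gamma_+(f)]$; the order is then bounded by $n-\dim(\tau_*)=m(f,g)$.

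The principal obstacle is ensuring that on every chart, even those associated to noncompact cones $\sigma$ (i.e., those with some $a^j$ having vanishing coordinates), the factor $u_g$ is genuinely smooth on the closure of the relevant region rather than merely a formal power series. This is precisely where hypothesis (ii) enters: in case (a), the assumption $g\in\hat{\mathcal E}(U)$ is designed so that every face $\gamma$ of $\Gamma_+(g)$ admits a $\gamma$-part as a function, hence the pullback $g\circ\pi$ has the required monomial factorization on all charts; in case (b), convenience of $f$ forces the exponents $A_j$ to be strictly positive on every chart, so the contribution of noncompact faces of $\Gamma_+(g)$ is dominated and the minor failures of factorization for $g$ do not spoil the estimate. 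Reconciling these two cases in a uniform proof -- particularly handling charts corresponding to noncompact faces of $\Gamma_+(f)\cap\Gamma_+(g)$ -- is the delicate technical point, and I expect it to occupy the bulk of the work.
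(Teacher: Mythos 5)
Your overall architecture (Mellin transform, toric simultaneous resolution obtained from a common refinement of the fans of $\Gamma_+(f)$ and $\Gamma_+(g)$, reduction to the monomial model, and the combinatorial identification of $-1/d(f,g)$ and $m(f,g)$) is the same as the paper's, but there is a genuine gap at your reduction step. You assume that on each chart the unit $u_f=f_\sigma$ is nonvanishing once $\varphi$ is supported near the origin; this is false. Nondegeneracy only makes the zero set $\{f_\sigma=0\}$ nonsingular (Theorem 7.2), and a small support of $\varphi$ in $x$-space does not keep $\pi^{-1}(\mathrm{supp}\,\varphi)$ away from that set. Because of these zeros, $Z_{\pm}(s;\varphi)$ individually can have poles at negative integers: when $d(f,g)<1$ some of them lie strictly to the right of $-1/d(f,g)$ (simple poles, by Proposition 11.11), and when $1/d(f,g)\in\N$ the pole at $s=-1/d(f,g)$ can have order $m(f,g)+1$. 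Hence the statement you propose to prove --- that every pole of $Z_{\pm}$ lies in $\mathrm{Re}(s)\le -1/d(f,g)$ with order at most $m(f,g)$ there --- is false in general and cannot serve as the sufficient condition. The paper closes exactly this gap with a cancellation argument that your proposal omits: the leading Laurent coefficients at $s=\lambda\in-\N$ satisfy $a_{\lambda}^{+}=(-1)^{\lambda-1}a_{\lambda}^{-}$ (Lemma 9.4, Propositions 10.8 and 11.11), and in the Mellin relation (14.5) the combination $e^{i\pi\lambda/2}B_{+}+e^{-i\pi\lambda/2}B_{-}$ then vanishes at integer $\lambda$, so these contributions disappear from $I(t;\varphi)$ and the bound (4.1) survives. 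Without this step the estimate does not follow from the pole analysis.

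A secondary inaccuracy: in case (b) you claim that convenience of $f$ forces all exponents $A_j=l_f(a^j(\sigma))$ to be strictly positive on every chart; this is wrong (if $\Gamma_+(f)$ meets, say, the $\alpha_2$-axis, then $l_f(e_1)=0$). What convenience actually yields is Lemma 12.4: $l_f(a)\neq 0$ implies $a\in\N^n$, so only skeleton vectors with all positive entries can produce poles, and for those the factorization of $g$ is carried out with the modified exponents $\tilde l_g$ of Lemma 12.2 (using the auxiliary polyhedron built from interior weight vectors), with no hypothesis $g\in\hat{\mathcal E}(U)$. Your phrase about the noncompact faces of $\Gamma_+(g)$ being ``dominated'' gestures at this, but the mechanism as you state it would not go through.
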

\begin{remark}\label{rem:4.2}
From the proof of the above theorem, 
we can see that, under the same assumptions, 
the set $\{\a\}$ in the asymptotic expansion (\ref{eqn:1.2})
belongs to the following set:
\begin{equation*}
\left\{-\frac{l_g(a)+\langle a\rangle+\nu}{l_f(a)}:
\nu\in \Z_+,a\in \tilde{\Sigma}^{(1)}\right\}
\cup(-\N),
\end{equation*}
where $l_g(a),l_f(a)\in \Z_+$ are as in $(\ref{eqn:7.1})$
and $\tilde{\Sigma}^{(1)}$ is a finite set of vectors
in $\Z_+^n$ defined in Theorem~\ref{thm:10.1}.
Of course, the same assertion holds in the cases of 
Theorem~\ref{thm:3.1} (i) and Theorem~\ref{thm:3.7}.
Note that $l_g(a)=0$ for any $a\in\Z_+^n$ if $g(0)\neq 0$. 
\end{remark}
\begin{remark}\label{rem:4.3}
Let us consider the case when
the weight $g$ is expressed as 
$g(x)=x^p\psi(x)$ where 
$p\in\Z_+^n$ and
$\psi\in C^{\infty}(U)$. 
(In this case, 
$g$ does not belong to $\hat{\mathcal E}(U)$
if and only if $\psi$ is a nonzero flat function.) 
Then, without the assumption (ii), 
the following estimate is obtained
under the same assumptions:
\begin{equation}\label{eqn:4.2}
   |I(t;\varphi)|\leq C(\varphi)t^{-1/d(f,x^p)}
   (\log t)^{m(f,x^p)-1}
   \quad \mbox{
   for $t\geq 2$.}
\end{equation}
This estimate will be shown in Section~\ref{subsec:14.3} 
(see also Remark~\ref{rem:10.3}). 
Note that these assertions hold even if $\psi$ 
is a nonzero flat function. 
In this case, $g$ does not belong to 
$\hat{\mathcal E}(U)$, because
$g$ is also a nonzero flat function.  
When $\psi$ vanishes at the origin
in high order
(in particular $\psi$ is flat), 
the reader might feel that 
the estimate (\ref{eqn:4.2}) is not optimal. 
But, we will give a simple example
showing some kind of the optimality of the
estimate (\ref{eqn:4.2}) even in the flat case
(see Example~1 in Section~\ref{sec:15}). 
\end{remark}

Next, let us consider the case when
the equality $\beta(f,g)=-1/d(f,g)$ holds. 
The following theorem generalizes and improves 
Theorem~3.3 in Section~3.
\begin{theorem}\label{thm:4.4}
Suppose that the conditions {\rm (i)}, {\rm (ii)} in Theorem~4.1 
are satisfied, 
{\rm (iii)}
there exists a principal face $\gamma_*$ of $\Gamma_+(g)$ such that 
   $g_{\gamma_*}$ is nonnegative or nonpositive on $U$ and
{\rm (iv)} 
   at least one of the following three conditions 
   is satisfied$:$ 
   \begin{enumerate}
   \item[(a)]
      $d(f,g)>1;$
   \item[(b)]
      $f$ is nonnegative or nonpositive on $U;$
   \item[(c)] 
      $1/d(f,g)$ is not an odd integer and  
      $f_{\tau_*}$ does not vanish on $U\cap(\R\setminus\{0\})^n$
where $\tau_*$ is a principal face of $\Gamma_+(f)$ associated to 
$\gamma_*$ in {\rm (iii)}.
\end{enumerate}

Then
the equations
$\beta(f,g)=-1/d(f,g)$
and $\eta(f,g)= m(f,g)$ hold. 
\end{theorem}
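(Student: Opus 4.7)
The plan is to reduce Theorem~\ref{thm:4.4} to a non-vanishing statement for the leading Laurent coefficient of the local zeta functions $Z_{\pm}(s;\varphi)$, and then to exploit the sign condition on $g_{\gamma_*}$ together with a careful choice of $\varphi$ to rule out cancellations. Theorem~\ref{thm:4.1} already furnishes the upper bound $\beta(f,g)\le -1/d(f,g)$ and implicitly bounds the order of the leading pole by $m(f,g)$, so the task is to produce, for every neighbourhood of the origin, a test function $\varphi$ with $C_{\alpha k}(\varphi)\neq 0$ for $\alpha=-1/d(f,g)$ and $k=m(f,g)$.

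First, I would invoke the Mellin-transform bridge between $I(t;\varphi)$ and $Z_{\pm}(s;\varphi)$ recalled in Section~14.1: the coefficients $C_{\alpha k}(\varphi)$ are explicit universal linear combinations of Laurent coefficients of $Z_{\pm}(s;\varphi)$ at their leading poles. Next, using the simultaneous resolution of singularities constructed in Section~7 applied to the pair $(f,g)$, together with the monomial computation of Section~9 and its extensions in Sections~10 and 11, one decomposes
\begin{equation*}
Z(s;\varphi)=\sum_{\sigma}Z_{\sigma}(s;\varphi),
\end{equation*}
where $\sigma$ ranges over the maximal cones of the refined fan. By the lemmas of Section~6 relating cones to faces, the only summands that can contribute a pole at $s=-1/d(f,g)$ of order $m(f,g)$ are those whose associated face of $\Gamma_+(f)$ lies in $\mathcal{F}_*[\Gamma_+(f)]$, with corresponding $\gamma=\Psi_*(\tau)\in\mathcal{F}_*[\Gamma_+(g)]$.

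The heart of the argument is the isolation step. By Remark~\ref{rem:2.9}, distinct principal faces of $\Gamma_+(g)$ are disjoint, so I would choose $\varphi$ of the form $\varphi(x)=\chi(x)\prod_j\rho_j(x_j)$, supported in a small polydisc around a point whose image under the resolution map lies in the charts corresponding to the distinguished principal face $\gamma_*$ of hypothesis (iii), while vanishing on a neighbourhood of every other principal face. Then only the cones associated to the pair $(\tau_*,\gamma_*)$ contribute to the leading Laurent coefficient. On these charts the leading coefficient reduces, by the monomial computation of Section~9 together with Proposition~\ref{pro:2.17} applied to $g$, to an integral of the form
\begin{equation*}
c\cdot\int x^{\lambda}\,g_{\gamma_*}(x)\,\varphi(x)\,dx
\end{equation*}
up to positive constants and lower-order terms, where $c\neq 0$ and $\lambda\in\Z_+^n$ is determined by $\tau_*$. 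The sign hypothesis on $g_{\gamma_*}$ and the freedom to take $\varphi\ge 0$ make this integral strictly nonzero, so $Z(s;\varphi)$ has the predicted pole with nonzero leading Laurent coefficient.

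The final step is to pass from the non-vanishing for $Z$ (equivalently the sum $Z_++Z_-$) to non-vanishing of $C_{\alpha,m(f,g)}(\varphi)$. The Mellin inversion produces $I(t;\varphi)$ as a combination of $Z_+(s;\varphi)$ and $Z_-(s;\varphi)$ weighted by Gamma factors $\Gamma(s+1)e^{\pm i\pi(s+1)/2}$; cancellation at $s=-1/d(f,g)$ between the two half-integrals is the only remaining obstruction, and each of the three alternatives in hypothesis (iv) kills it: (a) guarantees $s+1>0$ so the Gamma factors do not conspire with sign symmetry of $f$; (b) renders one of $Z_\pm$ identically zero; and (c) combines the oddness obstruction of the Gamma weights with the nonvanishing of $f_{\tau_*}$ on the torus, which by the nondegeneracy built into condition (E) lets one split $Z$ into strictly separated contributions from $\{f>0\}$ and $\{f<0\}$ whose Gamma-weighted sum is nonzero. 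The main obstacle, as in~\cite{ckn13}, is precisely Step~(3): controlling the resolution charts with a single $\varphi$ so that one principal face is selected, relying essentially on the disjointness of $\mathcal{F}_*[\Gamma_+(g)]$ and the quasihomogeneity of $g_{\gamma_*}$ guaranteed by $g\in\hat{\mathcal{E}}(U)$ (or by (ii)(b), which forces compactness of every principal face via Remark~\ref{rem:2.10}).
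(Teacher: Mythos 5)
Your overall architecture (toric resolution, Laurent coefficients of $Z_{\pm}(s;\varphi)$, the Mellin bridge, and the three alternatives in (iv) to prevent cancellation between $Z_+$ and $Z_-$) is the same as the paper's, but the step you yourself call the heart of the argument --- isolating the distinguished principal face by choosing $\varphi$ supported away from ``every other principal face'' --- does not work, and it is not what makes the single-face hypothesis (iii) sufficient. Principal faces are subsets of exponent space (faces of $\Gamma_+(f)$ and $\Gamma_+(g)$ in $\R_+^n$), not of the $x$-space where $\varphi$ lives, so ``vanishing on a neighbourhood of every other principal face'' has no meaning for an amplitude; and you cannot select resolution charts by localizing $\varphi$ near a point either, because the leading pole is produced by $\pi^{-1}(0)$ and the preimage of \emph{every} neighbourhood of the origin meets \emph{every} chart $\R^n(\sigma)$, so shrinking the support of $\varphi$ downstairs does not suppress the cones attached to the other principal faces. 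What the paper actually establishes (Proposition~\ref{pro:11.9}, by deforming the cut-off functions $\chi_{\sigma}$ \emph{upstairs} on $Y_{\Sigma}$ and using that each chart is densely embedded) is that the leading Laurent coefficient equals the single-chart expression (\ref{eqn:11.20})/(\ref{eqn:11.22}) for an \emph{arbitrary} fixed cone $\sigma\in\Sigma_*^{(n)}$, i.e.\ the value is independent of the cone; then the surjectivity of $\sigma\mapsto(\tau_*(\sigma),\gamma_*(\sigma))$ (Lemma~\ref{lem:11.8}) lets one pick a cone realizing exactly the pair $(\tau_*,\gamma_*)$ of hypothesis (iii). This cone-independence, not support localization of $\varphi$, is why a sign condition on a single principal face suffices (precisely the improvement over Theorem~\ref{thm:3.3}(a)); your proposal is missing this mechanism.

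Two further points in the same step are off. The leading coefficient is not of the form $c\int x^{\lambda}g_{\gamma_*}(x)\varphi(x)\,dx$: it is an integral in the resolved coordinates of $(g_{\gamma_*}\circ\pi(\sigma))(T^1_{A(\sigma)}(y))$ divided by $\bigl((f_{\tau_*}\circ\pi(\sigma))(T^1_{A(\sigma)}(y))\bigr)_{\pm}^{1/d(f,g)}$, and its very validity (convergence of the improper integral, and the elimination of the contributions coming from the zero set of $f_{\sigma}$, the $J$-type terms) is exactly where the alternatives (iv)(a)--(c) are used, via Proposition~\ref{pro:11.9} and the argument of Theorem~\ref{thm:11.10}; you assign (a)--(c) only to the final Gamma-factor anti-cancellation (\ref{eqn:14.5})/Remark~\ref{rem:14.1}, which is only their second role. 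Moreover, the nonvanishing of the resulting integral needs $g_{\sigma}(0)\neq 0$, i.e.\ Theorem~\ref{thm:7.2} under (ii)(a) or the Section~12 substitute (Lemma~\ref{lem:12.2}) when $f$ is convenient --- this is where hypothesis (ii) enters, and it is not supplied by quasihomogeneity of $g_{\gamma_*}$ alone. So the proposal's key nonvanishing step has a genuine gap, even though the surrounding reductions (upper bound from Theorem~\ref{thm:4.1}, Mellin inversion, and the roles of (iv)(a)--(c) in avoiding cancellation between $C_+$ and $C_-$) are consistent with the paper.
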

\begin{remark}\label{rem:4.5}
In Theorem~\ref{thm:14.2} in Section~\ref{sec:14}, 
we give explicit formulae for the coefficient 
of the leading term of 
the asymptotic expansion (\ref{eqn:1.2})
under the assumptions (i)-(iii).
These explicit formulae show that 
the above coefficient essentially depends 
on the principal face-parts $f_{\tau_*}$ and $g_{\gamma_*}$. 
The above (i)-(iv) are sufficient
conditions for the nonvanishing of the leading term. 
\end{remark}

\begin{remark}\label{rem:4.6}
Let us discuss about the necessities of the hypotheses (i)-(iii). 

(i)\quad 
As explained in Remark~\ref{rem:3.10},
some kind of restrictions to the regularity of the phase
is necessary for the two equalities in the above theorem. 

(ii)\quad
When $f$ is convenient, the strong assumption on 
the weight $g$ is no longer needed. 
Without the convenience of $f$, 
it is a complex issue to give ``good '' sufficient conditions
on $g$.
These problems will be discussed in Section~12.1.
Furthermore, in Section~\ref{sec:15}, 
we give a few examples showing the subtlety of 
the sufficient condition (a): 
``$g\in\hat{\mathcal E}(U)$''.
Now, let us assume that $f,g$ satisfy
all the conditions in Theorem~\ref{thm:4.4} 
except the condition (ii). 
First, we show some kind of necessity of 
the condition (a). 
Example~1 shows the existence of $f$ and $g$ 
such that $g\not\in\hat{\mathcal E}(U)$
and the equations: $\beta(f,g)=-1/d(f,g)$ in the theorem does not hold.
Next, we show that there is room for improvement on the condition (a).
By observing Example~2, 
it is natural to feel that
(a) is too strong and to expect that 
this condition can be weakened 
by the condition: 
\begin{enumerate}
\item[(a$'$)]
$g$ belongs to the class
$\hat{\mathcal E}[\Phi^{-1}(\Gamma_+(f))\cap\R_+^n](U)$.
\end{enumerate}
(see the definition of $\Phi$ in (\ref{eqn:2.6})).
Unfortunately, Example~3 violates this expectation.

These kinds of subtle situation, which are
seen in the above (i) and (ii), occurs 
from the fact that 
the geometry of the Newton polyhedra do not 
provide 
sufficient analytic information 
about smooth functions. 

(iii)\quad 
The most interesting condition in (iii) 
is (c). 
The necessity of the non-oddness of $1/d(f,g)$ is shown 
by means of the example, 
which was given by Pramanik and Yang \cite{py04} (Section~6).
Actually, consider the two-dimensional example:
$f(x_1,x_2)=x_1x_2$, $g(x_1,x_2)=x_2^2$. 
In this case, $d(f,g)=1$,
$f_{\tau_*}(x_1,x_2)=x_1x_2$, $g_{\gamma_*}(x_1,x_2)=x_2^2$. 
Though it is easy to see all the other conditions in the assumptions 
in the above theorem, 
the oscillation index $\beta(f,g)$ equals $-3$
($\neq -1/d(f,g)=-1$).
\end{remark}

Finally, Theorem~\ref{thm:3.6} 
can be generalized in the following form. 

\begin{theorem}\label{thm:4.7}
Suppose that $f$, $g$ satisfy the condition $(E)$ and that
they are nonnegative or nonpositive on $U$. 
Then we have
$\b(x^{\1}f,g)\b(x^{\1}g,f)\geq 1$.
Moreover, the following two conditions are equivalent$:$  
\begin{enumerate}
\item $\b(x^{\1}f,g)\b(x^{\1}g,f)=1;$ 
\item There exists a positive rational number $d$ such that 
$\Gamma_+(x^{\1} f)=d\cdot\Gamma_+(x^{\1} g)$. 
\end{enumerate}
If the condition {\rm (i)} or {\rm (ii)} 
is satisfied, then we have
$\eta(x^{\1}f,g)=\eta(x^{\1}g,f)=n$.
\end{theorem}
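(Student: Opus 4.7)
The strategy is to apply Theorem~\ref{thm:4.4} to both pairs $(x^{\1}f,g)$ and $(x^{\1}g,f)$ to replace the oscillation indices by Newton distances, and then derive the bilinear inequality from a convex-geometric identity relating the Newton polyhedra $\Gamma_+(x^{\1}f)=\Gamma_+(f)+\1$ and $\Gamma_+(x^{\1}g)=\Gamma_+(g)+\1$. The first step is to verify that Theorem~\ref{thm:4.4} applies. Multiplication by the monomial $x^{\1}$ preserves the class $\hat{\mathcal E}(U)$ (by Proposition~\ref{pro:2.17} and the ideal structure of $\hat{\mathcal E}[P](U)$), so both $x^{\1}f$ and $x^{\1}g$ belong to $\hat{\mathcal E}(U)$. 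Nondegeneracy of $x^{\1}f$ is inherited from that of $f$: every compact face of $\Gamma_+(x^{\1}f)$ has the form $\gamma_0+\1$ for a compact face $\gamma_0$ of $\Gamma_+(f)$, the corresponding face-part is $(x^{\1}f)_{\gamma_0+\1}=x^{\1}f_{\gamma_0}$, and the identity $\partial_j(x^{\1}f_{\gamma_0})=(x^{\1}/x_j)(f_{\gamma_0}+x_j\partial_jf_{\gamma_0})$ combined with Euler's identity for the quasi-homogeneous $f_{\gamma_0}$ shows that the vanishing of $\nabla(x^{\1}f_{\gamma_0})$ on $(\R\setminus\{0\})^n$ would force $f_{\gamma_0}\equiv 0$ there, contradicting the nondegeneracy of $f$. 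The sign conditions in hypothesis~(iii) of Theorem~\ref{thm:4.4} are arranged by shrinking $U$, since $g\geq 0$ (resp.~$\leq 0$) on $U$ ensures $g_{\gamma_*}$ has consistent sign near the origin for every principal face $\gamma_*$ of $\Gamma_+(g)$, and symmetrically for $f_{\tau_*}$.

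Theorem~\ref{thm:4.4} then yields $\b(x^{\1}f,g)=-1/d(x^{\1}f,g)$ and $\b(x^{\1}g,f)=-1/d(x^{\1}g,f)$, so the claimed bilinear identity becomes equivalent to a statement about the product of Newton distances. Writing $L_h(a):=\min_{\alpha\in\Gamma_+(h)+\1}\langle a,\alpha\rangle$ for the support function on $\R_+^n\setminus\{0\}$, Remark~\ref{rem:2.5} and the support-function characterization of polyhedral containment give
\[
d(x^{\1}f,g)=\sup_{a}\frac{L_f(a)}{L_g(a)},\qquad d(x^{\1}g,f)=\sup_{a}\frac{L_g(a)}{L_f(a)}.
\]
A Cauchy-type pointwise argument on these suprema --- choosing a direction $a^*$ at which one of them is attained and substituting into the other --- yields the required product identity, with equality holding precisely when $L_f/L_g$ is constant on $\R_+^n\setminus\{0\}$. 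By convex duality between support functions and polyhedra, together with the rationality of the Newton polyhedra, this constancy is equivalent to the proportionality $\Gamma_+(x^{\1}f)=d\cdot\Gamma_+(x^{\1}g)$ for some positive rational~$d$, proving the equivalence of (i) and (ii).

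For the multiplicity claim in the equality case, when $d\cdot(\Gamma_+(g)+\1)=\Gamma_+(f)+\1$ the contact set $\Gamma_0(x^{\1}f)=\partial\Gamma_+(x^{\1}f)\cap d\cdot(\Gamma_+(g)+\1)$ coincides with the entire boundary $\partial\Gamma_+(x^{\1}f)$. Every vertex of $\Gamma_+(x^{\1}f)$ therefore lies in $\mathcal F_0[\Gamma_+(x^{\1}f)]$, giving $m(x^{\1}f,g)=n$, and Theorem~\ref{thm:4.4} concludes $\eta(x^{\1}f,g)=n$; the symmetric argument handles $\eta(x^{\1}g,f)$. The principal obstacle is verifying hypothesis~(iv) of Theorem~\ref{thm:4.4} in the borderline case $d(x^{\1}f,g)=1$ that coincides with the equality case of the product identity: here neither the easy condition $d>1$ nor the signed-phase condition applies directly (since $x^{\1}f$ generically changes sign), and one must combine the proportionality of the Newton polyhedra with the sign of $f_{\tau_*}$ inherited from $f\geq 0$ and a careful treatment of the non-oddness condition in~(iv)(c).
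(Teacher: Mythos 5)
Your skeleton is the same as the paper's: reduce both oscillation indices to Newton distances, prove the convex-geometric statement $d(x^{\1}f,g)\,d(x^{\1}g,f)\geq 1$ with equality iff $\Gamma_+(x^{\1}f)=d\cdot\Gamma_+(x^{\1}g)$, and get $\eta=n$ from the fact that in the equality case $\Gamma_0(x^{\1}f)=\partial\Gamma_+(x^{\1}f)$ contains a vertex. Your support-function proof of the geometric step is a reasonable self-contained substitute for Lemma~\ref{lem:13.3} (which the paper imports from \cite{ckn13}), and your Euler-identity argument for the nondegeneracy of $x^{\1}f$ is exactly Lemma~\ref{lem:13.2} (a special case of Lemma~7.8). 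The essential difference is where the analytic input comes from: you apply Theorem~\ref{thm:4.4} directly to the pairs $(x^{\1}f,g)$ and $(x^{\1}g,f)$, whereas the paper proves the symmetry first at the level of the zeta-type functions $Z_{\pm}(s;\varphi)$ (Theorem~\ref{thm:13.1}, via Theorem~\ref{thm:11.10}) and only afterwards transfers to $I(t;\varphi)$. Note also that what your computation actually yields is $d(x^{\1}f,g)\,d(x^{\1}g,f)\geq 1$, hence $\b(x^{\1}f,g)\b(x^{\1}g,f)=1/(dd')\leq 1$, i.e.\ the direction appearing in Theorem~\ref{thm:13.1}; you present this as "the required product identity" without remarking on the discrepancy with the inequality as printed in the statement.

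The obstacle you flag at the end is a genuine gap, not a finishing touch, and it sits exactly where the theorem has content. The phase $x^{\1}f$ changes sign, so (iv)(b) of Theorem~\ref{thm:4.4} is unavailable; in the equality case one of the two distances is $\leq 1$, and for instance when $\Gamma_+(x^{\1}f)=\Gamma_+(x^{\1}g)$ both distances equal $1$, so (iv)(a) fails and $1/d=1$ is an odd integer, so (iv)(c) fails too. Thus your step ``Theorem~\ref{thm:4.4} then yields $\b(x^{\1}f,g)=-1/d(x^{\1}f,g)$'' is unjustified precisely in the case needed for the equivalence of (i) and (ii) and for $\eta=n$, and the danger is real: for sign-changing phases the top Laurent coefficients at integer poles satisfy $a_{\lambda}^{+}=(-1)^{\lambda-1}a_{\lambda}^{-}$ (Proposition~\ref{pro:11.11}), which is the cancellation mechanism behind the parity restriction in (iv)(c) and behind the example in Remark~\ref{rem:4.6}. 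Indeed, already for $n=1$, $f=g=x^2$ (which satisfies all hypotheses, with $d=1$ in (ii)), the residues of $Z_{\pm}(s;\varphi)$ at $s=-1$ are both $\varphi(0)/3$, so by (\ref{eqn:14.5}) the $t^{-1}$ term of $I(t;\varphi)=\int e^{itx^3}x^2\varphi\,dx$ cancels for every $\varphi$ and the true leading order is $t^{-4/3}$; this shows the borderline case cannot be closed by invoking Theorem~\ref{thm:4.4}, nor by any argument that does not control $B_{+}\neq B_{-}$ in the sense of Remark~\ref{rem:14.1}. The paper avoids the parity issue up to its last step by working with $Z_{\pm}$, where Theorem~\ref{thm:11.10} gives nonvanishing of $C=C_{+}+C_{-}$ from positivity alone; your route meets the issue head-on and leaves it unresolved, so as written the proposal proves the symmetry statement only for the zeta-type functions (Theorem~\ref{thm:13.1}), not for the oscillation indices.
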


\begin{remark}
It is needless to say that
the behevior of $I(t;\varphi)$ is independent of 
the exchanges of the integral variables. 
Therefore, 
if there exists a coordinate in which 
$f$ and $g$ satisfy the assumptions in each theorem,
then the respected assertion holds. 
\end{remark}



\section{Toric varieties}\label{sec:5}

In the analysis of Varchenko in \cite{var76}, 
the theory of toric varieties plays a crucial role.
Our analysis is also based on this theory. 
In this section, we recall the method to construct 
a toric variety from a given fan.
Refer to 
\cite{ful93} 
for general theory of toric varieties. 


\subsection{Cones and fans}\label{subsec:5.1}

Let us recall the definitions of important terminology:  
{\it cone} and {\it fan}.  

A {\it rational polyhedral cone} 
$\sigma\subset \R^n$ is a cone
generated by finitely many elements of $\Z^n$. 
In other words, 
there are $u_1,\ldots,u_k \in \Z^n$ such that  
$$ 
\sigma=\{\lambda_1 u_1+\cdots+\lambda_k u_k \in {\R}^n:
\lambda_1,\ldots,\lambda_k\geq 0\}. 
$$
We say that $\sigma$ is {\it strongly convex} if 
$\sigma\cap(-\sigma)=\{0\}$. 
By regarding a cone as a polyhedron in $\R^n$, 
the definitions of {\it dimension}, {\it face}, 
{\it edge}, {\it facet} for the cone 
are given by the same way as in Section~\ref{subsec:2.1}.  

The {\it fan} is defined to be a finite collection $\Sigma$ 
of cones in ${\R}^n$ with the following properties:
\begin{itemize}
\item
Each $\sigma\in \Sigma$ 
is a strongly convex rational polyhedral cone;
\item 
If $\sigma\in\Sigma$ and $\tau$ is a face of $\sigma$, then 
$\tau\in \Sigma$;
\item 
If $\sigma, \tau\in\Sigma$, 
then $\sigma\cap \tau$ is a face of each.    
\end{itemize}
For a fan $\Sigma$,
the union $|\Sigma|:=\bigcup_{\sigma\in\Sigma}\sigma$ 
is called the {\it support} of $\Sigma$. 
For $k=0,1,\ldots,n$, we denote by $\Sigma^{(k)}$  
the set of $k$-dimensional cones in $\Sigma$.  
The {\it skeleton} of a cone $\sigma\in\Sigma$ is 
the set of all of its primitive 
integer vectors 
(i.e., with components relatively prime in $\Z_+$)
in the edges of $\sigma$. 
It is clear that the skeleton of $\sigma$ 
generates $\sigma$ itself
and that the number of the elements of 
skeleton in $\Sigma^{(k)}$ is not less than $k$. 
Thus, the set of skeletons of the cones 
belonging to $\Sigma^{(k)}$ is also expressed 
by the same symbol $\Sigma^{(k)}$.

It is known (see \cite{ful93}) that 
there exists a {\it simplicial subdivision} 
$\tilde{\Sigma}$
of $\Sigma$,  
that is, $\tilde{\Sigma}$ is a fan satisfying the following properties:
\begin{itemize}
\item 
The fans $\Sigma$ and $\tilde{\Sigma}$ have the same support; 
\item 
Each cone of $\tilde{\Sigma}$ lies in some cone of $\Sigma$; 
\item 
The skeleton of any cone belonging to $\tilde{\Sigma}$ 
can be completed 
to a base of the lattice dual to $\Z^n$.
\end{itemize}


\subsection{Construction of toric varieties}\label{subsec:5.2}

Let $\Sigma_0$ be a fan satisfying $|\Sigma_0|=\R_+^n$.
Fix a simplicial subdivision $\Sigma$ of $\Sigma_0$. 
For an $n$-dimensional cone $\sigma\in\Sigma$, 
let 
$a^1(\sigma),\ldots,a^n(\sigma)$ be the skeleton of 
$\sigma$, ordered once and for all. 
Here, we set the coordinates of the vector $a^j(\sigma)$ as 
$$
a^j(\sigma)=(a^j_1(\sigma),\ldots,a^j_n(\sigma)).
$$
With every such cone $\sigma$, we
associate a copy of $\R^n$ which is denoted by 
$\R^n(\sigma)$.
We denote by
$ 
\pi(\sigma):\R^n(\sigma) \to \R^n
$
the map defined by  
$\pi(\sigma)(y_1,\ldots,y_n)=
(x_1,\ldots,x_n)$ with
\begin{equation}\label{eqn:5.1} 
x_j=\prod_{k=1}^n y_k^{a_j^k(\sigma)}
= y_1^{a_j^1(\sigma)}\cdots y_n^{a_j^n(\sigma)}
, \quad\quad 
j=1,\ldots,n.
\end{equation}
We remark that the following conditions are equivalent:
\begin{itemize}
\item 
$a^1(\sigma),\ldots ,a^{n}(\sigma)$ can be completed 
to a base of the lattice dual to $\Z^n$;
\item
The inverse map of $\pi(\sigma)$ is rational; 
\item
$\det (a^j_k(\sigma))_{1\leq j,k\leq n}=\pm 1$.
\end{itemize}
Let $Y_{\Sigma}$ be the union of $\R^{n}(\sigma)$ for $\sigma$
which are glued along the images of $\pi(\sigma)$. 
Indeed, for any $n$-dimensional cones $\sigma,\sigma'\in\Sigma$, 
two copies $\R^n(\sigma)$ and $\R^n(\sigma')$ can be
identified with respect to a rational mapping:
$\pi^{-1}(\sigma')\circ \pi(\sigma):
\R^n(\sigma)\to \R^n(\sigma')$ 
(i.e., $x\in\R^n(\sigma)$ and $x'\in\R^n(\sigma')$ will coalesce 
if $\pi^{-1}(\sigma')\circ\pi(\sigma):x\mapsto x'$).
Then it is known (see \cite{ful93}) that
\begin{itemize}
\item
$Y_{\Sigma}$ is an $n$-dimensional 
real algebraic manifold;
\item
The map 
$\pi:Y_{\Sigma}\to\R^n$
defined on each $\R^n(\sigma)$ as 
$\pi(\sigma):\R^n(\sigma)\to\R^n$ is proper;
\item
Each $\R^n(\sigma)$ is densely embedded in $Y_{\Sigma}$.
\end{itemize}
The manifold $Y_{\Sigma}$ is called the 
(real) {\it toric variety} associated with $\Sigma$.    
We call the pair $(Y_{\Sigma},\pi)$ the real resolution 
of singularities associated to $\Sigma$.        

The following properties of $\pi(\sigma)$ are useful for 
the analysis in Sections~8--13. 
They can be easily seen, so we omit their proofs.

\begin{lemma}\label{lem:5.1}
\begin{enumerate}
\item
The set of the points in $\R^n(\sigma)$
in which $\pi(\sigma)$ is not an isomorphism is
a union of coordinate hyperplanes.
\item
The Jacobian of the mapping $\pi(\sigma)$
is equal to
$$
J_{\pi(\sigma)}(y)=
\epsilon\prod_{j=1}^n y_j^{\langle a^j(\sigma)\rangle -1},
$$
where $\epsilon$ is $1$ or $-1$.
\end{enumerate}
\end{lemma}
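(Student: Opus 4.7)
The plan is to prove (ii) by a direct computation and then read off (i) from the resulting formula. The key algebraic input, recorded in Section~\ref{subsec:5.2}, is that $A:=(a^k_j(\sigma))_{1\le j,k\le n}$ satisfies $\det A=\pm 1$, which is precisely the simplicial condition that the skeleton $a^1(\sigma),\ldots,a^n(\sigma)$ completes to a basis of the lattice dual to $\Z^n$.

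For (ii), I would work first on the open torus $\{y\in\R^n(\sigma):y_1\cdots y_n\neq 0\}$. Logarithmic differentiation of $x_j=\prod_{k=1}^n y_k^{a^k_j(\sigma)}$ yields $\partial x_j/\partial y_k=a^k_j(\sigma)\,x_j/y_k$, so the Jacobian matrix factors as
\[
\left(\frac{\partial x_j}{\partial y_k}\right)_{j,k}
=\mathrm{diag}(x_1,\ldots,x_n)\cdot A\cdot \mathrm{diag}(1/y_1,\ldots,1/y_n).
\]
Taking determinants and substituting
\[
\prod_{j=1}^n x_j=\prod_{j=1}^n\prod_{k=1}^n y_k^{a^k_j(\sigma)}=\prod_{k=1}^n y_k^{\langle a^k(\sigma)\rangle}
\]
gives
\[
J_{\pi(\sigma)}(y)=\epsilon\,\prod_{k=1}^n y_k^{\langle a^k(\sigma)\rangle-1},\qquad \epsilon:=\det A=\pm 1.
\]
Since each primitive vector $a^k(\sigma)\in\Z_+^n$ satisfies $\langle a^k(\sigma)\rangle\ge 1$, the right-hand side is a polynomial, so the identity, established on a dense open set, extends to all of $\R^n(\sigma)$.

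For (i), the formula from (ii) shows that the zero locus of $J_{\pi(\sigma)}$ equals the union $\bigcup_{k:\,\langle a^k(\sigma)\rangle\ge 2}\{y_k=0\}$, which is a (possibly empty) union of coordinate hyperplanes. By the inverse function theorem, $\pi(\sigma)$ is a local diffeomorphism at every point off this set. Global injectivity on the open torus then follows from the observation that $A^{-1}\in M_n(\Z)$ (once again because $\det A=\pm 1$), which supplies an explicit monomial inverse $y_k=\prod_j x_j^{(A^{-1})^j_k}$. I do not anticipate any real obstacle: the whole lemma is a bookkeeping consequence of the simplicial identity $\det A=\pm 1$, and the only care needed is to check that all exponents that appear are indeed nonnegative integers so that the formulas extend from the torus to the full chart.
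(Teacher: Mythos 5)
Your proposal is correct: the factorization of the Jacobian matrix as $\mathrm{diag}(x_1,\ldots,x_n)\cdot A\cdot\mathrm{diag}(1/y_1,\ldots,1/y_n)$ on the torus, the use of the simplicial condition $\det A=\pm1$, the extension of the resulting polynomial identity to all of $\R^n(\sigma)$, and the integer monomial inverse $y_k=\prod_j x_j^{(A^{-1})^j_k}$ (valid for nonzero real coordinates with integer exponents) giving injectivity off the coordinate hyperplanes together establish both (i) and (ii). The paper omits the proof as elementary, and your computation is exactly the standard argument intended (as in Varchenko and Arnold--Gusein-Zade--Varchenko), so there is nothing to add.
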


\section{Construction of fans from polyhedra}\label{sec:6}

Let $P\subset\R_+^n$ be an $n$-dimensional polyhedron 
satisfying $P+\R_+^n\subset P$ 
(see Lemma~\ref{lem:2.2} in Section~2).
In this section, we explain a method to construct 
fans from a given polyhedron $P$. 
Furthermore, 
we consider precise relationships between 
cones of this fan and faces of $P$.
We denote by $(\R^n)^{\vee}$ the dual space of $\R^n$ 
with respect to the standard inner product. 
\subsection{Fans associated with polyhedra}
For $a=(a_1,\ldots,a_n)\in(\R^n)^{\vee}$ with
$a_j\geq 0$, 
define
\begin{equation}\label{eqn:6.1}
l(a)=\min\left\{
\langle a,\alpha\rangle: \alpha\in P \right\}
\end{equation}
and 
$\gamma(a)=
\{\alpha\in P :\langle a,\alpha\rangle=l(a)\}
(=H(a,l(a)) \cap P)$.
We introduce an equivalence relation $\sim$ 
in $(\R^n)^{\vee}$ by $a\sim a'$ 
if and only if $\gamma(a)=\gamma(a')$. 
For any $k$-dimensional face $\gamma$ of $P$, 
there is an equivalence class $\gamma^{\vee}$ 
which is defined by 
\begin{equation}\label{eqn:6.2}
\begin{split}
\gamma^{\vee}:=&
\{a\in (\R^n)^{\vee}:
\gamma(a)=\gamma \mbox{ and $a_j\geq 0$ 
for $j=1,\ldots,n$}\}\\
(=& \{a\in (\R^n)^{\vee}: \gamma=H(a,l(a))\cap P 
\mbox{ and $a_j\geq 0$ for $j=1,\ldots,n$} \}.)
\end{split}
\end{equation}
Here, $P^{\vee}:=\{0\}$.
The closure of $\gamma^{\vee}$, 
denoted by $\overline{\gamma^{\vee}}$,
is expressed as
\begin{equation}\label{eqn:6.3}
\overline{\gamma^{\vee}}=
\{a\in (\R^n)^{\vee}: \gamma \subset H(a,l(a))\cap P 
\mbox{ and $a_j\geq 0$ for $f=1,\ldots,n$} \}.
\end{equation}
It is easy to see that 
$\overline{\gamma^{\vee}}$ 
is an $(n-k)$-dimensional strongly convex rational 
polyhedral cone in $(\R^n)^{\vee}$ and, moreover, that 
the collection of $\overline{\gamma^{\vee}}$ 
gives a fan $\Sigma_P$, which is called   
the {\it fan associated with polyhedron $P$}.
Note that 
$|\Sigma_P|=\R_+^n$.

Furthermore, let us consider 
$n$-dimensional polyhedra $P_1,\ldots, P_m \subset\R_+^n$
satisfying $P_j+\R_+^n\subset P_j$ for all $j$.
Let $\Sigma_{P_j}$ be the fan associated with $P_j$.
It is easy to see that 
the collection of $\sigma_1\cap\cdots\cap\sigma_m$
for all $\sigma_j\in\Sigma_{P_j}$
gives a fan,
which is called the {\it fan associated with the polyhedra
$P_1,\ldots, P_m$}.
We remark that any simplicial subdivision 
of this fan is also a simplicial subdivision 
of $\Sigma_{P_j}$ for each $j$.

\subsection{Lemmas}\label{subsec:6.2}

Let us recall some lemmas given in \cite{kn13}.  
They will be useful for the analysis 
in Sections~\ref{sec:8}--\ref{sec:13}. 
These lemmas also play important roles in 
the proofs of Lemma~\ref{lem:7.1} and Theorem~\ref{thm:7.2}, 
which are concerned with 
toric resolutions of singularities 
in the class $\hat{{\mathcal E}}(U)$ 
(see \cite{kn13}).

The following symbols are used in this subsection.
\begin{itemize}
\item
$\Sigma_0$ is the fan associated with the polyhedron
$P$; 
\item
$\Sigma$ is a simplicial subdivision of $\Sigma_0$;
\item
$\Sigma^{(n)}$ consists of $n$-dimensional cones in $\Sigma$;
\item 
$a^1(\sigma),\ldots,a^n(\sigma)$ is the skeleton
of $\sigma\in\Sigma^{(n)}$, ordered once and for all;
\item
${\mathcal P}(\{1,\ldots,n\})$
is the set of all subsets in 
$\{1,\ldots,n\}$;
\item
${\mathcal F}[P]$ is the set of 
nonempty faces of $P$;
\item
$H(\cdot,\cdot)$, $l(\cdot)$ 
are as in (\ref{eqn:2.1}), (\ref{eqn:6.1}), 
respectively.
\end{itemize}

It is easy to see that the two maps 
\begin{equation*}
\gamma:{\mathcal P}(\{1,\ldots,n\})\times
\Sigma^{(n)}\to {\mathcal F}[P];
\quad 
I:{\mathcal F}[P]\times 
\Sigma^{(n)}\to{\mathcal P}(\{1,\ldots,n\})
\end{equation*}
can be defined by 
\begin{eqnarray}
&&
\label{eqn:6.4}
\gamma(I,\sigma)
:=
\bigcap_{j\in I}
H(a^j(\sigma),l(a^j(\sigma)))\cap P, \\
&&
\label{eqn:6.5}
I(\gamma,\sigma)
:=
\{j:\gamma\subset 
H(a^j(\sigma),l(a^j(\sigma)))\}.
\end{eqnarray}
Here set $\gamma(\emptyset,\sigma):=P$. 
Note that $I(P,\sigma)=\emptyset$.


\begin{lemma}\label{lem:6.1}
For $\sigma\in\Sigma^{(n)}$, 
$\gamma\in {\mathcal F}[P]$,
$I\in{\mathcal P}(\{1,\ldots,n\})$, 
we have the following. 
\begin{enumerate}
\item
$\gamma\subset\gamma(I(\gamma,\sigma),\sigma)$ and
$\dim(\gamma)\leq n-\# I(\gamma,\sigma)$.
\item 
$\gamma=\gamma(I,\sigma)
\Longrightarrow I\subset I(\gamma,\sigma)
\Longrightarrow \dim(\gamma)\leq n-\#  I$.
\end{enumerate}
\end{lemma}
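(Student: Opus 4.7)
The plan is to unwind the definitions of the two maps $\gamma(\cdot,\sigma)$ and $I(\cdot,\sigma)$ and exploit the fact that, because $\sigma\in\Sigma^{(n)}$ is a cone of a simplicial subdivision, its skeleton $a^1(\sigma),\ldots,a^n(\sigma)$ forms a $\Z$-basis of $\Z^n$ (this is the third bullet in the definition of simplicial subdivision recalled in Section~5.1). In particular the $n$ vectors are $\R$-linearly independent, so any subcollection of the affine hyperplanes $H(a^j(\sigma),l(a^j(\sigma)))$ has linearly independent normals and hence the intersection of $k$ of them is an affine subspace of codimension exactly $k$ in $\R^n$.

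For the first half of (i), I would argue directly from definitions: by the definition of $I(\gamma,\sigma)$ in (\ref{eqn:6.5}), every $j\in I(\gamma,\sigma)$ satisfies $\gamma\subset H(a^j(\sigma),l(a^j(\sigma)))$, so intersecting over all such $j$ and with $P$ gives $\gamma\subset\gamma(I(\gamma,\sigma),\sigma)$ by (\ref{eqn:6.4}). For the dimension bound, set $k:=\#I(\gamma,\sigma)$; then $\gamma$ lies in the intersection of the $k$ hyperplanes indexed by $I(\gamma,\sigma)$, which by the linear independence of normals has dimension $n-k$. Therefore $\dim(\gamma)\leq n-k=n-\#I(\gamma,\sigma)$.

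For (ii), the first implication is immediate from (\ref{eqn:6.4}): if $\gamma=\gamma(I,\sigma)$ then $\gamma\subset H(a^j(\sigma),l(a^j(\sigma)))$ for every $j\in I$, which by (\ref{eqn:6.5}) means $I\subset I(\gamma,\sigma)$. The second implication then follows by combining $I\subset I(\gamma,\sigma)$ with the dimension bound from (i): $\dim(\gamma)\leq n-\#I(\gamma,\sigma)\leq n-\#I$.

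The step requiring the most care is the codimension computation, where one must invoke the simplicial property; without it the normals could be linearly dependent and the bound $\dim(\gamma)\leq n-\#I(\gamma,\sigma)$ could fail. Everything else is a direct reading of the definitions (\ref{eqn:6.4}) and (\ref{eqn:6.5}), so no serious obstacle is expected.
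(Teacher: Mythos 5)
Your proof is correct and follows essentially the same route as the paper: part (ii) is argued exactly as in the paper's proof, and part (i), which the paper dismisses as "directly seen from the definitions," is filled in by you in the natural way, using that the $n$ skeleton vectors of a simplicial cone $\sigma\in\Sigma^{(n)}$ are linearly independent so that the intersection of the hyperplanes indexed by $I(\gamma,\sigma)$ has dimension exactly $n-\#I(\gamma,\sigma)$. This is a valid and appropriately careful completion of the paper's terse argument.
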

\begin{proof}
The assertions in (i) are directly seen from the definitions of 
$\gamma(I,\sigma)$ and $I(\gamma,\sigma)$.
The first implication in (ii) 
is shown as follows: 
$\gamma=\gamma(I,\sigma)
\Rightarrow
\gamma
=
\bigcap_{j\in I}
H(a^j(\sigma),l(a^j(\sigma)))\cap P
\Rightarrow
\gamma\subset H(a^j(\sigma),l(a^j(\sigma)))
\mbox{ for $j\in I$}
\Rightarrow
I\subset I(\gamma,\sigma)$.
From the inequality in (i), 
the second implication in (ii) is obvious.
\end{proof}

Next, consider the case 
when $\dim(\gamma)=n-\#  I(\gamma,\sigma)$. 
Define
\begin{equation}\label{eqn:6.6}
\Sigma^{(n)}(\gamma):=\{
\sigma\in\Sigma^{(n)}:\dim(\gamma)=n-\#  I(\gamma,\sigma)
\}.
\end{equation}
Note that $\Sigma^{(n)}(P)=\Sigma^{(n)}$. 

\begin{lemma}\label{lem:6.2}
For $\sigma\in\Sigma^{(n)}$, 
$\gamma\in {\mathcal F}[P]$,
$I\in{\mathcal P}(\{1,\ldots,n\})$, 
we have the following. Here $\gamma^{\vee}$ is as in 
$(\ref{eqn:6.2})$.
\begin{enumerate}
\item 
$\#  I(\gamma,\sigma)=\dim(\gamma^{\vee}\cap\sigma).$
\item
$\Sigma^{(n)}(\gamma)
=\{
\sigma\in\Sigma^{(n)}:\dim(\gamma^{\vee} \cap\sigma)
=\dim (\gamma^{\vee})
\}\neq\emptyset.
$
\item 
If $\sigma\in\Sigma^{(n)}(\gamma)$, then
$\gamma=\gamma(I(\gamma,\sigma),\sigma)$.
\end{enumerate}
\end{lemma}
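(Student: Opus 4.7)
The plan is to reduce all three assertions to a precise description of $\overline{\gamma^{\vee}}\cap\sigma$. Since $\sigma$ is an $n$-dimensional simplicial cone, the skeleton vectors $a^{1}(\sigma),\dots,a^{n}(\sigma)$ form a basis of $\R^{n}$, so every $a\in\sigma$ admits a unique expansion $a=\sum_{j=1}^{n}\lambda_{j}a^{j}(\sigma)$ with $\lambda_{j}\ge 0$. Writing $l_{j}:=l(a^{j}(\sigma))$, the key termwise inequality is
\[
\langle a,\alpha\rangle=\sum_{j}\lambda_{j}\langle a^{j}(\sigma),\alpha\rangle\ge\sum_{j}\lambda_{j}l_{j}\qquad(\alpha\in P),
\]
with equality if and only if $\alpha\in\gamma(a^{j}(\sigma))$ for each index $j$ with $\lambda_{j}>0$.

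To prove (i), I observe that $a\in\overline{\gamma^{\vee}}$ means $\gamma\subset\gamma(a)$, i.e.\ every $\alpha\in\gamma$ attains the minimum $l(a)$. Applied to the inequality above, this forces $\gamma\subset\gamma(a^{j}(\sigma))$, equivalently $j\in I(\gamma,\sigma)$, for every $j$ with $\lambda_{j}>0$. Conversely, if $\lambda_{j}=0$ whenever $j\notin I(\gamma,\sigma)$, the termwise minimum is realised jointly on $\gamma$, so $a\in\overline{\gamma^{\vee}}$. Therefore
\[
\overline{\gamma^{\vee}}\cap\sigma=\Bigl\{\textstyle\sum_{j\in I(\gamma,\sigma)}\lambda_{j}a^{j}(\sigma):\lambda_{j}\ge 0\Bigr\},
\]
a simplicial cone of dimension $\#I(\gamma,\sigma)$ thanks to the linear independence of the skeleton.

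For (ii), recall $\dim\overline{\gamma^{\vee}}=n-\dim\gamma$, so combining with (i) the condition $\sigma\in\Sigma^{(n)}(\gamma)$ becomes $\dim(\overline{\gamma^{\vee}}\cap\sigma)=\dim\overline{\gamma^{\vee}}$; equivalently, $\sigma$ meets the relative interior $\gamma^{\vee}$ in full dimension. Non-emptiness of $\Sigma^{(n)}(\gamma)$ then follows from $\overline{\gamma^{\vee}}\subset\R_{+}^{n}=|\Sigma|$: any relative-interior point of $\overline{\gamma^{\vee}}$ must lie in some $n$-dimensional $\sigma\in\Sigma^{(n)}$, since the lower-dimensional cones of $\Sigma$ together cover only a nowhere-dense set, and such a $\sigma$ meets $\overline{\gamma^{\vee}}$ in a neighbourhood of that point, hence in full dimension. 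For (iii), Lemma~\ref{lem:6.1}(i) gives $\gamma\subset\gamma(I(\gamma,\sigma),\sigma)$; the right-hand side is itself a face of $P$ (a finite intersection of supporting-hyperplane faces), and Lemma~\ref{lem:6.1}(ii), applied to this face with the index set $I(\gamma,\sigma)$, bounds its dimension by $n-\#I(\gamma,\sigma)=\dim\gamma$. A face contained in a face of its own dimension must coincide with it, yielding $\gamma=\gamma(I(\gamma,\sigma),\sigma)$.

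The main obstacle is the step in (i) that converts the global condition $a\in\overline{\gamma^{\vee}}$ into the coordinatewise condition ``$\lambda_{j}=0$ for every $j\notin I(\gamma,\sigma)$''; this relies on the termwise inequality and on the impossibility of accidental cancellation among the nonnegative summands $\lambda_{j}(\langle a^{j}(\sigma),\alpha\rangle-l_{j})$ on $P$. A secondary subtlety is the open/closed distinction between $\gamma^{\vee}$ and $\overline{\gamma^{\vee}}$ in the statement of (ii): it is resolved by noting that both intersections share the same affine dimension precisely when $\gamma^{\vee}\cap\sigma$ is nonempty, which is exactly the condition picked out by $\Sigma^{(n)}(\gamma)$.
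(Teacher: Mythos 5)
Your plan of reducing everything to the identity $\overline{\gamma^{\vee}}\cap\sigma=\bigl\{\sum_{j\in I(\gamma,\sigma)}\lambda_{j}a^{j}(\sigma):\lambda_{j}\ge 0\bigr\}$ is reasonable (the paper's proof of (i) rests on essentially this fact, via the equivalences $j\in I(\gamma,\sigma)\Leftrightarrow a^{j}(\sigma)\in\overline{\gamma^{\vee}}\cap\sigma$), but your proof of the forward inclusion has a genuine gap. From $a=\sum_{j}\lambda_{j}a^{j}(\sigma)\in\overline{\gamma^{\vee}}\cap\sigma$ you know $\langle a,\alpha\rangle=l(a)$ for $\alpha\in\gamma$, and you want each nonnegative summand $\lambda_{j}\bigl(\langle a^{j}(\sigma),\alpha\rangle-l(a^{j}(\sigma))\bigr)$ to vanish; this follows only if $l(a)=\sum_{j}\lambda_{j}\,l(a^{j}(\sigma))$, i.e.\ if the termwise lower bound is actually attained on $P$. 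That is not automatic, and for a general simplicial cone $\sigma\subset\R_+^n$ both this and assertion (i) are false: take $n=2$, $P=\{\alpha\in\R_+^2:\alpha_1+\alpha_2\ge 1\}$, $\gamma$ the compact edge, and $\sigma=\R_+^2$ with (unimodular) skeleton $(1,0),(0,1)$; then $l((1,0))=l((0,1))=0$, so $I(\gamma,\sigma)=\emptyset$, while $\overline{\gamma^{\vee}}\cap\sigma$ is the ray through $(1,1)$, of dimension $1$. The point is that your argument never uses the standing hypothesis of Section~6.2 that $\Sigma$ is a simplicial subdivision of the fan $\Sigma_{0}$ associated with $P$, and that hypothesis is exactly what excludes such examples: $\sigma$ lies in some cone $\overline{\delta^{\vee}}$ of $\Sigma_{0}$, so every face $\gamma(a^{j}(\sigma))$ contains the common nonempty face $\delta$, and evaluating at a point of $\delta$ gives $l(a)=\sum_{j}\lambda_{j}l(a^{j}(\sigma))$ for all $a\in\sigma$; with this identity your computation goes through (equivalently, one may argue, as the paper leaves implicit, that $\overline{\gamma^{\vee}}\cap\sigma$ is a face of the simplicial cone $\sigma$, hence spanned by the skeleton vectors it contains, which by $(\ref{eqn:6.3})$ are exactly those with $j\in I(\gamma,\sigma)$). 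Your closing remark attributes the difficulty to ``accidental cancellation among nonnegative summands,'' but cancellation is not the issue; attainment of the termwise minimum is, and that is where the subdivision property must enter.

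Two smaller points. In (ii), the claim that an $n$-cone $\sigma$ containing a relative-interior point of $\overline{\gamma^{\vee}}$ ``meets $\overline{\gamma^{\vee}}$ in a neighbourhood of that point'' is not justified as stated (a priori $\sigma$ could meet $\overline{\gamma^{\vee}}$ only in a lower-dimensional set through that point); the clean argument, which is the paper's, is that $|\Sigma|=\R_+^n$ forces $\overline{\gamma^{\vee}}$ to be covered by the finitely many closed sets $\overline{\gamma^{\vee}}\cap\sigma$, $\sigma\in\Sigma^{(n)}$, so at least one of them has dimension $\dim\overline{\gamma^{\vee}}$. Part (iii) is correct and coincides with the paper's argument: $\dim\gamma\le\dim\gamma(I(\gamma,\sigma),\sigma)\le n-\#I(\gamma,\sigma)=\dim\gamma$, and a face of $P$ containing $\gamma$ and of the same dimension must equal $\gamma$.
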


\begin{proof}
(i)\quad
This equation follows from the following equivalences.
$$
j\in I(\gamma,\sigma)
\Leftrightarrow
\gamma\subset H(a^j(\sigma),l(a^j(\sigma)))
\Leftrightarrow
a^j(\sigma)\in \overline{\gamma^{\vee}}
\Leftrightarrow
a^j(\sigma)\in 
\overline{\gamma^{\vee}}\cap\sigma,
$$
where $\overline{\gamma^{\vee}}$ denotes
the closure of ${\gamma^{\vee}}$. 
Note that the second equivalence follows from 
(\ref{eqn:6.3}).

(ii)\quad
Putting the equation in (i) and 
$\dim(\gamma^{\vee})=n-\dim(\gamma)$
together, we see the equality of the sets.
Since the support of the fan $\Sigma$ is $\R_+^n$, 
there exists $\sigma$ such that 
$\dim(\gamma^{\vee} \cap\sigma)
=\dim (\gamma^{\vee})$, which implies 
$\Sigma^{(n)}(\gamma)\neq\emptyset$.  

(iii)\quad 
Lemma~\ref{lem:6.1} (i),(ii) and 
the assumption imply 
$\dim(\gamma)=\dim(\gamma(I(\gamma,\sigma),\sigma))$. 
In fact, 
$
\dim(\gamma)\leq\dim(\gamma(I(\gamma,\sigma),\sigma))
\leq n-\# I(\gamma,\sigma)=\dim(\gamma).
$
Since $\gamma\subset\gamma(I(\gamma,\sigma),\sigma)$
from Lemma~6.1 (i),  
the above dimensional equation yields 
$\gamma=\gamma(I(\gamma,\sigma),\sigma)$. 
\end{proof}

\begin{remark}
It follows from 
Lemma~\ref{lem:6.2} (ii) and (iii) 
that the map $\gamma$ is surjective. 
\end{remark}

The following lemma is concerned with the property of
the map $\pi(\sigma)$ in (\ref{eqn:5.1}) when 
the face $\gamma(I,\sigma)$ is compact. 
\begin{lemma}\label{lem:6.4}
If $\gamma=\gamma(I,\sigma)$, then 
the following conditions are equivalent.
\begin{enumerate}
\item $\gamma$ is compact$;$
\item $\pi(\sigma)(T_I(\R^n))=0$.
\end{enumerate}
\end{lemma}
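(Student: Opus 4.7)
The plan is to rewrite both conditions as the same combinatorial condition on the skeleton matrix $(a_j^k(\sigma))$: namely, that for every index $j\in\{1,\dots,n\}$ there exists $k\in I$ with $a_j^k(\sigma)>0$. This will make the equivalence immediate.

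First I would translate condition (ii). A point $y\in T_I(\R^n)$ has $y_k=0$ for $k\in I$ while the coordinates $y_k$ with $k\notin I$ are arbitrary. From the formula (\ref{eqn:5.1}), the $j$-th coordinate of $\pi(\sigma)(y)$ is
\begin{equation*}
x_j=\prod_{k=1}^n y_k^{a_j^k(\sigma)}.
\end{equation*}
Using the convention $0^0=1$, the factors with $k\notin I$ or with $a_j^k(\sigma)=0$ are nonzero for generic $y$; hence $x_j=0$ for every $y\in T_I(\R^n)$ if and only if at least one factor with $k\in I$ has a strictly positive exponent, i.e., there exists $k\in I$ with $a_j^k(\sigma)>0$. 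Therefore (ii) holds if and only if this condition is satisfied for every $j$.

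Next I would translate (i) by analyzing the recession cone of $\gamma$. Since $P\subset\R_+^n$ and $P+\R_+^n\subset P$, we have $\mathrm{rec}(P)=\R_+^n$. Writing
\begin{equation*}
\gamma=\gamma(I,\sigma)=\bigcap_{k\in I} H(a^k(\sigma),l(a^k(\sigma)))\cap P,
\end{equation*}
a direct computation shows
\begin{equation*}
\mathrm{rec}(\gamma)=\bigl\{v\in\R_+^n:\langle a^k(\sigma),v\rangle=0\text{ for all }k\in I\bigr\}.
\end{equation*}
Now $\gamma$ is compact if and only if $\mathrm{rec}(\gamma)=\{0\}$, and since $\mathrm{rec}(\gamma)\subset\R_+^n$, this is equivalent to $e_j\notin\mathrm{rec}(\gamma)$ for every $j$, where $e_j$ is the $j$-th standard basis vector. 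Because all entries $a_j^k(\sigma)$ are nonnegative (the cone $\sigma$ lies in $\R_+^n$), the condition $\langle a^k(\sigma),e_j\rangle=0$ reads $a_j^k(\sigma)=0$. Thus $e_j\in\mathrm{rec}(\gamma)$ if and only if $a_j^k(\sigma)=0$ for all $k\in I$, so $\gamma$ is compact precisely when, for every $j$, there exists $k\in I$ with $a_j^k(\sigma)>0$.

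The two translations give the same combinatorial condition, which proves the equivalence. The only delicate point is the computation of $\mathrm{rec}(\gamma)$; everything else is a formal exponent count. Since $\sigma\subset\R_+^n$ and $\mathrm{rec}(P)=\R_+^n$, no sign issues arise, so this step is straightforward.
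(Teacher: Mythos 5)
Your argument is correct, and it is genuinely different from what the paper does: the paper gives no argument at all for Lemma~6.4, simply citing Proposition~8.6 of \cite{kn13}, whereas you reduce both (i) and (ii) to the same combinatorial condition on the skeleton matrix, namely that for every $j$ there is some $k\in I$ with $a_j^k(\sigma)>0$. The translation of (ii) via the monomial formula (\ref{eqn:5.1}) is exactly right, and the recession-cone computation $\mathrm{rec}(\gamma)=\R_+^n\cap\bigcap_{k\in I}\{v:\langle a^k(\sigma),v\rangle=0\}$ is valid because $\gamma$ is a nonempty intersection of the polyhedron $P$ (whose recession cone is $\R_+^n$, using $P\subset\R_+^n$ and $P+\R_+^n\subset P$) with hyperplanes. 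What you gain over the paper's treatment is a short self-contained proof that makes visible why compactness of $\gamma(I,\sigma)$ is a purely coordinate-wise condition on the exponents of $\pi(\sigma)$.

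One step is stated with a justification that is too weak, though the ingredients to repair it are already in your text: you claim that, because $\mathrm{rec}(\gamma)\subset\R_+^n$, the condition $\mathrm{rec}(\gamma)=\{0\}$ is equivalent to $e_j\notin\mathrm{rec}(\gamma)$ for all $j$. For an arbitrary cone in $\R_+^n$ this is false (e.g.\ the ray through $(1,1)$ contains no basis vector). It does hold here, but for a more specific reason: since all entries $a_j^k(\sigma)$ are nonnegative and any $v\in\mathrm{rec}(\gamma)$ has nonnegative coordinates, $\langle a^k(\sigma),v\rangle=0$ forces $a_j^k(\sigma)v_j=0$ for each $j$; hence $\mathrm{rec}(\gamma)=\R_+^n\cap T_J(\R^n)$ with $J=\{j:\exists k\in I,\ a_j^k(\sigma)>0\}$, i.e.\ it is a coordinate orthant, and such a cone is $\{0\}$ exactly when it contains no $e_j$. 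Equivalently: if $v\in\mathrm{rec}(\gamma)$ has $v_j>0$, then $a_j^k(\sigma)=0$ for all $k\in I$, so $e_j\in\mathrm{rec}(\gamma)$. Adding this one line closes the gap and your proof is complete.
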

\begin{proof}
See Proposition~8.6 in \cite{kn13}.
\end{proof}


\section{
Resolution of singularities 
in the class $\hat{\mathcal{E}}(U)$
}\label{sec:7}

In this section, 
we recall a result in \cite{kn13} 
about resolution of singularities 
in the class $\hat{\mathcal{E}}(U)$. 
Furthermore, we try to get a simultaneous resolution of
singularities for several functions belonging 
to $\hat{\mathcal{E}}(U)$, 
which is crucial for our analysis.

Hereafter, the following symbols are often used:
\begin{equation}\label{eqn:7.1}
l_{f}(a):=\min\{\langle a,\a\rangle:\a\in\Gamma_+(f)\},
\end{equation}
where $f$ is a smooth function defined near the origin, 
and 
\begin{eqnarray}
&&
T_I(\R^n)=\{y\in\R^n: y_j=0 \mbox{ if $j\in I$}\} \,\,\,
\mbox{(as in (\ref{eqn:1.5}))},\nonumber\\
&&
\tilde{T}_I(\R^n):=\{y\in\R^n: y_j=0 \mbox{ if and only if $j\in I$}\}
\label{eqn:7.2} \\
&&\quad\quad\quad
(=\{y\in T_I(\R^n): y_j\neq 0 \mbox{ if $j\not\in I$}\}) 
\nonumber.
\end{eqnarray}

By using the lemmas in the previous section, 
we see that an $\hat{\mathcal{E}}$ function 
can be expressed as in the normal crossing form 
sufficiently near the origin. 
The proof is given by using the lemmas
in the previous section. 
The details are seen in \cite{kn13}.

\begin{lemma}[\cite{kn13}]\label{lem:7.1}
Let $f$ satisfy the condition $(A)$ in Section~3 and 
belong to $\hat{\mathcal{E}}(U)$,
where $U$ is a neighborhood of the origin,
let $\Sigma$ be a simplicial subdivision of the fan $\Sigma_0$
associated with the Newton polyhedra $\Gamma_+(f)$
(see Section~6.1) and
let $\sigma$ be an $n$-dimensional cone in $\Sigma$ whose skeleton 
is $a^1(\sigma),\ldots,a^n(\sigma)\in\Z_+^n$.
Then for any $\sigma\in\Sigma^{(n)}$, 
there exists a smooth function $f_{\sigma}$ defined on 
the set $\pi(\sigma)^{-1}(U)$ such that 
$f_{\sigma}(0)\neq 0$ and 
\begin{equation}\label{eqn:7.3}
f(\pi(\sigma)(y))=
\left(\prod_{j=1}^n y_j^{l_f(a^j(\sigma))}\right)f_{\sigma}(y)
\quad\quad \mbox{for $y\in\pi(\sigma)^{-1}(U)$}.
\end{equation}
Furthermore, if $\gamma=\gamma(I,\sigma)$
$($see $(\ref{eqn:6.4})$$)$, then we have 
\begin{equation}\label{eqn:7.4}
f_{\gamma}(\pi(\sigma)(y))=
\left(\prod_{j=1}^n y_j^{l_f(a^j(\sigma))}\right)
f_{\sigma}(T_I(y))
\quad\quad \mbox{for $y\in\pi(\sigma)^{-1}(U)$.}
\end{equation}
\end{lemma}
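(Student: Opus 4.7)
The plan is to start from the structural description of $\hat{\mathcal{E}}[\Gamma_+(f)](U)$ supplied by Proposition~\ref{pro:2.17}, namely that $f$ can be written on $U$ as a finite sum
$f(x)=\sum_{p\in S} x^{p}\psi_{p}(x)$
with $S\subset \Gamma_+(f)\cap\Z_+^n$ and $\psi_p\in C^\infty(U)$. The monomial map $\pi(\sigma)$ in (\ref{eqn:5.1}) has the fundamental pullback property
$\pi(\sigma)(y)^{p}=\prod_{k=1}^{n}y_k^{\langle a^k(\sigma),p\rangle}$,
and because $p\in\Gamma_+(f)$, the definition (\ref{eqn:7.1}) of $l_f$ gives $\langle a^k(\sigma),p\rangle\geq l_f(a^k(\sigma))$ for every $k$. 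This is precisely the inequality needed to factor $\prod_k y_k^{l_f(a^k(\sigma))}$ out of $f\circ\pi(\sigma)$, and it forces the candidate
\begin{equation*}
f_\sigma(y):=\sum_{p\in S}\Bigl(\prod_{k=1}^{n} y_k^{\langle a^k(\sigma),p\rangle-l_f(a^k(\sigma))}\Bigr)\psi_p(\pi(\sigma)(y))
\end{equation*}
to be smooth on $\pi(\sigma)^{-1}(U)$, establishing (\ref{eqn:7.3}).

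For (\ref{eqn:7.4}) I would argue from two sides. Replacing $y$ by $T_I(y)$ in the formula above kills every term in which some exponent $\langle a^i(\sigma),p\rangle-l_f(a^i(\sigma))$ with $i\in I$ is positive, so only those $p\in S$ satisfying $\langle a^i(\sigma),p\rangle=l_f(a^i(\sigma))$ for all $i\in I$ survive. By (\ref{eqn:6.4}) this is exactly the condition $p\in\gamma(I,\sigma)=\gamma$. To identify the resulting expression with $f_\gamma\circ\pi(\sigma)/\prod_k y_k^{l_f(a^k(\sigma))}$, I would use that $f$ admits the $\gamma$-part (since $f\in\hat{\mathcal{E}}(U)$): apply the same factorization procedure to $f_\gamma$, whose Taylor series is supported on $\gamma\cap\Z_+^n$ (Remark~\ref{rem:2.14}(iv)), and invoke the quasihomogeneity (\ref{eqn:2.13}) of $f_\gamma$ together with the defining limit (\ref{eqn:2.12}) to show that the two smooth functions obtained on $\pi(\sigma)^{-1}(U)$ agree.

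Finally, to conclude $f_\sigma(0)\neq 0$, I would specialize (\ref{eqn:7.4}) to $I=\{1,\dots,n\}$, so $T_I(y)=0$ and the right side becomes $(\prod_k y_k^{l_f(a^k(\sigma))})\,f_\sigma(0)$. Because $\sigma\in\Sigma^{(n)}$ is simplicial and lies inside some maximal cone $\overline{v^{\vee}}$ of $\Sigma_0$ for a vertex $v$ of $\Gamma_+(f)$, the skeleton $a^1(\sigma),\dots,a^n(\sigma)$ is linearly independent and each $H(a^k(\sigma),l_f(a^k(\sigma)))$ contains $v$; hence $\gamma(\{1,\dots,n\},\sigma)=\{v\}$. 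The corresponding $\gamma$-part is the monomial $f_{\{v\}}(x)=c_v x^v$ with $c_v\neq 0$, and its pullback by $\pi(\sigma)$ equals $c_v\prod_k y_k^{l_f(a^k(\sigma))}$, yielding $f_\sigma(0)=c_v\neq 0$.

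The main obstacle is the proof of (\ref{eqn:7.4}), precisely because the representation in Proposition~\ref{pro:2.17} is not unique while $f_\gamma$ is intrinsic: one cannot directly read off the $\gamma$-part from a specific choice of $\psi_p$'s. Overcoming this requires comparing the two smooth functions produced above through their common quasihomogeneous characterization, rather than through a term-by-term matching. Once (\ref{eqn:7.4}) is in hand, the non-vanishing of $f_\sigma(0)$ is automatic from the vertex structure.
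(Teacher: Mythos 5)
Your construction of $f_\sigma$ from the representation in Proposition~\ref{pro:2.17} together with the pullback identity $\pi(\sigma)(y)^p=\prod_{k}y_k^{\langle a^k(\sigma),p\rangle}$ is exactly the device used in the paper (compare the proof of Lemma~\ref{lem:12.2}\,(i)), and your endgame for $f_\sigma(0)\neq 0$ is also sound: for $I=\{1,\dots,n\}$ the linear independence of the skeleton and the inclusion $\sigma\subset\overline{v^{\vee}}$ for a vertex $v$ of $\Gamma_+(f)$ give $\gamma(\{1,\dots,n\},\sigma)=\{v\}$, and $f_{\{v\}}=c_vx^v$ with $c_v\neq0$ because every vertex of $\Gamma_+(f)$ is a point of the Taylor support. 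So the route is the paper's; the only place the proposal is not yet a proof is the step you yourself flag, namely (\ref{eqn:7.4}).

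Two remarks on closing that step. As literally described (``apply the same factorization procedure to $f_\gamma$'') it presupposes that $f_\gamma$ again belongs to $\hat{\mathcal E}[\Gamma_+(f)]$ near the origin; this is true but is established in \cite{kn13}, not in the present paper, and the bookkeeping of which $p\in S$ survive under $T_I$ is representation-dependent and in the end unnecessary. The clean argument uses only ingredients you already cite. Put $a:=\sum_{i\in I}a^i(\sigma)$ and $l:=\sum_{i\in I}l_f(a^i(\sigma))$; since $\gamma(I,\sigma)$ is nonempty (it contains $v$), $(a,l)$ is a valid pair defining exactly $\gamma$. For $y\in\pi(\sigma)^{-1}(U)$ let $y^{(t)}$ denote $y$ with the coordinates $y_i$, $i\in I$, multiplied by $t\in(0,1]$; then $\pi(\sigma)(y^{(t)})=(t^{a_1}x_1,\dots,t^{a_n}x_n)$ with $x=\pi(\sigma)(y)$, and (\ref{eqn:7.3}) gives
\begin{equation*}
\frac{f(t^{a_1}x_1,\dots,t^{a_n}x_n)}{t^{l}}
=\Bigl(\prod_{j=1}^{n}y_j^{l_f(a^j(\sigma))}\Bigr)f_\sigma(y^{(t)}).
\end{equation*}
Letting $t\to0$, the left-hand side converges to $f_\gamma(\pi(\sigma)(y))$ by the defining limit (\ref{eqn:2.12}) (this is where $f\in\hat{\mathcal E}(U)$ enters; $y$ should be taken in the possibly smaller neighborhood on which the $\gamma$-part is admitted), while the right-hand side converges to $\bigl(\prod_{j}y_j^{l_f(a^j(\sigma))}\bigr)f_\sigma(T_I(y))$ by continuity of $f_\sigma$. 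This is (\ref{eqn:7.4}), obtained without any reference to the particular choice of $\psi_p$, after which your vertex argument yields $f_\sigma(0)=c_v\neq0$ as you indicate.
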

Notice that $f_{\sigma}$ may still have a singularity 
at a point where $f_{\sigma}$ vanishes. 
But an additional nondegeneracy assumption induces
that these kinds of singularities do not appear. 
This fact is exactly stated as in Theorem~8.10 
in the paper \cite{kn13}.
This result will be generalized in the following theorem.
It shows the existence of some kind of  
simultaneous resolution of singularities 
for several functions 
belonging to the class $\hat{\mathcal{E}}(U)$ and
satisfying the nondegeneracy condition. 

\begin{theorem}\label{thm:7.2}
Let $f_1,\ldots, f_m$ satisfy the condition $(A)$ in Section~3
and belong to $\hat{\mathcal{E}}(U)$,
where $U$ is a neighborhood of the origin,
let $\Sigma$ be a simplicial subdivision of the fan $\Sigma_0$
associated with the Newton polyhedra 
$\Gamma_+(f_1),\ldots,\Gamma_+(f_m)$ and 
let $\sigma$ be an $n$-dimensional cone in $\Sigma$, 
whose skeleton is 
$a^1(\sigma),\ldots,a^n(\sigma)\in\Z_+^n$.
Then for any $k\in\{1,\ldots,m\}$ and any $\sigma\in\Sigma^{(n)}$,
there exists a smooth function 
$f_{k,\sigma}$
defined on the set $\pi(\sigma)^{-1}(U)$ satisfying that
$f_{k,\sigma}(0)\neq 0$ and
\begin{equation}\label{eqn:7.5}
f_k(\pi(\sigma)(y))
=\left(\prod_{j=1}^n
y_j^{l_{f_k}(a^j(\sigma))} 
\right)
f_{k,\sigma}(y),
\end{equation} 
for $y\in\pi(\sigma)^{-1}(U)$,
where $l_{f_k}(\cdot)$ is as in $(\ref{eqn:7.1})$.

Furthermore,
if $f_k$ is nondegenerate over $\R$ 
with respect to $\Gamma_+(f_k)$ and    
a set $I\subset\{1,\ldots,n\}$ satisfies 
$\pi(\sigma)(\tilde{T}_I(\R^n))=0$,  
then the set 
$\{y\in \tilde{T}_I(\R^n):f_{k,\sigma}(y)=0\}$
is nonsingular,
i.e.,
the gradient of the restriction of the function 
$f_{k,\sigma}$ to $\tilde{T}_I(\R^n)$ does not vanish at the 
points of the set 
$\{y\in \tilde{T}_I(\R^n): f_{k,\sigma}(y)=0\}$.
\end{theorem}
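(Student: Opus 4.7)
The strategy is to reduce to the single-function case: since the joint simplicial subdivision $\Sigma$ is automatically a subdivision of the fan associated with each individual $\Gamma_+(f_k)$ (by the construction recalled at the end of Section~\ref{subsec:6.1}, any simplicial subdivision of the fan associated with the family $\Gamma_+(f_1),\ldots,\Gamma_+(f_m)$ is also a simplicial subdivision of $\Sigma_{\Gamma_+(f_k)}$ for every $k$), Lemma~\ref{lem:7.1} applies to each $f_k$ separately with the same $\Sigma$. This yields, for each $k$, a smooth function $f_{k,\sigma}$ on $\pi(\sigma)^{-1}(U)$ with $f_{k,\sigma}(0)\neq 0$ and satisfying the factorization (\ref{eqn:7.5}); the first half of the theorem is essentially bookkeeping.

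For the nondegeneracy part, I would first observe that the hypothesis $\pi(\sigma)(\tilde T_I(\R^n))=0$ depends only on the skeleton $\{a^j(\sigma)\}_{j=1}^n$ of $\sigma$ and on the index set $I$, not on any particular $f_k$. Applying Lemma~\ref{lem:6.4} polyhedron-by-polyhedron to $\Gamma_+(f_k)$, this hypothesis is equivalent, simultaneously for every $k$, to the compactness of the face
\[
\gamma_k := \bigcap_{j\in I} H(a^j(\sigma),\,l_{f_k}(a^j(\sigma)))\cap\Gamma_+(f_k).
\]
In particular each $(f_k)_{\gamma_k}$ is an honest polynomial by Remark~\ref{rem:2.14}(iii), and the nondegeneracy hypothesis on $f_k$ guarantees that $\nabla(f_k)_{\gamma_k}$ does not vanish on $(\R\setminus\{0\})^n$.

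Next I would invoke the second identity of Lemma~\ref{lem:7.1} with $\gamma=\gamma_k$, obtaining
\[
(f_k)_{\gamma_k}(\pi(\sigma)(y))=\Bigl(\prod_{j=1}^n y_j^{l_{f_k}(a^j(\sigma))}\Bigr)\,f_{k,\sigma}(T_I(y))
\]
on $\pi(\sigma)^{-1}(U)$. For $y\in\tilde T_I(\R^n)$ we have $T_I(y)=y$ and $y_j\neq 0$ for every $j\notin I$, so $\pi(\sigma)$ restricted to this locus is a smooth submersion into the real torus $(\R\setminus\{0\})^n$. Differentiating the identity above with respect to the $y_j$ with $j\notin I$, the chain rule together with the invertibility of the Jacobian of $\pi(\sigma)$ on the torus (Lemma~\ref{lem:5.1}(ii)) translates the nonvanishing of $\nabla(f_k)_{\gamma_k}$ on $(\R\setminus\{0\})^n$ into the nonvanishing of the gradient of the restriction $f_{k,\sigma}|_{\tilde T_I(\R^n)}$ at any of its zeros; this is precisely the argument of Theorem~8.10 of \cite{kn13} applied to the single function $f_k$.

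The only genuine difficulty is the chain-rule bookkeeping in the last step, which has already been worked out in \cite{kn13} for one function at a time; no new analytic phenomenon arises in the simultaneous setting, because the resolution map $\pi(\sigma)$ and the exceptional divisor $\tilde T_I(\R^n)$ are common to all the $f_k$, while both the nondegeneracy hypothesis and the desired nonsingularity conclusion are indexed by a single $k$. Thus the multi-function theorem follows essentially for free from the common refinement $\Sigma$ together with the already-established single-function result.
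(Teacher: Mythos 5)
Your overall route is exactly the paper's: the paper proves Theorem~\ref{thm:7.2} by observing that the case $m=1$ is the result of \cite{kn13} (Lemma~\ref{lem:7.1} together with Theorem~8.10 there), and that for $m\geq 2$ the joint simplicial subdivision $\Sigma$ is in particular a simplicial subdivision of each individual fan $\Sigma_{f_k}$, so the single-function statement applies verbatim to each $f_k$; your reduction is the same, and your use of Lemma~\ref{lem:6.4} to identify the compact face $\gamma_k=\gamma(I,\sigma)$ is consistent with how the paper uses (\ref{eqn:7.4}) later.

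One sentence in your sketch of the $m=1$ argument is, however, wrong as stated: under the hypothesis $\pi(\sigma)(\tilde{T}_I(\R^n))=0$, the map $\pi(\sigma)$ restricted to $\tilde{T}_I(\R^n)$ is identically zero, so it is certainly not a submersion onto the torus $(\R\setminus\{0\})^n$, and differentiating the identity $(f_k)_{\gamma_k}(\pi(\sigma)(y))=\bigl(\prod_{j=1}^n y_j^{l_{f_k}(a^j(\sigma))}\bigr)f_{k,\sigma}(T_I(y))$ along $\tilde{T}_I(\R^n)$ gives no information there, since both sides vanish identically once some $l_{f_k}(a^j(\sigma))>0$ with $j\in I$. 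The correct chain-rule argument (the one in \cite{kn13}, and the one reflected in formulas such as (\ref{eqn:10.28}) and (\ref{eqn:11.21})) first replaces the coordinates $y_j$, $j\in I$, by $1$ via the map $T_I^1$ of (\ref{eqn:1.4}), using the quasihomogeneity (\ref{eqn:2.13}) of the compact-face part; then $\pi(\sigma)\circ T_I^1$ does land in the torus for $y_j\neq 0$, $j\notin I$, the unimodularity of $(a^j_k(\sigma))$ makes that restricted monomial map invertible, and at a zero of $f_{k,\sigma}\circ T_I$ the derivative of the monomial prefactor drops out, yielding the nonvanishing of the gradient of $f_{k,\sigma}|_{\tilde{T}_I(\R^n)}$. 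Since you ultimately defer to Theorem~8.10 of \cite{kn13} (as the paper itself does), this is a repairable slip in the exposition rather than a gap in the reduction, but the submersion claim should be replaced by the $T_I^1$ argument.
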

\begin{remark}\label{rem:7.3}
Let $b=(b_1,\ldots,b_n)$ be a point on $\tilde{T}_I(\R^n)$
satisfying $f_{k,\sigma}(b)=0$.
By the implicit function theorem,
there exists a local coordinate around $b$ in which
$f_k\circ\pi(\sigma)$ can be expressed 
in a normal crossing form.
To be more specific, 
there exists a local diffeomorphism
$\phi$ defined around $b$ such that
$y=\phi(u)$ with $b=\phi(b)$ and 
\begin{equation}
(f_k\circ\pi(\sigma)\circ\phi)(u)=
(u_i-b_i)\left(\prod_{j\in I}u_j^{l_{f_k}(a^j(\sigma))}\right),
\end{equation}
where $y_j=u_j$ for $j\in I$ and $i\in\{1,\ldots,n\}\setminus I$.
\end{remark}
\begin{proof}[Proof of Theorem~\ref{thm:7.2}.] 
The case $m=1$ in the theorem has been shown in \cite{kn13}.
The case $m\geq 2$ can be similarly shown by using the fact that
$\Sigma$ is a simplicial subdivision of each fan 
$\Sigma_{f_k}$ for all $k$,
where $\Sigma_{f_k}$ is the fan associated with
$\Gamma_+(f_k)$.
\end{proof}
\subsection{Remarks}
The proposition, below, is a weaker version of Theorem~7.2.
But,  
when one wants to get a simultaneous resolution 
of singularities with respect to several functions,
it might be more convenient to deal with their product.
Indeed, after computing the product, 
we can construct the resolution via only one function
from this proposition.

\begin{proposition}\label{thm:7.}
Let $f_1,\ldots, f_m$ be as in Theorem~7.2 and 
let $F(x):=\prod_{k=1}^m f_k(x)$. 
Let $\Sigma$ be a simplicial subdivision of the fan $\Sigma_F$
associated with the Newton polyhedron 
$\Gamma_+(F)$ and 
let $\sigma$ be an $n$-dimensional cone in $\Sigma$, 
whose skeleton is 
$a^1(\sigma),\ldots,a^n(\sigma)\in\Z_+^n$.
Then for any $k\in\{1,\ldots,m\}$ and any $\sigma\in\Sigma^{(n)}$,
there exists a smooth function 
$f_{k,\sigma}$
defined on the set $\pi(\sigma)^{-1}(U)$ satisfying that
$f_{k,\sigma}(0)\neq 0$ and the equation $(\ref{eqn:7.5})$
holds
for $y\in\pi(\sigma)^{-1}(U)$.

Furthermore,
if $F$ is nondegenerate over $\R$ 
with respect to $\Gamma_+(F)$ and    
a set $I\subset\{1,\ldots,n\}$ satisfies 
$\pi(\sigma)(\tilde{T}_I(\R^n))=0$,  
then the set 
$\{y\in \tilde{T}_I(\R^n):f_{k,\sigma}(y)=0\}$
is nonsingular.
\end{proposition}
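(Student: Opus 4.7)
The plan is to reduce everything to Theorem~\ref{thm:7.2} and Lemma~\ref{lem:7.1} via the combinatorial identity
$$\Gamma_+(F)=\Gamma_+(f_1)+\cdots+\Gamma_+(f_m)$$
(Minkowski sum). Granted this identity, the fan $\Sigma_F$ coincides with the common refinement of $\Sigma_{f_1},\ldots,\Sigma_{f_m}$, so every simplicial subdivision $\Sigma$ of $\Sigma_F$ is automatically a simplicial subdivision of the fan associated to the family $\Gamma_+(f_1),\ldots,\Gamma_+(f_m)$. This is exactly the hypothesis needed to invoke Theorem~\ref{thm:7.2} term-by-term.

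I would prove the Minkowski identity by a vertex argument. The inclusion ``$\subset$'' is immediate from the Cauchy product, since every exponent appearing in the Taylor series of $F$ is a sum of exponents from the factors. For ``$\supset$'', it suffices to check that each vertex $v$ of the Minkowski sum lies in the support of $F$: choose $c\in\R_+^n$ strongly exposing $v$. Then there is a unique decomposition $v=v_1+\cdots+v_m$ in which each $v_k$ is the unique $c$-maximizer on $\Gamma_+(f_k)$, hence a vertex of $\Gamma_+(f_k)$. A short linearity-of-$c$ computation shows that no other tuple $(\alpha_1,\ldots,\alpha_m)\in\prod_k \Gamma_+(f_k)$ sums to $v$, so the coefficient of $x^v$ in $F$ equals $\prod_k c_{v_k}^{f_k}\neq 0$, which places $v$ in $\Gamma_+(F)$.

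With the identity in hand, the factorization (\ref{eqn:7.5}) for each $f_k$ follows by applying Lemma~\ref{lem:7.1} individually to $f_k$, since $\Sigma$ refines $\Sigma_{f_k}$. For the nondegeneracy assertion I would apply Theorem~\ref{thm:7.2} in the single-function case ($m=1$) to $F$ itself; note that $F$ satisfies condition (A), belongs to $\hat{\mathcal E}(U)$ as a product of $\hat{\mathcal E}$-functions (one checks directly that $(f_1 f_2)_\gamma=(f_1)_{\gamma_1}(f_2)_{\gamma_2}$ on the Minkowski-sum face $\gamma=\gamma_1+\gamma_2$ via the limit (\ref{eqn:2.12})), and is nondegenerate by hypothesis. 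This produces a distinguished factor $F_\sigma$ whose zero locus in $\tilde T_I(\R^n)$ is nonsingular, and matching the single factorization against the product of the individual ones (\ref{eqn:7.5}) forces $F_\sigma=\prod_k f_{k,\sigma}$.

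It remains to transfer nonsingularity of $\{F_\sigma=0\}\cap\tilde T_I(\R^n)$ to each $\{f_{k,\sigma}=0\}\cap\tilde T_I(\R^n)$. By the Leibniz rule, if two of the $f_{k,\sigma}$'s vanished simultaneously at a point $p\in\tilde T_I(\R^n)$, every term in the tangential gradient of $F_\sigma$ at $p$ would carry at least one vanishing factor, forcing $\nabla(F_\sigma|_{\tilde T_I})(p)=0$ and contradicting nonsingularity. Hence the zero loci are pairwise disjoint on $\tilde T_I(\R^n)$, and at a point $p$ where only $f_{k_0,\sigma}$ vanishes the identity $\nabla(F_\sigma|_{\tilde T_I})(p)=\nabla(f_{k_0,\sigma}|_{\tilde T_I})(p)\prod_{k\neq k_0}f_{k,\sigma}(p)$, together with non-vanishing of the remaining factors there, transfers non-vanishing to $\nabla(f_{k_0,\sigma}|_{\tilde T_I})(p)$. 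The main obstacle in the plan is the Minkowski identity: once the no-cancellation at vertices is secured the rest is bookkeeping against the already-proved $m=1$ case of Theorem~\ref{thm:7.2}.
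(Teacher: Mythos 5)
Your proposal is correct, but it takes a genuinely different route from the paper at both key points. For the fan comparison, you first prove the Minkowski-sum identity $\Gamma_+(F)=\Gamma_+(f_1)+\cdots+\Gamma_+(f_m)$ by a no-cancellation argument at vertices (note one slip: for Newton polyhedra the exposing functional $c\in\R_{>0}^n$ selects the unique \emph{minimizer}, not maximizer, on each $\Gamma_+(f_k)$; the argument is otherwise the standard one) and then invoke the standard fact that the normal fan of a Minkowski sum is the common refinement of the normal fans, so that $\Sigma$ refines each $\Sigma_{f_k}$ and additionally $l_F(a)=\sum_k l_{f_k}(a)$. The paper never uses the Minkowski identity: it proves the fan equality $\Sigma_F=\Sigma_0$ directly (its Lemma~7.7) from the multiplicativity of $\gamma$-parts, $F_{\Gamma(a)}=\prod_k (f_k)_{\gamma_k(a)}$ with $l_F(a)=\sum_k l_{f_k}(a)$ (Lemma~7.5 -- essentially the identity you say you would ``check directly''), combined with the observation that $F_{\Gamma(a)}$ is a monomial exactly when $a$ lies in the interior of a maximal cone (Lemma~7.6), and then applies the multi-function Theorem~7.2 to the family $f_1,\ldots,f_m$. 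For the nonsingularity statement the two routes also diverge: the paper proves that nondegeneracy of $F$ forces nondegeneracy of every $f_k$ (Lemma~7.8, via the Euler identity and Leibniz rule) and then quotes Theorem~7.2 again, whereas you apply only the $m=1$ case of Theorem~7.2 to $F$ itself — which requires your extra verifications that $F$ satisfies condition (A) and lies in $\hat{\mathcal E}(U)$ — and then transfer nonsingularity from $\{F_\sigma=0\}\cap\tilde{T}_I(\R^n)$ to each factor through the identity $F_\sigma=\prod_k f_{k,\sigma}$ and a Leibniz argument, which in passing shows the zero sets of the $f_{k,\sigma}$ are pairwise disjoint there. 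What each buys: your version avoids the Euler-identity lemma and the multi-function form of the resolution theorem, at the price of the Minkowski-sum identity (where the cancellation issue for smooth functions genuinely has to be addressed, as you do at vertices) and of establishing $F\in\hat{\mathcal E}(U)$; the paper's version stays entirely inside $\gamma$-part computations for the individual $f_k$ and needs neither of these facts about $F$.
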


In order to prove Proposition~7.4, 
the following lemmas are important. 
We denote 
$\R_{>0}^n:=\{x=(x_1,\ldots,x_n)\in\R^n:
x_j>0 \mbox{ for $j=1,\ldots,n$} \}$.
\begin{lemma}
Let $a=(a_1\ldots,a_n)\in \R_{>0}^n$.
Let $\Gamma(a):=H(a,l_F(a))\cap\Gamma_+(F)$ 
and $\gamma_k(a):=H(a,l_{f_k}(a))\cap\Gamma_+(f_k)$
for $k=1,\ldots,m$.
$($Note that the faces $\Gamma(a)$ and 
$\gamma_k(a)$ are compact for $k=1,\ldots,m$.$)$
Then we have
\begin{enumerate}
\item 
$F_{\Gamma(a)}(x)=
\prod_{k=1}^m
(f_k)_{\gamma_k(a)}(x)$;
\item
$l_F(a)=\sum_{k=1}^m l_{f_k}(a)$,
\end{enumerate}
where
$F_{\Gamma(a)}$ is the $\Gamma(a)$-part
of $F$ and  $(f_k)_{\gamma_k(a)}$ is the $\gamma_k(a)$-part
of $f_k$ for $k=1,\ldots,n$.
Furthermore, 
if every $f_k$ belongs to $\hat{\mathcal E}(U)$ for 
$k=1,\ldots,m$,
then
the above $(i)$,$(ii)$ hold for any $a\in\R_+^n$.
\end{lemma}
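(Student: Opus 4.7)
The plan is to prove (ii) first via a Minkowski-sum identity for Newton polyhedra, noting that this step works already for every $a \in \R_+^n$; then to establish (i) in two stages: the compact-face case $a \in \R_{>0}^n$ by a direct polynomial expansion, and the general case $a \in \R_+^n$ under the $\hat{\mathcal E}(U)$ hypothesis via the limit characterization of the $\gamma$-part.

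For (ii), I would start from the Minkowski identity
\[
\Gamma_+(F) = \Gamma_+(f_1) + \cdots + \Gamma_+(f_m),
\]
which holds because the Taylor coefficients of a product are convolutions of those of the factors and the right-hand side is already $\R_+^n$-invariant. Minimizing the linear functional $\langle a, \cdot \rangle$ over a Minkowski sum equals the sum of the individual minima, so $l_F(a) = \sum_{k=1}^m l_{f_k}(a)$ for every $a \in \R_+^n$.

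For (i) with $a \in \R_{>0}^n$, every face $\gamma_k(a)$ and $\Gamma(a)$ is compact, so by Remark~2.14(iii) each $\gamma$-part reduces to the polynomial $\sum_{\alpha \in \gamma_k(a) \cap \Z_+^n} c^{(k)}_\alpha x^\alpha$, and likewise for $F_{\Gamma(a)}$. Expanding the product then reduces to bookkeeping: a monomial $x^\alpha$ arising in $\prod_k (f_k)_{\gamma_k(a)}$ is of the form $\alpha = \sum_k \alpha^{(k)}$ with $\alpha^{(k)} \in \gamma_k(a) \cap \Z_+^n$, forcing $\langle a, \alpha \rangle = l_F(a)$ so that $\alpha \in \Gamma(a)$; conversely, the sum-of-minima identity forces any decomposition of a point $\alpha \in \Gamma(a) \cap \Z_+^n$ inside $\Gamma_+(f_1) + \cdots + \Gamma_+(f_m)$ to land in $\gamma_1(a) + \cdots + \gamma_m(a)$, so that the two sides have identical coefficients.

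The extension to $a \in \R_+^n$ under the $\hat{\mathcal E}(U)$ hypothesis proceeds by passing to an integer representative. In the common refinement of the fans associated with $\Gamma_+(f_1), \ldots, \Gamma_+(f_m)$, the vector $a$ lies in the relative interior of some rational cone; any integer vector $a' \in \Z_+^n$ in that same relative interior defines the same face $\gamma_k(a)$ of $\Gamma_+(f_k)$ for every $k$ simultaneously, and, by (ii), also defines $\Gamma(a)$ of $\Gamma_+(F)$. Definition~2.12, together with the well-definedness statement there (proved in \cite{kn13}, Proposition~5.2(iii)), then expresses each $(f_k)_{\gamma_k(a)}(x)$ as the limit
\[
\lim_{t \to 0^+} t^{-l_{f_k}(a')} f_k(t^{a'_1} x_1, \ldots, t^{a'_n} x_n)
\]
on a common neighborhood of the origin. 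Multiplying across $k$ and invoking $l_F(a') = \sum_k l_{f_k}(a')$ from (ii) yields the analogous limit for $F$, which simultaneously shows that $F$ admits the $\Gamma(a)$-part and establishes the product identity in (i). The main obstacle is the coordinated choice of $a'$: one must verify that a single integer vector defines the chosen face of every $\Gamma_+(f_k)$ and of $\Gamma_+(F)$ at once, which is precisely what the joint-fan construction of Section~6.1 supplies.
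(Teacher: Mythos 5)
Your strategy is genuinely different from the paper's, which disposes of both items at once by a two-line scaling argument: writing $F(t^{a_1}x_1,\ldots,t^{a_n}x_n)/t^{l_F(a)}=\bigl(\prod_{k=1}^m f_k(t^{a_1}x_1,\ldots,t^{a_n}x_n)/t^{l_{f_k}(a)}\bigr)\cdot t^{\sum_k l_{f_k}(a)-l_F(a)}$ and letting $t\to 0$, the existence of the factor limits and the fact that the product of the (nonzero) face parts is not identically zero force the exponent of $t$ to vanish, which is (ii), and the resulting equality of limits is (i); the same identity covers $a\in\R_+^n$ once each $f_k\in\hat{\mathcal E}(U)$. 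Your plan (Minkowski sum for (ii), coefficient bookkeeping for compact faces, integer representative in the joint fan for the general case) can be made to work, but as written two load-bearing steps do not close.

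First, your justification of $\Gamma_+(F)=\Gamma_+(f_1)+\cdots+\Gamma_+(f_m)$ (convolution of Taylor coefficients plus $\R_+^n$-invariance) only gives the inclusion $\Gamma_+(F)\subset\Gamma_+(f_1)+\cdots+\Gamma_+(f_m)$, hence only $l_F(a)\geq\sum_k l_{f_k}(a)$; the inequality you actually need for (ii) is the reverse one, and it is precisely the no-cancellation statement. You must still argue, e.g., that for $a\in\R_{>0}^n$ the part of the Taylor series of $F$ on the level $\langle a,\alpha\rangle=\sum_k l_{f_k}(a)$ equals $\prod_k (f_k)_{\gamma_k(a)}$, a product of nonzero polynomials and hence nonzero — but that is exactly the bookkeeping you postpone to (i), so the order of your two steps needs to be reversed or merged (alternatively, run the standard vertex argument for Minkowski sums). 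Second, in the extension to $a\in\R_+^n$ the assertion that the integer representative $a'$ ``by (ii) also defines $\Gamma(a)$'' is not what (ii) gives: knowing $l_F(a')=\sum_k l_{f_k}(a')$ does not identify $H(a',l_F(a'))\cap\Gamma_+(F)$ with $\Gamma(a)$. What you need is the face decomposition $\Gamma(a)=\gamma_1(a)+\cdots+\gamma_m(a)$ coming from the full Minkowski identity, after which $\gamma_k(a')=\gamma_k(a)$ for all $k$ does force $\Gamma(a')=\Gamma(a)$; and to conclude that $F$ admits the $\Gamma(a)$-part in the sense of Definition~2.12 you should note that the same product-of-limits argument applies to every valid integer pair defining $\Gamma(a)$, not only to the one representative $a'$ (the cited uniqueness statement from \cite{kn13} presupposes that all these limits exist). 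Both defects are repairable, but they are the crux of the lemma; the paper's single scaling identity is the cheaper way around them.
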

\begin{proof}
The following equation holds:
$$
\frac{F(t^{a_1}x_1,\ldots,t^{a_n}x_n)}{t^{l_F(a)}}
=
\left(
\prod_{k=1}^m 
\frac{f_k(t^{a_1}x_1,\ldots,t^{a_n}x_n)}{t^{l_{f_k}(a)}}
\right)
\cdot
t^{\sum_{k=1}^ml_{f_k}(a)-l_F(a)},
$$
where $a\in\R_+^n$, $t>0$ is small and
$x$ is in a small neighborhood of the origin.
Consider the limits as $t\to 0$ of the left and right sides 
of the above equation.
Then the lemma can be easily shown.
\end{proof}

\begin{lemma}
Let $a=(a_1\ldots,a_n)\in\R_{>0}^n$. 
Let $f$ be a smooth function satisfying the condition $(A)$
in Section~3,
$\Sigma_f$ the fan associated to $\Gamma_+(f)$
and $\gamma(a)$ the compact face of $\Gamma_+(f)$
defined by $H(a,l_f(a))\cap\Gamma_+(f)$.  
Then the following two conditions are equivalent.
\begin{enumerate}
\item 
The $\gamma(a)$-part $f_{\gamma(a)}$ of $f$
equals to a monomial;
\item
There exists an $n$-dimensional cone in $\Sigma_f$ 
such that $a\in{\rm Int}(\sigma)$,
\end{enumerate}
where ${\rm Int}(\sigma)$ means the interior of
$\sigma$.
\end{lemma}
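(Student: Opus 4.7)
My plan is to reduce both conditions to the single geometric statement that $\gamma(a)$ is a vertex (i.e., a zero-dimensional face) of $\Gamma_+(f)$. Because $a\in\R_{>0}^n$, every coordinate of $a$ is strictly positive, so the linear functional $\langle a,\cdot\rangle$ attains its minimum on $\Gamma_+(f)$ along a compact face, namely $\gamma(a)$. This compactness is what allows a clean description of $f_{\gamma(a)}$ and what makes $\overline{\gamma(a)^\vee}$ behave well inside $\R_+^n$.

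First, I would prove (i) $\Longleftrightarrow$ ``$\gamma(a)$ is a vertex of $\Gamma_+(f)$''. Since $\gamma(a)$ is compact, Remark~\ref{rem:2.14}(iii) gives
\[
f_{\gamma(a)}(x)=\sum_{\alpha\in\gamma(a)\cap\Z_+^n}c_{\alpha}\,x^{\alpha}.
\]
Every vertex of $\gamma(a)$ is a vertex of the polyhedron $\Gamma_+(f)$ (it is a face of a face), and each vertex of $\Gamma_+(f)$ is an exponent $\alpha$ with $c_\alpha\neq 0$ by the very definition of the Newton polyhedron. Hence $f_{\gamma(a)}$ contains at least as many distinct nonzero monomial terms as $\gamma(a)$ has vertices. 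Because any compact convex polytope is the convex hull of its vertices, $f_{\gamma(a)}$ is a single monomial iff $\gamma(a)$ has a unique vertex iff $\gamma(a)$ itself is that vertex.

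Next, I would prove (ii) $\Longleftrightarrow$ ``$\gamma(a)$ is a vertex of $\Gamma_+(f)$''. By the construction in Section~\ref{subsec:6.2}, the cones of $\Sigma_f$ are exactly the closures $\overline{\gamma^{\vee}}$ as $\gamma$ runs through the faces of $\Gamma_+(f)$, with $\dim(\overline{\gamma^{\vee}})=n-\dim(\gamma)$. Thus the $n$-dimensional cones of $\Sigma_f$ correspond bijectively to vertices of $\Gamma_+(f)$. For such a vertex $\gamma$, the cone $\sigma=\overline{\gamma^{\vee}}$ is full-dimensional, so its topological interior coincides with its relative interior, which in turn equals $\gamma^{\vee}\cap\R_{>0}^n=\{a\in\R_{>0}^n:\gamma(a)=\gamma\}$: the boundary of $\sigma$ is cut out either by the ambient constraints $a_j=0$ (excluded because $a\in\R_{>0}^n$) or by hyperplanes of the form $H(a,l(a))$ coming from strictly larger faces of $\Gamma_+(f)$ (where $\gamma(a)$ would strictly contain $\gamma$). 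Hence $a\in\mathrm{Int}(\sigma)$ for some $n$-dimensional $\sigma\in\Sigma_f$ iff $\gamma(a)$ is a vertex.

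Combining the two equivalences yields the lemma. The main technical point, which is where I would be most careful, is the characterization of the topological interior of $\overline{\gamma^{\vee}}$ for a vertex $\gamma$: one must separate the two sources of boundary — the ambient wall $\{a_j=0\}$ and the ``face enlargement'' walls where $\gamma(a)$ jumps to a larger face — and then use the hypothesis $a\in\R_{>0}^n$ to discard the first source. Once this is clear, both directions of the equivalence follow directly from the definitions of $\Sigma_f$ and of $\gamma(a)$.
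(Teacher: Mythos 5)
Your proof is correct and follows essentially the same route as the paper: the paper's own (very terse) argument also reduces both conditions to the statement that $\gamma(a)$ is a vertex of $\Gamma_+(f)$, using that $a\in\R_{>0}^n$ makes $\gamma(a)$ compact so $f_{\gamma(a)}$ is a monomial iff $\gamma(a)$ is a vertex, and that by the construction of $\Sigma_f$ this happens iff $a$ lies in the interior of an $n$-dimensional cone. You simply supply the details (vertices carry nonzero coefficients; interiors of the $n$-dimensional cones $\overline{\gamma^\vee}$ are the equivalence classes $\gamma^\vee$) that the paper leaves implicit.
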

\begin{proof}
Since $a\in\R_{>0}^n$, $\gamma(a)$ is a vertex
if and only if $f_{\gamma(a)}$ is a monomial.
Moreover, 
from the definition of cone in Section~6.1, 
the face $\gamma(a)$ is a vertex if and only if 
$a$ belongs to the interior of some $n$-dimensional cone
of $\Sigma_f$. 
\end{proof}
\begin{lemma}
Let $\Sigma_0$ be the fan associated 
with the Newton polyhedra 
$\Gamma_+(f_1),\ldots,\Gamma_+(f_m)$ 
(as in Theorem~7.2). 
Then we have $\Sigma_F=\Sigma_0$.
\end{lemma}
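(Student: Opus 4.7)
My plan is to show that the $n$-dimensional cones of $\Sigma_F$ and of $\Sigma_0$ coincide. Since both fans have support $\R_+^n$ and any such fan is uniquely determined by the collection of its $n$-dimensional cones together with all of their faces, this will yield $\Sigma_F=\Sigma_0$.

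First, I would observe that the interior in $\R^n$ of any $n$-dimensional cone of either fan lies inside $\R_{>0}^n$: if an interior point $a$ had some $a_j=0$, every neighborhood of $a$ would contain vectors with a negative coordinate, contradicting the containment of the cone in $\R_+^n$. Hence it suffices to check, for each $a\in\R_{>0}^n$, that $a$ is interior to a maximal cone of $\Sigma_F$ if and only if $a$ is interior to a maximal cone of $\Sigma_0$. Distinct maximal cones of a fan have disjoint relative interiors, so this equivalence automatically sets up a bijection between the two sets of maximal cones that identifies their interiors, and taking closures identifies the maximal cones themselves.

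Next I would fix $a\in\R_{>0}^n$ and exploit the fact that each face $\gamma_F(a)=H(a,l_F(a))\cap\Gamma_+(F)$ and each $\gamma_k(a)=H(a,l_{f_k}(a))\cap\Gamma_+(f_k)$ is compact (since $a$ has strictly positive entries), so $F_{\gamma_F(a)}$ and every $(f_k)_{\gamma_k(a)}$ are honest polynomials in $\R[x_1,\ldots,x_n]$. By Lemma~7.5(i), we have the polynomial identity
\[
F_{\gamma_F(a)}(x) \;=\; \prod_{k=1}^{m}(f_k)_{\gamma_k(a)}(x).
\]
Since $\R[x_1,\ldots,x_n]$ is a unique factorization domain in which every divisor of a monomial is itself a monomial (up to a nonzero scalar), a product of nonzero polynomials is a monomial if and only if each factor is a monomial. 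Invoking Lemma~7.6, which states that for $a\in\R_{>0}^n$ the $\gamma(a)$-part of a function is a monomial precisely when $a$ lies in the interior of an $n$-dimensional cone of the associated fan, I obtain the chain of equivalences: $a$ is interior to a maximal cone of $\Sigma_F$ iff $F_{\gamma_F(a)}$ is a monomial iff every $(f_k)_{\gamma_k(a)}$ is a monomial iff $a$ is interior to a maximal cone of each $\Sigma_{f_k}$, and the last condition is exactly the statement that $a$ is interior to a maximal cone of the common refinement $\Sigma_0$.

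The step that merits some care is the appeal to unique factorization. In the ring of formal power series a product of two non-monomials can perfectly well collapse to a monomial, so one cannot run this argument directly on Taylor series. The hypothesis $a\in\R_{>0}^n$ is precisely what delivers compactness of $\gamma_F(a)$ and of each $\gamma_k(a)$, and hence polynomiality of the $\gamma$-parts involved in Lemma~7.5(i); restricting to this subset loses nothing because the interiors of all $n$-dimensional cones in either fan are already contained in $\R_{>0}^n$.
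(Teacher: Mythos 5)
Your proof is correct and rests on the same two ingredients as the paper's own argument: the multiplicativity of $\gamma$-parts for $a\in\R_{>0}^n$ (Lemma~7.5~(i)) and the monomial criterion for interior points of $n$-dimensional cones (Lemma~7.6), so it is essentially the same approach. The only difference is bookkeeping: the paper reduces to $m=2$ by induction and compares cones via the inclusions $\sigma\subset\sigma_1\cap\sigma_2\subset\tilde{\sigma}$, whereas you run the pointwise equivalence on $\R_{>0}^n$ for all $m$ at once and make explicit (via unique factorization in $\R[x_1,\ldots,x_n]$) the step that a product of nonzero polynomials is a monomial iff each factor is, which the paper takes for granted.
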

\begin{proof}
By an inductive argument, it suffices to show the lemma
in the case when $m=2$.
Let $\Sigma_F^{(n)}$ 
(resp. $\Sigma_{f_1}^{(n)}$,$\Sigma_{f_2}^{(n)}$,$\Sigma_0^{(n)}$) be
the set of $n$-dimensional cones 
contained in $\Sigma_F$ 
(resp. $\Sigma_{f_1}$,$\Sigma_{f_2}$,$\Sigma_0$).

Let us consider the following two claims:
\begin{enumerate}
\item[(a)]
For any $\sigma\in\Sigma_F^{(n)}$, 
there exist $\sigma_1\in\Sigma_{f_1}^{(n)}$ and
$\sigma_2\in\Sigma_{f_2}^{(n)}$ such that
$\sigma\subset\sigma_1\cap\sigma_2$.
\item[(b)]
For any $\sigma_1\in\Sigma_{f_1}^{(n)}$ and
$\sigma_2\in\Sigma_{f_2}^{(n)}$, 
there exists $\sigma\in\Sigma_F^{(n)}$ such that
${\rm Int}(\sigma_1\cap\sigma_2)\subset\sigma$.
\end{enumerate}

[Proof of the claim (a).]
Let $\sigma\in\Sigma_F^{(n)}$.
Since $F_{\Gamma(a)}(x)$ is a monomial
for any $a\in {\rm Int}(\sigma)$ from Lemma~~7.6 
and
$F_{\Gamma(a)}(x)=
(f_1)_{\gamma_1(a)}(x)\cdot(f_2)_{\gamma_2(a)}(x)$
from Lemma~7.5 (i),
$(f_1)_{\gamma_1(a)}(x)$ and 
$(f_2)_{\gamma_2(a)}(x)$ are monomials.
Applying Lemma~7.5 again, 
we see
$a\in{\rm Int}(\sigma_1)\cap{\rm Int}(\sigma_2)
={\rm Int}(\sigma_1\cap\sigma_2)$.

[Proof of the claim (b).]
We only consider the case when 
${\rm Int}(\sigma_1\cap\sigma_2)\neq\emptyset$.
If $a\in{\rm Int}(\sigma_1\cap\sigma_2)$, then
Lemmas~7.5 and 7.6 imply that
$F_{\Gamma(a)}(x)$ is a monomial. 
Thus there exists a cone $\sigma\in\Sigma_F^{(n)}$ 
such that $a\in{\rm Int}(\sigma)$.

By using the above two claims (a), (b), 
the lemma can be shown as follows.
For any $\sigma\in\Sigma_F^{(n)}$, 
there exist $\sigma_1\in\Sigma_{f_1}^{(n)}$, 
$\sigma_2\in\Sigma_{f_2}^{(n)}$ and 
$\tilde{\sigma}\in\Sigma_F^{(n)}$ such that
$\sigma\subset\sigma_1\cap\sigma_2\subset\tilde{\sigma}$.
Since $\sigma$ and $\tilde{\sigma}$ belong to
$\Sigma_F^{(n)}$ and satisfy $\sigma\subset\tilde{\sigma}$,
$\sigma$ and $\tilde{\sigma}$ are the same cones.
Thus $\sigma=\sigma_1\cap\sigma_2\in\Sigma_0^{(n)}$,
which implies $\Sigma_F^{(n)}\subset\Sigma_0^{(n)}$.
On the other hand, 
the inclusion $\Sigma_0^{(n)}\subset\Sigma_F^{(n)}$
can be  similarly shown. 
Therefore, we see that the two fans 
$\Sigma_0$ and $\Sigma_F$ are the same.

\end{proof}
\begin{lemma}
If $F$ is nondegenerate over $\R$ 
with respect to its Newton polyhedron, 
then so is every $f_k$ for $k=1,\ldots,m$.  
\end{lemma}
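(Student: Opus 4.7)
The plan is to contrapose: assume some $f_k$ fails to be nondegenerate over $\R$ with respect to $\Gamma_+(f_k)$ at a compact face $\gamma_k$, and produce a compact face $\Gamma$ of $\Gamma_+(F)$ together with a point $x^0\in(\R\setminus\{0\})^n$ at which $\nabla F_\Gamma(x^0)=0$.

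First, by Remark~\ref{rem:2.14} (iii), since $\gamma_k$ is compact there is a valid pair $(a,l)\in\Z_{>0}^n\times\Z_+$ that defines $\gamma_k$; i.e.\ $l=l_{f_k}(a)$ and $\gamma_k=H(a,l_{f_k}(a))\cap\Gamma_+(f_k)$. For this same $a\in\Z_{>0}^n$, set
$$\Gamma:=H(a,l_F(a))\cap\Gamma_+(F),\qquad \gamma_j(a):=H(a,l_{f_j}(a))\cap\Gamma_+(f_j)\quad(j=1,\dots,m),$$
so that $\gamma_k(a)=\gamma_k$, and $\Gamma$ is a compact face of $\Gamma_+(F)$ (again by Remark~\ref{rem:2.14} (iii), since $a\in\Z_{>0}^n$). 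Lemma~\ref{thm:7.} (i) for $\R_{>0}^n$-vectors (which does not require any $\hat{\mathcal E}$ hypothesis) gives the product decomposition
$$F_\Gamma(x)=\prod_{j=1}^{m}(f_j)_{\gamma_j(a)}(x).$$

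Now suppose, toward a contradiction, that $\nabla (f_k)_{\gamma_k}(x^0)=0$ for some $x^0\in(\R\setminus\{0\})^n$. By Remark~\ref{rem:2.14} (ii) the polynomial $(f_k)_{\gamma_k}$ is quasihomogeneous with weights $a$ and degree $l_{f_k}(a)$; since $f_k(0)=0$ and $a_j>0$ for every $j$, we have $l_{f_k}(a)>0$. Euler's identity
$$l_{f_k}(a)\,(f_k)_{\gamma_k}(x^0)=\sum_{j=1}^{n}a_j x^0_j\,\partial_j(f_k)_{\gamma_k}(x^0)=0$$
then forces $(f_k)_{\gamma_k}(x^0)=0$.

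Applying the product rule to $F_\Gamma=\prod_j(f_j)_{\gamma_j(a)}$ at $x^0$, each term of $\partial_i F_\Gamma(x^0)$ either contains the factor $\partial_i(f_k)_{\gamma_k}(x^0)=0$ (the term where the derivative hits the $k$-th factor) or the factor $(f_k)_{\gamma_k}(x^0)=0$ (every other term). Hence $\nabla F_\Gamma(x^0)=0$ with $x^0\in(\R\setminus\{0\})^n$, contradicting the nondegeneracy of $F$ over $\R$ with respect to $\Gamma_+(F)$. The main (minor) subtlety is picking a valid pair $(a,l)$ with strictly positive entries so that the corresponding face of $\Gamma_+(F)$ is automatically compact and Euler's identity applies; everything else is a direct consequence of Lemma~\ref{thm:7.} and the product rule.
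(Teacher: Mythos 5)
Your proposal is correct and follows essentially the same route as the paper's proof: argue by contraposition, pick a strictly positive valid pair $a$ for the offending compact face, use the Euler identity (with degree $l_{f_k}(a)>0$) to get $(f_k)_{\gamma_k}(x^0)=0$, and then kill every partial of $F_{\Gamma(a)}=\prod_{j}(f_j)_{\gamma_j(a)}$ at $x^0$ by the product rule. The only cosmetic differences are that you handle general $m$ directly instead of reducing to $m=2$ by induction, and the product decomposition you invoke is the content of Lemma~7.5~(i) rather than Proposition~7.4.
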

\begin{proof}
By an inductive argument, it also suffices to show the lemma
in the case when $m=2$.

Suppose that $f_1$ is not nondegenerate.
Then there exists a compact face $\gamma$ of 
$\Gamma_+(f_1)$ 
and a point $x_*\in(\R\setminus\{0\})^n$ 
such that 
$\frac{\partial(f_1)_{\gamma}}{\partial x_j}(x_*)=0$
for $j=1,\ldots,n$.
Let $a\in\R_{>0}^n$ satisfy that
$(a,l_{f_1}(a))$ is a valid pair defining $\gamma$.
Moreover, from the Euler identity:
$\sum_{j=1}^n a_j x_j \frac{\partial(f_1)_{\gamma}}{\partial x_j}(x)
=(f_1)_{\gamma}(x)$,
we have $(f_{1})_{\gamma}(x_*)=0$.
Since
$F_{\Gamma(a)}(x)=
(f_1)_{\gamma}(x)\cdot(f_2)_{\gamma_2(a)}(x)$ from Lemma~7.5 (i),
we have
$$
\frac{\partial F_{\Gamma(a)}}{\partial x_j}(x_*)
=
\frac{\partial(f_1)_{\gamma}}{\partial x_j}(x_*)\cdot(f_2)_{\gamma_2(a)}(x_*)+
(f_1)_{\gamma}(x_*)\cdot
\frac{\partial(f_2)_{\gamma_2(a)}}{\partial x_j}(x_*)=0,
$$
for $j=1,\ldots, m$.
This shows that $F$ is not nondegenerate.
\end{proof}

\begin{remark}
The converse of the assertion in Lemma~7.8 does not hold in general. 
Indeed, consider the case $f_1(x_1,x_2)=f_2(x_1,x_2)=x_1^2-x_2^2$. 
Therefore, the assumptions in Proposition~7.4 
are stronger than those in Theorem~7.2. 
\end{remark}

\begin{proof}[Proof of Proposition~7.4]
Applying Lemmas 7.7 and 7.8 to Theorem~7.2, 
we can easily show the proposition.
\end{proof}

\section{Poles of local zeta type functions}\label{sec:8}

The subsequent six sections are devoted to the 
investigation of poles of weighted local zeta functions
and similar functions. 
Throughout these sections, we usually assume the following
without any mentioning: 
Let $U$ be an open neighborhood of the origin in $\R^n$ and 
the functions $f$, $g$, $\varphi$ satisfy 
the conditions (A), (B), (C) 
in the beginning of Section~\ref{sec:3}.
But we sometimes consider the case when 
$g$ is a nonzero flat function in Section~10, 
which does not satisfy the condition (B).

By means of the Mellin transform, 
there exists a clear relationship 
between the oscillatory integral (\ref{eqn:1.1})
and some functions  
similar to the local zeta function (\ref{eqn:1.3})
(see Section~\ref{subsec:14.1}).
As is well known in 
\cite{jea70},\cite{mal74},\cite{var76},\cite{igu78},\cite{agv88}, etc.,
through this relationship, 
the investigation of the behavior of oscillatory integrals 
is reduced to 
that of the properties of poles of these functions.   
The results in Section~4 will be proved 
in Section~\ref{subsec:14.3}
by using the results in these six sections.

\subsection{Functions similar to the local zeta functions}\label{subsec:8.1}
We investigate the properties of 
poles of the functions:
\begin{equation}\label{eqn:8.1}
Z_{+}(s;\varphi)=\int_{\R^n} f(x)_{+}^s g(x)\varphi(x)dx, \quad 
Z_{-}(s;\varphi)=\int_{\R^n} f(x)_{-}^s g(x)\varphi(x)dx, 
\end{equation}
where 
$$
f(x)_+=\max\{f(x),0\} \mbox{ \,\,and \,\,} 
f(x)_-=\max\{-f(x),0\}
$$ 
and the ({\it weighted}) 
{\it local zeta function}:
\begin{equation}\label{eqn:8.2}
Z(s;\varphi)=\int_{\R^n} |f(x)|^s g(x)\varphi(x)dx.
\end{equation}
We call $g$ the {\it weight}.
From the properties of $Z_{+}(s;\varphi)$ and $Z_{-}(s;\varphi)$, 
we can easily obtain analogous properties 
of $Z(s;\varphi)$  
by using the relationship: 
$Z(s;\varphi)=Z_{+}(s;\varphi)+Z_{-}(s;\varphi)$.

It is easy to see that $Z_{\pm}(s;\varphi)$ and $Z(s;\varphi)$
are holomorphic functions on the region 
${\rm Re} (s)>0$. 
Moreover, 
if there exists a ``simultaneous resolution of singularities''
with respect to the functions $f$ and $g$, 
then 
$Z_{\pm}(s;\varphi)$ and $Z(s;\varphi)$ 
can be meromorphically continued
to the whole complex plane and, moreover,  
precise properties of their poles 
can be seen.
Following the analysis of Varchenko in \cite{var76} 
(see also \cite{agv88}),
let us explain this process. 

\subsection{How to use resolution of singularities}
\label{subsec:8.2}

For the moment, 
we assume the existence of a proper real analytic mapping 
$\pi$ from some $n$-dimensional real analytic manifold $Y$ 
to $\R^n$ such that 
at each point of the set $\pi^{-1}(0)$, 
there exist smooth local coordinates 
$y=(y_1,\ldots,y_n)$ satisfying the following properties:
\begin{enumerate}
\item
There exist nonnegative integers $l_j, \tilde{l}_j$
and smooth functions $\tilde{f}, \tilde{g}$ defined 
near the origin
with $\tilde{f}(0),\tilde{g}(0)\neq 0$ such that 
\begin{eqnarray}\label{eqn:8.3}
f(\pi(y))=\left(\prod_{j=1}^n y_j^{l_j}\right)\tilde{f}(y), \quad
g(\pi(y))=\left(\prod_{j=1}^n y_j^{\tilde{l}_j}\right)\tilde{g}(y). 
\end{eqnarray}
\item
The Jacobian of the mapping $\pi$ has the form
\begin{equation}\label{eqn:8.4}
J_{\pi}(y)=\pm \prod_{j=1}^n y_j^{\tilde{m}_j-1},
\end{equation}
where $\tilde{m}_j$ are positive integers. 
\item
On a neighborhood of the origin 
in $\R^n$, 
$\pi$ is a diffeomorphism outside a proper analytic subset in 
$\R^n$.
\end{enumerate}
The above proper map
$\pi:Y\to\R^n$
is called a 
{\it simultaneous resolution of singularities} 
with respect to $f$ and $g$.

By means of the above map $\pi:Y\to\R^n$, 
we have 
\begin{equation}\label{eqn:8.5}
\begin{split}
&Z_{\pm}(s;\varphi)
=\int_{\R^n} f(x)_{\pm}^s g(x)\varphi(x)dx\\
&\quad\quad
=\int_{Y} f(\pi(y))_{\pm}^s g(\pi(y))\varphi(\pi(y))J_{\pi}(y)dy.
\end{split}
\end{equation}
Furthermore, 
by using an appropriate smooth finite partition of unity,
substituting (\ref{eqn:8.3}),(\ref{eqn:8.4}) and 
applying the orthant decomposition: 
\begin{equation}\label{eqn:8.6}
\int_{\R^n} F(x)dx=\sum_{\theta\in\{-1,1\}^n}
\int_{\R_+^n}F(\theta_1 x_1,\ldots,\theta_n x_n)dx,
\end{equation}
where $F$ is a continuous function on $\R^n$,
the above (\ref{eqn:8.5}) implies that 
the functions $Z_{\pm}(s;\varphi)$ can be written 
in a certain sum of functions of the following form:
\begin{equation}\label{eqn:8.7}
\int_{\R_+^n}
\left(\prod_{j=1}^n y_j^{l_j s+\tilde{l}_j+\tilde{m}_j -1}\right)
\psi(y;s) dy,
\end{equation}
where $\psi(\cdot;s)$ is
a $C_0^{\infty}$ function of $y$ on $\R^n$ for any $s\in\C$ 
and 
$\psi(y;\cdot)$ is an entire function on $\C$
for any $y\in\R^n$.
As we will show in the next section, 
the above integral is easy to be treated and 
a direct computation gives 
many important properties of its poles. 
From the above arguments, 
it becomes an essentially important 
problem to find an appropriate
map $\pi$ in our investigation of 
the functions $Z_{\pm}(s;\varphi)$ and 
the oscillatory integral $I(t;\varphi)$. 

Let us discuss the existence 
of the simultaneous resolution of singularities with respect ot $f$ and $g$
and its application in earlier studies of oscillatory integrals.
First, we consider the unweighted case 
(i.e., $g\equiv 1$). 
If $f$ is real analytic near the origin, 
then a famous Hironaka's theorem \cite{hir64}
implies the existence of a desired map $\pi:Y \to \R^n$ 
(see also \cite{ati70}).
Moreover, an easy computation in the next section
shows that poles of the integral (\ref{eqn:8.7})
is contained in arithmetic progressions.
As a result, 
one can see that the distribution of
poles of $Z_{\pm}(s;\varphi)$ are contained in a union of a 
finite number of arithmetic progressions 
and, moreover, that 
an asymptotic expansion of 
the oscillatory integral $I(t;\varphi)$ in (\ref{eqn:1.1}) 
can be expressed as in (\ref{eqn:1.2}).  
These analyses are seen in the earlier works
\cite{jea70},\cite{mal74}, etc. 
This kind of application of 
Hironaka's resolution theorem to the analysis
can be traced back to the works of 
Berenstein and Gel'fand \cite{bg69}
and Atiyah \cite{ati70}.
By the way, these results are still very abstract. 
In order to determine or exactly estimate the oscillation index
and its multiplicity, 
one has to get more quantitative information 
of the leading pole of the integrals in (\ref{eqn:8.1}).  
Varchenko \cite{var76} constructs resolution of singularities 
by using the theory of toric varieties 
and describes the values 
of $l_j,\tilde{m}_j$ in (\ref{eqn:8.3}),(\ref{eqn:8.4}) 
in terms of the geometrical information 
of the Newton polyhedron of $f$ under the assumption 
(D) in Section~3.
As a result, he gave precise results
in Theorem~\ref{thm:3.1} in Section~3.
Furthermore, 
the authors \cite{kn13} obtain an analogous result
when $f$ satisfies the condition:
\begin{itemize}
\item[(E)]
$f$ belongs to the class $\hat{\mathcal{E}}(U)$
and is nondegenerate over $\R$ with respect to its
Newton polyhedron.
\end{itemize}

Next, 
let us consider the weighted case (i.e., $g\not\equiv 1$).
When the weight is a monomial, 
the above analysis of Varchenko 
can be directly generalized and 
the values of $\tilde{l}_j$ are 
also computed by using the resolution for the function $f$ only
(see \cite{vas79} and Section~\ref{sec:10} in this paper).  
More general weighted case essentially requires  
a simultaneous resolution of singularities 
with respect to $f$ and $g$.
Remember that a map
$\pi:Y_{\Sigma}\to\R^n$ 
in Section~\ref{sec:7} has been constructed
in the case of $\hat{\mathcal{E}}(U)$, 
which is similar to the
simultaneous resolution of singularities 
with respect to $f$ and $g$,
by using the geometrical information 
of the Newton polyhedra of $f$ and $g$
under some conditions. 
Though this map $\pi$ does not satisfy all the conditions 
for resolution of singularities with respect to $f$ and $g$, 
we can obtain sufficiently precise results 
about the poles of $Z_{\pm}(s;\varphi)$ by means of this map 
in Sections~10, 11 and 12.



\section{Computation on poles of the elementary integrals}\label{sec:9}

In this section, 
we investigate poles of 
an elementary integral of the form:
\begin{equation}\label{eqn:9.1}
L(s)=
\int_{\R_+^n}
\left(\prod_{j=1}^n y_j^{l_j s+m_j -1}\right)
\psi(y;s) dy,
\end{equation}
where $l_j\in\Z_+$, $m_j\in\N$ and $\psi$
is as in (\ref{eqn:8.7}).
(Put $m_j=\tilde{l}_j+\tilde{m}_j$, then
(\ref{eqn:8.7}) is given.)

\subsection{One-dimensional model}\label{subsec:9.1}

By repeated integrals, 
the analysis of poles of the integral 
$L(s)$ is essentially reduced to 
that of the following simple one-dimensional 
integral:
\begin{equation}\label{eqn:9.2}
L_1(s)=\int_0^{\infty}y^s\psi(y)dy, 
\end{equation}
where $\psi\in C_0^{\infty}(\R)$. 
Observing the convergence of the integral,
we see that $L_1$ is holomorphic on the region
${\rm Re}(s)>-1$. 
Furthermore, 
we obtain the following.
\begin{lemma}\label{lem:9.1}
The function $L_1$ can be meromorphically extended 
to the whole complex plane  and its
poles are contained in $(-\N)$. 
Moreover, we have
\begin{equation}\label{eqn:9.3}
\lim_{s\to-k}(s+k)L_1(s)
=\frac{1}{(k-1)!}\psi^{(k-1)}(0) \quad \mbox{for $k\in\N$}.
\end{equation}
In particular, 
\begin{equation}\label{eqn:9.4}
\lim_{s\to-1}(s+1)L_1(s)
=\psi(0).
\end{equation}
\end{lemma}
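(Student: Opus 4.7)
The plan is to prove Lemma~\ref{lem:9.1} by isolating the contribution coming from the origin through a Taylor expansion of $\psi$, since the integrand is smooth away from $y=0$ and $\psi$ has compact support. First I would choose $R>0$ with $\mathrm{Supp}(\psi)\subset(-R,R)$ and split
\begin{equation*}
L_1(s)=\int_0^1 y^s\psi(y)\,dy+\int_1^{R}y^s\psi(y)\,dy.
\end{equation*}
The second piece is entire in $s$ because $y\mapsto y^s$ is smooth on $[1,R]$ and the integrand is bounded uniformly on compact subsets of $\C$, so it plays no role in the pole structure.

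Next, for a fixed integer $N\geq 1$, I would write the Taylor expansion with integral remainder
\begin{equation*}
\psi(y)=\sum_{j=0}^{N-1}\frac{\psi^{(j)}(0)}{j!}y^j+R_N(y),\qquad R_N(y)=O(y^N) \text{ as } y\to 0,
\end{equation*}
and substitute into $\int_0^1 y^s\psi(y)\,dy$. Each monomial contribution evaluates explicitly on ${\rm Re}(s)>-1$ to
\begin{equation*}
\int_0^1 y^{s+j}\,dy=\frac{1}{s+j+1},
\end{equation*}
so the sum gives the meromorphic expression $\sum_{j=0}^{N-1}\frac{\psi^{(j)}(0)}{j!\,(s+j+1)}$, with simple poles at $s=-1,-2,\ldots,-N$. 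The remainder integral $\int_0^1 y^s R_N(y)\,dy$ converges absolutely and is holomorphic on ${\rm Re}(s)>-N-1$ by the bound on $R_N$, using standard Morera/dominated convergence arguments.

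Combining these, we obtain on the half-plane ${\rm Re}(s)>-N-1$
\begin{equation*}
L_1(s)=\sum_{j=0}^{N-1}\frac{\psi^{(j)}(0)}{j!\,(s+j+1)}+H_N(s),
\end{equation*}
where $H_N$ is holomorphic on this half-plane. Since $N$ is arbitrary and the expressions agree on overlapping regions, this produces a meromorphic extension to all of $\C$ whose only possible poles are simple and lie in $-\N$. Reading off the residue at $s=-k$ (setting $j=k-1$) yields the formula $\lim_{s\to -k}(s+k)L_1(s)=\psi^{(k-1)}(0)/(k-1)!$, and specializing to $k=1$ gives (\ref{eqn:9.4}). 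There is no real obstacle here; the only point to be slightly careful about is the uniform control of the remainder $R_N$ so that $\int_0^1 y^s R_N(y)\,dy$ is genuinely holomorphic on ${\rm Re}(s)>-N-1$, which follows from the standard estimate $|R_N(y)|\leq \frac{\sup|\psi^{(N)}|}{N!}y^N$ on $[0,1]$.
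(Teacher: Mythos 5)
Your proof is correct, but it follows a different route from the paper. You split the integral at $y=1$, Taylor-expand $\psi$ near the origin with the integral-form remainder, integrate the monomials explicitly to exhibit the simple poles at $-\N$, and absorb the remainder into a function holomorphic on ${\rm Re}(s)>-N-1$; letting $N\to\infty$ gives the meromorphic continuation and the residue $\psi^{(k-1)}(0)/(k-1)!$ at $s=-k$. The paper instead does a single repeated integration by parts,
\begin{equation*}
\int_0^{\infty}y^s\psi(y)\,dy=\frac{(-1)^k}{(s+1)\cdots(s+k)}\int_0^{\infty}y^{s+k}\psi^{(k)}(y)\,dy,
\end{equation*}
which provides the continuation in one line and yields the residue at $s=-k$ from $\int_0^{\infty}\psi^{(k)}(y)\,dy=-\psi^{(k-1)}(0)$ (using compact support), with no domain splitting or remainder estimates. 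Both arguments are standard and both show the poles are at most simple; your version makes the pole structure perhaps more visibly explicit, while the paper's integration-by-parts device is the one that is actually reused later (Propositions~\ref{pro:9.2}, \ref{pro:9.3} and Lemma~\ref{lem:9.4}), where it is applied variable by variable in the multi-dimensional integrals --- a setting in which the Taylor-expansion route would be considerably more cumbersome to iterate. Your one point of care, the uniform bound $|R_N(y)|\le \sup|\psi^{(N)}|\,y^N/N!$ guaranteeing holomorphy of the remainder term, is handled correctly.
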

\begin{proof}
The integrations by parts imply 
\begin{equation}\label{eqn:9.5}
\begin{split}
&\int_0^{\infty}y^s\psi(y)dy
=\frac{-1}{s+1}\int_0^{\infty}y^{s+1}\psi'(y)dy=
\cdots\\
&\quad\quad\quad
\cdots=\frac{(-1)^k}{(s+1)\cdots(s+k)}
\int_0^{\infty}y^{s+k}\psi^{(k)}(y)dy,
\end{split}
\end{equation}
for any nonnegative integer $k$.
The assertions in the lemma easily 
follow from the above equations. 
\end{proof}

\subsection{Positions and orders of poles}\label{subsec:9.2}

Observing the one-dimensional model, we can see 
important properties of poles of the function 
$L(s)$ in (\ref{eqn:9.1}).
Let 
$$
B:=\{j:l_j\neq 0\} \subset \{1,\ldots,n\}.
$$
It is easy to see that
$L(s)$ is holomorphic on the region 
${\rm Re}(s) > \max\{-(m_j-1)/l_j:j\in B\}$.
Moreover, the following can be seen.  
\begin{proposition}[\cite{gs64},\cite{agv88}]\label{pro:9.2}
The function $L(s)$ can be analytically continued
to the whole complex plane as a meromorphic function
and its poles belong to the set 
\begin{equation}\label{eqn:9.6}
\left\{-\frac{m_j+\nu_j}{l_j}:
\nu_j\in \Z_+, j\in B \right\}.
\end{equation}
Moreover, suppose that $p$ belongs to the above set and let 
$$
A(p)
=
\left\{
j\in B:\mbox{there exists $\nu_j\in\Z_+$ such that } 
-\frac{m_j+\nu_j}{l_j}=p
\right\}.
$$
Then, if 
$p$ is a pole of $L(s)$, then the order of the pole of
$L(s)$ at $p$ is at most $\# A(p)$.
\end{proposition}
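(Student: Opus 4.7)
The plan is to reduce the analysis of $L(s)$ to repeated application of the one-dimensional model in Lemma~\ref{lem:9.1}, treating $L(s)$ as an iterated integral by Fubini. First I would choose $R>0$ so that $\mathrm{supp}\,\psi(\cdot;s)$ is contained in $[0,R]^n$ uniformly in $s$ (which is possible since $\psi$ has compact support in $y$), and split
\begin{equation*}
L(s)=\int_{[0,R]^n}\left(\prod_{j=1}^n y_j^{l_j s+m_j-1}\right)\psi(y;s)\,dy,
\end{equation*}
so that Fubini allows me to perform the $y$-integrations in any order. The factors with $j\notin B$ contribute $y_j^{m_j-1}$, where $m_j\in\N$, so integration against a smooth compactly-supported function in $y_j$ produces something holomorphic in $s$ (in fact, independent of $s$ through that variable) and introduces no poles. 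Therefore only the coordinates $j\in B$ can be responsible for poles.

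Next, for each $j\in B$, I would apply the one-dimensional argument of Lemma~\ref{lem:9.1}: integrating by parts $\nu_j+1$ times in $y_j$ yields
\begin{equation*}
\int_0^\infty y_j^{l_js+m_j-1}\Psi(y_j)\,dy_j
=\frac{(-1)^{\nu_j+1}}{\prod_{k=0}^{\nu_j}(l_js+m_j+k)}\int_0^\infty y_j^{l_js+m_j+\nu_j}\Psi^{(\nu_j+1)}(y_j)\,dy_j,
\end{equation*}
where $\Psi$ stands for the remaining integrand viewed as a smooth compactly-supported function of $y_j$ with parameters $s$ and $(y_k)_{k\neq j}$. Doing this simultaneously in each coordinate $j\in B$ (which is legitimate because partial derivatives in different variables commute and $\psi(\cdot;s)$ is jointly smooth in $y$ and entire in $s$), and choosing $\nu_j$ large enough, exhibits $L(s)$ on any prescribed half-plane as a product of the $\#B$ meromorphic factors $\prod_{k=0}^{\nu_j}(l_js+m_j+k)^{-1}$ times an integral that is holomorphic on that half-plane. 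Since the $\nu_j$ can be chosen arbitrarily large, this produces a meromorphic continuation of $L(s)$ to all of $\C$, with poles contained in the union over $j\in B$ and $\nu_j\in\Z_+$ of the values $-(m_j+\nu_j)/l_j$, which is exactly the set~\eqref{eqn:9.6}.

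Finally, for the order estimate at a pole $p$, I would observe that in the product-of-factors representation above, a given $s=p$ can appear as a zero of the denominator $l_js+m_j+k$ only when $j\in A(p)$ and for exactly one value of $k$ per such $j$ (since $k\mapsto -(m_j+k)/l_j$ is injective). Thus the total order of vanishing of the product of denominators at $s=p$ is at most $\#A(p)$, giving the claimed bound on the order of the pole. The main obstacle, which is really a bookkeeping issue rather than a deep one, is to justify that the iterated partial integrations in different variables can be carried out simultaneously and that the resulting boundary terms vanish uniformly on compact subsets of $\C$; this is handled by the compact support of $\psi$ in $y$ and the joint smoothness/entirety hypothesis, which lets one interchange differentiation in $y$ and holomorphic continuation in $s$ without loss.
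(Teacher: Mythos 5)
Your proposal is correct and follows essentially the same route as the paper: the paper's proof is precisely to iterate the integration-by-parts argument of Lemma~\ref{lem:9.1} in the variables $y_j$ with $j\in B$, which yields the meromorphic continuation, the candidate pole set (\ref{eqn:9.6}), and the order bound $\#A(p)$ exactly as you describe. Your additional bookkeeping (injectivity of $k\mapsto -(m_j+k)/l_j$ for the order count, vanishing of boundary terms via compact support, holomorphy of the remaining integral on a prescribed half-plane for $\nu_j$ large) just makes explicit what the paper leaves to the reader.
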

\begin{proof}
Observing the proof of Lemma~\ref{lem:9.1}, 
we can easily show the above from 
the integrations by parts
with respect to $y_j$ for all $j\in B$. 
\end{proof}
Here, we return to the investigation of poles of 
$Z_{\pm}(s;\varphi)$, $Z(s;\varphi)$ in Section~\ref{sec:8}.
When there exists a ``simultaneous resolution of singularities''
with respect to $f$ and $g$, 
$Z(s;\varphi), Z_{\pm}(s;\varphi)$ can be 
written in a certain sum of the integrals of the form 
(\ref{eqn:9.1}). 
Applying Proposition~\ref{pro:9.2} to these integrals,  
we see that their poles are contained in 
some union of sets of the form (\ref{eqn:9.6}). 

\subsection{Coefficients}\label{subsec:9.3}

Let us compute the coefficients
of terms in the Laurent expansion of $L(s)$
at the above candidate poles.
This computation will be useful for the investigations 
in the latter sections.

Let
$s_*:=\max\left\{-m_j/l_j: j\in B\right\}$
and $A_*:=A(s_*)\subset B$.
Note that $s_*$ is the leading pole of $L(s)$ and 
$\# A_*$ is its order, 
if the following coefficient does not vanish. 
\begin{proposition}\label{pro:9.3}
The coefficient of $(s-s_*)^{-\# A_*}$
in the Laurent expansion of $L(s)$ at $s=s_*$
is equal to
\begin{equation*}
\begin{cases}
&\displaystyle
\frac{\psi(0;s_*)}{\prod_{j=1}^n l_j}, \,\,\,\,\,\,
\mbox{ if $A_*=B=\{1,\ldots,n\}$,}
\\
&\displaystyle 
\frac{1}{\prod_{j\in A_*}l_j}
\int_{\R_+^{n-\# A_*}}
\left(\prod_{j\notin A_*}y_j^{l_j s_*+m_j-1}\right)
\psi(T_{A_*}(y);s_*)
\prod_{j\notin A_*}dy_j, \,\,\,
\mbox{ otherwise},
\end{cases}
\end{equation*}
where $T_{A_*}(\cdot)$ is as in $(\ref{eqn:1.4})$.
\end{proposition}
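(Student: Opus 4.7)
The plan is to iterate the integration-by-parts identity from the proof of Lemma~9.1, once in each variable $y_j$ with $j \in A_*$. For $\mathrm{Re}(s)$ large, the boundary terms vanish — at infinity by compact support of $\psi$, at the origin because $\mathrm{Re}(l_j s + m_j) > 0$ — giving
\begin{equation*}
\int_0^\infty y_j^{l_j s + m_j - 1}(\,\cdot\,)\, dy_j \;=\; -\frac{1}{l_j s + m_j}\int_0^\infty y_j^{l_j s + m_j}\,\partial_{y_j}(\,\cdot\,)\, dy_j.
\end{equation*}
Applying this for every $j \in A_*$ (justified by Fubini and smoothness of $\psi$) rewrites
\begin{equation*}
L(s) \;=\; \prod_{j \in A_*}\!\Bigl(\frac{-1}{l_j s + m_j}\Bigr)\!\int_{\R_+^n}\!\Bigl(\prod_{j \in A_*} y_j^{l_j s + m_j}\Bigr)\!\Bigl(\prod_{j \notin A_*} y_j^{l_j s + m_j - 1}\Bigr)\!\Bigl(\prod_{j \in A_*}\partial_{y_j}\Bigr)\!\psi(y;s)\, dy,
\end{equation*}
which makes the meromorphic continuation near $s = s_*$ manifest.

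Let $k := \#A_*$. Since $l_j s + m_j = l_j(s - s_*)$ for $j \in A_*$, the prefactor equals $(-1)^k\bigl(\prod_{j \in A_*} l_j\bigr)^{-1}(s - s_*)^{-k}$. Multiplying by $(s - s_*)^k$ and letting $s \to s_*$, I interchange limit and integral by dominated convergence: on the compact support of $\psi$, the integrand is bounded uniformly in $s$ near $s_*$, because the exponent $l_j \mathrm{Re}(s_*) + m_j - 1$ is strictly greater than $-1$ for $j \in B \setminus A_*$ (by maximality of $s_*$) and equals $m_j - 1 \geq 0$ for $j \notin B$. Since $y_j^{l_j s_* + m_j} = 1$ for $j \in A_*$, this gives
\begin{equation*}
\lim_{s \to s_*}(s - s_*)^k L(s) \;=\; \frac{(-1)^k}{\prod_{j \in A_*} l_j}\int_{\R_+^n}\!\Bigl(\prod_{j \notin A_*} y_j^{l_j s_* + m_j - 1}\Bigr)\!\Bigl(\prod_{j \in A_*}\partial_{y_j}\Bigr)\!\psi(y;s_*)\, dy.
\end{equation*}

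Finally, carrying out the $y_j$-integrations for $j \in A_*$ by the fundamental theorem of calculus — once more using the compact support of $\psi$ — contributes a factor $(-1)^k\psi(T_{A_*}(y); s_*)$; the two $(-1)^k$ factors cancel and yield exactly the claimed formula. The degenerate case $A_* = B = \{1,\ldots,n\}$ is simply the instance in which no outer integration remains, and the formula collapses to $\psi(0;s_*)/\prod_{j=1}^n l_j$. The computation is essentially elementary; the only mild technical point is the dominated-convergence step, whose case split into $j \in B \setminus A_*$ versus $j \notin B$ is precisely what makes the coefficient cleanly isolate the set $A_*$.
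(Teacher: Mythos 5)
Your proposal is correct and follows essentially the same route as the paper: one integration by parts in each variable $y_j$ with $j\in A_*$ (as in the proof of Lemma~9.1), which isolates the factor $\prod_{j\in A_*}(l_j(s-s_*))^{-1}$, followed by the computation of $\lim_{s\to s_*}(s-s_*)^{\# A_*}L(s)$ and evaluation of the remaining $A_*$-integrations at the boundary. The justifications you add (vanishing boundary terms, $\nu_j=0$ for $j\in A_*$ implicit in $l_js_*+m_j=0$, and the dominated-convergence interchange using $l_js_*+m_j>0$ for $j\in B\setminus A_*$) are exactly the details the paper leaves to the reader.
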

\begin{proof}
Recalling the proof of Lemma~\ref{lem:9.1}, 
we see that the integration by parts
with respect to each $y_j$ for $j\in A_*$
and the computation of 
$\lim_{s\to s_*} (s-s_*)^{\# A_*} L(s)$
give the lemma.
\end{proof}
In the analysis of $Z_{\pm}(s;\varphi)$ in the latter sections, 
we must consider the case when 
the coefficients of some poles have particular properties. 
(Actually, this case is induced by the zero set of
the function $f_{\sigma}$ in Lemma~\ref{lem:7.1}.)
This property will be understood through the following 
functions:
\begin{equation*}
L_{\pm}(s)=
\int_{\R_+^n}
\left(
y_n^s
\prod_{j\in D}
y_j^
{l_j s+m_j-1}
\right)
\psi(y_1,\ldots,y_{n-1},\pm y_n)dy,
\end{equation*}
where $D$ is a subset of $B\setminus \{n\}$.

\begin{lemma}\label{lem:9.4}
For $\lambda\in -\N$,
let $A_{\lambda}$ be a subset in
$D$ defined by
$
A_{\lambda}
=\{j:
l_j\lambda+m_j-1\in -\N\}.
$
Then the functions $L_{\pm}(s)$ have at 
$s=\lambda$ poles of order not higher 
than $\#  A_{\lambda}+1$.
Moreover, the following holds$:$
Let $a_{\lambda}^{\pm}$ be the coefficients of 
$(s-\lambda)^{-\#  A_{\lambda}-1}$ in the Laurent expansions of 
$L_{\pm}(s)$ at $s=\lambda$, 
respectively. 
Then the equation:
 $a_{\lambda}^+=(-1)^{\lambda-1}a_{\lambda}^-$
for $\lambda\in-\N$
holds.
\end{lemma}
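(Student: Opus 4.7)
The plan is to reduce the claim to the one-dimensional Lemma~\ref{lem:9.1} applied coordinate-by-coordinate, using the bookkeeping of Propositions~\ref{pro:9.2} and~\ref{pro:9.3}, and then to isolate where the sign $\pm$ enters. First, observe that $L_{\pm}(s)$ has exactly the form to which Proposition~\ref{pro:9.2} applies, with the index set $D\cup\{n\}$ playing the role of $B$ and with the $y_n$-factor corresponding to the pair $(l_n,m_n)=(1,1)$. For $\lambda\in-\N$, the integer $\nu_n:=-\lambda-1$ is nonnegative, so in the notation of Proposition~\ref{pro:9.2} we have $A(\lambda)=A_{\lambda}\cup\{n\}$; hence the order of the pole of $L_{\pm}$ at $s=\lambda$ is at most $\# A_{\lambda}+1$.

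To compute the top-order Laurent coefficient $a_{\lambda}^{\pm}$, I would follow the proof of Proposition~\ref{pro:9.3}. Set $n_j:=1-l_j\lambda-m_j\in\N$ for $j\in A_{\lambda}$. Integrate by parts $-\lambda$ times in $y_n$ and $n_j$ times in each $y_j$ with $j\in A_{\lambda}$, exactly as in Lemma~\ref{lem:9.1}. Each such iteration produces a denominator factor vanishing simply at $s=\lambda$, and together these account for the required $\# A_{\lambda}+1$ simple factors. After isolating them, the coefficient of $(s-\lambda)^{-\# A_{\lambda}-1}$ takes the form $a_{\lambda}^{\pm}=C\cdot J_{\pm}$, where $C$ is a constant depending only on $\lambda$ and the pairs $(l_j,m_j)$ (in particular \emph{independent} of the sign), and $J_{\pm}$ is an integral over the remaining coordinates of a mixed derivative $\partial_{y_n}^{-\lambda-1}\prod_{j\in A_{\lambda}}\partial_{y_j}^{n_j-1}\psi(y_1,\ldots,y_{n-1},\pm y_n)$ evaluated at $y_n=0$ and at $y_j=0$ for $j\in A_{\lambda}$.

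The $\pm$-dependence is then isolated in one chain-rule computation: since $\partial_{y_n}^{-\lambda-1}\bigl[\psi(\ldots,-y_n)\bigr]\big|_{y_n=0}=(-1)^{-\lambda-1}\bigl(\partial_{y_n}^{-\lambda-1}\psi\bigr)(\ldots,0)$, and the derivatives in $y_j$ for $j\in A_{\lambda}$ commute with this operation and act on variables unaffected by the sign flip, we find $J_{-}=(-1)^{-\lambda-1}J_{+}$. Rearranging gives $a_{\lambda}^{+}=(-1)^{\lambda+1}a_{\lambda}^{-}=(-1)^{\lambda-1}a_{\lambda}^{-}$, as required.

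The main obstacle is purely bookkeeping: verifying that all constants produced by the iterated integrations by parts combine into a single overall factor $C$ common to $L_{+}$ and $L_{-}$. This reduces to the observation that, among all the integration variables, only $y_n$ is tied to the sign, so the $y_j$-integrations for $j\in A_{\lambda}$ contribute identical numerical factors in the two cases, and the sole remaining asymmetry is the chain-rule factor $(-1)^{-\lambda-1}$ already accounted for.
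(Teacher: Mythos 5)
Your proposal is correct and follows essentially the same route as the paper's proof: the order bound comes from Proposition~\ref{pro:9.2} with the index set $D\cup\{n\}$ and $A(\lambda)=A_{\lambda}\cup\{n\}$, and the top Laurent coefficient is computed by iterated integration by parts as in Lemma~\ref{lem:9.1}, with the $\pm$-dependence isolated in the chain-rule factor $(\pm 1)^{-\lambda-1}=(\pm 1)^{\lambda-1}$ coming from the $y_n$-derivatives of $\psi(y_1,\ldots,y_{n-1},\pm y_n)$, exactly as in the paper's formula $a_{\lambda}^{\pm}=(\pm 1)^{\lambda-1}C_{\lambda}$. One wording slip worth fixing: it is not \emph{each} iteration but only the final integration by parts in each variable of $A_{\lambda}\cup\{n\}$ that produces a denominator factor vanishing (simply) at $s=\lambda$ — the earlier factors are nonzero there — which is precisely how the count $\#A_{\lambda}+1$ arises, as your following sentence correctly asserts.
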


\begin{proof}
Let $k_j$ be positive integers defined by 
$k_j=\min\{k\in \N: l_j\lambda+m_j+k>0\}$
for $j\in D$.
We remark that 
$l_j\lambda+m_j+k_j=1$ if and only if $j\in A_{\lambda}$.
It easily follows from Proposition~\ref{pro:9.2}
that the functions $L_{\pm}(s)$ can be 
analytically extended to $\C$ as meromorphic functions
and they have at $s=\lambda$ poles of order not
higher than $\# A_{\lambda}+1$.

By using the integration by parts,
a direct computation gives 
$a_{\lambda}^{\pm}=(\pm 1)^{\lambda-1}C_{\lambda}$,
where $C_{\lambda}$ is given as follows.
If $D=A_{\lambda}=\{1,\ldots,n-1\}$, then
\begin{equation*}
C_{\lambda}=\frac{1}{(-\lambda-1)!}
\left(\prod_{j=1}^{n-1}
\frac{1}{l_j(-l_j \lambda-m_j)!}\right)
(\partial^{\alpha}\psi)(0).
\end{equation*}
Otherwise, 
\begin{equation*}
\begin{split}
&C_{\lambda}=\frac{1}{(-\lambda-1)!}
\left(\prod_{j\in A_{\lambda}}
\frac{1}{l_j(-l_j \lambda-m_j)!}\right)
\left(\prod_{j\in D\setminus A_{\lambda}}
\frac{1}{\prod_{\nu=1}^{k_j}
(-l_j \lambda-m_j+1-\nu)}\right)\\
&\quad\quad\times
\displaystyle 
\int_{\R_+^{n-\# A_{\lambda}-1}}
\left(
\prod_{j\in D\setminus A_{\lambda}}
y_j^{l_j\lambda+m_j+k_j-1}
\right)
\left(
\partial^{\alpha}\psi
\right)
\left(
T_{A_{\lambda}\cup\{n\}}(y)
\right)
\prod_{j\not\in A_{\lambda}\cup\{n\}}
dy_j. \quad 
\end{split}
\end{equation*}
Here $\a=(\a_1,\ldots,\a_n)$
satisfies that
$\a_j=k_j-1$ for $j\in A_{\lambda}$,
$\a_j=k_j$ for $j\in D\setminus A_{\lambda},$
$\a_n=-\lambda-1$
and $\a_j=0$ otherwise.
From the above equations,
we see that 
$a_{\lambda}^+=(-1)^{\lambda-1}a_{\lambda}^-$.
\end{proof}

\section{The case of monomial type weight}\label{sec:10}

The purpose of this section is to investigate
the poles of 
$Z_{\pm}(s;\varphi)$ 
in (\ref{eqn:8.1}) and 
$Z(s;\varphi)$
in (\ref{eqn:8.2})
in the case when
the weight $g$ is expressed as 
a monomial $x^p$ multiplied by a smooth function $\psi$
defined on a neighborhood $U$ of the origin in $\R^n$.
In this case,  
$g$ does not belong to $\hat{\mathcal E}(U)$
if and only if $\psi$ is a nonzero flat function. 

We explain reasons why we treat this special 
weight case in detail. 
First of all, 
this weight has a special form
such that the analysis of Varchenko can be directly generalized. 
Moreover, many parts of the investigation of 
more general weight case in Section~11
coincide with those of this simpler case. 
Therefore, the analysis in this section 
not only reduces the complexity of 
that in more general weight case in Section~11
but also clarifies essentially important parts.
Second of all, 
in this case, 
some particular results can
be obtained,
which cannot be covered by the results
in Section~11. 
To be more specific, 
we consider the case when 
the weight is flat in this section 
(see Remark~\ref{rem:10.3}, below).
This result affects the phenomenon 
in the behavior of oscillatory integrals
in Remark~\ref{rem:4.3}.

In this section,
we use the following notation, 
which were introduced in Sections~ 5 and 6.
\begin{itemize}
\item
$\Sigma_0$ is the fan associated with $\Gamma_+(f)$;
\item
$\Sigma$ is a simplicial subdivision of $\Sigma_0$;
\item
$(Y_{\Sigma},\pi)$ is the real resolution 
associated with $\Sigma$;
\item
$a^1(\sigma),\ldots,a^n(\sigma)$ is the skeleton of
$\sigma\in\Sigma^{(n)}$, ordered once and for all;
\item
$J_{\pi}(y)$ is the Jacobian of the mapping of $\pi$.
\end{itemize}


\subsection{Candidate poles}\label{sec:10.1}

First, let us state our results relating to 
the positions and the orders of candidate poles of
$Z_{\pm}(s;\varphi)$ and $Z(s;\varphi)$.
\begin{theorem}\label{thm:10.1}
Suppose that {\rm (i)} $f$ satisfies the condition $(E)$,
{\rm (ii)} $g(x)=x^p\psi(x)$, where $p\in \Z_+^n$ 
and $\psi$ is a smooth function defined on $U$.
If the support of $\varphi$ is
contained in a sufficiently small neighborhood 
of the origin, 
then 
the functions  
$Z_{\pm}(s;\varphi)$ and $Z(s;\varphi)$ 
can be analytically continued
as meromorphic functions on the whole complex plane.
More precisely, we have the following.
\begin{itemize}
\item[(a)]
The poles of the functions $Z_{\pm}(s;\varphi)$ and $Z(s;\varphi)$
are contained in the set 
\begin{equation}\label{eqn:10.1}
\left\{
-\frac{\langle a,p+\1\rangle +\nu}{l_f(a)}
:\,\, \nu\in\Z_+,\,\, a\in\tilde{\Sigma}^{(1)}
\right\}
\cup (-\N),
\end{equation}
where $l_f(a)$ is as in $(\ref{eqn:7.1})$ and 
$\tilde{\Sigma}^{(1)}=\{a\in\Sigma^{(1)}:l_f(a)\neq 0\};$
\item[(b)]
The largest element of the first set in
$(\ref{eqn:10.1})$
is $-1/d(f,x^p);$
\item[(c)]
When
$Z_{\pm}(s;\varphi)$ and $Z(s;\varphi)$
have poles at $s=-1/d(f,x^p)$,
their orders are at most
\begin{eqnarray}\label{eqn:10.2}
\begin{cases}
m(f,x^p)& \quad \mbox{if $1/d(f,x^p)$ 
is not an integer}, \\
\min\{m(f,x^p)+1, n\}&
\quad \mbox{otherwise}.
\end{cases}
\end{eqnarray}
\end{itemize}
\end{theorem}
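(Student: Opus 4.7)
The plan is to pull the integrals $Z_{\pm}(s;\varphi)$ back through the toric resolution of Lemma~\ref{lem:7.1}, which is available because $f\in\hat{\mathcal E}(U)$, and then reduce to the model one-chart integrals already analyzed in Section~\ref{sec:9}. Fix a simplicial subdivision $\Sigma$ of the fan $\Sigma_0$ associated with $\Gamma_+(f)$ and work chart by chart. On $\R^n(\sigma)$ with $\sigma\in\Sigma^{(n)}$ with skeleton $a^1(\sigma),\dots,a^n(\sigma)$, Lemma~\ref{lem:7.1}, Lemma~\ref{lem:5.1}(ii), and the monomial form of the weight combine to give
\begin{equation*}
f(\pi(\sigma)(y))_{\pm}^{s}\,g(\pi(\sigma)(y))\,J_{\pi(\sigma)}(y)
=\pm\Bigl(\prod_{j=1}^{n}y_{j}^{l_{f}(a^{j}(\sigma))s+\langle a^{j}(\sigma),p+\1\rangle-1}\Bigr)
\,f_{\sigma}(y)_{\pm}^{s}\,\tilde\psi_{\sigma}(y),
\end{equation*}
where $\tilde\psi_{\sigma}=\psi\circ\pi(\sigma)\cdot\varphi\circ\pi(\sigma)$ is smooth. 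After a partition of unity on $Y_{\Sigma}$ subordinate to these charts and to a finer cover separating the locus $\{f_{\sigma}=0\}$ from the points where $f_{\sigma}(0)\neq 0$, and after applying the orthant decomposition $(\ref{eqn:8.6})$, $Z_{\pm}(s;\varphi)$ becomes a finite sum of integrals on $\R_{+}^{n}$.

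Away from the zero set of $f_{\sigma}$, the factor $f_{\sigma}(y)_{\pm}^{s}$ is entire in $s$ and smooth in $y$, so each piece is of the model form $(\ref{eqn:9.1})$ with $l_{j}=l_{f}(a^{j}(\sigma))$ and $m_{j}=\langle a^{j}(\sigma),p+\1\rangle$. Proposition~\ref{pro:9.2} then produces meromorphic extensions with poles contained in the first set of $(\ref{eqn:10.1})$; only skeleton vectors with $l_{f}(a^{j}(\sigma))\neq 0$ contribute, which accounts for the restriction to $\tilde\Sigma^{(1)}$. Near a point $b\in\tilde T_{I}(\R^{n})$ with $f_{\sigma}(b)=0$, the nondegeneracy hypothesis allows us to invoke Theorem~\ref{thm:7.2} and Remark~\ref{rem:7.3}: there are local coordinates $u$ in which $f\circ\pi(\sigma)\circ\phi=(u_{i}-b_{i})\prod_{j\in I}u_{j}^{l_{f}(a^{j}(\sigma))}$ with $b_{i}\neq 0$. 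The $u_{j}$ $(j\in I)$ integrations give model monomial contributions as before, while the transverse integration in $u_{i}$ is a bounded-domain integral of $(u_{i}-b_{i})_{\pm}^{s}$ against a smooth weight, which by the one-dimensional computation in Lemma~\ref{lem:9.1} meromorphically extends with simple poles precisely on $-\N$. The candidate-pole set $(\ref{eqn:10.1})$ follows by assembling these two sources across the finitely many charts.

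For (b), Remark~\ref{rem:2.5}(i) with $g=x^{p}$ gives $d(f,x^{p})=\min\{d>0:d(p+\1)\in\Gamma_{+}(f)\}$, and the support-function description of $\Gamma_{+}(f)$ expresses this as $d(f,x^{p})=\sup_{a}l_{f}(a)/\langle a,p+\1\rangle$ over $a\in\R_{+}^{n}\setminus\{0\}$ supporting $\Gamma_{+}(f)$. Because every supporting outer normal of $\Gamma_{+}(f)$ in $\R_{+}^{n}$ is a nonnegative combination of elements of $\tilde\Sigma^{(1)}$, the supremum is attained on $\tilde\Sigma^{(1)}$, so the largest element of the first set in $(\ref{eqn:10.1})$ is $-1/d(f,x^{p})$. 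For (c), fix the principal face $\tau_{*}\in\mathcal F_{*}[\Gamma_{+}(f)]$ associated (through $\Phi$) with $p$: its dual cone $\overline{\tau_{*}^{\vee}}$ has dimension $n-\dim\tau_{*}=m(f,x^{p})$. By Lemma~\ref{lem:6.2}(i), the skeleton vectors $a^{j}(\sigma)$ that attain $\langle a^{j}(\sigma),p+\1\rangle/l_{f}(a^{j}(\sigma))=1/d(f,x^{p})$ are exactly those lying in $\overline{\tau_{*}^{\vee}}\cap\sigma$, whose number is $\dim(\overline{\tau_{*}^{\vee}}\cap\sigma)\leq m(f,x^{p})$. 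Proposition~\ref{pro:9.2} thus bounds the order of the pole contributed by the monomial factors by $m(f,x^{p})$. When $1/d(f,x^{p})\notin\Z$ this is the only source; when $1/d(f,x^{p})\in\N$, the zero-set source adds at most one more factor, producing the stated bound $\min\{m(f,x^{p})+1,n\}$, the cap $n$ being forced by the fact that only $n$ one-dimensional integrations by parts are available on $\R_{+}^{n}$.

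The main obstacle is the bookkeeping near the zero set of $f_{\sigma}$. One must check that the implicit-function-theorem coordinate change of Remark~\ref{rem:7.3} leaves the monomial factors in the $y_{j}$, $j\in I$, intact (the Jacobian of $\phi$ being a smooth nonvanishing factor near the nondegenerate point $b$), and that the new transverse factor $(u_{i}-b_{i})_{\pm}^{s}$ contributes only simple poles at integers in the Laurent expansion, so that its interaction with the monomial poles raises their order by at most one and only when $1/d(f,x^{p})$ is itself an integer. This is precisely what distinguishes the two branches of the bound in $(\ref{eqn:10.2})$.
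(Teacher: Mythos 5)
Your proposal is correct and follows essentially the same route as the paper's proof: orthant decomposition plus the toric resolution of Lemma~7.1/Theorem~7.2, a partition of unity separating the zero locus of $f_{\sigma}$ from where it is nonvanishing, reduction to the elementary integrals of Section~9 (Proposition~9.2 and Lemma~9.1), and then the identification of $-1/d(f,x^p)$ and the pole-order bound by counting the skeleton vectors attaining the extremal ratio via the dual cone of $\tau_f(q)$ and Lemma~6.2. Your support-function/mediant phrasing of part (b) and the direct dual-cone count in part (c) are just dual repackagings of the paper's Lemmas~10.4 and~10.5.
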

\begin{remark}\label{rem:10.2}
When $\psi(0)\neq 0$, 
$\Gamma_+(g)=\{p\}+\R_+^n$, 
which implies $d(f,x^p)=d(f,g)$.
Note that the equation $d(f,x^p)=d(f,g)$ does not always hold
when $\psi(0)= 0$. 
\end{remark}
\begin{remark}\label{rem:10.3}
When $\psi$ vanishes in high order at the origin, 
it is natural to expect that the set of 
the actual poles might become smaller than 
(\ref{eqn:10.1}); in other words, 
the actual largest element in (b) might become smaller 
than $-1/d(f,x^p)$.
But, there exists a simple example
of $f$ and $g$ violating this expectation 
(see Section~\ref{subsec:15.1}).
Indeed, this example shows that
the set (\ref{eqn:10.1}) is necessary to express 
the set of poles.
Furthermore, 
we can see that 
the assertions in the above theorem hold
even in the case when  
the weight $g$ is flat
(equivalently $\psi$ is flat)
and that they are optimal in the above sense.
\end{remark}
\begin{proof}
We only show 
the assertions in the theorem 
in the case of the functions $Z_{\pm}(s;\varphi)$.

\underline{\it Step 1.}\,\, 
({\it Orthant decompositions of $Z_{\pm}(s;\varphi)$}.) 
\quad

Applying the orthant decomposition (\ref{eqn:8.6})
to the functions $Z_{\pm}(s;\varphi)$, 
we have
\begin{equation}\label{eqn:10.3}
Z_{\pm}(s;\varphi)
=\sum_{\theta\in\{-1,1\}^n}
\tilde{Z}_{\pm}(s;\varphi_{\theta};f_{\theta},g_{\theta}),
\end{equation}
where $\varphi_{\theta}(x):=\varphi(\theta_1 x_1,\ldots,\theta_n x_n)$, etc. 
for $\theta=(\theta_1,\ldots,\theta_n)\in\{-1,1\}^n$ and
\begin{equation}\label{eqn:10.4}
\tilde{Z}_{\pm}(s;\varphi;f,g)
(=\tilde{Z}_{\pm}(s;\varphi)):=
\int_{\R_+^n}f(x)_{\pm}^s g(x)\varphi(x)dx.
\end{equation}
Consider the case when $g(x)=x^p\psi(x)$.  
Since $g_{\theta}(x)=\theta^p x^p \psi_{\theta}(x)$, we have 
\begin{equation}\label{eqn:10.5}
Z_{\pm}(s;\varphi)
=\sum_{\theta\in\{-1,1\}^n}
\theta^p
\hat{Z}_{\pm}(s;\varphi_{\theta};f_{\theta},\psi_{\theta}),
\end{equation}
where $\theta^p:=\prod_{j=1}^n\theta_j^{p_j}$ and 
$$
\hat{Z}_{\pm}(s;\varphi;f,\psi)
:=\int_{\R_+^n}f(x)_{\pm}^s \psi(x)\varphi(x)x^p dx.
$$
We remark that 
$\tilde{Z}_{\pm}(s;\varphi;f,g)=
\hat{Z}_{\pm}(s;\varphi;f,\psi)$, 
while  
$\tilde{Z}_{\pm}(s;\varphi_{\theta};f_{\theta},g_{\theta})
=-\hat{Z}_{\pm}(s;\varphi_{\theta};f_{\theta},\psi_{\theta})$
if $\theta^p=-1$. 
From the equations (\ref{eqn:10.3}),(\ref{eqn:10.5}),
it suffices to 
prove the theorem 
in the case of $\tilde{Z}_{\pm}(s;\varphi;f,g)$ or 
$\hat{Z}_{\pm}(s;\varphi;f,\psi)$
instead of $Z_{\pm}(s;\varphi)$. 

\underline{\it Step 2.}\,\, 
({\it Decompositions of $\tilde{Z}_{\pm}(s;\varphi)$}.) 
\quad

For the moment, we assume ${\rm Re}(s)>0$.
By using the mapping $x=\pi(y)$, 
$\tilde{Z}_{\pm}(s;\varphi)$ can be expressed as 
\begin{eqnarray*}
&&
\tilde{Z}_{\pm}(s;\varphi)=\int_{\R_+^n}f(x)_{\pm}^s g(x)\varphi(x)dx \\
&&
\quad 
=\int_{\tilde{Y}_{\Sigma}} 
((f\circ\pi)(y))_{\pm}^s (g\circ\pi)(y) (\varphi\circ\pi)(y) 
|J_{\pi}(y)|dy, 
\end{eqnarray*}
where $\tilde{Y}_{\Sigma}=Y_{\Sigma}\cap\pi^{-1}(\R_+^n)$ and 
$dy$ is a volume element in $Y_{\Sigma}$.
It is easy to see that 
there exists a set of $C^{\infty}_0$ functions 
$\{\chi_{\sigma}:Y_{\Sigma} \to\R_+: \sigma\in\Sigma^{(n)}\}$ 
satisfying 
the following properties:
\begin{itemize}
\item 
For each $\sigma\in\Sigma^{(n)}$, 
the support of the function $\chi_{\sigma}$ is contained 
in $\R^n(\sigma)$ and 
$\chi_{\sigma}$ identically equals one 
in some neighborhood of the origin. 
\item 
$\sum_{\sigma\in\Sigma^{(n)}}\chi_{\sigma}\equiv 1$ 
on the support of 
$\varphi\circ\pi$.  
\end{itemize}
Applying Lemmas~\ref{lem:5.1} and \ref{lem:7.1}
and Theorem~\ref{thm:7.2},
we have 
\begin{equation}\label{eqn:10.6}
\tilde{Z}_{\pm}(s;\varphi)=
\sum_{\sigma\in\Sigma^{(n)}} 
Z_{\pm}^{(\sigma)}(s)
\end{equation} 
with  
\begin{equation}\label{eqn:10.7}
\begin{split}
&Z_{\pm}^{(\sigma)}(s)
:=\int_{\R_+^n} ((f\circ\pi(\sigma))(y))_{\pm}^s 
(g\circ\pi(\sigma))(y)(\varphi\circ\pi(\sigma))(y) 
\chi_{\sigma}(y)|J_{\pi(\sigma)}(y)|dy \\
&
\quad 
=\int_{\R_+^n} 
\left(
\prod_{j=1}^n y_j^{l_f(a^j(\sigma))}f_{\sigma}(y)
\right)_{\pm}^s 
\left(\prod_{j=1}^n y_j^{\langle a^j(\sigma),p\rangle}\right)
\left|
\prod_{j=1}^n y_j^{\langle a^j(\sigma)\rangle -1}
\right|
\tilde{\chi}_{\sigma}(y)dy, 
\end{split}
\end{equation}
where 
$\tilde{\chi}_{\sigma}(y)=
\psi(\pi(\sigma)(y))\varphi(\pi(\sigma)(y))\chi_{\sigma}(y)$.

Consider the functions $Z^{(\sigma)}_{\pm}(s)$ 
for $\sigma\in\Sigma^{(n)}$. 
We easily see the existence of finite sets of 
$C^{\infty}_0$ functions 
$\{\xi _k:\R^n\to\R_+\}$ and 
$\{\eta_l:\R^n\to\R_+\}$ satisfying the following conditions. 
\begin{itemize}
\item 
The supports of $\xi_k$ and $\eta_l$ are sufficiently small and 
$\sum_k \psi_k + \sum_l \eta_l \equiv 1$ 
on the support of $\tilde{\chi}_{\sigma}$.
\item 
For each $k$, the function
$f_{\sigma}$ is always positive or negative on the support of $\xi_k$. 
\item 
For each $l$, the support of $\eta_l$ intersects the set 
$\{y\in {\rm Supp}(\tilde{\chi}_{\sigma}):f_{\sigma}(y)=0\}$.
\item
The union of the support of $\eta_l$ for all $l$ contains the set 
$\{y\in {\rm Supp}(\tilde{\chi}_{\sigma}):f_{\sigma}(y)=0\}$.
\end{itemize}

By using the functions $\xi_k$ and $\eta_l$, we have
\begin{equation}\label{eqn:10.8}
Z^{(\sigma)}_{\pm}(s)=
\sum_k I^{(k)}_{\pm,\sigma}(s)+
\sum_l J^{(l)}_{\pm,\sigma}(s),
\end{equation} 
with
\begin{equation}\label{eqn:10.9}
\begin{split}
&
I^{(k)}_{\pm,\sigma}(s)=\int_{\R_+^n} 
\left(
\prod_{j=1}^n 
y_j^{l_f(a^j(\sigma))}f_{\sigma}(y)
\right)_{\pm}^s 
\left(\prod_{j=1}^n y_j^{\langle a^j(\sigma),p\rangle}\right)
\left|
\prod_{j=1}^n y_j^{\langle a^j(\sigma)\rangle -1}
\right|
\tilde{\xi}_k(y)dy, 
\\
&
J^{(l)}_{\pm,\sigma}(s)
=\int_{\R_+^n} 
\left(
\prod_{j=1}^n y_j^{l_f(a^j(\sigma))}f_{\sigma}(y)
\right)_{\pm}^s 
\left(\prod_{j=1}^n y_j^{\langle a^j(\sigma),p\rangle}\right)
\left|
\prod_{j=1}^n y_j^{\langle a^j(\sigma)\rangle -1}
\right|
\tilde{\eta}_l(y)dy, 
\end{split}
\end{equation}
where $\tilde{\xi}_k(y)=\tilde{\chi}_{\sigma}(y)\xi_k(y)$ and 
$\tilde{\eta}_l(y)=\tilde{\chi}_{\sigma}(y)\eta_l(y)$. 
If the set 
$\{y\in {\rm Supp}(\tilde{\chi}_{\sigma}):f_{\sigma}(y)=0\}\cap\R_+^n$
is empty, then the functions $J^{(l)}_{\pm,\sigma}(s)$ 
do not appear. 

\underline{\it Step 3.}\,\, 
({\it Poles of $I^{(k)}_{\pm,\sigma}(s)$}.) 
\quad

First, 
consider properties of poles of 
the functions $I^{(k)}_{\pm,\sigma}(s)$. 
Since every $y_j$ is nonnegative in the integrand, 
we have 
\begin{equation}\label{eqn:10.10}
I_{\pm,\sigma}^{(k)}(s)=
\int_{\R_+^n} 
\left(\prod_{j=1}^n 
y_j^{l_f(a^j(\sigma))s+\langle a^j(\sigma),p+\1\rangle-1}
\right)f_{\sigma}(y)_{\pm}^s
\tilde{\xi}_k(y)dy.
\end{equation}
By applying Proposition~\ref{pro:9.2} to 
(\ref{eqn:10.10}), 
each $I_{\pm,\sigma}^{(k)}(s)$ 
can be analytically continued to the whole complex plane as 
meromorphic functions and 
their poles are contained in the set 
\begin{equation}\label{eqn:10.11}
\left\{
-\frac{\langle a^j(\sigma),p+\1\rangle+\nu}{l_f(a^j(\sigma))}:
\nu\in\Z_+, j\in B({\sigma})
\right\},
\end{equation}
where 
\begin{equation}\label{eqn:10.12}
B(\sigma):=\{j:l_f(a^j(\sigma))\neq 0\}
\subset\{1,\ldots,n\}.
\end{equation}

\underline{\it Step 4.}\,\, 
({\it Poles of $J^{(l)}_{\pm,\sigma}(s)$}.) 
\quad

Next, consider the case of the functions $J^{(l)}_{\pm,\sigma}(s)$. 
By applying Theorem~\ref{thm:7.2} 
and changing the integral variables as in 
Remark~\ref{rem:7.3}, 
$J_{\pm,\sigma}^{(l)}(s)$ can be expressed as follows.
\begin{equation}\label{eqn:10.13}
\begin{split}
&J_{\pm,\sigma}^{(l)}(s)=
\int_{\R_+^n} 
\left((u_i-b_i)\prod_{j\in B_l(\sigma)}
u_j^{l_f(a^j(\sigma))}
\right)_{\pm}^s 
\left(\prod_{j\in B_l(\sigma)}u_j^{\langle a^j(\sigma),p\rangle}\right)\\
&\quad\quad\quad\quad\quad\times
\left|\prod_{j\in B_l(\sigma)}u_j^{\langle a^j(\sigma)\rangle -1}\right|
\hat{\eta}_l(u_1,\ldots,u_i-b_i,\ldots,u_n)du, 
\end{split}
\end{equation}
where $B_l(\sigma)\subsetneq\{1,\ldots,n\}$, 
$i\in \{1,\ldots,n\}\setminus B_l(\sigma)$, 
$b_i>0$ and 
$\hat{\eta}_l\in C_0^{\infty}(\R^n)$
has a support containing the origin.
In a similar fashion to the case of 
$I_{\pm,\sigma}^{(k)}(s)$, 
we have 
\begin{equation}\label{eqn:10.14}
J_{\pm,\sigma}^{(l)}(s)=
\int_{\R_+^n} \left(u_i^s
\prod_{j\in B_l(\sigma)} 
u_j^{l_f(a^j(\sigma))s+\langle a^j(\sigma),p+\1\rangle-1}
\right)
\hat{\eta}_l(u_1,\ldots,\pm u_i,\ldots,u_n)du.
\end{equation}
By applying Proposition~\ref{pro:9.2} 
to (\ref{eqn:10.14}), 
each $J_{\pm,\sigma}^{(l)}(s)$
can be analytically continued to the complex plane as 
meromorphic functions and
their poles are contained in 
the set  
\begin{equation}\label{eqn:10.15}
\left\{
-\frac{\langle a^j(\sigma),p+\1\rangle+\nu}{l_f(a^j(\sigma))}:
\nu\in\Z_+, j\in \tilde{B}_l(\sigma) 
\right\}\cup(-\N), 
\end{equation}
where $\tilde{B}_l(\sigma):=
\{j\in B_l(\sigma):l_f(a^j(\sigma))\neq 0\}$.

From the relationships (\ref{eqn:10.6}) and (\ref{eqn:10.8}), 
the analyses in Steps~3 and 4 implies that
$Z_{\pm}(s)$ also become meromorphic functions on $\C$ and
their poles are contained in the union of the sets
(\ref{eqn:10.11}) and (\ref{eqn:10.15}) 
for all $\sigma\in \Sigma^{(n)}$,
which is the assertion (a) in the theorem.



Now, for $p\in\Z_+^n$, we define
\begin{equation}\label{eqn:10.16}
\beta(p):=\max\left\{
-\dfrac{\langle a,p+\1\rangle}{l_f(a)}: 
a\in\tilde{\Sigma}^{(1)} 
\right\}.
\end{equation}
Note that $\beta(p)$ is the largest element of the first set in
(\ref{eqn:10.1}).

\underline{\it Step 5.}\,\, 
({\it Geometrical meanings of} $\b(p)$.) 
\quad

The assertion~(b) in the theorem 
follows from the following lemma, which
reveals the relationship between 
``the values of $\beta(p)$'' and 
``the geometrical conditions of $\Gamma_+(f)$ and $p$''.
\begin{lemma}\label{lem:10.4}
Let $q=(q_1,\ldots,q_n)$ be the point of the 
intersection of $\partial\Gamma_+(f)$ 
with the line joining the origin 
and the point $p+\1=(p_1+1,\ldots,p_n+1)$. 
Then 
\begin{eqnarray*}
\beta(p)=-\frac{p_1+1}{q_1}=\cdots=-\frac{p_n+1}{q_n}
=-\frac{\langle p\rangle +n}{\langle q\rangle }=-\frac{1}{d(f,x^p)}.
\end{eqnarray*}
In particular, we have
$q=d(f,x^p)(p+\1)$.
\end{lemma}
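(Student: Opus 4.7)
The plan is to separate the statement into two independent assertions: (I) a purely convex-geometric identification of the intersection point $q$, giving $q = d(f,x^p)(p+\1)$, and (II) the identification $\beta(p) = -1/d(f,x^p)$ via duality between points on $\partial\Gamma_+(f)$ and edges of the fan $\Sigma$. Once (I) is established, all the middle equalities in the displayed formula are automatic: writing $q_j = d(p_j+1)$ with $d := d(f,x^p)$ immediately gives $-(p_j+1)/q_j = -1/d$ for each $j$ and, by summing numerators and denominators, $-(\langle p\rangle +n)/\langle q\rangle = -1/d$.

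For (I), I would invoke the convenient form of $\Gamma_+(x^p)$: since $g=x^p$ is a monomial, $\Gamma_+(g)+\1 = \{p+\1\}+\R_+^n$, so for any $d>0$,
\[
d\cdot(\Gamma_+(g)+\1) = \{d(p+\1)\}+\R_+^n.
\]
Using the reformulation $d(f,g) = \min\{d>0: d\cdot(\Gamma_+(g)+\1)\subset\Gamma_+(f)\}$ from Remark~\ref{rem:2.5} together with $\Gamma_+(f)+\R_+^n = \Gamma_+(f)$, the condition reduces to $d(p+\1)\in\Gamma_+(f)$. The minimum such $d$ is exactly the unique value for which $d(p+\1)$ lies on $\partial\Gamma_+(f)$, i.e. on the ray from the origin through $p+\1$. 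This point is precisely $q$, so $q=d(f,x^p)(p+\1)$.

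For (II), note that all quantities in the defining max for $\beta(p)$ are negative, so maximizing $-\langle a,p+\1\rangle/l_f(a)$ amounts to minimizing $\langle a,p+\1\rangle/l_f(a)$. For every $a\in\tilde\Sigma^{(1)}$ we have $\Gamma_+(f)\subset H^+(a,l_f(a))$, and in particular $\langle a,q\rangle\geq l_f(a)$; substituting $q=d(p+\1)$ yields $\langle a,p+\1\rangle/l_f(a)\geq 1/d$, hence $\beta(p)\leq -1/d$. For the reverse inequality, let $\gamma$ be the smallest face of $\Gamma_+(f)$ containing $q$; the dual cone $\overline{\gamma^\vee}$ is a nonzero cone in $\Sigma_0$, so any subdivision $\Sigma$ of $\Sigma_0$ contains at least one edge $a\in\Sigma^{(1)}\cap\overline{\gamma^\vee}$. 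For such an $a$, $\gamma\subset H(a,l_f(a))$ by (\ref{eqn:6.3}), so $\langle a,q\rangle=l_f(a)$; and because $q=d(p+\1)$ has strictly positive entries while $a\in\R_+^n\setminus\{0\}$, we have $l_f(a)=\langle a,q\rangle>0$, so $a\in\tilde\Sigma^{(1)}$. For this $a$, $\langle a,p+\1\rangle/l_f(a)=1/d$, achieving the bound.

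The main obstacle is the last step of (II): one must be sure that some primitive edge of the subdivided fan actually lies in $\overline{\gamma^\vee}$ and simultaneously satisfies $l_f(a)\neq 0$. Both conditions are mild once $q$ is known to have all coordinates strictly positive (forcing positivity of $l_f(a)$ for any $a\neq 0$ in the relevant dual cone), together with the general fact that a simplicial subdivision of $\Sigma_0$ preserves every ray of $\Sigma_0$ and fills each higher-dimensional cone with cones whose skeletons span it. With these observations the proof is complete.
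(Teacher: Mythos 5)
Your proposal is correct and follows essentially the same route as the paper: the paper likewise identifies, for each ray $a$, the intersection point $q(a)=\frac{l_f(a)}{\langle a,p+\1\rangle}(p+\1)$ of $H(a,l_f(a))$ with the line, notes that maximizing $-\langle a,p+\1\rangle/l_f(a)$ means $q(a)$ is farthest from the origin (your upper bound being "no $q(a)$ lies beyond $q$" and your lower bound being "some $a\in\tilde\Sigma^{(1)}$ has $q\in H(a,l_f(a))$"), and then appeals to the definition of $d(f,x^p)$ exactly as in your part (I) via Remark~\ref{rem:2.5}. Your write-up merely makes explicit two points the paper leaves as "easy to see" — the existence of a ray of the subdivided fan in $\overline{\tau_f(q)^{\vee}}$ (also available from Lemma~\ref{lem:6.2}\,(ii)) and the positivity of $l_f(a)$ there because $q\in\R_{>0}^n$ — so there is no gap and no genuinely different method.
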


\begin{proof}
For $a \in \Sigma^{(1)}$, 
we denote by $q(a)$ 
the point of the intersection of the hyperplane 
$H(a,l_f(a))$ with the line $\{t\cdot (p+\1):t\in\R\}$, 
where $H(\cdot,\cdot)$ is as in (\ref{eqn:2.1}). 
Then it is easy to see 
\begin{equation}\label{eqn:10.17}
q(a)=\frac{l_f(a)}{\langle a,p+\1\rangle }\cdot (p+\1). 
\end{equation}
From (\ref{eqn:10.17}),  
the condition that 
$-\langle a,p+\1\rangle /l_f(a)$ takes the maximum
is equivalent to 
the geometrical condition that 
$q(a)$ is as far as possible from the origin. 
To be more precise, we have the following equivalences: 
For $a\in\tilde{\Sigma}^{(1)}$,  
\begin{equation}\label{eqn:10.18}
\beta(p)=-\frac{\langle a,p+\1\rangle }{l_f(a)} \, \Longleftrightarrow \,
q=q(a) \, \Longleftrightarrow \,
q\in H(a,l_f(a)). 
\end{equation}
From (\ref{eqn:10.17}) and (\ref{eqn:10.18}), we have
$-\beta(p)=(p_1+1)/q_1=
\cdots=(p_n+1)/q_n=(\langle p\rangle +n)/\langle q\rangle $.
From the definition of $d(\cdot,\cdot)$, 
the above value equals $1/d(f,x^p)$. 
\end{proof}

\underline{\it Step 6.}\,\, 
({\it Orders of the poles at} $\b(p)$.) 
\quad

Let us consider the orders of the poles of 
$\tilde{Z}_{\pm}(s;\varphi)$
at $s=\b(p)$.
For $\sigma\in\Sigma^{(n)}$, let  
\begin{equation}\label{eqn:10.19}
A_p(\sigma):=\left\{
j\in B(\sigma):
\beta(p)=-\frac{\langle a^j(\sigma),p+\1\rangle }{l_f(a^j(\sigma))} 
\right\}\subset\{1,\ldots,n\},
\end{equation}
where $B(\sigma)$ is as in (\ref{eqn:10.12}).
From (\ref{eqn:10.6}), (\ref{eqn:10.8}),
it suffices to analyze the poles of 
$I_{\pm,\sigma}^{(k)}(s)$ and 
$J_{\pm,\sigma}^{(l)}(s)$.
Applying Proposition~\ref{pro:9.2}
to the integrals 
(\ref{eqn:10.10}) and (\ref{eqn:10.14}), 
we see the upper bounds of orders of the poles 
at $s=\beta(p)$ 
of $I_{\pm,\sigma}^{(k)}(s)$
and $J_{\pm,\sigma}^{(l)}(s)$ as follows.
\begin{center}
\begin{tabular}{l|l} \hline
\textit{$I_{\pm,\sigma}^{(k)}(s)$} & 
       \quad $\# A_p(\sigma)$ \\ \hline
\textit{$J_{\pm,\sigma}^{(l)}(s)$} & 
       $\min\{\# A_p(\sigma),n-1\}$ if $\beta(p)\not\in (-\N)$\\
             & $\min\{\# A_p(\sigma)+1,n\}$ if $\beta(p) \in (-\N)$ 
\\ \hline 
\end{tabular}
\end{center}

The assertion (c) in the theorem follows 
from the above table and Lemma~\ref{lem:10.5}
below, 
which shows the geometrical meaning of 
$\# A_p(\sigma)$.
(The estimate 
$\# A_p(\sigma)\leq m(f,x^p)$ 
is sufficient for the proof of the assertion (c). 
The existence of $\sigma\in\Sigma^{(n)}$ 
attaining the equality will be used
in the analysis in the next subsection.) 
\end{proof}

\begin{lemma}\label{lem:10.5}
Let $q$ be as in Lemma~$\ref{lem:10.4}$. 
Then the equations $m(f,x^p)=n-\dim(\tau_f(q))=
\max\left\{\# A_p(\sigma)
:\sigma\in\Sigma^{(n)}
\right\}$
hold.
\end{lemma}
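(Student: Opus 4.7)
The plan is to establish each equality separately. For the right-hand equality $n-\dim(\tau_f(q))=\max_{\sigma}\#A_p(\sigma)$, the strategy is to identify $A_p(\sigma)$ with the combinatorial set $I(\tau_f(q),\sigma)$ from (\ref{eqn:6.5}) and then invoke Lemma \ref{lem:6.2}. Using $q=d(f,x^p)(p+\1)$ from Lemma \ref{lem:10.4}, the condition $\beta(p)=-\langle a^j(\sigma),p+\1\rangle/l_f(a^j(\sigma))$ defining membership in $A_p(\sigma)$ is equivalent to $\langle a^j(\sigma),q\rangle=l_f(a^j(\sigma))$, which in turn says that $q$ lies on the face $H(a^j(\sigma),l_f(a^j(\sigma)))\cap\Gamma_+(f)$. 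Since $q$ lies in the relative interior of $\tau_f(q)$, this promotes to $\tau_f(q)\subseteq H(a^j(\sigma),l_f(a^j(\sigma)))$, i.e., $j\in I(\tau_f(q),\sigma)$. For indices $j\notin B(\sigma)$, the positivity $\langle a^j(\sigma),q\rangle>0$ (from $q\in\R_{>0}^n$ and $a^j(\sigma)\in\Z_+^n\setminus\{0\}$) rules them out on both sides. Hence $A_p(\sigma)=I(\tau_f(q),\sigma)$, and Lemma \ref{lem:6.2} (i)--(ii) deliver $\max_{\sigma}\#A_p(\sigma)=\dim(\tau_f(q)^{\vee})=n-\dim(\tau_f(q))$.

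For the left-hand equality $m(f,x^p)=n-\dim(\tau_f(q))$, the strategy is to show that $\tau_f(q)$ is the face of smallest dimension in $\mathcal{F}_0[\Gamma_+(f)]$. The membership $\tau_f(q)\in\mathcal{F}_0[\Gamma_+(f)]$ is immediate: since $\Gamma_+(x^p)=\{p\}+\R_+^n$, one has $d(f,x^p)\cdot(\Gamma_+(x^p)+\1)=\{q\}+d(f,x^p)\R_+^n$, so $q\in\Gamma_0(f)$. The key claim is then that for every $\alpha\in\Gamma_0(f)$ one has $\tau_f(\alpha)\supseteq\tau_f(q)$; once this is established, $\dim(\tau_f(\alpha))\geq\dim(\tau_f(q))$ for all such $\alpha$, which forces $m(f,x^p)=\max_{\tau}(n-\dim(\tau))=n-\dim(\tau_f(q))$.

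To verify this inclusion, I would express $\tau_f(\alpha)$ as $H(a,l)\cap\Gamma_+(f)$ for a valid pair $(a,l)\in\Z_+^n\times\Z_+$, which is available by Lemma \ref{lem:2.2} since $\Gamma_+(f)+\R_+^n\subseteq\Gamma_+(f)$. Membership $\alpha\in\tau_f(\alpha)$ gives $\langle a,\alpha\rangle=l$. The defining condition $\alpha\in\Gamma_0(f)$ translates componentwise to $\alpha_k\geq q_k$; combined with $a\in\Z_+^n$ this yields $\langle a,\alpha\rangle\geq\langle a,q\rangle\geq l$, where the last step uses validity of $(a,l)$ together with $q\in\Gamma_+(f)$. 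The resulting pinch $\langle a,q\rangle=l$ places $q$ in $\tau_f(\alpha)$, and hence $\tau_f(q)\subseteq\tau_f(\alpha)$. None of the individual steps is computationally heavy; the one subtlety worth flagging is ensuring that every face of $\Gamma_+(f)$ can be cut out by a valid pair with $a\in\Z_+^n$ rather than merely $a\in\Z^n$, which is exactly the content of Lemma \ref{lem:2.2} and the standard fact that every face of a polyhedron is exposed.
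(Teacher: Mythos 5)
Your argument is correct, and for the equality $n-\dim(\tau_f(q))=\max_{\sigma}\#A_p(\sigma)$ it is essentially the paper's proof: both identify $A_p(\sigma)$ with $I(\tau_f(q),\sigma)$ via $q=d(f,x^p)(p+\1)$ and the equivalence $\beta(p)=-\langle a,p+\1\rangle/l_f(a)\Leftrightarrow q\in H(a,l_f(a))$, and then combine the bound $\#I(\tau_f(q),\sigma)\le n-\dim(\tau_f(q))$ (you via Lemma~\ref{lem:6.2}~(i), the paper via Lemma~\ref{lem:6.1}~(ii)) with the nonemptiness of $\Sigma^{(n)}(\tau_f(q))$ from Lemma~\ref{lem:6.2}~(ii); your explicit remark that indices with $l_f(a^j(\sigma))=0$ drop out because $q\in\R_{>0}^n$ is a detail the paper leaves implicit. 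Where you genuinely diverge is the equality $m(f,x^p)=n-\dim(\tau_f(q))$. The paper argues through the principal-face formalism: since every nonempty face of $\Gamma_+(x^p)=\{p\}+\R_+^n$ contains the vertex $p$ and principal faces of the weight polyhedron are pairwise disjoint (Remark~\ref{rem:2.9}, imported from \cite{ckn13}), there is a unique principal face $\gamma_*\ni p$, whence $q=\Phi(p)$ lies in the associated principal face $\tau_*$ of $\Gamma_+(f)$ and $\tau_f(q)\subseteq\tau_*$ forces the dimension count. You instead prove directly that $\tau_f(q)\subseteq\tau_f(\alpha)$ for every $\alpha\in\Gamma_0(f)$, by cutting $\tau_f(\alpha)$ out with a valid pair $(a,l)\in\Z_+^n\times\Z_+$ (legitimate by Lemma~\ref{lem:2.2} and Remark~\ref{rem:2.13}) and pinching $\langle a,q\rangle=l$ from $\alpha\ge q$ componentwise; this makes $\tau_f(q)$ the minimal-dimensional member of ${\mathcal F}_0[\Gamma_+(f)]$ without invoking the disjointness of principal faces, so your route is more self-contained, while the paper's is shorter given the machinery it has already set up.
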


\begin{proof}
In the case when $g(x)=x^p$, 
we have $\Gamma_+(g)=\{p\}+\R_+^n$.
Since
each nonempty proper face of $\Gamma_+(g)$
contains the vertex $p$,
there exists only one principal face 
$\gamma_*$ of $\Gamma_+(g)$ 
and $p$ is contained in $\gamma_*$,
which implies that 
$n-\dim(\tau_f(q))=m(f,x^p)$.
From the definition of $A_p(\sigma)$
and (\ref{eqn:10.18}), we have 
\begin{eqnarray*}
&&A_p(\sigma)
=\{j : q\in H(a^j(\sigma),l_f(a^j(\sigma)))\} \\
&& \quad=\{j : 
\tau_f (q)\subset H(a^j(\sigma),l_f(a^j(\sigma)))\}
=I(\tau_f (q),\sigma),
\end{eqnarray*}
where $I(\cdot,\cdot)$ is as in (\ref{eqn:6.5}).
Lemma~\ref{lem:6.1} (ii)
implies that $\dim(\tau_f (q))
\leq n-\#  I(\tau_f (q),\sigma)
= n-\#  A_p(\sigma)$ for any 
$\sigma\in\Sigma^{(n)}$.
On the other hand, Lemma~\ref{lem:6.2} (ii)
implies that $\Sigma^{(n)}(\tau_f(q))\neq\emptyset$, 
i.e.,
there exists $\sigma\in\Sigma^{(n)}$ such that 
$\dim (\tau_f(q))=n-\#  A_p(\sigma)$.
\end{proof}

\subsection{First coefficients}
Next, let us consider the coefficients 
of the most important term of the Laurent expansions
of $\tilde{Z}_{\pm}(s;\varphi)$.

We define the subset $\Sigma_p^{(n)}$ 
in $\Sigma^{(n)}$ consisting of 
important cones as 
\begin{equation}\label{eqn:10.20}
\Sigma_p^{(n)}:=\left\{\sigma\in\Sigma^{(n)}:
\#A_p(\sigma)=m(f,x^p) \right\}.
\end{equation}
It follows from Lemma~\ref{lem:10.5} that 
$\Sigma_p^{(n)}$ is nonempty. 
From the definition of $m(f,x^p)$ and 
Lemma~\ref{lem:6.2}, 
we can see the following:
\begin{eqnarray}\label{eqn:10.21}
&&\quad\quad\sigma\in\Sigma_p^{(n)} \nonumber\\
&&\Longleftrightarrow
\dim(\tau_f (q))=n-m(f,x^p)=n-\#  A_p(\sigma)
=n-\# I(\tau_f (q),\sigma) \nonumber\\
&&
\Longrightarrow
\tau_f (q)=\bigcap_{j\in A_p(\sigma)}
H(a^j(\sigma),l(a^j(\sigma)))\cap\Gamma_+(f),
\end{eqnarray}
where $q:=d(f,x^p)(p+\1)$.
Note the equation 
$A_p(\sigma)=I(\tau_f (q),\sigma)$ in the proof of 
Lemma~\ref{lem:10.5}.
Hence, 
the equation $\gamma(I,\sigma)=\gamma$ holds
for $\sigma\in\Sigma_p^{(n)}$,
$I=A_p(\sigma)$
and $\gamma=\tau_f (q)$,  
which was an important condition in Lemma~7.1.

Let us compute the coefficients of 
$(s-\beta(p))^{-m(f,x^p)}$ 
in the Laurent expansions of 
$\tilde{Z}_{\pm}(s;\varphi)$. 
Respectively, we define
$$ 
\tilde{C}_{\pm}:=\lim_{s\to\beta(p)} 
(s-\beta(p))^{m(f,x^p)} 
\tilde{Z}_{\pm}(s;\varphi). 
$$

\begin{proposition}\label{pro:10.6}
Suppose that 
{\rm (i)} $f$ satisfies the condition $(E)$,
{\rm (ii)} $g(x)=x^p\psi(x)$, 
where $p\in \Z_+^n$ and
$\psi$ is a smooth function defined on $U$ 
and 
{\rm (iii)} at least one of the following
conditions is satisfied. 
\begin{enumerate}
\item[(a)] $d(f,x^p)>1;$
\item[(b)] 
$f_{\sigma}\circ T_{A_p(\sigma)}$ 
does not vanish on $\R_+^n\cap \pi(\sigma)^{-1}(U)$
for any $\sigma\in\Sigma_p^{(n)}$.
\end{enumerate}
Then we give explicit formulae for coefficients$:$ 
$
\tilde{C}_{\pm}=G_{\pm}(f,\psi,\varphi),
$
where 
$G_{\pm}(f,\psi,\varphi)$ are as in 
$(\ref{eqn:10.26}),(\ref{eqn:10.29}),(\ref{eqn:10.30}),(\ref{eqn:10.31})$
in the proof of this proposition. 
\end{proposition}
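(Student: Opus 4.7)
The plan is to continue the decomposition of $\tilde Z_\pm(s;\varphi)$ set up in the proof of Theorem~\ref{thm:10.1} and extract residues. From the cone-by-cone splitting $\tilde Z_\pm = \sum_{\sigma\in\Sigma^{(n)}}(\sum_k I^{(k)}_{\pm,\sigma} + \sum_l J^{(l)}_{\pm,\sigma})$ and the pole-order table at the end of Step~6, the order-$m(f,x^p)$ singularity at $s=\beta(p)$ can only come from cones $\sigma \in \Sigma_p^{(n)}$: for these, the $I$-terms attain order $m(f,x^p)$ by Proposition~\ref{pro:9.2} applied to $(\ref{eqn:10.10})$, while the $J$-terms have order at most $\#(A_p(\sigma)\cap B_l(\sigma))$ plus one if $\beta(p)\in -\N$. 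The strategy is thus threefold: (i) dispose of the $J$-terms under hypothesis (a) or (b); (ii) evaluate the leading coefficient of each $I$-term via Proposition~\ref{pro:9.3}; and (iii) reassemble the cone contributions using the partition of unity and the face-part identity $(\ref{eqn:7.4})$ of Lemma~\ref{lem:7.1}.

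For (i) under hypothesis (b), I argue by contradiction: if $A_p(\sigma) \subseteq B_l(\sigma)$, then the local zero $b \in \tilde T_{B_l(\sigma)}(\R^n)$ of $f_\sigma$ satisfies $b \in T_{A_p(\sigma)}(\R^n)\cap \R_+^n\cap \pi(\sigma)^{-1}(U)$, giving $f_\sigma\circ T_{A_p(\sigma)}(b) = f_\sigma(b) = 0$ and contradicting (b); hence $\#(A_p(\sigma)\cap B_l(\sigma)) < m(f,x^p)$ and every $J^{(l)}_{\pm,\sigma}$ is subleading. Under hypothesis (a), $\beta(p) = -1/d(f,x^p)\in(-1,0)$ is not in $-\N$, so the $+1$ is absent from the $J$-order bound; I then combine $(\ref{eqn:10.21})$, which identifies $A_p(\sigma)$ for $\sigma\in\Sigma_p^{(n)}$ with the exact facets of $\Gamma_+(f)$ meeting at the principal face $\tau_*$, with the local structure of the zero set of $f_\sigma$ in $\R_+^n$ to preclude $A_p(\sigma)\subseteq B_l(\sigma)$ again.

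For (ii), Proposition~\ref{pro:9.3} with $A_* = A_p(\sigma)$ is applied to $(\ref{eqn:10.10})$, splitting into the cases $A_p(\sigma) = \{1,\ldots,n\}$ (principal face a vertex, $m(f,x^p) = n$) and $A_p(\sigma)\subsetneq\{1,\ldots,n\}$; since $f_\sigma(0)\neq 0$ by Lemma~\ref{lem:7.1}, the cutoff $\xi_k$ supported near the origin is the active one, and $\tilde C_\pm$ picks out the residue whose sign matches that of $f_\sigma(0)$. For (iii), applying $(\ref{eqn:7.4})$ with $\gamma = \tau_* = \gamma(A_p(\sigma),\sigma)$, valid for $\sigma\in\Sigma_p^{(n)}$ by $(\ref{eqn:10.21})$, rewrites $f_\sigma\circ T_{A_p(\sigma)}$ as $f_{\tau_*}\circ\pi(\sigma)$ up to an explicit monomial in $y$; inverting the $\pi(\sigma)$-pullback on the complementary $\R_+^{n-m(f,x^p)}$ and summing over $\sigma\in\Sigma_p^{(n)}$ collapses the cone-dependent pieces, via $\sum_\sigma\chi_\sigma\equiv 1$, into an integral intrinsic to the principal faces $\tau_*$ and $\gamma_* = \Psi_*(\tau_*)$, producing the stated $G_\pm(f,\psi,\varphi)$. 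The principal obstacle is step (i) under hypothesis (a), where the bound from Proposition~\ref{pro:9.2} is not strict on its own and must be supplemented by the cone-geometric structure of $\Sigma_p^{(n)}$ together with Lemma~\ref{lem:7.1}; a secondary bookkeeping point is tracking the sign data that distinguishes $\tilde C_+$ from $\tilde C_-$.
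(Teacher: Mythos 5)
Your overall frame (reuse the decomposition from Theorem~\ref{thm:10.1}, apply Proposition~\ref{pro:9.3} to the $I$-terms, rewrite via (\ref{eqn:7.4})) matches the paper, but your treatment of the $J$-terms under hypothesis (a) rests on a claim that is false, and this is precisely where the paper does something different. Hypothesis (a) is the purely numerical condition $d(f,x^p)>1$; it places no restriction on the zero set of $f_{\sigma}$ on the strata $T_{A_p(\sigma)}(\R^n)$, so you cannot ``preclude $A_p(\sigma)\subseteq B_l(\sigma)$'' and conclude the $J$-terms are strictly subleading. (Take $f(x_1,x_2)=x_1^4-x_2^4$, $g\equiv 1$: $f$ is nondegenerate, $d(f)=2>1$, yet $f_{\sigma}$ vanishes on exactly the strata in question; and since $\beta(p)\notin -\N$ the order bound for $J^{(l)}_{\pm,\sigma}$ is $\min\{\#A_p(\sigma),n-1\}=m(f,x^p)$ when $m(f,x^p)<n$, so these terms genuinely carry leading-order poles.) What the paper does instead is keep the $H_{l,\pm}(\sigma)$ contributions in (\ref{eqn:10.22}) and eliminate them by a limiting argument: because $d(f,x^p)>1$, the limit integrals (\ref{eqn:10.23})--(\ref{eqn:10.24}) converge as improper integrals, and deforming the cutoffs so that the supports of the $\eta_l$ shrink forces $H_{l,\pm}(\sigma)\to 0$ while the total $\tilde{C}_{\pm}$, which is independent of the partition of unity, stays fixed. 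This deformation-of-cutoffs mechanism is the essential idea your proposal is missing; order-counting alone cannot deliver it.

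Two further discrepancies. First, your step (iii) reassembly (``summing over $\sigma\in\Sigma_p^{(n)}$ collapses \dots into an integral intrinsic to the principal faces'') is not how the stated formulae arise: (\ref{eqn:10.26}) and (\ref{eqn:10.29}) are single-chart integrals attached to one \emph{arbitrary fixed} cone $\sigma\in\Sigma_p^{(n)}$, obtained by a second deformation, this time of the $\chi_{\sigma}$'s, using the density of each chart $\R^n(\sigma)$ in $Y_{\Sigma}$ to push all the mass onto one chart; only in the case $m(f,x^p)=n$ does a genuine sum over cones ($L=\sum_{\sigma\in\Sigma_p^{(n)}}L_{\sigma}$ in (\ref{eqn:10.30})) survive. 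A naive sum over overlapping charts would over-count. Second, under hypothesis (b) your contradiction argument only covers $\sigma\in\Sigma_p^{(n)}$ (which is all (b) speaks about), but when $\beta(p)\in -\N$ the $J$-terms of cones $\sigma\notin\Sigma_p^{(n)}$ can still produce poles of order $\#A_p(\sigma)+1=m(f,x^p)$, so your opening assertion that only cones in $\Sigma_p^{(n)}$ contribute at leading order is not automatic; the paper explicitly computes these extra coefficients and shows they too vanish in the limit of shrinking $\chi_{\sigma}$-supports before arriving at (\ref{eqn:10.25})--(\ref{eqn:10.29}).
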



\begin{proof}
In this proof, 
we use the following notation and symbols 
to decrease the complexity in the expressions 
of the integrals, below. 
\begin{itemize}
\item 
$\prod_{j\not\in A_p(\sigma)}y_j^{a_j}dy_j$ means 
$\prod_{j\not\in A_p(\sigma)}y_j^{a_j}\cdot
\prod_{j\not\in A_p(\sigma)}dy_j$ 
with $a_j \geq 0$.
\item $L_{\sigma}:=\prod_{j\in A_p(\sigma)}l_f(a^j(\sigma))^{-1}$.
\item $M_j(\sigma)
:=-l_f(a^j(\sigma))/d(f,x^p)+
\langle  a^j(\sigma),p+\1 \rangle$.
\item 
If $a=0$, then the value of $a^{-1/d(f,x^p)}$ is defined by $0$.
\item 
$((\psi\cdot\varphi)\circ\pi(\sigma))(T_A(y))
:=(\psi\circ\pi(\sigma))(T_A(y))\cdot
(\varphi\circ\pi(\sigma))(T_A(y))$.
\end{itemize}
Note that $M_j(\sigma)$ is a nonnegative constant and,   
moreover, $M_j(\sigma)= 0$ if and only if $j\in A_p(\sigma)$. 

We divide the computation into the following two cases:
$m(f,x^p)<n$ and $m(f,x^p)=n$. 

\underline{The case: $m(f,x^p)<n$.}\quad


\underline{Under the hypothesis (a).}\quad
First, we consider the case when the hypothesis (a):
$d(f,x^p)>1$ is satisfied.
Respectively, we define
$$ 
\tilde{C}_{\pm}(\sigma):=
\lim_{s\to\beta(p)} 
(s-\beta(p))^{m(f,x^p)} 
Z_{\pm}^{(\sigma)}(s), 
$$
where $Z_{\pm}^{(\sigma)}(s)$ 
are as in (\ref{eqn:10.7}). 
If $\sigma\not\in\Sigma_p^{(n)}$,
then $\tilde{C}_{\pm}(\sigma)=0$.
Thus, 
it suffices to consider the case when 
$\sigma\in\Sigma_p^{(n)}$. 
Considering the equations (\ref{eqn:10.8}) and
applying Proposition~\ref{pro:9.3} 
to (\ref{eqn:10.10}) and (\ref{eqn:10.14})
with $A_*=A_p(\sigma)$, 
we have
\begin{equation}\label{eqn:10.22}
\begin{split}
\quad \tilde{C}_{\pm}(\sigma)
=\sum_{k} G_{k,\pm}(\sigma)+\sum_{l} H_{l,\pm}(\sigma),
\end{split}
\end{equation}
with
\begin{equation}\label{eqn:10.23}
G_{k,\pm}(\sigma)=L_{\sigma}
\int_{\R_+^{n-m(f,x^p)}}
\frac{
\tilde{\xi}_{k}(T_{A_p(\sigma)}(y))
}{
f_{\sigma}(T_{A_p(\sigma)}(y))_{\pm}^{1/d(f,x^p)}
}
\prod_{j\not\in A_p(\sigma)}y_j^{M_j(\sigma)-1}dy_j, 
\end{equation}
and
\begin{equation}\label{eqn:10.24}
\begin{split}
&H_{l,\pm}(\sigma)= \\
&L_{\sigma}
\int_{\R_+^{n-m(f,x^p)}}
\frac{
\hat{\eta}_{l}
(T_{A_p(\sigma)}(u_1,\ldots,\pm u_i,\ldots,u_n))
}
{u_i^{1/d(f,x^p)}} 
\prod_{j\in B_l(\sigma)\setminus A_p(\sigma)
}u_j^{M_j(\sigma)-1}
\prod_{j\not\in A_p(\sigma)}du_j,
\end{split}
\end{equation}
where 
$\tilde{\xi}_{k}$, $\hat{\eta}_{l}$,
$B_l(\sigma)$, $i$ are as 
in (\ref{eqn:10.10}), (\ref{eqn:10.14}).
The summations
 in (\ref{eqn:10.22}) are taken for all $k$, $l$ 
satisfying 
$T_{A_p(\sigma)}(\R^n)
\cap {\rm Supp}(\xi_k)\neq \emptyset$ 
and $A_p(\sigma)\subset B_l(\sigma)$, 
respectively. 
We remark that the values of 
$G_{k,\pm}(\sigma)$ and 
$H_{l,\pm}(\sigma)$ depend on 
the cut-off functions $\chi_{\sigma}$, $\xi_k$, $\eta_l$
in Section~\ref{sec:10.1}. 
Since $d(f,x^p)>1$, 
the integrals in (\ref{eqn:10.23}),(\ref{eqn:10.24}) 
are convergent and they are
interpreted as improper integrals. 

In (\ref{eqn:10.23}), (\ref{eqn:10.24}), 
we deform the cut-off functions $\xi_k$ and $\eta_l$ 
as the volume of the support of $\eta_l$ tends to
zero for all $l$.  
Then, it is easy to see that 
the limit of $H_{l,\pm}(\sigma)$ is zero, while 
that of $\sum_k G_{k,\pm}(\sigma)$ 
can be computed explicitly. 
Considering the equation (\ref{eqn:10.22})
and the deformation of $\xi_k$ and $\eta_l$ in the above,
we have
\begin{equation}\label{eqn:10.25}
\begin{split}
\tilde{C}_{\pm}(\sigma)=
L_{\sigma}
\int_{\R_+^{n-m(f,x^p)}}
\frac{
\tilde{\chi}_{\sigma}(T_{A_p(\sigma)}(y))
}
{
\left(
f_{\sigma}(T_{A_p(\sigma)}(y))
\right)_{\pm}^{1/d(f,x^p)}
}
\prod_{j\notin A_p(\sigma)}
y_j^{M_j(\sigma)-1}
dy_j,
\end{split}
\end{equation}
where $\tilde{\chi}_{\sigma}$ is as in (\ref{eqn:10.7}).

Furthermore, 
let us compute the limits $\tilde{C}_{\pm}$ explicitly. 
If the cut-off function $\chi_{\sigma}$ is deformed as 
the volume of the support of $\chi_{\sigma}$ tends to zero, 
then $\tilde{C}_{\pm}(\sigma)$ tends to zero. 
Notice that each $\R_+^n(\sigma)$ 
is densely embedded in $\tilde{Y}_{\Sigma}$ 
(see Section~\ref{subsec:5.2})
and 
that
$\tilde{C}_{\pm}=
\sum_{\sigma\in\Sigma_p^{(n)}}
\tilde{C}_{\pm}(\sigma)$.
Thus,  
for an arbitrary fixed cone $\sigma\in\Sigma_p^{(n)}$, 
we have $\tilde{C}_{\pm}=G_{\pm}(f,\psi,\varphi)$ with
\begin{equation}\label{eqn:10.26}
G_{\pm}(f,\psi,\varphi)=
L_{\sigma}
\int_{\R_+^{n-m(f,x^p)}}
\frac{((\psi \cdot \varphi)\circ \pi(\sigma))
(T_{A_p(\sigma)}(y))}
{\left(
f_{\sigma}(T_{A_p(\sigma)}(y))\right)_{\pm}^{1/d(f,x^p)}}
\prod_{j\notin A_p(\sigma)}
y_j^{M_j(\sigma)-1}
dy_j.
\end{equation}
From the above computation, we can see that 
the value of the above integral must be independent of 
$\sigma\in\Sigma_p^{(n)}$.
Since the function $f_{\sigma}$ in (\ref{eqn:10.26}) appears
after the process of toric resolution, 
one should look for the other formulae, 
which are expressed by $\psi,\varphi,f$ more directly. 

Let us give the other formulae of $G_{\pm}(f,\psi,\varphi)$. 
From the equation (\ref{eqn:10.21}), Lemma~\ref{lem:7.1} implies 
\begin{equation}\label{eqn:10.27}
(f_{\tau_f (q)}\circ\pi(\sigma))(y)
=
\left(
\prod_{j=1}^n
y_j^{l_f(a^j(\sigma))}
\right)
f_{\sigma}(T_{A_p(\sigma)}(y)).
\end{equation}
Moreover, we have
\begin{equation}\label{eqn:10.28}
(f_{\tau_f (q)}\circ\pi(\sigma))
(T_{A_p(\sigma)}^1(y))
=
\left(
\prod_{j\not\in A_p(\sigma)}
y_j^{l_f(a^j(\sigma))}
\right)
f_{\sigma}(T_{A_p(\sigma)}(y)),
\end{equation}
where $T_{A_p(\sigma)}^1(\cdot)$ is as in 
(\ref{eqn:1.4}). 
By using the above equation, (\ref{eqn:10.26}) 
can be rewritten as 
\begin{equation}\label{eqn:10.29}
\begin{split}
&G_{\pm}(f,\psi,\varphi)= \\
&L_{\sigma}
\int_{\R_+^{n-m(f,x^p)}}
\frac{
\left((\psi \cdot \varphi)\circ \pi(\sigma)\right)(T_{A_p(\sigma)}(y))
}
{
\left(
(f_{\tau_f (q)}\circ \pi(\sigma))
(T^1_{A_p(\sigma)}(y))
\right)_{\pm}^{1/d(f,x^p)}
}
\prod_{j\notin A_p(\sigma)}
y_j^{\langle a^j(\sigma)\rangle -1}
dy_j.
\end{split}
\end{equation}


\underline{Under the hypothesis (b).}\quad
Secondly, we consider the case that the hypothesis (b)
is satisfied.
In this case, 
we immediately obtain the equations (\ref{eqn:10.25})
for $\sigma\in\Sigma_p^{(n)}$ 
in the same argument as in the case of the hypothesis (a), 
because $J_{\pm,\sigma}^{(l)}(s)$ do not appear 
in the decompositions (\ref{eqn:10.8}) and 
$f_{\sigma}\circ T_{A_p(\sigma)}$ does not vanish. 
But, in this case, it must be careful that 
$\tilde{C}_{\pm}(\sigma)$ does not always vanish
even in the case when $\sigma\not\in\Sigma_p^{(n)}$.
Indeed, some $J_{\pm,\sigma}^{(l)}(s)$ may have 
at $s=-1/d(f,x^p)$ a pole of order $m(f,x^p)$ 
even if $\sigma\not\in\Sigma_p^{(n)}$.
Here, the coefficients 
$\tilde{C}_{\pm}(\sigma)$ 
for $\sigma\not\in\Sigma_p^{(n)}$ can be computed 
in a similar argument of the proof of Lemma~\ref{lem:9.1},
since $I_{\pm,\sigma}^{(k)}(s)$ do not
have at $s=-1/d(f,x^p)$ poles of order 
$m(f,x^p)$. 
From the result of this computation, 
it is easy to see that these
coefficients tend to zero 
if the volume of the support of $\chi_{\sigma}$  
tends to zero. 
Therefore, the limits $\tilde{C}_{\pm}$ can be samely 
computed as in (\ref{eqn:10.26}) and (\ref{eqn:10.29}),
where $\sigma$ is as in the hypothesis (b).
We remark that $\tilde{C}_{+}$ or 
$\tilde{C}_{-}$ is equal to zero in this case. 

\underline{The case: $m(f,x^p)=n$.}\quad

In this case, we see that 
$A_p(\sigma)=\{1,\ldots,n\}$, 
$m(f,x^p)=n$ and 
$\tau_f(q)=q$.
Similar computations give 
the following expressions corresponding to (\ref{eqn:10.26}):
\begin{equation}\label{eqn:10.30}
G_{\pm}(f,\psi,\varphi)= 
L
\frac{\psi(0)\varphi(0)}
{
f_{\sigma}(0)^{1/d(f,x^p)}_{\pm}},
\end{equation}
where $\sigma\in\Sigma_p^{(n)}$ and
$L=\sum_{\sigma\in\Sigma_p^{(n)}}L_{\sigma}$.

From the equation (\ref{eqn:10.28}),
we obtain the other expressions
corresponding to (\ref{eqn:10.29}):
\begin{equation}\label{eqn:10.31}
\begin{split}
G_{\pm}(f,\psi,\varphi)=
L\frac{\psi(0)\varphi(0)}
{\left(
f_{\tau_f (q)}(\1)
\right)^{1/d(f,x^p)}_{\pm}}
=
L
\frac{(q!)^{1/d(f,x^p)}\psi(0)\varphi(0)}
{(\d^{q}f(0))^{1/d(f,x^p)}_{\pm}},
\end{split}
\end{equation}
where $\1=(1,\ldots,1)$ and 
$L$ is the same as in (\ref{eqn:10.30}).

\end{proof}

\begin{remark}
Let us consider the case when $\tau_f(q)$ is compact. 
From (\ref{eqn:10.21}), Lemma~\ref{lem:6.4} implies 
that $\pi(\sigma)(T_{A_p(\sigma)}(\R^n))=0$. 
Therefore, the formulae (\ref{eqn:10.26}), (\ref{eqn:10.29})
can be expressed in more simple forms as follows. 
\begin{equation*}
\begin{split}
&G_{\pm}(f,\psi,\varphi)=
L_{\sigma}\psi(0)\varphi(0)
\int_{\R_+^{n-m(f,x^p)}}
\frac{
\prod_{j\notin A_p(\sigma)}
y_j^{M_j(\sigma)-1}
dy_j
}{
\left(
f_{\sigma}(T_{A_p(\sigma)}(y))\right)_{\pm}^{1/d(f,x^p)}
}
\\
&\quad\quad\quad=L_{\sigma}\psi(0)\varphi(0)
\int_{\R_+^{n-m(f,x^p)}}
\frac{
\prod_{j\notin A_p(\sigma)}
y_j^{\langle a^j(\sigma)\rangle -1}
dy_j
}{
\left(
(f_{\tau_f (q)}\circ \pi(\sigma))
(T^1_{A_p(\sigma)}(y))
\right)_{\pm}^{1/d(f,x^p)}}.
\end{split}
\end{equation*}
\end{remark}

Finally, let us compute the coefficients of 
$(s-\beta(p))^{-m(f,x^p)}$ 
in the Laurent expansions of 
$Z_{\pm}(s;\varphi)$, $Z(s;\varphi)$.
Respectively, we define 
$$ 
C_{\pm}:=\lim_{s\to\beta(p)} 
(s-\beta(p))^{m(f,x^p)} 
Z_{\pm}(s;\varphi), 
\quad
C:=\lim_{s\to\beta(p)} 
(s-\beta(p))^{m(f,x^p)} 
Z(s;\varphi).
$$

\begin{theorem}\label{thm:10.7}
Suppose that 
{\rm (i)} $f$ satisfies the condition $(E)$,
{\rm (ii)} $g(x)=x^p\psi(x)$, 
where $p\in \Z_+^n$ and
$\psi$ is a smooth function defined on $U$
and
{\rm (iii)}
at least one of the following
conditions is satisfied. 
\begin{enumerate}
\item[(a)] $d(f,g)=d(f,x^p)>1;$
\item[(b)] $f$ is nonnegative or nonpositive on $U;$
\item[(c)] $f_{\tau_*}$ does not vanish on 
$(\R\setminus\{0\})^n$, 
where $\tau_*=\tau_f(q)$ is the principal face of $\Gamma_+(f)$.
\end{enumerate}
Then we give explicit formulae for the coefficients in the following.
\begin{equation}\label{eqn:10.32}
C_{\pm}=
\left(
\prod_{j=1}^n\theta_j^{p_j}
\right)
\sum_{\theta\in\{-1,1\}^n}
G_{\pm}(f_{\theta},\psi_{\theta},\varphi_{\theta})
\mbox{\quad and \quad} 
C=C_+ + C_-,
\end{equation} 
where 
$f_{\theta}(x)=f(\theta_1 x_1,\ldots,\theta_n x_n)$, etc. and 
$G_{\pm}(f,\psi,\varphi)$
are as in 
$(\ref{eqn:10.26}),(\ref{eqn:10.29})$, 
$(\ref{eqn:10.30}),(\ref{eqn:10.31})$.

Furthermore, the formulae $(\ref{eqn:10.32})$
imply the following. 
Suppose that 
{\rm (iv)} every component of $p\in \Z_+^n$ is even 
and 
{\rm (v)} 
$\psi(0)\varphi(0)$ is positive $($resp. negative$)$
and $\psi\cdot\varphi$ is nonnegative $($resp. nonpositive$)$ on $U$. 
Then if the support of $\varphi$ is sufficiently small, 
then 
$C_{\pm}$ are nonnegative 
$($resp. nonpositive$)$ and 
$C=C_{+}+C_{-}$ is positive
$($resp. negative$)$. 
\end{theorem}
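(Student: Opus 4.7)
The plan is to reduce the computation to Proposition 10.6 via the orthant decomposition (10.5) and then assemble the orthant contributions. By (10.5),
$$Z_\pm(s;\varphi) = \sum_{\theta\in\{-1,1\}^n} \theta^p\, \hat{Z}_\pm(s;\varphi_\theta;f_\theta,\psi_\theta),$$
and each $\hat{Z}_\pm$ has exactly the form to which Proposition 10.6 applies after renaming $(f,\psi,\varphi) \mapsto (f_\theta,\psi_\theta,\varphi_\theta)$. Since $\Gamma_+(f)$, membership in $\hat{\mathcal{E}}(U)$, and the nondegeneracy condition in $(E)$ are all invariant under the reflections $x_j \mapsto -x_j$, each $f_\theta$ still satisfies $(E)$ and shares with $f$ the same Newton polyhedron, Newton distance, multiplicity, principal face $\tau_*$, and leading pole location $\beta(p)$.

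The substantive step is to verify hypothesis (iii) of Proposition 10.6 for every $f_\theta$ under each of the three conditions of Theorem~10.7. Case (a) is immediate since $d(f_\theta,x^p) = d(f,x^p) > 1$. Case (c) follows from identities (10.27)--(10.28) of Lemma~7.1: the nonvanishing of $f_{\tau_*}$ on $(\R\setminus\{0\})^n$ translates into the nonvanishing of $f_\sigma \circ T_{A_p(\sigma)}$ on the relevant stratum of $\pi(\sigma)^{-1}(U)$ for each $\sigma \in \Sigma_p^{(n)}$, and the conclusion passes to $f_\theta$ because $\tau_*$ is also the principal face of $\Gamma_+(f_\theta)$. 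Case (b) is the most delicate: if $f$ has a fixed sign, then $(f_\theta)_\pm \equiv 0$ for one choice of sign in every orthant, so the corresponding $Z_\pm$ vanishes trivially; for the nontrivial sign, any zero of $f_\sigma$ on $\tilde{T}_{A_p(\sigma)}(\R^n)$ would be a local extremum and hence critical, contradicting the nonsingular-zero statement of Theorem~7.2. Thus $f_\sigma \circ T_{A_p(\sigma)}$ is nonvanishing in every relevant orthant, verifying hypothesis (b) of Proposition~10.6.

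Applying Proposition 10.6 in each orthant and summing then yields
$$C_\pm = \sum_{\theta \in \{-1,1\}^n} \theta^p\, G_\pm(f_\theta,\psi_\theta,\varphi_\theta), \qquad C = C_+ + C_-,$$
which is precisely (10.32). For the final positivity assertion, when every $p_j$ is even we have $\theta^p = 1$ for all $\theta$, so $C_\pm = \sum_\theta G_\pm(f_\theta,\psi_\theta,\varphi_\theta)$. Under hypothesis (v), in each integrand formula (10.26), (10.29), (10.30), or (10.31) the factor $((\psi\cdot\varphi)\circ\pi(\sigma))(T_{A_p(\sigma)}(y))$ is nonnegative (resp.\ nonpositive), the denominator is a nonnegative real power of the nonnegative quantity $f_\sigma(T_{A_p(\sigma)}(y))_\pm$, and the remaining monomial weights are nonnegative on $\R_+^{n-m(f,x^p)}$. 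Hence every orthant summand $G_\pm(f_\theta,\psi_\theta,\varphi_\theta)$ is nonnegative (resp.\ nonpositive), so $C_\pm$ have the asserted sign. To see that $C$ is strictly nonzero, shrink the support of $\varphi$ into a neighborhood of the origin on which $\psi\cdot\varphi$ is strictly signed; then for the orthant $\theta = \1$ the integrand in the chosen representation of $G_+(f,\psi,\varphi)$ or $G_-(f,\psi,\varphi)$ is strictly signed on a set of positive measure, so $C = C_+ + C_- \neq 0$ with the claimed sign.

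The main obstacle is the verification of hypothesis (iii) of Proposition~10.6 under case (b) of Theorem~10.7, which requires combining the global sign of $f$ with the nonsingular-zero statement of Theorem~7.2 to exclude interior zeros of $f_\sigma$; the remaining work is bookkeeping over the $2^n$ orthants and sign inspection of the explicit integrands supplied by Proposition~10.6.
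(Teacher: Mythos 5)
Your overall route is the paper's: decompose by orthants via (10.3)--(10.5), apply Proposition~10.6 to each triple $(f_{\theta},\psi_{\theta},\varphi_{\theta})$, and read off the sign statement from the explicit formulae. The gap lies in the key step, namely the verification of hypothesis (iii) of Proposition~10.6 under cases (b) and (c). Condition (b) of Proposition~10.6 requires that $f_{\sigma}\circ T_{A_p(\sigma)}$ have no zero on the whole set $\R_+^n\cap\pi(\sigma)^{-1}(U)$ (and, since you apply the proposition in every orthant, effectively on $\R^n\cap\pi(\sigma)^{-1}(U)$), including points at which further coordinates $y_j$ with $j\notin A_p(\sigma)$ vanish. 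Your arguments exclude zeros only on the open stratum $\tilde{T}_{A_p(\sigma)}(\R^n)$. In case (c), the identities (10.27)--(10.28) give information only where the monomial $\prod_{j\notin A_p(\sigma)}y_j^{l_f(a^j(\sigma))}$ is nonzero; at a point with $y_{j_0}=0$ and $l_f(a^{j_0}(\sigma))>0$ both sides vanish identically, so the nonvanishing of $f_{\tau_*}$ on $(\R\setminus\{0\})^n$ says nothing there, and your case (c) invokes no nondegeneracy at all, which cannot suffice at such points. In case (b), the local-extremum argument likewise works only at points all of whose non-$A_p(\sigma)$ coordinates are nonzero (that is what makes the sign of the monomial, hence of the restriction of $f_{\sigma}$ to the stratum, locally constant), and it appeals to the nonsingularity statement of Theorem~7.2 without checking its hypothesis $\pi(\sigma)(\tilde{T}_{A_p(\sigma)}(\R^n))=0$.

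The paper closes exactly this point by a contradiction argument at an arbitrary $b_0\in T_{A_p(\sigma)}(\R^n)\cap\pi(\sigma)^{-1}(U)$ with $f_{\sigma}(b_0)=0$: nondegeneracy, through Theorem~7.2, produces nearby points $b_1,b_2\in T_{A_p(\sigma)}(\R^n)\cap\pi(\sigma)^{-1}(U)$ with $f_{\sigma}(b_1)>0$ and $f_{\sigma}(b_2)<0$, and this sign change contradicts (b) via (7.3) and (c) via (10.27). Your proof needs this (or an equivalent argument carried out on the smaller strata) before Proposition~10.6 may be invoked; as written, the conclusion that $f_{\sigma}\circ T_{A_p(\sigma)}$ is nonvanishing in every relevant orthant does not follow from what you established. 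The remaining parts are fine: case (a) is immediate, the assembly of (10.32) from the orthant decomposition is the paper's, and the final sign assertion is correct once you make explicit (as the paper does via (10.28) and $f_{\sigma}(0)\neq 0$) that for some orthant the integrand of $G_+$ or $G_-$ is not identically zero near the origin, which is what guarantees $C=C_++C_-\neq 0$.
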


\begin{proof}
From the equations (\ref{eqn:10.3}), (\ref{eqn:10.5}), 
in order to obtain the explicit formulae 
(\ref{eqn:10.32}), 
we must show every condition in (iii)
implies the condition (a) or (b) 
in Proposition~\ref{pro:10.6} (iii).

Since 
the case (a) is easily shown,  
we only consider the cases (b) and (c). 
It suffices to show that 
when the support of $\varphi$
is contained in a sufficiently small neighborhood 
of the origin,
the conditions
(b) and (c) imply the condition:
$f_{\sigma}\circ T_{A_p(\sigma)}$ does not 
vanish on $\R^n\cap\pi(\sigma)^{-1}(U)$ 
for any $\sigma\in\Sigma_p^{(n)}$.
Note that the difference from 
the condition (b) in Proposition~\ref{pro:10.6}
is only in ``$\R^n\cap\pi(\sigma)^{-1}(U)$''.
Let us show this by contradiction. 

Now, let us assume that 
for some $\sigma \in \Sigma_p^{(n)}$
there exists a point 
$b_0\in T_{A_p(\sigma)}(\R^n)\cap \pi(\sigma)^{-1}(U)$
such that $f_{\sigma}(b_{0})=0$.
Since $f$ is nondegenerate over $\R$ 
with respect to its Newton polyhedron,
Theorem~\ref{thm:7.2} implies that
there are points $b_1, b_2 \in T_{A_p(\sigma)}(\R^n)\cap
\pi(\sigma)^{-1}(U)$
near $b_0$ such that
$f_{\sigma}(b_1)>0$ and $f_{\sigma}(b_2)<0$.
By using the equations (\ref{eqn:7.3}),(\ref{eqn:10.27}),
it is easy to see that 
the conditions (b) and (c) induce
the contradiction to 
the existence of the above points $b_1,b_2$.

Finally, we show the sign of the values of $C_{\pm}$ 
and $C=C_++C_-$. 
Since the nonnegativities or the nonpositivities of $C_{\pm}$
can be directly seen from the explicit formulae with conditions 
(iii)-(v), 
we only show the nonvanishing of 
the value of $C=C_+ +C_-$. 
Since $f_{\sigma}(0)>0$, 
the equation (\ref{eqn:10.28})  implies that
$f_{\tau_f (q)}\circ \pi(\sigma)\circ T^1_{A_p(\sigma)}$
does not identically equal zero near the origin. 
Therefore, the conditions on $\psi,\varphi$ in (iv),(v)
imply the nonvanishing of $C=C_+ +C_-$.
\end{proof}

\subsection{Poles on negative integers}\label{subsec:10.3}

Let us consider the poles of $Z_{\pm}(s;\varphi)$
at negative integers in more detail. 

For $\lambda\in-\N$,
define 
\begin{eqnarray*}
&&
A_{\lambda}(\sigma,p):=\{j\in B(\sigma); 
l_f(a^j(\sigma))\lambda+\langle a^j(\sigma),p+\1
\rangle-1 \in-\N\},\\
&&
\rho_{\lambda}(p):=
\min\{\max\{\#  A_{\lambda}(\sigma,p):\sigma\in\Sigma^{(n)}\},n-1\}.
\end{eqnarray*}
The following proposition is concerned with 
the poles of 
$Z_{\pm}(s;\varphi)$, 
which are induced by the set of zeros of $f_{\sigma}$,
and will be used in 
the computation of the coefficients 
of the asymptotic expansion (\ref{eqn:1.2})
of $I(t;\varphi)$ 
(see the proof in Section~\ref{subsec:14.3}).

\begin{proposition}\label{pro:10.8}
Suppose that 
{\rm (i)} $f$ satisfies the condition $(E)$ and 
{\rm (ii)} $g(x)=x^p \psi(x)$,
where $p\in \Z_+^n$ and 
$\psi$ is a smooth function defined on $U$.
If the support of $\varphi$ is
contained in a sufficiently small neighborhood 
of the origin, then the orders of poles of 
$Z_{\pm}(s;\varphi)$  
at $s=\lambda\in -\N$ are not 
higher than $\rho_{\lambda}(p)+1$.
In particular, if $\lambda>-1/d(f,x^p)$, 
then these orders are not higher than $1$.    
Moreover, 
let $a_{\lambda}^{\pm}$ be the coefficients of 
$(s-\lambda)^{-\rho_{\lambda}(p)-1}$ 
in the Laurent expansions of 
$Z_{\pm}(s;\varphi)$ at $s=\lambda$, 
respectively,  
then we have $a_{\lambda}^+=(-1)^{\lambda-1}a_{\lambda}^-$ 
for $\lambda\in-\N$.
\end{proposition}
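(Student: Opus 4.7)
The plan is to follow the same three-step decomposition used for the proof of Theorem~10.1, tracking the pole order at $\lambda \in -\N$ instead of at $\beta(p)$. First, the orthant decomposition (10.3)--(10.5) reduces the analysis of $Z_\pm(s;\varphi)$ to that of $\hat{Z}_\pm$ on $\R_+^n$; then the toric resolution $(Y_\Sigma,\pi)$ of Lemma~7.1 writes $\hat{Z}_\pm = \sum_\sigma Z^{(\sigma)}_\pm$ via (10.6)--(10.7); and finally a smooth partition of unity $\{\xi_k\}\cup\{\eta_l\}$ splits $Z^{(\sigma)}_\pm$ into pieces $I^{(k)}_{\pm,\sigma}$ (supported where $f_\sigma$ has a definite sign) and $J^{(l)}_{\pm,\sigma}$ (localized near a zero of $f_\sigma$), producing the explicit formulas (10.10) and (10.14) respectively.

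For the order bound at $\lambda$, I apply Proposition~9.2 to (10.10) to bound the order of $I^{(k)}_{\pm,\sigma}(s)$ at $s=\lambda$ by $\# A_\lambda(\sigma,p)$. For $J^{(l)}_{\pm,\sigma}(s)$, after the coordinate change of Remark~7.3 the integral (10.14) takes precisely the form of $L_\pm(s)$ in Lemma~9.4, with $u_i$ as the distinguished variable and $D = B_l(\sigma)\subsetneq\{1,\ldots,n\}$; the pole order at $s=\lambda$ is therefore bounded by $\#(A_\lambda(\sigma,p)\cap B_l(\sigma))+1\leq \min(\# A_\lambda(\sigma,p),\,n-1)+1$. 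Combining these contributions over all pieces and all $\sigma\in\Sigma^{(n)}$ yields the global bound $\rho_\lambda(p)+1$. The second assertion (order at most $1$ when $\lambda > -1/d(f,x^p)=\beta(p)$) then follows because, by the definition (10.16), every $a\in\tilde{\Sigma}^{(1)}$ satisfies $l_f(a)\lambda+\langle a,p+\1\rangle > 0$; this integer being strictly positive forces $A_\lambda(\sigma,p)=\emptyset$ for all $\sigma$, so $\rho_\lambda(p)=0$.

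For the sign relation $a_\lambda^+ = (-1)^{\lambda-1}a_\lambda^-$, the generic case is essentially free: when $\max_\sigma \# A_\lambda(\sigma,p)\leq n-1$, only the $J^{(l)}_{\pm,\sigma}$ contributions attain the top order $\rho_\lambda(p)+1$, Lemma~9.4 supplies the factor $(-1)^{\lambda-1}$ between the $+$ and $-$ top-order coefficients cone by cone, and the orthant weights $\theta^p$ in (10.5) are independent of $\pm$ and thus drop out of the ratio. The main technical obstacle is the edge case $\max_\sigma \# A_\lambda(\sigma,p)=n$, in which $I^{(k)}_{\pm,\sigma}(s)$ can also attain the top order $n=\rho_\lambda(p)+1$. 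To handle it, I would verify the relation directly from (10.10) by the same integration-by-parts as in the proof of Lemma~9.4: for a given $\sigma$, the top-order coefficients of $I^{(k)}_{+,\sigma}$ and $I^{(k)}_{-,\sigma}$ are supported on the two sides of $\{f_\sigma=0\}$, and combining with the orthant sum $\sum_\theta \theta^p\hat{Z}_\pm(s;\varphi_\theta;f_\theta,\psi_\theta)$ in (10.5), the substitution $x_j\mapsto\theta_j x_j$ together with the derivative orders produced by the integration by parts conspire to yield the required sign (matching the scalar verification in which $(-1)^p(-1)^{k-1}=(-1)^{\lambda-1}$ for $\lambda=-p-k$).
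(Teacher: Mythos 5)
Your main line is the paper's own: the paper proves Proposition~10.8 precisely by re-using Steps~2 and 4 of the proof of Theorem~10.1 and applying Lemma~9.4 to (10.14), and your order bound (Proposition~9.2 for the $I^{(k)}_{\pm,\sigma}$, Lemma~9.4 for the $J^{(l)}_{\pm,\sigma}$, with the cap coming from $B_l(\sigma)\subsetneq\{1,\ldots,n\}$) and your deduction of the order-one statement for $\lambda>-1/d(f,x^p)$ from (10.16) and Lemma~10.4 are correct.

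The gap is exactly the edge case you flag, $\max_\sigma\# A_\lambda(\sigma,p)=n$, and as written your treatment of it does not close. (The paper's two-line proof is silent here too: ``it suffices to investigate the $J^{(l)}_{\pm,\sigma}$'' is literally true only when $\max_\sigma\# A_\lambda(\sigma,p)\le n-1$; already for $n=1$, $f(x)=x^2$, $g\equiv1$ the individual orthant pieces have nonzero residues at $\lambda\in-\N$ coming purely from $I$-type integrals, and the relation holds only after summing over orthants.) Your stated mechanism is off target: the coefficient of $(s-\lambda)^{-n}$ of an $I$-piece is a constant times the $\alpha$-jet of $f_{\sigma}(y)_\pm^\lambda\tilde\xi_k(y)$ at $y=0$, with $\alpha_j=-l_f(a^j(\sigma))\lambda-\langle a^j(\sigma),p+\1\rangle$, and since $f_\sigma(0)\neq0$ each fixed cone and orthant contributes to exactly one of $Z_+$, $Z_-$; nothing is ``supported on the two sides of $\{f_\sigma=0\}$'' at this order. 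The identity $a_\lambda^+=(-1)^{\lambda-1}a_\lambda^-$ for this part can only come from the orthant sum in (10.5), and making that work is a genuine computation: because $\det(a^j_k(\sigma))=\pm1$, the orthant signs $\theta\in\{-1,1\}^n$ correspond bijectively to sign vectors $\epsilon$ in the $y$-variables via $\theta_j=\prod_k\epsilon_k^{a^k_j(\sigma)}$; the $\epsilon$-orthant contributes the fixed jet times $\theta^p\epsilon^\alpha=\prod_j\epsilon_j^{-l_f(a^j(\sigma))\lambda-\langle a^j(\sigma),\1\rangle}$, landing in $Z_+$ or $Z_-$ according to the sign of $\bigl(\prod_j\epsilon_j^{l_f(a^j(\sigma))}\bigr)f_\sigma(0)$; and the required relation then reduces to $\sum_{\epsilon\in\{-1,1\}^n}\prod_j\epsilon_j^{\langle a^j(\sigma),\1\rangle}=0$, i.e.\ to the fact that not all $\langle a^j(\sigma),\1\rangle$ can be even, which holds precisely because unimodularity makes $(a^j_k(\sigma))$ invertible mod~2. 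Your one-dimensional ``scalar verification'' is consistent with this but does not by itself produce the $n$-dimensional cancellation; you need to add this parity/unimodularity argument (or an equivalent) before the claim that the $I$-pieces ``conspire to yield the required sign'' is justified.
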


\begin{proof}
By observing Steps 2 and 4 in the proof of 
Theorem \ref{thm:10.1},
it suffices to investigate the poles of each 
$J_{\pm,\sigma}^{(l)}(s)$.
Applying Lemma~\ref{lem:9.4}
to (\ref{eqn:10.14}),
we obtain the theorem.
\end{proof}
 
\subsection{Generalization of Varchenko's results to the Puiseux series case}

In this subsection, 
we introduce a new class of functions 
which admit asymptotic expansions at the origin
of the form of a fractional power series 
and generalize the results about unweighted local zeta type functions
(\ref{eqn:8.1}), (\ref{eqn:8.2}) with $g\equiv 1$ 
due to Varchenko \cite{var76} and Kamimoto and Nose \cite{kn13} 
in the case when $f$ belongs to this class. 
This generalization can be easily done
by using the results in the monomial weighted case.

Hereafter in this subsection, 
let $U$ be a small open neighborhood of the origin in $\R^n$.  
We denote by $U_+$ the restriction of $U$ to $\R^n_+$. 
\begin{definition}
Let $p=(p_1,\ldots,p_n)\in\N^n$.
We write $\alpha/p:=(\alpha_1/p_1,\ldots,\alpha_n/p_n)$ 
and 
$x^{\alpha/p}
:=x_1^{\alpha_1/p_1}\cdots x_n^{\alpha_n/p_n}$
for $\alpha=(\alpha_1,\ldots,\alpha_n)\in\Z_+^n$.
\begin{enumerate}
\item
We denote by $C_{1/p}^{\infty}(U_+)$ the set of
functions $f$ defined on $U_+$ for which 
there exist an open neighborhood $V$
of the origin in $\R^n$ and 
a smooth function $F_f$ defined on $V$ satisfying 
the condition (A) in Section~3 such that
$U_+\subset \Phi_{1/p}^{-1}(V)$ and 
$f=F_f\circ \Phi_{1/p}$ on $U_+$, where 
$\Phi_{1/p}(x):=(x_1^{1/p_1},\ldots,x_n^{1/p_n})$.
\item
We denote by $\hat{\mathcal E}_{1/p}(U_+)$
the set of functions $f\in C_{1/p}^{\infty}(U_+)$ such that 
$F_f$ belongs to 
the class $\hat{\mathcal E}(V)$.
\item
Let $f\in C_{1/p}^{\infty}(U_+)$.
When we write the Taylor series of $F_f$ at the origin as 
$F_f(x)\sim\sum_{\alpha\in\Z_+^n}c_{\alpha}x^{\alpha}$,
the formal series:
\begin{equation}
f(x)\sim \sum_{\alpha\in\Z_+^n}c_{\alpha}x^{\alpha/p}:=
\sum_{\alpha\in\Z_+^n}c_{\alpha}x_1^{\alpha_1/p_1}\cdots x_n^{\alpha_n/p_n}
\label{eqn:10.33}
\end{equation}
is called the {\it Puiseux series of} $f$ at the origin.
\item
For $f\in C_{1/p}^{\infty}(U_+)$, 
we can naturally generalize the definitions of 
{\it the Newton polyhedron, the Newton distance and its multiplicity, 
the principal face} in the case of the Puiseux series (\ref{eqn:10.33}), 
which are samely denoted by 
$\Gamma_+(f),d(f),m(f)$, $\tau_*$, respectively.
Recall that 
$d(f),m(f)$, $\tau_*$ have been defined in 
Remark~\ref{rem:2.11} in the case when $f\in C^{\infty}(U)$.
For a compact face $\gamma$ of $\Gamma_+(f)$, 
the $\gamma$-part $f_{\gamma}$ of $f$ is
defined by 
$f_{\gamma}(x):=\sum_{\alpha/p\in\gamma} c_{\alpha}x^{\alpha/p}$.
Note that the vertices of the Newton polyhedron $\Gamma_+(f)$ are 
contained in $\Q_+^n$.
(Refer to \cite{cgp13} for the details.) 
\item
$f\in C_{1/p}^{\infty}(U_+)$ is said to {\it be nondegenerate
over $\R$ with respect to} $\Gamma_+(f)$
if the $\gamma$-part $f_{\gamma}$ satisfies
$\nabla f_{\gamma}\neq (0,\ldots,0)$ on
$\R_{>0}^n$ for every compact face 
$\gamma$ of $\Gamma_+(f)$.
Note that $f\in C_{1/p}^{\infty}(U_+)$ is smooth in 
$U_+ \cap \R_{>0}^n$.
\end{enumerate}
\end{definition}

We will use the following properties later.
\begin{lemma} Let $f\in C_{1/p}^{\infty}(U_+)$. Then we have 
\begin{enumerate}
\item $f$ is nondegenerate
over $\R$ with respect to $\Gamma_+(f)$ if and only if
$F_f$ is nondegenerate
over $\R$ with respect to $\Gamma_+(F_f)$. 
\item 
$d(F_f,x^{p-\1})=d(f)$, where 
$x^{p-\1}=\prod_{j=1}^n x_j^{p_j-1}$.
\end{enumerate}
\end{lemma}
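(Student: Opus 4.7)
The plan is to exploit that the map $\Phi_{1/p}\colon \R_{>0}^n \to \R_{>0}^n$, defined by $\Phi_{1/p}(x)=(x_1^{1/p_1},\ldots,x_n^{1/p_n})$, is a real-analytic diffeomorphism whose action on exponents is the componentwise rescaling $T_p\colon \alpha\mapsto \alpha/p$. Substituting $y=\Phi_{1/p}(x)$ into the Taylor series $F_f(y)\sim\sum_\alpha c_\alpha y^\alpha$ produces exactly the Puiseux series $f(x)\sim\sum_\alpha c_\alpha x^{\alpha/p}$, so $T_p$ carries the supports onto one another; by linearity, $T_p$ restricts to a bijection $\Gamma_+(F_f)\leftrightarrow\Gamma_+(f)$ which sends faces to faces and preserves compactness. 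For each compact face $\tilde\gamma$ of $\Gamma_+(F_f)$ with image $\gamma=T_p(\tilde\gamma)$, the face-parts are linked by
\[
f_\gamma(x) \;=\; (F_f)_{\tilde\gamma}\bigl(\Phi_{1/p}(x)\bigr) \qquad (x\in\R_{>0}^n).
\]

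For part (i), differentiating this identity gives
\[
\frac{\partial f_\gamma}{\partial x_j}(x)
\;=\;\frac{1}{p_j}\,x_j^{1/p_j-1}\,
\frac{\partial (F_f)_{\tilde\gamma}}{\partial y_j}\bigl(\Phi_{1/p}(x)\bigr),
\]
and the factor $(1/p_j)x_j^{1/p_j-1}$ is strictly positive on $\R_{>0}^n$. Hence nondegeneracy of $f$ on $\R_{>0}^n$ is equivalent to the nonvanishing of $\nabla(F_f)_{\tilde\gamma}$ on $\R_{>0}^n$ for every compact $\tilde\gamma$. The direction $(\Leftarrow)$ follows at once from this equivalence by restriction; for $(\Rightarrow)$, I would extend the nonvanishing of $\nabla(F_f)_{\tilde\gamma}$ from $\R_{>0}^n$ to $(\R\setminus\{0\})^n$ by combining the quasihomogeneity of $(F_f)_{\tilde\gamma}$ with positive integer weights (Remark 2.14 (iii)) and Euler's identity $\sum_j a_j y_j\,\partial_j(F_f)_{\tilde\gamma}=l\,(F_f)_{\tilde\gamma}$, transferring any hypothetical zero of the gradient at a point of $(\R\setminus\{0\})^n$ to a zero on the positive orthant via the sign reflections $R_\epsilon(y)=(\epsilon_1 y_1,\ldots,\epsilon_n y_n)$.

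For part (ii), observe that $\Gamma_+(x^{p-\1})=\{p-\1\}+\R_+^n$, so Definition 2.4 together with Remark 2.5 simplify to
\[
d(F_f,x^{p-\1})\;=\;\min\{d>0:\ dp\in\Gamma_+(F_f)\},
\]
since $\Gamma_+(F_f)$ is stable under $+\R_+^n$. The bijection $T_p\colon \Gamma_+(F_f)\to \Gamma_+(f)$ established in the first paragraph gives the equivalence $dp\in\Gamma_+(F_f)\iff d\cdot\1\in\Gamma_+(f)$, and Remark 2.11 applied to the Puiseux case identifies $d(f)$ with $\min\{d>0:\ d\cdot\1\in\Gamma_+(f)\}$, yielding the claimed equality.

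The main obstacle is the extension step in part (i) from $\R_{>0}^n$ to $(\R\setminus\{0\})^n$: the diffeomorphism $\Phi_{1/p}$ only controls the positive orthant, so bridging the two nondegeneracy conditions has to go through the quasihomogeneous polynomial structure of $(F_f)_{\tilde\gamma}$ rather than through $\Phi_{1/p}$ alone. Part (ii), by contrast, is a direct geometric translation once the bijection of Newton polyhedra has been set up.
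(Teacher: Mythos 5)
Your part (ii) is correct and is in substance the paper's own argument: the componentwise rescaling $\alpha\mapsto\alpha/p$ is a linear bijection carrying $\Gamma_+(F_f)$ onto $\Gamma_+(f)$, under which $dp\in\Gamma_+(F_f)$ if and only if $d\cdot\1\in\Gamma_+(f)$, and this gives $d(F_f,x^{p-\1})=d(f)$ at once. Likewise your chain-rule identity $\partial_j f_\gamma(x)=(1/p_j)\,x_j^{1/p_j-1}\,\partial_j (F_f)_{\tilde{\gamma}}(\Phi_{1/p}(x))$ is exactly the content of the paper's one-line proof of (i): since $\Phi_{1/p}$ is a diffeomorphism of $\R_{>0}^n$ and the extra factors are strictly positive there, nonvanishing of $\nabla f_{\gamma}$ on $\R_{>0}^n$ for all compact faces of $\Gamma_+(f)$ is equivalent to nonvanishing of $\nabla (F_f)_{\tilde{\gamma}}$ on $\R_{>0}^n$ for all compact faces of $\Gamma_+(F_f)$.

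The gap is your final step in (i), the extension of the nonvanishing of $\nabla(F_f)_{\tilde{\gamma}}$ from $\R_{>0}^n$ to $(\R\setminus\{0\})^n$. Euler's identity only converts a gradient zero into a zero of the face part at the same point, and composing with a sign reflection $R_\epsilon$ produces a different quasihomogeneous polynomial $(F_f)_{\tilde{\gamma}}\circ R_\epsilon$ whose gradient on the positive orthant is not controlled by the hypothesis on $f$, which only sees $\R_{>0}^n$; so the proposed transfer of zeros between orthants does not go through. In fact no argument can close this step under the literal definitions: take $p=(2,1)$ and $f(x_1,x_2)=x_1+2x_1^{1/2}x_2+x_2^2=(x_1^{1/2}+x_2)^2$, so that $F_f(y_1,y_2)=(y_1+y_2)^2$. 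Every compact face part of $f$ has nonvanishing gradient on $\R_{>0}^2$ (for the edge part the $x_1$-derivative is $1+x_1^{-1/2}x_2\geq 1$), so $f$ is nondegenerate in the sense of Definition~10.10 (v); yet for the compact edge $\tilde{\gamma}$ of $\Gamma_+(F_f)$ the gradient $\nabla(F_f)_{\tilde{\gamma}}$ vanishes at $(1,-1)\in(\R\setminus\{0\})^2$, so $F_f$ violates the condition in (D). Thus only the implication from $F_f$ to $f$, and the positive-orthant equivalence displayed above, can be proved; the latter is what the paper's chain-rule proof actually yields and is all that is used when the lemma is invoked in the proofs of Theorems~10.12 and 10.14, where after the substitution $x=\Phi_{1/p}^{-1}(y)$ the integrals run over $\R_+^n$ only. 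You should therefore drop the reflection/Euler extension and either state (i) with nondegeneracy of $F_f$ taken on $\R_{>0}^n$, or record only the one implication together with the orthant equivalence.
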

\begin{proof}
(i) By using the chain rule, this equivalence can be easily seen.

(ii) 
From the definitions of the Newton distances $d(\cdot,\cdot)$ and
$d(\cdot)$, the geometrical relationship between 
$\Gamma_+(F_f)$ and $\Gamma_+(f)$ gives the desired equation. 
\end{proof}

Now, let us consider the following local zeta type functions 
in the case when $f$ belongs to the class
$C_{1/p}^{\infty}(U_+)$.
\begin{equation}\label{eqn:10.34}
\tilde{Z}_{\pm}(s;\varphi)=\int_{\R_+^n} f(x)_{\pm}^s \varphi(x)dx, \quad 
\tilde{Z}(s;\varphi)=\int_{\R_+^n} |f(x)|^s \varphi(x)dx, 
\end{equation}
where $\varphi$ satisfies the condition (C) in Section~3.

By applying Theorems 10.1 and 10.8,
we obtain the following theorems, 
whose assertions are almost the same as those in the results
in \cite{var76}, \cite{kn13} 
in appearance.

\begin{theorem}\label{thm:10.12}
Suppose that $f\in\hat{\mathcal E}_{1/p}(U_+)$ 
is nondegenerate over $\R$ with respect to $\Gamma_+(f)$.
If the support of $\varphi$ is
contained in a sufficiently small neighborhood 
of the origin, 
then 
the functions  
$\tilde{Z}_{\pm}(s;\varphi)$ and $\tilde{Z}(s;\varphi)$ 
can be analytically continued
as meromorphic functions to the whole complex plane.
More precisely, we have the following.
\begin{itemize}
\item[(a)]
The poles of the functions 
$\tilde{Z}_{\pm}(s;\varphi)$ and $\tilde{Z}(s;\varphi)$
are contained in the set 
\begin{equation}\label{eqn:10.35}
\left\{
-\frac{\langle a,p\rangle +\nu}{l_{F_f}(a)}
:\,\, \nu\in\Z_+,\,\, a\in\tilde{\Sigma}^{(1)}
\right\}
\cup (-\N).
\end{equation}
Here $l_{F_f}(a)$ is as in $(\ref{eqn:7.1})$,
$\Sigma$ is a simplicial subdivision of the fan
associated with the Newton polyhedron 
$\Gamma_+(F_f)$, 
$\Sigma^{(1)}$ is the set of 
one-dimensional cones in $\Sigma$ and 
$\tilde{\Sigma}^{(1)}:=\{a\in\Sigma^{(1)}:l_{F_f}(a)\neq 0\};$
\item[(b)]
The largest element of the first set in
$(\ref{eqn:10.35})$
is $-1/d(f);$
\item[(c)]
When
$\tilde{Z}_{\pm}(s;\varphi)$ and $\tilde{Z}(s;\varphi)$
have poles at $s=-1/d(f)$,
their orders are at most
\begin{eqnarray}\label{eqn:10.2}
\begin{cases}
m(f)& \quad \mbox{if $1/d(f)$ 
is not an integer}, \\
\min\{m(f)+1, n\}&
\quad \mbox{otherwise}.
\end{cases}
\end{eqnarray}
\end{itemize}
\end{theorem}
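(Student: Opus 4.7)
The plan is to reduce Theorem~\ref{thm:10.12} to the already-proved Theorem~\ref{thm:10.1} by means of the change of variables $u = \Phi_{1/p}(x)$, i.e., $x_j = u_j^{p_j}$, which transforms a Puiseux-type phase into a smooth phase with a monomial weight. First, I would perform this substitution explicitly: since $f(x) = F_f(\Phi_{1/p}(x))$ on $U_+$ and $dx_j = p_j u_j^{p_j-1}\,du_j$, one obtains
\begin{equation*}
\tilde Z_{\pm}(s;\varphi) \;=\; \Bigl(\prod_{j=1}^n p_j\Bigr) \int_{\R_+^n} F_f(u)_{\pm}^{s}\, u^{p-\1}\, \tilde\varphi(u)\,du,
\end{equation*}
where $\tilde\varphi(u) := \varphi(u_1^{p_1},\ldots,u_n^{p_n})$ is a $C^\infty$ function supported near the origin. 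The analogous identity holds for $\tilde Z(s;\varphi)$.

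Second, I would check that the transformed integral satisfies the hypotheses of Theorem~\ref{thm:10.1}. By Definition~10.11 (ii), $F_f \in \hat{\mathcal{E}}(V)$; by the preceding lemma (part (i)), $F_f$ is nondegenerate over $\R$ with respect to $\Gamma_+(F_f)$, so $F_f$ satisfies condition $(E)$. The weight is a monomial $u^{p-\1}$ times the smooth function $\psi \equiv 1$, matching condition (ii) of Theorem~\ref{thm:10.1}. Applying that theorem with parameter vector $p-\1$ in place of $p$, the candidate pole set becomes
\begin{equation*}
\left\{-\frac{\langle a,(p-\1)+\1\rangle + \nu}{l_{F_f}(a)} : \nu\in\Z_+,\, a\in\tilde\Sigma^{(1)}\right\}\cup(-\N),
\end{equation*}
which coincides with the set in (\ref{eqn:10.35}) once $\langle a,(p-\1)+\1\rangle = \langle a,p\rangle$ is noted. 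This proves (a).

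Third, for (b) and (c) I would invoke the geometric identifications $d(F_f,x^{p-\1}) = d(f)$ (part (ii) of the lemma just above Theorem~\ref{thm:10.12}) and $m(F_f,x^{p-\1}) = m(f)$. Under the coordinate-wise dilation $\alpha\mapsto\alpha/p$, the polyhedron $\Gamma_+(F_f)$ is mapped bijectively onto $\Gamma_+(f)$ preserving the face lattice. The point $q := d(F_f,x^{p-\1})\cdot p \in \partial\Gamma_+(F_f)$ maps to $d(f)\cdot\1 \in \partial\Gamma_+(f)$, and the smallest faces containing these two points correspond under this bijection, hence have the same dimension. This yields $m(F_f,x^{p-\1}) = m(f)$ from Remark~\ref{rem:2.11}, and Theorem~\ref{thm:10.1}(c) then gives the stated order bounds.

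The main obstacle is essentially bookkeeping: verifying the face-lattice correspondence $\Gamma_+(F_f) \leftrightarrow \Gamma_+(f)$ under $\alpha\mapsto\alpha/p$ and confirming that the various combinatorial quantities ($d$, $m$, principal faces) transport correctly. All the hard analytic work, namely the meromorphic continuation, the control over the pole set via toric resolution, and the order estimate, is already packaged in Theorem~\ref{thm:10.1}; the Puiseux setting only adds the preliminary monomial change of coordinates.
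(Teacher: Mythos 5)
Your proposal is correct and follows essentially the same route as the paper: the substitution $x_j=y_j^{p_j}$ turning $\tilde Z_{\pm}(s;\varphi)$ into the integral of $F_f(y)_{\pm}^s\,y^{p-\1}\,\varphi(\Phi_{1/p}^{-1}(y))$, followed by an application of Theorem~\ref{thm:10.1} together with Lemma~10.11 (nondegeneracy of $F_f$ and $d(F_f,x^{p-\1})=d(f)$). Your explicit face-lattice argument giving $m(F_f,x^{p-\1})=m(f)$ fills in a step the paper leaves implicit, but it is the same proof in substance.
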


\begin{remark}
It is natural to expect that
the set  in the assertion (a)
should be replaced by
the following one:
\begin{equation}\label{eqn:10.37}
\left\{
-\frac{\langle a\rangle +\nu}{l_{f}(a)}
:\,\, \nu\in\Z_+,\,\, a\in\tilde{\Sigma}^{(1)}
\right\}
\cup (-\N).
\end{equation}
Here $l_f(a)$ is as in $(\ref{eqn:7.1})$,
$\Sigma$ is a simplicial subdivision of the fan
associated with the Newton polyhedron 
$\Gamma_+(f)$, 
$\Sigma^{(1)}$ is the set of 
one-dimensional cones in $\Sigma$ and 
$\tilde{\Sigma}^{(1)}:=\{a\in\Sigma^{(1)}:l_f(a)\neq 0\}$.
But we have not yet verified this. 
\end{remark}
\begin{theorem}\label{thm:10.14}
Suppose that $f\in\hat{\mathcal E}_{1/p}(U_+)$ 
is nondegenerate on $\R$ with respect to $\Gamma_+(f)$
and that
at least one of the following
conditions is satisfied:
\begin{enumerate}
\item[(a)] $d(f)>1;$
\item[(b)] $f$ is nonnegative or nonpositive on $U_+;$
\item[(c)] $f_{\tau_*}$ does not vanish on 
$\R_{>0}^n$, 
where $\tau_*$ is the principal face of $\Gamma_+(f)$.
\end{enumerate}
Then if 
$\varphi(0)$ is positive $($resp. negative$)$
and $\varphi$ is nonnegative $($resp. nonpositive$)$ on $U$ 
and the support of $\varphi$ is sufficiently small, 
then 
$\tilde{C}_{\pm}$ are nonnegative 
$($resp. nonpositive$)$ and 
$\tilde{C}=\tilde{C}_{+}+\tilde{C}_{-}$ is positive
$($resp. negative$)$,
where
$$ 
\tilde{C}_{\pm}:=\lim_{s\to-1/d(f)} 
(s+1/d(f))^{m(f)} 
\tilde{Z}_{\pm}(s;\varphi), 
\quad
\tilde{C}:=
\lim_{s\to-1/d(f)} 
(s+1/d(f))^{m(f)} 
\tilde{Z}(s;\varphi).
$$
\end{theorem}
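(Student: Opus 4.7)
The strategy is to reduce Theorem~10.14 to the corresponding statement for the smooth phase $F_f$ by making the change of variables $x_j=y_j^{p_j}$. This substitution, whose Jacobian is $\prod_j p_j\,y_j^{p_j-1}$, combined with $f=F_f\circ\Phi_{1/p}$, yields
\begin{equation*}
\tilde Z_\pm(s;\varphi)=\Big(\prod_j p_j\Big)\int_{\R_+^n}F_f(y)_\pm^{\,s}\,y^{p-\1}\,\tilde\varphi(y)\,dy,
\end{equation*}
where $\tilde\varphi(y):=\varphi(y_1^{p_1},\dots,y_n^{p_n})$ is smooth. This is precisely $\tilde Z_\pm(s;\tilde\varphi\,;\,F_f,y^{p-\1})$ for the \emph{monomial} weight $g(y)=y^{p-\1}\cdot\psi(y)$ with $\psi\equiv1$, so Proposition~\ref{pro:10.6} (its proof and explicit formula~(\ref{eqn:10.26})) applies once its hypotheses are verified. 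By Lemma~10.11(ii) one has $d(F_f,y^{p-\1})=d(f)$; a parallel elementary argument using the bijection $\alpha\leftrightarrow\alpha/p$ between $\Gamma_+(F_f)\cap\Z_+^n$ and the support of the Puiseux expansion of $f$ (which sends $d(f)\,p$ to the diagonal point $(d(f),\dots,d(f))$) shows $m(F_f,y^{p-\1})=m(f)$ and identifies the principal face $\tau_*^F$ of $\Gamma_+(F_f)$ with $\tau_*$ of $\Gamma_+(f)$; in particular $(F_f)_{\tau_*^F}(y)=f_{\tau_*}(y^p)$ for $y\in\R_+^n$.

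Next I verify that each of the three alternative conditions of Theorem~\ref{thm:10.14} implies one of the two alternative hypotheses (iii)(a),(b) of Proposition~\ref{pro:10.6} for the data $(F_f,y^{p-\1})$. Case (a) is immediate since $d(F_f,y^{p-\1})=d(f)>1$. For case (b), the nonnegativity or nonpositivity of $f$ on $U_+$ transfers to the same property of $F_f$ on a neighborhood of the origin in $\R_+^n$; substituting into the monomialization~(\ref{eqn:7.3}) then forces $F_{f,\sigma}$ to be of constant sign, in particular nonvanishing, on $\R_+^n\cap\pi(\sigma)^{-1}(U_+)$. For case (c), apply equation~(\ref{eqn:10.28}) (with $f$ replaced by $F_f$ and $p$ by $p-\1$): for any $y$ with all positive coordinates, $\pi(\sigma)(T^1_{A_{p-\1}(\sigma)}(y))\in\R_{>0}^n$, and the nonvanishing of $f_{\tau_*}=(F_f)_{\tau_*^F}$ on $\R_{>0}^n$ gives $F_{f,\sigma}(T_{A_{p-\1}(\sigma)}(y))\neq0$ on $\R_{>0}^{n-m(f)}$, which by continuity implies nonvanishing on a set on which the Proposition~\ref{pro:10.6} formula is an absolutely convergent improper integral.

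Now I pass to the sign assertion. Applying the explicit formula~(\ref{eqn:10.26}) of Proposition~\ref{pro:10.6} to the reduced data yields
\begin{equation*}
\tilde C_\pm = L_\sigma\int_{\R_+^{n-m(f)}}
\frac{\tilde\varphi\bigl(\pi(\sigma)(T_{A_{p-\1}(\sigma)}(y))\bigr)}
{\bigl(F_{f,\sigma}(T_{A_{p-\1}(\sigma)}(y))\bigr)_\pm^{1/d(f)}}
\prod_{j\notin A_{p-\1}(\sigma)}y_j^{M_j(\sigma)-1}\,dy_j,
\end{equation*}
in which $L_\sigma>0$, the product $\prod y_j^{M_j(\sigma)-1}$ is nonnegative, and the denominator factor (with the convention $0^{-1/d(f)}:=0$) is nonnegative. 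Hence the sign of the integrand is the sign of $\tilde\varphi$, which by hypothesis on $\varphi$ is nonnegative (respectively nonpositive); thus $\tilde C_\pm\ge0$ (resp.\ $\le0$). To see that the sum $\tilde C=\tilde C_++\tilde C_-$ is \emph{strictly} positive (resp.\ negative), note that Lemma~\ref{lem:7.1} gives $F_{f,\sigma}(0)\neq0$, so $F_{f,\sigma}\circ T_{A_{p-\1}(\sigma)}$ has a fixed nonzero sign on a neighborhood of $y=0$; consequently exactly one of $(F_{f,\sigma}\circ T_{A_{p-\1}(\sigma)})_\pm$ is strictly positive on a set of positive measure near $0$, and on that set $\tilde\varphi$ takes the value $\varphi(0)\neq0$. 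The corresponding $\tilde C_\pm$ is therefore strictly of the sign of $\varphi(0)$, while the other summand is of the same sign (possibly zero), and the conclusion follows.

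The main obstacle is the translation between the Puiseux data for $f$ and the ordinary-Newton data for $F_f$---specifically, verifying that $d$, $m$, the principal face and the $\tau_*$-part transform as claimed under $\alpha\leftrightarrow\alpha/p$, and that condition~(c) of Theorem~\ref{thm:10.14}, which controls $f_{\tau_*}$ only on $\R_{>0}^n$, is in fact sufficient to give the nonvanishing hypothesis (b) of Proposition~\ref{pro:10.6} in the form required for the formula for $\tilde C_\pm$ to be a convergent improper integral. Once this dictionary is in place, the rest of the argument is a routine appeal to results already proved.
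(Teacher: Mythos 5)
Your overall route is the paper's: substitute $x=\Phi_{1/p}^{-1}(y)$ to rewrite $\tilde Z_{\pm}(s;\varphi)$ as a local zeta function for the smooth phase $F_f$ with monomial weight $y^{p-\1}$, transport the Newton data ($d$, $m$, principal face, $\tau_*$-part) through the coordinatewise bijection $\alpha\leftrightarrow\alpha/p$, and then invoke the monomial-weight results of Section~10 (Proposition~\ref{pro:10.6} together with the argument of Theorem~\ref{thm:10.7}, restricted to one orthant). The dictionary you spell out ($m(F_f,y^{p-\1})=m(f)$, $(F_f)_{\tau_*^F}(y)=f_{\tau_*}(y^p)$) is correct and is indeed left implicit in the paper, and your final sign argument from formula (\ref{eqn:10.26}) matches the paper's.

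The genuine weak point is your verification that conditions (b) and (c) give the nonvanishing hypothesis (iii)(b) of Proposition~\ref{pro:10.6}. In case (b) you write that constant sign of $F_f$ on $U\cap\R_+^n$ ``forces $F_{f,\sigma}$ to be of constant sign, in particular nonvanishing''; this inference is false as stated, since a nonnegative smooth function may well vanish, and in case (c) your argument via (\ref{eqn:10.28}) only yields nonvanishing of $F_{f,\sigma}\circ T_{A(\sigma)}$ on the \emph{open} orthant, whereas hypothesis (iii)(b) concerns $\R_+^n\cap\pi(\sigma)^{-1}(U)$, boundary included; continuity does not extend nonvanishing from an open dense set to its closure. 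This matters precisely when $d(f)\le 1$: zeros of $F_{f,\sigma}$ meeting the closed orthant produce the $J^{(l)}_{\pm,\sigma}$ contributions in the proof of Proposition~\ref{pro:10.6}, which can carry a pole at $-1/d(f)$ of order $\ge m(f)$, so without excluding them the explicit formula for $\tilde C_{\pm}$ (and hence the strict-sign conclusion) is not justified. The missing ingredient is the nondegeneracy argument used in the proof of Theorem~\ref{thm:10.7}: if $F_{f,\sigma}$ vanished at a point of $T_{A(\sigma)}(\R^n)\cap\R_+^n\cap\pi(\sigma)^{-1}(U)$, Theorem~\ref{thm:7.2} provides nearby points where $F_{f,\sigma}$ takes both signs, and via (\ref{eqn:7.3}) and (\ref{eqn:10.27})--(\ref{eqn:10.28}) (with perturbations chosen with positive coordinates, so that everything stays in the positive orthant and the connectedness of $\R_{>0}^n$ applies) this contradicts either the constant sign of $f$ on $U_+$ or the nonvanishing of $f_{\tau_*}$ on $\R_{>0}^n$. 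Inserting that contradiction argument closes the gap; the rest of your proof stands.
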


\begin{proof}[Proof of Theorems 10.12 and 10.14]
We only show the case of $\tilde{Z}_{\pm}(s;\varphi)$. 
Substituting 
$x=\Phi^{-1}_{1/p}(y)=(y_1^{p_1},\ldots,y_n^{p_n})$ 
into the integrals 
in (\ref{eqn:10.34}),
we have
\begin{equation}\label{eqn:10.38}
\tilde{Z}_{\pm}(s;\varphi)
=\int_{\R_+^n} f(x)_{\pm}^s \varphi(x)dx
=\left(\prod_{j=1}^n p_j\right)
\int_{\R_+^n} F_f(y)_{\pm}^s \varphi(\Phi_{1/p}^{-1}(y))y^{p-\1}dy,
\end{equation}
where $y^{p-\1}=\prod_{j=1}^n y_j^{p_j-1}$.
Noticing the nondegeneracy condition for $F_f$ from
Lemma~10.11 (i) and 
the equation $d(F_f,x^{p-\1})=d(f)$ in Lemma~10.11 (ii), 
we can apply Theorems~10.1 and 10.8 to the 
integrals in (\ref{eqn:10.38}). 
As a result, we can obtain the above theorems.
\end{proof}

\section{The case of $\hat{\mathcal E}$-weight}\label{sec:11}

In this section,
we more generally investigate the poles of 
$Z_{\pm}(s;\varphi)$ and $Z(s;\varphi)$
in the case when the weight $g$ belongs to the class 
$\E(U)$, 
where $U$ is an open neighborhood of the origin. 
In order to investigate this case, 
we must use a fan $\Sigma$ constructed from the two Newton polyhedra
$\Gamma_+(f)$ and $\Gamma_+(g)$. 

In this section, 
we use the following notation.
\begin{itemize}
\item
$\Sigma_0$ is the fan associated with $\Gamma_+(f)$
\underline{and $\Gamma_+(g)$};
\item
$\Sigma$ is a simplicial subdivision of $\Sigma_0$;
\item
$(Y_{\Sigma},\pi)$ is the real resolution 
associated with $\Sigma$;
\item
$a^1(\sigma),\ldots,a^n(\sigma)$ is the skeleton of
$\sigma\in\Sigma^{(n)}$, ordered once and for all;
\item
$J_{\pi}(y)$ is the Jacobian of the mapping of $\pi$.
\end{itemize}

\subsection{Candidate poles}

First, let us state our results relating to
the positions and the orders of candidate poles of
$Z_{\pm}(s;\varphi)$ and $Z(s;\varphi)$
with $\hat{\mathcal{E}}$-weights.
\begin{theorem}\label{thm:11.1}
Suppose that {\rm (i)} $f$ satisfies the condition $(E)$
and
{\rm (ii)} $g$ belongs to the class $\hat{\mathcal{E}}(U)$.
If the support of $\varphi$ is
contained in a sufficiently small neighborhood 
of the origin, 
then 
the functions  
$Z_{\pm}(s;\varphi)$ and $Z(s;\varphi)$ 
can be analytically continued
as meromorphic functions to the whole complex plane.
More precisely, we have the following.
\begin{enumerate}
\item[(a)]
The poles of the functions $Z_{\pm}(s;\varphi)$ and $Z(s;\varphi)$
are contained in the set 
\begin{equation}\label{eqn:11.1}
\left\{-\frac{l_g(a)+\langle a\rangle +\nu}{l_f(a)}
:\nu\in\Z_+,a\in\tilde{\Sigma}^{(1)}\right\}\cup(-\N),
\end{equation}
where $l_f(a)$ and $l_g(a)$ are as in $(\ref{eqn:7.1})$
and $\tilde{\Sigma}^{(1)}=\{a\in\Sigma^{(1)}:l_f(a)\neq 0\}$$;$
\item[(b)]
The largest element of the first set of $(\ref{eqn:11.1})$ is
$-1/d(f,g)$$;$
\item[(c)]
When $Z_{\pm}(s;\varphi)$ and $Z(s;\varphi)$
have poles at $s=-1/d(f,g)$,
their orders are at most
\begin{equation*}
\begin{cases}
m(f,g)& \mbox{if $-1/d(f,g)$ is not an integer,}\\
\min\{m(f,g)+1,n\} & \mbox{otherwise.}
\end{cases}
\end{equation*}
\end{enumerate}
\end{theorem}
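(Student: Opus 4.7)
The plan is to imitate the strategy of Theorem~\ref{thm:10.1}, replacing the ad hoc treatment of the monomial $x^p$ by the simultaneous toric resolution of Theorem~\ref{thm:7.2} applied to the pair $(f,g)$. First, I would split $Z_{\pm}(s;\varphi)$ into $2^n$ one-orthant pieces via the orthant decomposition (\ref{eqn:8.6}); then on each $\R_+^n$ summand I would pull back by the map $\pi(\sigma):\R^n(\sigma)\to\R^n$ of the toric variety $Y_{\Sigma}$, where $\Sigma$ is a simplicial subdivision of the fan associated with $\Gamma_+(f)$ \emph{and} $\Gamma_+(g)$. Choosing a smooth partition of unity $\{\chi_{\sigma}\}_{\sigma\in\Sigma^{(n)}}$ subordinate to the charts $\R^n(\sigma)$ and supported near $\pi^{-1}(0)$, I can decompose $\tilde Z_{\pm}(s;\varphi)$ as a finite sum of local integrals on $\R_+^n$.

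On each chart, Theorem~\ref{thm:7.2} (applied with $m=2$, $f_1=f$, $f_2=g$) produces smooth functions $f_{\sigma},g_{\sigma}$ with $g_{\sigma}(0)\neq 0$ and
\[
f(\pi(\sigma)(y))=\Bigl(\prod_{j=1}^n y_j^{l_f(a^j(\sigma))}\Bigr)f_{\sigma}(y),\qquad
g(\pi(\sigma)(y))=\Bigl(\prod_{j=1}^n y_j^{l_g(a^j(\sigma))}\Bigr)g_{\sigma}(y),
\]
together with the Jacobian formula of Lemma~\ref{lem:5.1}(ii). Since the support of $\varphi\circ\pi(\sigma)\cdot\chi_{\sigma}$ can be taken so small that $g_{\sigma}$ does not vanish on it, $g_{\sigma}$ contributes only a smooth non-vanishing factor and the local integrand on each chart takes the form
\[
\Bigl(\prod_{j=1}^n y_j^{l_f(a^j(\sigma))s+l_g(a^j(\sigma))+\langle a^j(\sigma)\rangle -1}\Bigr)\,f_{\sigma}(y)_{\pm}^{s}\,\Psi_{\sigma}(y),
\]
with $\Psi_{\sigma}\in C^{\infty}_0(\R^n)$. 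I would then split this integral further using a second partition of unity that separates the sets where $f_{\sigma}$ has constant sign from a small neighborhood of $\{f_{\sigma}=0\}$; on the latter set the nonsingularity assertion of Theorem~\ref{thm:7.2} and the implicit function theorem straightening (Remark~\ref{rem:7.3}) reduce the problem to an integral of exactly the type treated in Lemma~\ref{lem:9.4} and Proposition~\ref{pro:9.2}. Applying Proposition~\ref{pro:9.2} chart by chart yields the meromorphic continuation and, together with the $(-\N)$ contribution coming from the normal direction to $\{f_{\sigma}=0\}$, yields (a).

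For (b), I would establish the geometric identity
\[
\max\!\Bigl\{-\tfrac{l_g(a)+\langle a\rangle}{l_f(a)}:a\in\tilde\Sigma^{(1)}\Bigr\}=-\tfrac{1}{d(f,g)},
\]
following the pattern of Lemma~\ref{lem:10.4}. The key is to note $l_g(a)=\min\{\langle a,\beta\rangle:\beta\in\Gamma_+(g)\}$, so $l_g(a)+\langle a\rangle=\min\{\langle a,\beta+\1\rangle:\beta\in\Gamma_+(g)\}$, and then use the definition (\ref{eqn:2.5}) and the map $\Phi(\beta)=d(f,g)(\beta+\1)$ from (\ref{eqn:2.6}): the ratio $(l_g(a)+\langle a\rangle)/l_f(a)$ is maximized precisely on those primitive vectors whose dual hyperplane supports $\Gamma_+(f)$ along a face lying in $\Gamma_0(f)=\partial\Gamma_+(f)\cap\Phi(\Gamma_+(g))$, and the extremal value is the reciprocal of $d(f,g)$. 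For (c), I would count, for a fixed $\sigma\in\Sigma^{(n)}$, the cardinality of the set $A(\sigma):=\{j:-(l_g(a^j(\sigma))+\langle a^j(\sigma)\rangle)/l_f(a^j(\sigma))=-1/d(f,g)\}$, which by Lemma~\ref{lem:6.2} equals $n-\dim(\gamma(A(\sigma),\sigma))$ for a face of $\Gamma_+(f)$ belonging to $\mathcal{F}_0[\Gamma_+(f)]$; maximizing over $\sigma$ gives the bound $m(f,g)$ by Definition~\ref{def:2.6}, and the extra ``$+1$'' in the integer case comes exactly from Lemma~\ref{lem:9.4}.

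The main obstacle is the geometric step in (b) and (c): whereas in the monomial weight case $\Gamma_+(g)=\{p\}+\R_+^n$ has a single vertex and Lemmas~\ref{lem:10.4}, \ref{lem:10.5} identify the extremal direction with the line $\{t(p+\1)\}$, here $\Gamma_0(g)$ may be a union of faces, so one must verify that ``maximizing over $a\in\tilde\Sigma^{(1)}$'' is compatible with ``maximizing over $\beta\in\Gamma_+(g)$'' built into the definition of $d(f,g)$, and that the principal faces from Definition~\ref{def:2.8} correctly account for the order of the pole. Once this is arranged using the third bullet of Remark~\ref{rem:2.5} and the bijectivity of $\Psi_*$ in Definition~\ref{def:2.8}, the rest of the argument is a direct translation of Steps 3--6 in the proof of Theorem~\ref{thm:10.1}.
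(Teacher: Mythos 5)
Your overall architecture coincides with the paper's: orthant decomposition, simultaneous toric resolution via Theorem~\ref{thm:7.2} for the pair $(f,g)$ with the fan built from both Newton polyhedra, reduction on each chart to the elementary integrals of Proposition~\ref{pro:9.2} and Lemma~\ref{lem:9.4}, and the identification of $-1/d(f,g)$ exactly as in the paper's analogue of Lemma~\ref{lem:10.4} (the chain of equivalences starting from $d(f,g)\cdot(\Gamma_+(g)+\1)\subset\Gamma_+(f)$, which is the paper's Lemma~11.2). Parts (a) and (b) of your plan are therefore sound and essentially identical to the published argument.

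The gap is in (c), precisely at the point you flag as ``the main obstacle'' but then dispose of too quickly. You claim that, by Lemma~\ref{lem:6.2}, $\# A(\sigma)=n-\dim(\gamma(A(\sigma),\sigma))$ for a face of $\Gamma_+(f)$ \emph{belonging to} $\mathcal{F}_0[\Gamma_+(f)]$, and that Definition~\ref{def:2.6} then gives $\# A(\sigma)\leq m(f,g)$. Neither half of this is justified as stated: Lemma~\ref{lem:6.2} concerns index sets of the form $I(\gamma,\sigma)$, whereas $A(\sigma)$ is defined analytically by an equality of ratios, so a priori you only get the one-sided bound $\dim(\tau_*(\sigma))\leq n-\# A(\sigma)$ (from linear independence of the skeleton vectors); and, more seriously, there is no reason offered why $\tau_*(\sigma):=\bigcap_{j\in A(\sigma)}H(a^j(\sigma),l_f(a^j(\sigma)))\cap\Gamma_+(f)$ should lie in $\mathcal{F}_0[\Gamma_+(f)]$, i.e.\ why it should meet the contact set $\Gamma_0(f)=\partial\Gamma_+(f)\cap\Phi(\Gamma_+(g))$ at all. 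The missing idea in the paper is the hyperplane-level identity: $j\in A(\sigma)$ if and only if $\Phi\bigl(H(a^j(\sigma),l_g(a^j(\sigma)))\bigr)=H(a^j(\sigma),l_f(a^j(\sigma)))$, which shows that $\Phi$ maps the corresponding face $\gamma_*(\sigma)$ of $\Gamma_+(g)$ into $\tau_*(\sigma)$; since $\Phi(\gamma_*(\sigma))\subset\Gamma_0(f)$, some face $\tilde\tau\in\mathcal{F}_0[\Gamma_+(f)]$ satisfies $\tilde\tau\subset\tau_*(\sigma)$, whence $\dim(\tau_*(\sigma))\geq\dim(\tilde\tau)\geq n-m(f,g)$, and combining with $\dim(\tau_*(\sigma))\leq n-\# A(\sigma)$ yields $\# A(\sigma)\leq m(f,g)$. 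Invoking the third bullet of Remark~\ref{rem:2.5} and the bijectivity of $\Psi_*$ does not substitute for this step: those statements concern the principal faces themselves, not an arbitrary cone $\sigma\in\Sigma^{(n)}$, and the whole point of the bound is that it must hold uniformly over all $\sigma$. Once this bridge is supplied, the extra ``$+1$'' in the integer case via Lemma~\ref{lem:9.4} works exactly as you describe.
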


\begin{proof}
We only consider the case of $Z_{\pm}(s;\varphi)$. 
By applying the orthant decomposition, 
it suffices to investigate the functions:
\begin{equation*}\label{eqn:11.}
\tilde{Z}_{\pm}(s;\varphi)
(=\tilde{Z}_{\pm}(s;\varphi;f,g))=
\int_{\R_+^n}f(x)_{\pm}^s g(x)\varphi(x)dx. 
\end{equation*}
From Theorem~\ref{thm:7.2}, there exist smooth functions
$f_{\sigma}$ and $g_{\sigma}$ such that 
$f_{\sigma}(0)\cdot g_{\sigma}(0)\neq 0$ and 
\begin{equation}\label{eqn:11.2}
\begin{split}
&
f(\pi(\sigma)(y))
=
\left(
\prod_{j=1}^n
y_j^{l_f(a^j(\sigma))}
\right)
f_{\sigma}(y),\\
&
g(\pi(\sigma)(y))
=
\left(
\prod_{j=1}^n
y_j^{l_g(a^j(\sigma))}
\right)
g_{\sigma}(y).
\end{split}
\end{equation}
Using the mapping $x=\pi(y)$ and
the cut-off functions 
$\{\chi_{\sigma};\sigma\in \Sigma^{(n)}\}$
in the proof of Theorem~\ref{thm:10.1}
and substituting the above equations in 
(\ref{eqn:11.2}), 
we have
\begin{equation}\label{eqn:11.3}
\begin{split}
&\tilde{Z}_{\pm}(s;\varphi)
=\int_{\tilde{Y}_{\Sigma}} 
((f\circ\pi)(y))_{\pm}^s (g\circ\pi)(y) (\varphi\circ\pi)(y) 
|J_{\pi}(y)|dy\\
&
\quad\quad\quad
=\sum_{\sigma\in\Sigma^{(n)}} Z_{\pm}^{(\sigma)}(s), 
\end{split}
\end{equation}
with
\begin{equation}\label{eqn:11.4}
\begin{split}
&Z_{\pm}^{(\sigma)}(s)=
\int_{\R_+^n} 
((f\circ\pi(\sigma))(y))_{\pm}^s 
(g\circ\pi(\sigma))(y) (\varphi\circ\pi(\sigma))(y) 
\chi_{\sigma}(y)
|J_{\pi(\sigma)}(y)|dy\\
&\quad
=\int_{\R_+^n}\left(
\prod_{j=1}^n y_j^{l_f(a^j(\sigma))} 
f_{\sigma}(y)\right)_{\pm}^s
\left(
\prod_{j=1}^n y_j^{l_g(a^j(\sigma))} 
g_{\sigma}(y)\right)
\left|\prod_{j=1}^n y_j^{\langle a^j(\sigma)\rangle -1}\right|
\tilde{\chi}_{\sigma}(y)dy\\
&\quad
=\int_{\R_+^n}\left(
\prod_{j=1}^n 
y_j^{l_f(a^j(\sigma))s+l_g(a^j(\sigma))+\langle a^j(\sigma)\rangle -1} 
\right)
f_{\sigma}(y)_{\pm}^s 
g_{\sigma}(y)
\tilde{\chi}_{\sigma}(y)dy,
\end{split}
\end{equation}
where $\tilde{\chi}_{\sigma}(y)=
(\varphi\circ\pi(\sigma))(y) \chi_{\sigma}(y)$.

By a similar argument in the proof of Theorem~\ref{thm:10.1},
we see that the poles of $Z_{\pm}^{(\sigma)}(s)$
are contained in the set
\begin{equation}\label{eqn:11.5}
\begin{split}
\left\{
-\frac{l_g(a^j(\sigma))+
\langle a^j(\sigma)\rangle +\nu}{l_f(a^j(\sigma))}:
\nu\in\Z_+, j\in B({\sigma})
\right\}
\cup(-\N),
\end{split}
\end{equation}
where
$B(\sigma)=\{j:l_f(a^j(\sigma))\neq 0\}$. 
From (\ref{eqn:11.5}), 
we can see (a) in the theorem. 

Next, consider a geometrical meaning of 
the largest element of the set (\ref{eqn:11.1}). 
The following lemma implies the assertion (b) in the theorem. 
\begin{lemma}
$$
\displaystyle
\max\left\{
-\dfrac{l_g(a)+\langle a\rangle }{l_f(a)}: 
a\in\tilde{\Sigma}^{(1)} 
\right\}
=-\frac{1}{d(f,g)}.
$$
\end{lemma}
\begin{proof}
Using the equations in Remark~2.1, 
we see the following equivalences. 
\begin{equation}\label{eqn:11.6}
\begin{split}
&\quad\quad
\Phi(\Gamma_+(g))\subset\Gamma_+(f)\\
&\Longleftrightarrow
d(f,g)\cdot(\Gamma_+(g)+\1)\subset\Gamma_+(f)\\
&\Longleftrightarrow
d(f,g)\cdot(H^+(a,l_g(a))+\1)\subset H^+(a,l_f(a)) 
\quad\quad \mbox{for any $a\in\Z_+^n$} \\
&\Longleftrightarrow
H^+(a,d(f,g)(l_g(a)+\langle a\rangle ))\subset H^+(a,l_f(a)) 
\quad\quad \mbox{for any $a\in\Z_+^n$} \\
&\Longleftrightarrow
d(f,g)(l_g(a)+\langle a\rangle ) \geq l_f(a)
\quad\quad \mbox{for any $a\in\Z_+^n$} \\
&\Longleftrightarrow
-\frac{l_g(a)+\langle a\rangle }{l_f(a)}
\leq -\frac{1}{d(f,g)}
\quad\quad \mbox{for any $a\in\Z_+^n$.} 
\end{split}
\end{equation}
Moreover, from the construction of 
the fan $\Sigma$,  we see  
the existence of $a\in\Sigma^{(1)}$ satisfying the 
equality in the last inequality.
Therefore, we have the equation in the lemma. 
\end{proof}
Finally, let us consider the orders of the poles of 
$\tilde{Z}_{\pm}(s;\varphi)$
at $s=-1/d(f,g)$.
For $\sigma\in\Sigma^{(n)}$, let  
\begin{equation}\label{eqn:11.7}
A(\sigma):=\left\{
j\in B(\sigma):
-\frac{1}{d(f,g)}=
-\frac{l_g(a^j(\sigma))+\langle a^j(\sigma)\rangle }{l_f(a^j(\sigma))} 
\right\}\subset\{1,\ldots,n\}.
\end{equation}
In order to prove the assertion (c) in the theorem, 
it suffices to show the estimate 
$\# A(\sigma)\leq m(f,g)$, 
which will be shown in Lemma~\ref{lem:11.4} (ii) 
in the next subsection. 

\end{proof}
\begin{remark}\label{rem:11.3}
Carefully observing the equivalences in (\ref{eqn:11.6}), 
we see that for $\sigma\in\Sigma^{(n)}$, 
\begin{equation}\label{eqn:11.8}
j\in A(\sigma)\Longleftrightarrow
\Phi(H(a^j(\sigma),l_g(a^j(\sigma))))
= H(a^j(\sigma),l_f(a^j(\sigma))).
\end{equation}
\label{eqn:}
\end{remark}

\subsection{Properties of the principal faces}\label{subsec:11.2}
In order to investigate more precise 
properties of the leading poles 
of $Z_{\pm}(s;\varphi)$, 
we must understand more exact relationship
between the cones of
the fan $\Sigma$ and the faces of
the Newton polyhedra $\Gamma_+(f)$ and $\Gamma_+(g)$.
In this subsection, 
after our interest focuses essentially important cones 
and faces,  
their properties and relationship are investigated in detail. 

Now, let us define a class of 
important faces of $\Gamma_+(f)$, $\Gamma_+(g)$. 
For a cone $\sigma\in\Sigma^{(n)}$ with $A(\sigma)\neq\emptyset$,
let $\tau_*(\sigma)$ (resp. $\gamma_*(\sigma)$) be the face of 
$\Gamma_+(f)$ (resp. $\Gamma_+(g)$) defined by 
\begin{eqnarray}
&&
\tau_*(\sigma):=\bigcap_{j\in A(\sigma)}
H(a^j(\sigma),l_f(a^j(\sigma)))\cap \Gamma_+(f),
\label{eqn:11.12}\\
&&
\gamma_*(\sigma):=\bigcap_{j\in A(\sigma)}
H(a^j(\sigma),l_g(a^j(\sigma)))\cap \Gamma_+(g).
\label{eqn:11.13}
\end{eqnarray}
When $A(\sigma)=\emptyset$, we define 
$\tau_*(\sigma)=\gamma_*(\sigma):=\emptyset$.
From the construction 
of the fan $\Sigma$, 
it is easy to see that 
$\tau_*(\sigma)$ (resp. $\gamma_*(\sigma)$)
is a nonempty face of $\Gamma_+(f)$ 
(resp. $\Gamma_+(g)$) if $A(\sigma)\neq\emptyset$.
\begin{remark}\label{rem:11.5}
In terms of the symbol (\ref{eqn:6.4}) in Section~\ref{sec:6},
the above equations can be written as
$\tau_*(\sigma)$(resp. $\gamma_*(\sigma)$)
$=\gamma(A(\sigma),\sigma)$
with $P=\Gamma_+(f)$ (resp. $P=\Gamma_+(g)$). 
\end{remark}

\begin{lemma}\label{lem:11.4}
Suppose that $\tau_*$ is a principal face of $\Gamma_+(f)$.
Then we have the following 
$($see $(\ref{eqn:6.5}), (\ref{eqn:6.6})$ 
for the definitions of $I(\cdot,\cdot)$ and 
$\Sigma^{(n)}(\cdot)$$)$. 
\begin{enumerate}
\item 
$I(\tau_*,\sigma)\subset A(\sigma)$ 
for any $\sigma\in\Sigma^{(n)}$.
\item
$\# A(\sigma)\leq m(f,g)$ for any $\sigma\in\Sigma^{(n)}$. 
\item 
$\Sigma^{(n)}(\tau_*)\subset\Sigma_*^{(n)}:=
\left\{
\sigma\in\Sigma^{(n)}:
\#A(\sigma)=m(f,g) 
\right\}$. 
\item
$\Sigma_*^{(n)}\neq\emptyset$.
\end{enumerate}
\end{lemma}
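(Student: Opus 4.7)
The plan is to exploit the auxiliary faces $\tau_*(\sigma)$ and $\gamma_*(\sigma)$ introduced in $(\ref{eqn:11.12})$--$(\ref{eqn:11.13})$ together with Remark $\ref{rem:11.3}$, while using as a uniform input the monotonicity inequality
\[
d(f,g)(l_g(a)+\langle a\rangle)\geq l_f(a)\qquad(a\in\Z_+^n)
\]
that was established in the chain of equivalences $(\ref{eqn:11.6})$. All four assertions will follow by pushing this inequality through the map $\Phi$ and tracking dimensions via Lemmas $\ref{lem:6.1}$ and $\ref{lem:6.2}$.

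For (i), fix $j\in I(\tau_*,\sigma)$, so that $\langle a^j(\sigma),\alpha\rangle=l_f(a^j(\sigma))$ for every $\alpha\in\tau_*$. Since $\tau_*\in\mathcal F_0[\Gamma_+(f)]$ I may choose $\alpha\in\tau_*\cap\Gamma_0(f)$ and write $\alpha=\Phi(\beta)$ with $\beta\in\Gamma_+(g)$, which transforms the previous identity into $\langle a^j(\sigma),\beta\rangle=l_f(a^j(\sigma))/d(f,g)-\langle a^j(\sigma)\rangle$. The membership $\beta\in\Gamma_+(g)$ provides $\langle a^j(\sigma),\beta\rangle\geq l_g(a^j(\sigma))$, whereas the monotonicity inequality provides the opposite bound $l_g(a^j(\sigma))\geq l_f(a^j(\sigma))/d(f,g)-\langle a^j(\sigma)\rangle$. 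These collapse to equalities, yielding $l_f(a^j(\sigma))=d(f,g)(l_g(a^j(\sigma))+\langle a^j(\sigma)\rangle)\geq d(f,g)\langle a^j(\sigma)\rangle>0$, so $j\in B(\sigma)$, and the extracted equation is precisely the condition identifying $j\in A(\sigma)$ via Remark $\ref{rem:11.3}$.

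For (ii), the case $A(\sigma)=\emptyset$ is vacuous, so assume $A(\sigma)\neq\emptyset$. The aim is to exhibit a face $\tau'\in\mathcal F_0[\Gamma_+(f)]$ with $\tau'\subset\tau_*(\sigma)$, and the main obstacle is ensuring $\gamma_*(\sigma)\neq\emptyset$. Here I exploit that $\Sigma$ refines the fan associated with \emph{both} Newton polyhedra: the $n$-dimensional cone $\sigma$ lies inside a unique $n$-dimensional cone $\tilde\sigma\in\Sigma_{\Gamma_+(g)}$, dual to a nonempty face $\gamma_{\tilde\sigma}$ of $\Gamma_+(g)$, and the containment $\sigma\subset\overline{\gamma_{\tilde\sigma}^{\vee}}$ together with $(\ref{eqn:6.3})$ forces $\gamma_{\tilde\sigma}\subset H(a^j(\sigma),l_g(a^j(\sigma)))\cap\Gamma_+(g)$ for every $j$, so $\emptyset\neq\gamma_{\tilde\sigma}\subset\gamma_*(\sigma)$. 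Fixing $\beta\in\gamma_*(\sigma)$ and setting $\alpha:=\Phi(\beta)$, the defining property of $A(\sigma)$ places $\alpha$ in $\tau_*(\sigma)\subset\partial\Gamma_+(f)$, while $\alpha\in\Phi(\Gamma_+(g))$ is automatic, so $\alpha\in\Gamma_0(f)$ and consequently $\tau_f(\alpha)\in\mathcal F_0[\Gamma_+(f)]$ with $\tau_f(\alpha)\subset\tau_*(\sigma)$. Combining Remark $\ref{rem:11.5}$ with Lemma $\ref{lem:6.1}$ (ii) gives $\dim(\tau_*(\sigma))\leq n-\#A(\sigma)$, whence
\[
m(f,g)\geq n-\dim(\tau_f(\alpha))\geq n-\dim(\tau_*(\sigma))\geq\#A(\sigma).
\]

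For (iii) and (iv), let $\sigma\in\Sigma^{(n)}(\tau_*)$. Lemma $\ref{lem:6.2}$ (i) together with the definition $(\ref{eqn:6.6})$ yields $\#I(\tau_*,\sigma)=n-\dim(\tau_*)=m(f,g)$, the last equality because $\tau_*$ is a principal face. Combining with (i) and (ii) produces the sandwich $m(f,g)=\#I(\tau_*,\sigma)\leq\#A(\sigma)\leq m(f,g)$, forcing $\sigma\in\Sigma_*^{(n)}$ and proving (iii). Statement (iv) then follows immediately, since Lemma $\ref{lem:6.2}$ (ii) guarantees $\Sigma^{(n)}(\tau_*)\neq\emptyset$, and (iii) embeds it into $\Sigma_*^{(n)}$.
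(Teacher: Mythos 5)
Your proposal is correct, and for parts (ii)–(iv) it follows the same skeleton as the paper: sandwich a face of ${\mathcal F}_0[\Gamma_+(f)]$ between $\Phi(\gamma_*(\sigma))$ and $\tau_*(\sigma)$, compare dimensions via Remark~\ref{rem:11.5} and Lemma~\ref{lem:6.1}~(ii) to get $n-m(f,g)\le\dim(\tau_*(\sigma))\le n-\#A(\sigma)$, and then deduce (iii),(iv) from $\#I(\tau_*,\sigma)=m(f,g)$ for $\sigma\in\Sigma^{(n)}(\tau_*)$ together with Lemma~\ref{lem:6.2}~(ii). The genuine differences are two. First, in (i) the paper works with the associated principal face $\gamma_*=\Psi_*(\tau_*)$ and shows that $\Phi^{-1}(H(a^j(\sigma),l_f(a^j(\sigma))))=H(a^j(\sigma),l_g(a^j(\sigma)))$, concluding via the equivalence (\ref{eqn:11.8}); you instead pick a point $\alpha\in\tau_*\cap\Gamma_0(f)$, write $\alpha=\Phi(\beta)$ with $\beta\in\Gamma_+(g)$, and squeeze between $\langle a^j(\sigma),\beta\rangle\ge l_g(a^j(\sigma))$ and the inequality $d(f,g)(l_g(a)+\langle a\rangle)\ge l_f(a)$ from (\ref{eqn:11.6}); this is a more elementary, pointwise argument that bypasses $\Psi_*$ entirely (note the resulting equality is literally the defining condition (\ref{eqn:11.7}) of $A(\sigma)$, so the appeal to Remark~\ref{rem:11.3} is not even needed, and your check that $l_f(a^j(\sigma))>0$, hence $j\in B(\sigma)$, is exactly the point one must not forget). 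Second, in (ii) you actually prove the nonemptiness of $\gamma_*(\sigma)$ — by locating $\sigma$ inside a cone $\overline{\gamma_{\tilde\sigma}^{\vee}}$ of the normal fan of $\Gamma_+(g)$ and using (\ref{eqn:6.3}) — whereas the paper only asserts this ``from the construction of the fan $\Sigma$'' just after (\ref{eqn:11.13}); your argument is a clean justification of that assertion, and it is precisely where the hypothesis that $\Sigma$ refines the fan associated with \emph{both} Newton polyhedra enters.
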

\begin{proof}
(i) \quad 
Suppose that 
$j\in I(\tau_*,\sigma)$, i.e., 
$\tau_*\subset H(a^j(\sigma),l_f(a^j(\sigma)))$.
Let $\gamma_*$ be the principal face of 
$\Gamma_+(g)$ associated to $\tau_*$, 
i.e. $\Psi_*(\tau_*)=\gamma_*$. 
Since $\Phi(\gamma_*)\subset\Psi_*^{-1}(\gamma_*)=\tau_*$,
we have
\begin{equation}\label{eqn:11.10}
\begin{split}
&\tau_*\subset H(a^j(\sigma),l_f(a^j(\sigma)))
\Longrightarrow
\Phi(\gamma_*)\subset H(a^j(\sigma),l_f(a^j(\sigma)))\\
&\quad\quad\quad\quad\quad\quad\quad\quad\quad
\quad\Longleftrightarrow
\gamma_*\subset \Phi^{-1}(H(a^j(\sigma),l_f(a^j(\sigma))))
\end{split}
\end{equation} 
From an easy property of the map $\Phi$,
there exists a positive number $l$ such that 
$\Phi^{-1}(H(a^j(\sigma),l_f(a^j(\sigma))))=
H(a^j(\sigma),l)$.
Since 
$\Gamma_+(g)\subset
\Phi^{-1}(H^+(a^j(\sigma),l_f(a^j(\sigma))))$ 
and
$\gamma_*$ is a nonempty proper face of 
$\Gamma_+(g)$, 
the definition of $l_g(\cdot)$ 
implies $l=l_g(a^j(\sigma))$.
Thus, we have
$$ 
H(a^j(\sigma),l_g(a^j(\sigma)))
=\Phi^{-1}(H(a^j(\sigma),l_f(a^j(\sigma)))),
$$
which implies that $j\in A(\sigma)$ 
from the equivalence (\ref{eqn:11.8})
in Remark~\ref{rem:11.3}. 

(ii)\quad
Since the case when $A(\sigma)=\emptyset$ is obvious, 
we assume that $A(\sigma)\neq\emptyset$.
From the equivalence (\ref{eqn:11.8}) in Remark~\ref{rem:11.3}, 
we have 
\begin{eqnarray}
&&\Phi
\left(
\bigcap_{j\in A(\sigma)}
H(a^j(\sigma),l_g(a^j(\sigma)))
\right)
\cap \Gamma_+(f)
=
\bigcap_{j\in A(\sigma)}
H(a^j(\sigma),l_f(a^j(\sigma)))
\cap \Gamma_+(f) \nonumber\\
&&\quad\quad\Longleftrightarrow
\Phi
\left(
\bigcap_{j\in A(\sigma)}
H(a^j(\sigma),l_g(a^j(\sigma)))
\cap \Phi^{-1}(\Gamma_+(f))
\right)
=
\tau_*(\sigma).
\label{eqn:11.14}
\end{eqnarray}
Since $\Phi^{-1}(\Gamma_+(f))\supset \Gamma_+(g)$, 
the left side of (\ref{eqn:11.14}) contains 
$\Phi(\gamma_*(\sigma))$, i.e. 
$\Phi(\gamma_*(\sigma))\subset\tau_*(\sigma)$.  
Since $\Phi(\gamma_*(\sigma))$ is a nonempty set in
$\Gamma_0(f)$ and is contained in the face
$\tau_*(\sigma)$ of $\Gamma_+(f)$, 
the definition of ${\mathcal F}_0[\Gamma_+(f)]$
implies that
there exists a face 
$\tilde{\tau}\in{\mathcal F}_0[\Gamma_+(f)]$
such that
$\Phi(\gamma_*(\sigma))\subset
\tilde{\tau}\subset
\tau_*(\sigma)$. 
From the definition of $m(f,g)$, we have
\begin{equation}\label{eqn:11.15}
\dim(\tau_*(\sigma))\geq\dim(\tilde{\tau})\geq 
n-m(f,g).
\end{equation}
On the other hand, we have
\begin{equation}\label{eqn:11.16}
\dim(\tau_*(\sigma))
\leq
\dim
\left(
\bigcap_{j\in A(\sigma)}
H(a^j(\sigma),l_f(a^j(\sigma)))
\right)
=
n-\# A(\sigma).
\end{equation}
Putting (\ref{eqn:11.15}), (\ref{eqn:11.16}) together, we have
$\#A(\sigma)\leq m(f,g)$.

(iii)\quad
From the definition (\ref{eqn:6.6}), we have
\begin{equation}\label{eqn:11.11}
\Sigma^{(n)}(\tau_*)=
\{\sigma\in\Sigma^{(n)}:
\#I(\tau_*,\sigma)=m(f,g)\}.
\end{equation}
It follows from (i), (ii) in this lemma
that $\Sigma^{(n)}(\tau_*)\subset\Sigma_*^{(n)}$.

(iv)\quad 
Since $\Sigma^{(n)}(\tau_*)\neq\emptyset$ 
from Lemma~\ref{lem:6.2} (ii),
we see  $\Sigma_*^{(n)}\neq\emptyset$. 
\end{proof}
\begin{remark}\label{rem:11.7}
From Lemma~\ref{lem:11.4} (ii), (iv), 
we see 
$\max\{\# A(\sigma):\sigma\in \Sigma^{(n)}\}=m(f,g)$,
which is a generalization of Lemma~\ref{lem:10.5}.
\end{remark}

\begin{proposition}\label{pro:11.6}
If $\sigma\in\Sigma_*^{(n)}$, then
$\tau_*(\sigma)$ $($resp. $\gamma_*(\sigma)$$)$
is a principal face of $\Gamma_+(f)$
$($resp. $\Gamma_+(g)$$)$. 
Moreover,  
$\gamma_*(\sigma)$ is associated to 
$\tau_*(\sigma)$ 
$($i.e. $\gamma_*(\sigma)=\Psi_*(\tau_*(\sigma))$$)$.
\end{proposition}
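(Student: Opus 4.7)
The plan is, first, to verify that $\gamma_*(\sigma)$ is a nonempty face of $\Gamma_+(g)$; second, to exploit the equivalence (\ref{eqn:11.8}) to link $\gamma_*(\sigma)$ and $\tau_*(\sigma)$ via $\Phi$; third, to force $\tau_*(\sigma)$ to be principal by a dimension squeeze that uses the hypothesis $\sigma \in \Sigma_*^{(n)}$; and finally, to read off $\gamma_*(\sigma)=\Psi_*(\tau_*(\sigma))$ from a direct manipulation of the defining intersections.

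For the nonemptiness step, I would observe that since $\Sigma$ simplicially subdivides the common refinement $\Sigma_0$ of the fans associated with $\Gamma_+(f)$ and $\Gamma_+(g)$, the $n$-dimensional cone $\sigma$ is contained in some $n$-dimensional cone of the fan associated with $\Gamma_+(g)$; by the construction in Section~\ref{sec:6} such a maximal cone must be $\overline{v^{\vee}}$ for a vertex $v$ of $\Gamma_+(g)$. In particular $\langle a^j(\sigma),v\rangle=l_g(a^j(\sigma))$ for every $j$, so $v\in\gamma_*(\sigma)$. Picking any $\beta\in\gamma_*(\sigma)$ and setting $\alpha:=\Phi(\beta)$, the equivalence (\ref{eqn:11.8}) applied to each $j\in A(\sigma)$ and intersected yields $\Phi(\gamma_*(\sigma))\subset\tau_*(\sigma)$. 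Because $\# A(\sigma)=m(f,g)\geq 1$, $\tau_*(\sigma)$ is a proper face of $\Gamma_+(f)$, hence $\alpha\in\partial\Gamma_+(f)\cap\Phi(\Gamma_+(g))=\Gamma_0(f)$. Consequently $\tau_f(\alpha)\in\mathcal{F}_0[\Gamma_+(f)]$, and the minimality property of $\tau_f(\alpha)$ combined with $\alpha\in\tau_*(\sigma)$ gives $\tau_f(\alpha)\subset\tau_*(\sigma)$.

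Now comes the decisive dimension squeeze: the definition of $m(f,g)$ forces $\dim(\tau_f(\alpha))\geq n-m(f,g)$; the defining intersection of $\tau_*(\sigma)$ imposes $\dim(\tau_*(\sigma))\leq n-\# A(\sigma)=n-m(f,g)$; and $\tau_f(\alpha)\subset\tau_*(\sigma)$ makes the former dimension no larger than the latter. All three inequalities therefore collapse to equalities, forcing $\tau_f(\alpha)=\tau_*(\sigma)$ and hence $\tau_*(\sigma)\in\mathcal{F}_*[\Gamma_+(f)]$.

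For the final identification, I would compute $\Psi_*(\tau_*(\sigma))=\Phi^{-1}(\tau_*(\sigma))\cap\Gamma_+(g)$ directly. Applying $\Phi^{-1}$ to the intersection defining $\tau_*(\sigma)$ and invoking (\ref{eqn:11.8}) in reverse gives $\Phi^{-1}(\tau_*(\sigma))=\bigcap_{j\in A(\sigma)}H(a^j(\sigma),l_g(a^j(\sigma)))\cap\Phi^{-1}(\Gamma_+(f))$; since $\Gamma_+(g)\subset\Phi^{-1}(\Gamma_+(f))$ (the content of $\Phi(\Gamma_+(g))\subset\Gamma_+(f)$), intersecting with $\Gamma_+(g)$ absorbs the $\Phi^{-1}(\Gamma_+(f))$ factor and produces precisely $\gamma_*(\sigma)$. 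Hence $\gamma_*(\sigma)=\Psi_*(\tau_*(\sigma))\in\mathcal{F}_*[\Gamma_+(g)]$ is the principal face of $\Gamma_+(g)$ associated to $\tau_*(\sigma)$. I expect the main obstacle to be setting up the dimension squeeze cleanly --- in particular, ensuring that the three bounds genuinely meet --- but once the nonemptiness of $\gamma_*(\sigma)$ and the inclusion $\Phi(\gamma_*(\sigma))\subset\Gamma_0(f)$ are secured, the squeeze is automatic.
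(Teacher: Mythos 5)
Your argument is correct and follows essentially the same route as the paper: the hyperplane correspondence (\ref{eqn:11.8}) giving $\Phi(\gamma_*(\sigma))\subset\tau_*(\sigma)$, the dimension squeeze between $n-m(f,g)$ and $n-\#A(\sigma)$ (which the paper packages as the inequalities (\ref{eqn:11.15})--(\ref{eqn:11.16}) from the proof of Lemma~\ref{lem:11.4}), and the direct computation $\Phi^{-1}(\tau_*(\sigma))\cap\Gamma_+(g)=\gamma_*(\sigma)$ via (\ref{eqn:11.14}). Your explicit vertex argument for the nonemptiness of $\gamma_*(\sigma)$ and the use of $\tau_f(\alpha)$ for a single point $\alpha\in\Gamma_0(f)$ in place of the paper's face $\tilde\tau\supset\Phi(\gamma_*(\sigma))$ are only cosmetic variations.
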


\begin{proof}
Suppose that $\sigma\in\Sigma_*^{(n)}$, i.e., 
$\# A(\sigma)=m(f,g)$. 
From (\ref{eqn:11.15}), (\ref{eqn:11.16}),
we have $\dim(\tau_*(\sigma))=n-m(f,g)$ and,
moreover, 
$\tau_*(\sigma)=\tilde{\tau}\in{\mathcal F}_0[\Gamma_+(f)]$. 
It follows from these equations that 
$\tau_*(\sigma)$ is a principal face of $\Gamma_+(f)$.
Moreover, 
the equation (\ref{eqn:11.14}) implies  
\begin{eqnarray*}
&&\quad\quad
\Phi^{-1}\left(
\tau_*(\sigma)
\right)=
\bigcap_{j\in A(\sigma)}
H(a^j(\sigma),l_g(a^j(\sigma)))
\cap \Phi^{-1}(\Gamma_+(f))
\\
&&\Longrightarrow
\Phi^{-1}\left(
\tau_*(\sigma)
\right)\cap\Gamma_+(g)=
\bigcap_{j\in A(\sigma)}
H(a^j(\sigma),l_g(a^j(\sigma)))
\cap \Phi^{-1}(\Gamma_+(f))
\cap \Gamma_+(g) \\
&&\quad\quad\quad\quad\quad\quad
=\bigcap_{j\in A(\sigma)}
H(a^j(\sigma),l_g(a^j(\sigma)))
\cap \Gamma_+(g)=\gamma_*(\sigma).
\label{eqn:}
\end{eqnarray*}
From the above equations and the definition 
(\ref{eqn:2.11}), 
we see that 
$\gamma_*(\sigma)$ is a principal face of 
$\Gamma_+(g)$ associated to $\tau_*(\sigma)$.
\end{proof}

From Proposition~\ref{pro:11.6}, 
the map from 
$\Sigma_*^{(n)}$ to ${\mathcal F}_*[\Gamma_+(f)]$ 
(resp. to ${\mathcal F}_*[\Gamma_+(g)]$) 
is naturally defined
(i.e., $\sigma\mapsto \tau_*(\sigma)$ 
(resp. $\sigma\mapsto \gamma_*(\sigma)$)).
\begin{lemma}\label{lem:11.8}
The above maps are surjective. 
\end{lemma}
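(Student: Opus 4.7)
The plan is to reduce the surjectivity of both maps to that of a single one, and then to extract the desired cone from the machinery already developed in Sections~6 and 11. First I would observe that Proposition~\ref{pro:11.6} together with the bijectivity of $\Psi_*:{\mathcal F}_*[\Gamma_+(f)]\to{\mathcal F}_*[\Gamma_+(g)]$ (Definition~\ref{def:2.8}) gives the commuting relation $\gamma_*(\sigma)=\Psi_*(\tau_*(\sigma))$ on $\Sigma_*^{(n)}$. Consequently, surjectivity of $\sigma\mapsto\gamma_*(\sigma)$ onto ${\mathcal F}_*[\Gamma_+(g)]$ follows mechanically from surjectivity of $\sigma\mapsto\tau_*(\sigma)$ onto ${\mathcal F}_*[\Gamma_+(f)]$, so I only need to prove the latter.

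Now fix an arbitrary principal face $\tau_*\in{\mathcal F}_*[\Gamma_+(f)]$. By Lemma~\ref{lem:6.2}~(ii) applied to $P=\Gamma_+(f)$, the set $\Sigma^{(n)}(\tau_*)$ is nonempty, so I can choose some $\sigma\in\Sigma^{(n)}(\tau_*)$. Lemma~\ref{lem:11.4}~(iii) then guarantees $\sigma\in\Sigma_*^{(n)}$, i.e.\ $\#A(\sigma)=m(f,g)$. On the other hand, the definition (\ref{eqn:6.6}) of $\Sigma^{(n)}(\tau_*)$ combined with Lemma~\ref{lem:6.2}~(i) gives $\#I(\tau_*,\sigma)=n-\dim(\tau_*)=m(f,g)$, where the last equality uses $\tau_*\in{\mathcal F}_*[\Gamma_+(f)]$.

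The key step is now to compare the two index sets $I(\tau_*,\sigma)$ and $A(\sigma)$. By Lemma~\ref{lem:11.4}~(i), $I(\tau_*,\sigma)\subset A(\sigma)$, and by the cardinality computation above, $\#I(\tau_*,\sigma)=m(f,g)=\#A(\sigma)$. Hence $I(\tau_*,\sigma)=A(\sigma)$. Finally, Lemma~\ref{lem:6.2}~(iii) yields $\tau_*=\gamma(I(\tau_*,\sigma),\sigma)=\gamma(A(\sigma),\sigma)=\tau_*(\sigma)$ (see the definition in (\ref{eqn:11.12}) and Remark~\ref{rem:11.5}), which proves surjectivity.

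There is no real obstacle: every ingredient is already in place, and the argument is essentially a bookkeeping exercise matching the two parallel descriptions $\gamma(I,\sigma)$ (for $P=\Gamma_+(f)$) and $\tau_*(\sigma)$ via the index set $A(\sigma)$ associated to the weighted Newton distance. The only point that requires mild care is confirming $I(\tau_*,\sigma)=A(\sigma)$ rather than merely one containment, but this is forced by the dimensional equality $\#A(\sigma)=m(f,g)=\#I(\tau_*,\sigma)$ obtained from $\sigma\in\Sigma^{(n)}(\tau_*)\cap\Sigma_*^{(n)}$.
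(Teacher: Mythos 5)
Your argument is correct and follows essentially the same route as the paper's proof: choose $\sigma\in\Sigma^{(n)}(\tau_*)$ (nonempty by Lemma~\ref{lem:6.2}~(ii)), use Lemma~\ref{lem:11.4}~(i),(iii) to force $I(\tau_*,\sigma)=A(\sigma)$ by the cardinality count $\#I(\tau_*,\sigma)=m(f,g)=\#A(\sigma)$, and conclude $\tau_*=\gamma(I(\tau_*,\sigma),\sigma)=\tau_*(\sigma)$ via Lemma~\ref{lem:6.2}~(iii) and Remark~\ref{rem:11.5}. The reduction of the $\gamma_*$-map to the $\tau_*$-map through $\Psi_*$ is also exactly how the paper handles the second map.
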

\begin{proof}
Let $\tau_*$ be an arbitrary principal face of 
$\Gamma_+(f)$ and $\sigma$ a cone in $\Sigma^{(n)}(\tau_*)$. 
Since $\Sigma^{(n)}(\tau_*)\subset\Sigma^{(n)}_*$ from
Lemma~\ref{lem:11.4} (iii), we have
$\#I(\tau_*,\sigma)=\#A(\sigma)=m(f,g)$. 
Moreover, since $I(\tau_*,\sigma)\subset A(\sigma)$ 
from Lemma~\ref{lem:11.4} (i), 
we have $A(\sigma)=I(\tau_*,\sigma)$. 
As a result, we have
\begin{equation*}
\begin{split}
&\tau_*=
\bigcap_{j\in I(\tau_*,\sigma)}
H(a^j(\sigma),l_f(a^j(\sigma)))
\cap \Gamma_+(f)\\
&\quad=
\bigcap_{j\in A(\sigma)}
H(a^j(\sigma),l_f(a^j(\sigma)))
\cap \Gamma_+(f)=  
\tau_*(\sigma).
\end{split}
\end{equation*}
Note that the first equality follows from Lemma~\ref{lem:6.2} (iii)
with Remark~\ref{rem:11.5}. 
Of course, if $\gamma_*$ is a principal face of $\Gamma_+(g)$
associated to $\tau_*$, 
then we see that $\gamma_*=\Psi_*(\tau_*)=\Psi_*(\tau_*(\sigma))
=\gamma_*(\sigma)$.

\end{proof}

\subsection{First coefficients}\label{subsec:11.3}

Let us return to the analysis of the functions $Z_{\pm}(s;\varphi)$.
First, 
we compute the coefficients of 
$(s+1/d(f,g))^{-m(f,g)}$ 
in the Laurent expansions of 
$\tilde{Z}_{\pm}(s;\varphi)$.
Respectively, we define 
\begin{equation}\label{eqn:11.17}
\begin{split}
\tilde{C}_{\pm}=\lim_{s\to -1/d(f,g)} 
(s+1/d(f,g))^{m(f,g)} 
\tilde{Z}_{\pm}(s;\varphi).
\end{split}
\end{equation}

\begin{proposition}\label{pro:11.9}
Suppose that $({\rm i})$ $f$ satisfies the condition $(E)$,
$({\rm ii})$ $g$ belongs to the class $\hat{\mathcal E}(U)$
and 
$({\rm iii})$ at least one of the following
conditions is satisfied:
\begin{enumerate}
\item[(a)] $d(f,g)>1;$
\item[(b)] 
$f_{\sigma}\circ T_{A(\sigma)}$ 
does not vanish on $\pi(\sigma)^{-1}(U)$ 
for any $\sigma\in\Sigma_*^{(n)}$.
\end{enumerate}
Then we give explicit formulae for coefficients:
$
\tilde{C}_{\pm}=
G_{\pm}(f,g,\varphi)
$, where 
$G_{\pm}(f,g,\varphi)$ are as in 
$(\ref{eqn:11.20}),(\ref{eqn:11.22}),(\ref{eqn:11.23}),(\ref{eqn:11.24})$
in the proof of this proposition. 
\end{proposition}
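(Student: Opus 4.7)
The plan is to follow the template of Proposition~\ref{pro:10.6} with the monomial factor $x^p$ replaced by the resolved factor arising from $g$ via the simultaneous resolution of Theorem~\ref{thm:7.2}. I would start from the decomposition $\tilde{Z}_{\pm}(s;\varphi)=\sum_{\sigma\in\Sigma^{(n)}}Z_{\pm}^{(\sigma)}(s)$ established in (\ref{eqn:11.3})--(\ref{eqn:11.4}), and for each $\sigma$ further split $Z_{\pm}^{(\sigma)}(s)=\sum_k I_{\pm,\sigma}^{(k)}(s)+\sum_l J_{\pm,\sigma}^{(l)}(s)$ via a smooth partition of unity $\{\xi_k,\eta_l\}$ on the support of $\tilde{\chi}_{\sigma}$, chosen so that each $\xi_k$ is supported where $f_{\sigma}$ has a fixed sign and each $\eta_l$ is supported near the zero set $\{f_{\sigma}=0\}$, in exactly the same fashion as in Step~2 of the proof of Theorem~\ref{thm:10.1}.

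Setting $\tilde{C}_{\pm}(\sigma):=\lim_{s\to -1/d(f,g)}(s+1/d(f,g))^{m(f,g)}Z_{\pm}^{(\sigma)}(s)$, Lemma~\ref{lem:11.4}(ii) together with the analog of Proposition~\ref{pro:9.2} applied to (\ref{eqn:11.4}) shows that the pole order at $s=-1/d(f,g)$ is at most $\#A(\sigma)\leq m(f,g)$, so $\tilde{C}_{\pm}(\sigma)=0$ unless $\sigma\in\Sigma_*^{(n)}$. For $\sigma\in\Sigma_*^{(n)}$, I would apply Proposition~\ref{pro:9.3} with $A_*=A(\sigma)$; the key arithmetic identity is that, by the very definition (\ref{eqn:11.7}) of $A(\sigma)$, one has $l_f(a^j(\sigma))\cdot(-1/d(f,g))+l_g(a^j(\sigma))+\langle a^j(\sigma)\rangle=0$ for $j\in A(\sigma)$, producing exactly the pole of order $m(f,g)$. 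Under hypothesis~(a), the resulting improper integrals converge (the exponents on the remaining variables are strictly larger than $-1$ thanks to $d(f,g)>1$), and one shows that the $J_{\pm,\sigma}^{(l)}$-contributions tend to zero as the supports of $\eta_l$ shrink onto $\{f_{\sigma}=0\}$; under hypothesis~(b) those terms are simply absent on $\pi(\sigma)^{-1}(U)$. Summing over $\sigma$ and invoking the density of each $\R^n(\sigma)$ in $Y_{\Sigma}$ to localize the answer onto any single $\sigma\in\Sigma_*^{(n)}$ yields the cone-by-cone formula analogous to (\ref{eqn:10.25}), which will be (\ref{eqn:11.20}); in the special case $m(f,g)=n$ one instead obtains an evaluation at the origin of the form (\ref{eqn:11.24}).

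The main obstacle is to re-express the coefficient, initially written in terms of the resolution-dependent data $f_{\sigma},g_{\sigma}$, as an intrinsic quantity built from the principal face-parts $f_{\tau_*}$ and $g_{\gamma_*}$, and to verify that after summation the result is independent of the chosen simplicial subdivision. For this I would apply Lemma~\ref{lem:7.1} to $f$ with the face $\tau_*(\sigma)$ and to $g$ with the face $\gamma_*(\sigma)$; Proposition~\ref{pro:11.6} guarantees that these are honest principal faces linked by $\gamma_*(\sigma)=\Psi_*(\tau_*(\sigma))$, and Lemma~\ref{lem:11.8} ensures every principal-face pair $(\tau_*,\gamma_*)$ arises from some $\sigma\in\Sigma_*^{(n)}$. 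The analogues of the identities (\ref{eqn:10.27})--(\ref{eqn:10.28}) then convert $f_{\sigma}(T_{A(\sigma)}(y))$ into $(f_{\tau_*(\sigma)}\circ\pi(\sigma))(T^1_{A(\sigma)}(y))$ divided by a monomial in $\{y_j:j\notin A(\sigma)\}$ (and similarly for $g_{\sigma}$ against $g_{\gamma_*(\sigma)}$), absorbing all exponent shifts to produce the intrinsic formulae (\ref{eqn:11.22})--(\ref{eqn:11.23}). The bookkeeping on exponents here is the subtlest point: one must check that the monomial factors arising from Lemma~\ref{lem:7.1} for $f$ and for $g$ combine with the Jacobian monomial $\prod_j y_j^{\langle a^j(\sigma)\rangle-1}$ to exactly match what Proposition~\ref{pro:9.3} produces, which is essentially an instance of the quasihomogeneous scaling (\ref{eqn:2.13}) and the definition of $A(\sigma)$.
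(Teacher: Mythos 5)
Your overall route is the same as the paper's: orthant decomposition, the cone-by-cone splitting (\ref{eqn:11.3})--(\ref{eqn:11.4}), Proposition~\ref{pro:9.3} with $A_*=A(\sigma)$, shrinking the cut-offs $\eta_l$ and then $\chi_{\sigma}$ together with the density of $\R_+^n(\sigma)$ in $\tilde{Y}_{\Sigma}$ to localize onto one $\sigma\in\Sigma_*^{(n)}$, and finally Lemma~\ref{lem:7.1} (via Remark~\ref{rem:11.5}), Proposition~\ref{pro:11.6} and Lemma~\ref{lem:11.8} to convert $(f_{\sigma},g_{\sigma})$ into the intrinsic data $(f_{\tau_*(\sigma)},g_{\gamma_*(\sigma)})$. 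That part is sound, up to a harmless mislabeling of which displayed formula is which in the case $m(f,g)=n$.

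There is, however, a genuine gap in your treatment of hypothesis (b). First, (b) does not make the $J^{(l)}_{\pm,\sigma}$-terms ``simply absent'': it only says that $f_{\sigma}$ does not vanish on the subspace $T_{A(\sigma)}(\R^n)$, and only for $\sigma\in\Sigma_*^{(n)}$; zeros of $f_{\sigma}$ away from that subspace, and zeros for cones outside $\Sigma_*^{(n)}$, still produce $J$-type integrals. Second, and more seriously, your claim that $\tilde{C}_{\pm}(\sigma)=0$ for $\sigma\notin\Sigma_*^{(n)}$ does not follow from ``pole order $\leq \#A(\sigma)\leq m(f,g)$'': the bound $\#A(\sigma)$ is correct only for the $I$-type pieces. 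Near a zero of $f_{\sigma}$ the normal crossing form of Remark~\ref{rem:7.3} introduces an extra factor $u_i^s$, and by Lemma~\ref{lem:9.4} (cf.\ the table in Step~6 of the proof of Theorem~\ref{thm:10.1}) a $J$-term can have a pole of order $\#A(\sigma)+1$ at $s=-1/d(f,g)$ when $1/d(f,g)\in\N$ --- precisely the situation not excluded by (b). Thus a cone with $\#A(\sigma)=m(f,g)-1$ may a priori contribute to the coefficient of $(s+1/d(f,g))^{-m(f,g)}$. Under (a) this cannot happen because $d(f,g)>1$ forces $-1/d(f,g)\in(-1,0)$, which is why your argument is complete there; under (b) the paper (in the corresponding step of Proposition~\ref{pro:10.6}, to which the proof of Proposition~\ref{pro:11.9} appeals) closes the gap by computing these residual contributions explicitly and showing they tend to zero as the support of $\chi_{\sigma}$ shrinks, so that, by independence of $\tilde{C}_{\pm}$ from the choice of cut-offs, the limit is still given by the formula attached to any fixed $\sigma\in\Sigma_*^{(n)}$. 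Your proposal needs this additional argument (or some substitute for it) to be complete in case (b).
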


\begin{proof}
In this proof, 
we use the following notation and symbols 
to decrease the complexity in the integral expression. 
\begin{itemize}
\item 
$\prod_{j\not\in A(\sigma)}y_j^{a_j}dy_j$ means 
$\prod_{j\not\in A(\sigma)}y_j^{a_j}\cdot
\prod_{j\not\in A(\sigma)}dy_j$ 
with $a_j>0$; 
\item 
$L_{\sigma}:=\prod_{j\in A(\sigma)}l_f(a^j(\sigma))^{-1}$;
\item 
$M_j(\sigma)
:=-l_f(a^j(\sigma))/d(f,g)+
l_g(a^j(\sigma))+
\langle  a^j(\sigma) \rangle$.
\item 
If $a=0$, then the value of $a^{-1/d(f,g)}$ is defined by $0$.
\end{itemize}
Note that $M_j(\sigma)$ is a nonnegative constant and,   
moreover, $M_j(\sigma)= 0$ if and only if $j\in A(\sigma)$. 

Let us compute the limits $\tilde{C}_{\pm}$ exactly. 
We divide the computation into the following two cases:
$m(f,g)<n$ and $m(f,g)=n$. 

\underline{The case: $m(f,g)<n$.}\quad

\underline{Under the hypothesis (a).}\quad
First, we consider the case that the hypothesis (a):
$d(f,g)>1$ is satisfied.
Let us explicitly compute the following limits:
\begin{equation*}
\tilde{C}_{\pm}(\sigma):=
\lim_{s\to-1/d(f,g)} 
(s+1/d(f,g))^{m(f,g)} 
Z^{(\sigma)}_{\pm}(s),
\end{equation*} 
where $Z_{\pm}^{(\sigma)}(s)$ are 
as in (\ref{eqn:11.4}).
We only consider the case that
$$
\sigma\in\Sigma_*^{(n)}=\left\{
\sigma\in\Sigma^{(n)}:
\#A(\sigma)=m(f,g) 
\right\},
$$
since $\tilde{C}_{\pm}(\sigma)=0$
if $\sigma\not\in\Sigma_*^{(n)}$. 
We remark that $\Sigma_*^{(n)}$ is not empty from
Lemma~\ref{lem:11.4} (iv).
By applying the argument as in the proof of 
Theorem~\ref{thm:10.1} and Proposition~\ref{pro:10.6}
to $Z_{\pm}^{(\sigma)}(s)$
(compare with $Z_{\pm}^{(\sigma)}(s)$ 
defined in (\ref{eqn:10.7})),
we see that if $\sigma\in\Sigma_*^{(n)}$, then 
\begin{equation}\label{eqn:11.18}
\tilde{C}_{\pm}(\sigma)=
L_{\sigma}
\int_{\R_+^{n-m(f,g)}}
\frac{
g_{\sigma}(T_{A(\sigma)}(y))
\tilde{\chi}_{\sigma}(T_{A(\sigma)}(y))
}{
f_{\sigma}(T_{A(\sigma)}(y))_{\pm}^{1/d(f,g)}
}
\prod_{j\not\in A(\sigma)}
y_j^{M_j(\sigma)-1}dy_j.
\end{equation}
Furthermore, 
let us compute the limits $\tilde{C}_{\pm}$ explicitly. 
If the cut-off function $\chi_{\sigma}$ is deformed as 
the volume of the support of $\chi_{\sigma}$ tends to zero, 
then $\tilde{C}_{\pm}(\sigma)$ tends to zero. 
Notice that each $\R_+^n(\sigma)$ 
is densely embedded in $\tilde{Y}_{\Sigma}$ 
(see Section~\ref{subsec:5.2})
and that
$
\tilde{C}_{\pm}=
\sum_{\sigma\in\Sigma_*^{(n)}}\tilde{C}_{\pm}(\sigma).
$
Thus,  
for an arbitrary fixed cone $\sigma\in\Sigma_*^{(n)}$, 
we have 
\begin{equation}\label{eqn:11.19}
\tilde{C}_{\pm}=G_{\pm}(f,g,\varphi)
\end{equation}
with 
\begin{equation}\label{eqn:11.20}
\begin{split}
&G_{\pm}(f,g,\varphi)=\\
&L_{\sigma}
\int_{\R_+^{n-m(f,g)}}
\frac{
g_{\sigma}(T_{A(\sigma)}(y))
(\varphi\circ\pi(\sigma))(T_{A(\sigma)}(y))
}{
f_{\sigma}(T_{A(\sigma)}(y))_{\pm}^{1/d(f,g)}
}
\prod_{j\not\in A(\sigma)}
y_j^{M_j(\sigma)-1}dy_j.
\end{split}
\end{equation}

Let us give the other formulae of $G_{\pm}(f,g,\varphi)$, 
which are more directly expressed by $f,g,\varphi$. 
From Lemma~\ref{lem:7.1} with Remark~\ref{rem:11.5}, 
we obtain
\begin{equation}\label{eqn:11.21}
\begin{split}
&
(f_{\tau_*(\sigma)}\circ\pi(\sigma))(T_{A(\sigma)}^1(y))
=
\left(
\prod_{j\not\in A(\sigma)}
y_j^{l_f(a^j(\sigma))}
\right)
f_{\sigma}(T_{A(\sigma)}(y)),\\
&
(g_{\gamma_*(\sigma)}\circ\pi(\sigma))(T_{A(\sigma)}^1(y))
=
\left(
\prod_{j\not\in A(\sigma)}
y_j^{l_g(a^j(\sigma))}
\right)
g_{\sigma}(T_{A(\sigma)}(y)).
\end{split}
\end{equation}
By using the above equations, (\ref{eqn:11.20}) 
can be rewritten as 
\begin{equation}\label{eqn:11.22}
\begin{split}
&G_{\pm}(f,g,\varphi)=\\
&L_{\sigma}
\int_{\R_+^{n-m(f,g)}}
\frac{
(g_{\gamma_*(\sigma)}\circ \pi(\sigma))
(T^1_{A(\sigma)}(y))
(\varphi\circ \pi(\sigma))(T_{A(\sigma)}(y))
}
{
\left( 
(f_{\tau_*(\sigma)}\circ \pi(\sigma))
(T^1_{A(\sigma)}(y)) 
\right)_{\pm}^{1/d(f,g)}
}
\prod_{j\notin A(\sigma)}
y_j^{\langle a^j(\sigma)\rangle -1}dy_j.
\end{split}
\end{equation}
We remark that 
$G_{\pm}(f,g,\varphi)$ depend only
on $f_{\tau_*(\sigma)}$, $g_{\gamma_*(\sigma)}$, $\varphi$
and
that $G_{\pm}(f,g,\varphi)$ must be independent of 
the selection of $\sigma\in\Sigma^{(n)}_*$. 

\underline{Under the hypothesis (b).}\quad
In a similar fashion to the proof of 
Proposition~\ref{pro:10.6},
we can show the assertion in the case 
when the hypothesis (b) is satisfied. 

\underline{The case: $m(f,g)=n$.}\quad

In this case, we see that 
$A(\sigma)=\{1,\ldots,n\}$.
We remark that
$\tau_*(\sigma)$ and $\gamma_*(\sigma)$ are vertices 
of $\Gamma_+(f)$ and $\Gamma_+(g)$,
denoted by $q$ and $p$ respectively.
Similar computations give 
the following expression corresponding to (\ref{eqn:11.20}):
\begin{equation}\label{eqn:11.23}
G_{\pm}(f,g,\varphi)=
L
\frac{g_{\sigma}(0)\varphi(0)}
{f_{\sigma}(0)^{1/d(f,g)}_{\pm}},
\end{equation}
where $\sigma\in\Sigma_*^{(n)}$ and
$L=\sum_{\sigma\in\Sigma_*^{(n)}}L_{\sigma}$.
From the equation (\ref{eqn:11.21}),
we obtain another expression
corresponding to (\ref{eqn:11.22})

\begin{equation}\label{eqn:11.24}
G_{\pm}(f,g,\varphi)=
L
\frac{g_{\gamma_*(\sigma)}(\1)
\varphi(0)}
{f_{\tau_*(\sigma)}(\1)^{1/d(f,g)}_{\pm}}
=L
\frac{(q!)^{1/d(f,g)}}{p!}
\frac{(\d^{p}g)(0)\varphi(0)}
{(\d^{q}f)(0)^{1/d(f,g)}_{\pm}},
\end{equation}
where $\1=(1,\ldots,1)$ and
$L$ is the same as in (\ref{eqn:11.23}).
\end{proof}

\begin{remark}
Let us consider the case when $\tau_*(\sigma)$ or 
$\gamma_*(\sigma)$ is compact. 
Note that the compactness of $\tau_*(\sigma)$ 
is equivalent to that of $\gamma_*(\sigma)$ 
(see Remark~\ref{rem:2.10}).  
Lemma~\ref{lem:6.4} with Remark~\ref{rem:11.5} implies 
that $\pi(\sigma)(T_{A(\sigma)}(\R^n))=0$. 
Therefore, the formulae (\ref{eqn:11.20}),(\ref{eqn:11.22})
can be expressed in more simple forms as follows. 
\begin{equation*}
\begin{split}
&G_{\pm}(f,g,\varphi)=
L_{\sigma}\varphi(0)
\int_{\R_+^{n-m(f,g)}}
\frac{
g_{\sigma}(T_{A(\sigma)}(y))
}{
f_{\sigma}(T_{A(\sigma)}(y))_{\pm}^{1/d(f,g)}
}
\prod_{j\not\in A(\sigma)}
y_j^{M_j(\sigma)-1}dy_j\\
&=L_{\sigma}\varphi(0)
\int_{\R_+^{n-m(f,g)}}
\frac{
(g_{\gamma_*(\sigma)}\circ \pi(\sigma))
(T^1_{A(\sigma)}(y))
}
{
\left( 
(f_{\tau_*(\sigma)}\circ \pi(\sigma))
(T^1_{A(\sigma)}(y)) 
\right)_{\pm}^{1/d(f,g)}
}
\prod_{j\notin A(\sigma)}
y_j^{\langle a^j(\sigma)\rangle -1}dy_j.
\end{split}
\end{equation*}
\end{remark}

Finally,
let us compute the coefficients of 
$(s+1/d(f,g))^{-m(f,g)}$ 
in the Laurent expansions of 
$Z_{\pm}(s;\varphi)$, $Z(s;\varphi)$.
Respectively, we define
\begin{equation}\label{eq:11.8}
\begin{split}
&C_{\pm}=\lim_{s\to -1/d(f,g)} 
(s+1/d(f,g))^{m(f,g)} 
Z_{\pm}(s;\varphi),\\
&C=\lim_{s\to -1/d(f,g)} 
(s+1/d(f,g))^{m(f,g)} 
Z(s;\varphi).
\end{split}
\end{equation}
\begin{theorem}\label{thm:11.10}
Suppose that $({\rm i})$ $f$ satisfies the condition $(E)$,
$({\rm ii})$ $g$ belongs to the class $\hat{\mathcal E}(U)$
and 
$({\rm iii})$ at least one of the following
conditions is satisfied: 
\begin{enumerate}
\item[(a)] $d(f,g)>1$$;$
\item[(b)] $f$ is nonnegative or nonpositive on $U$$;$
\item[(c)]
There exists a principal face $\tau_*$ of $\Gamma_+(f)$
such that 
$f_{\tau_*}$ does not vanish 
on $U\cap(\R\setminus \{0\})^n$.
\end{enumerate}
Then we obtain explicit formulae for coefficients in the following.
\begin{equation}\label{eqn:11.26}
C_{\pm}=
\sum_{\theta\in\{-1,1\}^n}
G_{\pm}(f_{\theta},g_{\theta},\varphi_{\theta})
\mbox{\quad and \quad} 
C=C_+ + C_-, 
\end{equation} 
where $f_{\theta}(x):=f(\theta_1 x_1,\ldots,\theta_n x_n)$ etc. and 
$G_{\pm}(f,g,\varphi)$ are as in 
$(\ref{eqn:11.20}),(\ref{eqn:11.22})$,
$(\ref{eqn:11.23}),(\ref{eqn:11.24})$
in the proof of Proposition~\ref{pro:11.9}. 

Furthermore,
the formulae $(\ref{eqn:11.26})$
imply the following. 
Suppose that
$({\rm iv})$ 
$g_{\gamma_*}$ is nonnegative $($resp. nonpositive$)$ on $U$,
where 
$\gamma_*$ is a principal face of 
$\Gamma_+(g)$ 
$($associated to $\tau_*$ if {\rm (c)} is satisfied in {\rm (iii)}$)$, 
and 
$({\rm v})$
$\varphi(0)>0$
and 
$\varphi$ is nonnegative on $U$.
Then,
if the support of $\varphi$ is sufficiently small, 
then 
$C_{\pm}$ are nonnegative 
$($resp. nonpositive$)$ and 
$C=C_{+}+C_{-}$ is positive
$($resp. negative$)$. 
\end{theorem}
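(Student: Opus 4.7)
The plan is to imitate, almost line-for-line, the proof of Theorem~\ref{thm:10.7}, replacing Proposition~\ref{pro:10.6} by Proposition~\ref{pro:11.9} as the engine. The first step is to apply the orthant decomposition (\ref{eqn:8.6}) to reduce the study of $Z_\pm(s;\varphi)$ to that of $\tilde Z_\pm(s;\varphi_\theta;f_\theta,g_\theta)$ for $\theta\in\{-1,1\}^n$, so that (\ref{eqn:11.26}) will follow by taking limits term by term, provided each orthant-integral supplies $G_\pm(f_\theta,g_\theta,\varphi_\theta)$ via Proposition~\ref{pro:11.9}. This reduces the theorem to verifying that each of the three options in hypothesis~(iii) of Theorem~\ref{thm:11.10} forces hypothesis~(iii) of Proposition~\ref{pro:11.9} for every $f_\theta, g_\theta$.

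The alternative (a) is immediate: $d(f,g)=d(f_\theta,g_\theta)>1$ transfers directly. For (b) and (c) the argument is by contradiction: suppose some $\sigma\in\Sigma_*^{(n)}$ and some $b_0\in T_{A(\sigma)}(\R^n)\cap\pi(\sigma)^{-1}(U)$ satisfy $f_\sigma(b_0)=0$. Since $f$ is nondegenerate with respect to $\Gamma_+(f)$, Theorem~\ref{thm:7.2} (applied to the pair $f,g$) tells us that the zero set of $f_\sigma$ on $\tilde T_{A(\sigma)}(\R^n)$ is nonsingular, hence $f_\sigma$ changes sign in every neighborhood of $b_0$ inside $T_{A(\sigma)}(\R^n)$. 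Using (\ref{eqn:11.2}) and Remark~\ref{rem:2.15} (extending $f_{\tau_*(\sigma)}$ as a smooth quasihomogeneous function beyond $\R_+^n$), we get a corresponding sign change of $f$ near $\pi(\sigma)(b_0)$; this contradicts (b). Under (c), Proposition~\ref{pro:11.6} says $\tau_*(\sigma)$ is a principal face of $\Gamma_+(f)$, and Lemma~\ref{lem:11.8} lets us assume $\tau_*(\sigma)=\tau_*$; then the identity (\ref{eqn:11.21}) rewrites the supposed zero of $f_\sigma$ at $b_0$ as a zero of $f_{\tau_*}\circ\pi(\sigma)\circ T^1_{A(\sigma)}$ off the monomial-vanishing locus, and by quasihomogeneity this produces a zero of $f_{\tau_*}$ on $U\cap(\R\setminus\{0\})^n$, contradicting~(c). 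So Proposition~\ref{pro:11.9}(iii)(b) holds in either case.

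Having secured hypothesis~(iii)(b) of Proposition~\ref{pro:11.9} for each of the $2^n$ orthant pieces, formula (\ref{eqn:11.26}) follows by summing the corresponding expressions $G_\pm(f_\theta,g_\theta,\varphi_\theta)$. For the concluding sign/nonvanishing statement, I will substitute (iv)(v) into the explicit formula (\ref{eqn:11.22}) (or (\ref{eqn:11.24}) when $m(f,g)=n$): the factor $(\varphi\circ\pi(\sigma))(T_{A(\sigma)}(y))$ has the appropriate sign by~(v), $(g_{\gamma_*(\sigma)}\circ\pi(\sigma))(T^1_{A(\sigma)}(y))$ has the appropriate sign by~(iv), the integrand's remaining monomial factor $\prod y_j^{\langle a^j(\sigma)\rangle-1}$ is manifestly nonnegative on $\R_+^{n-m(f,g)}$, and the denominator raises the nonnegative quantities $f_{\tau_*(\sigma)\pm}$ to the $1/d(f,g)$ power, producing a definite sign for $C_\pm$; summing over the $2^n$ orthants preserves the sign because each $\varphi_\theta(0)>0$ and each $g_{\theta,\gamma_*}$ has the prescribed sign (the relevant $g$-part at $\theta$ is $\theta^p g_{\gamma_*}(\theta\cdot)$, and the sign of $\theta^p$ is compensated by the change of variables).

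The subtlest point — and the one I expect to need the most care — is the nonvanishing of $C=C_++C_-$. The argument I plan to run is: because $f_{\tau_*(\sigma)}(\1)\neq 0$ (evaluated on $\R_{>0}^n$ via (\ref{eqn:7.3}), it equals a nonzero multiple of $f_\sigma(0)$), at least one of the sets $\{f_\sigma>0\}$, $\{f_\sigma<0\}$ near the origin of $\R^n(\sigma)$ is nonempty on the locus $T_{A(\sigma)}(\R^n)\cap\pi(\sigma)^{-1}(U)$, so at least one of $C_+,C_-$ is strictly positive (resp. negative) after applying (\ref{eqn:11.22}); since the other has the same sign by the previous paragraph, their sum cannot cancel. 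The main obstacle throughout is bookkeeping: tracking how the orthant-signs $\theta^p$, the signs of $f$ and $g$ after the substitution $x\mapsto(\theta_1 x_1,\dots,\theta_n x_n)$, and the quasihomogeneous extension of $f_{\tau_*}$ of Remark~\ref{rem:2.15} interact consistently, so that the case~(c) contradiction and the final sign argument both go through in the noncompact-face case as well as the compact-face one.
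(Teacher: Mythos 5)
Your overall route is the same as the paper's: orthant decomposition, verification that each alternative in (iii) forces hypothesis (iii) of Proposition~\ref{pro:11.9} (with (a) immediate and (b), (c) handled by contradiction via Theorem~\ref{thm:7.2}, Proposition~\ref{pro:11.6} and the surjectivity of Lemma~\ref{lem:11.8}), and then a sign analysis of the explicit formulae (\ref{eqn:11.22}), (\ref{eqn:11.24}). Up to and including the sign-definiteness of the $G_{\pm}(f_\theta,g_\theta,\varphi_\theta)$ this matches the paper's argument (a small remark: for a general $\hat{\mathcal E}$-weight there is no factor $\theta^p$ to track; one simply uses that $(g_\theta)_{\gamma_*}=(g_{\gamma_*})_\theta$ inherits the sign from (iv)).

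The genuine gap is in your last step, the nonvanishing of $C=C_++C_-$. Your argument only controls the $f$-side: from $f_{\tau_*(\sigma)}(\1)\neq 0$, i.e. $f_\sigma(0)\neq 0$, you conclude that the denominator in (\ref{eqn:11.22}) is not annihilated by the convention $a_{\pm}^{-1/d(f,g)}:=0$ for $a_{\pm}=0$. But strict positivity of one of $C_{\pm}$ also requires the numerator factor $(g_{\gamma_*(\sigma)}\circ\pi(\sigma))(T^1_{A(\sigma)}(y))$ not to vanish identically near $y=0$; hypothesis (iv) gives only its sign, and a priori it could be identically zero on the relevant locus (in which case $C_+=C_-=0$ and the conclusion fails), so your inference ``at least one of $C_+,C_-$ is strictly positive'' does not follow from what you have established. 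This is exactly the point the paper singles out as the non-trivial part of the proof: it is ruled out by the second identity in (\ref{eqn:11.21}) together with $g_\sigma(0)\neq 0$, which is where hypothesis (ii) $g\in\hat{\mathcal E}(U)$ enters through Theorem~\ref{thm:7.2}. Indeed, for $y$ near $0$ with $y_j>0$ for $j\notin A(\sigma)$, both $(f_{\tau_*(\sigma)}\circ\pi(\sigma))(T^1_{A(\sigma)}(y))$ and $(g_{\gamma_*(\sigma)}\circ\pi(\sigma))(T^1_{A(\sigma)}(y))$ are nonzero, and then (iv), (v) give $C\neq 0$. Your appeal to the $f$-factor alone is the analogue of the argument for Theorem~\ref{thm:10.7}, where the numerator was automatically controlled by $\psi(0)\varphi(0)\neq 0$; in the $\hat{\mathcal E}$-weight setting that role is played by $g_\sigma(0)\neq 0$ and must be invoked explicitly. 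With that one addition your proof coincides with the paper's.
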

\begin{proof}
Remembering how to show Theorem~\ref{thm:10.7}  
by using Proposition~\ref{pro:10.6}, 
one can almost similarly show the above theorem
by using Proposition~\ref{pro:11.9}.
The following show non-trivial parts. 
\begin{itemize}
\item
The surjectivities 
in Lemma~\ref{lem:11.8} are used to find a cone
associated to principal faces which satisfy the conditions
in the theorem. 
\item
In order to see $C=C_++C_-\neq 0$ from the formula (\ref{eqn:11.22}), 
it must be shown that
$g_{\gamma_*}\circ\pi(\sigma)\circ T_{A(\sigma)}^1$ 
does not identically equal zero near the origin. 
Indeed, this follows from the equations (\ref{eqn:11.21}) 
with $g_{\sigma}(0)\neq 0$.
\end{itemize}
\end{proof}

\subsection{Poles on negative integers}
Let us consider the poles of 
$Z_{\pm}(s;\varphi)$ on negative integers.
For $\lambda\in -\N$,
define 
\begin{equation}\label{eqn:11.27}
\begin{split}
&
A_{\lambda}(\sigma):=\{j\in B(\sigma): 
l_f(a^j(\sigma))\lambda+l_g(a^j(\sigma))+
\langle  a^j(\sigma)\rangle -1 \in - \N\},\\
&
\rho_{\lambda}:=
\min\{\max\{\# A_{\lambda}(\sigma):
\sigma\in\Sigma^{(n)}\},n-1\}.
\end{split}
\end{equation}
We obtain the following result corresponding to 
Proposition~\ref{pro:10.8}.
\begin{proposition}\label{pro:11.11}
Suppose that {\rm (i)} $f$ satisfies the condition $(E)$
and that {\rm (ii)} $g$ belongs to the class 
$\hat{\mathcal E}(U)$.
If the support of $\varphi$ is
contained in a sufficiently small neighborhood 
of the origin, then the orders of poles of 
$Z_{\pm}(s;\varphi)$  
at $s=\lambda\in -\N$ are not 
higher than $\rho_{\lambda}+1$.
In particular, if $\lambda>-1/d(f,g)$, 
then these orders are not higher than $1$.    
Moreover, 
let $a_{\lambda}^{\pm}$ be the coefficients of 
$(s-\lambda)^{-\rho_{\lambda}-1}$ 
in the Laurent expansions of 
$Z_{\pm}(s;\varphi)$ at $s=\lambda$, 
respectively,  
then we have $a_{\lambda}^+=(-1)^{\lambda-1}a_{\lambda}^-$ 
for $\lambda\in-\N$.
\end{proposition}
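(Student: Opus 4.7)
\smallskip
\noindent
\textbf{Proof plan for Proposition~\ref{pro:11.11}.} The plan is to adapt the argument of Proposition~\ref{pro:10.8} to the $\hat{\mathcal E}$-weight setting, using the simultaneous resolution with respect to $f$ \emph{and} $g$ (Theorem~\ref{thm:7.2}) rather than only with respect to $f$. First I will apply the orthant decomposition~\eqref{eqn:8.6} to reduce to the functions $\tilde Z_\pm(s;\varphi)$, and then use the partition-of-unity decomposition $\tilde Z_\pm(s;\varphi)=\sum_{\sigma\in\Sigma^{(n)}}Z_\pm^{(\sigma)}(s)$ already recorded in~\eqref{eqn:11.3}--\eqref{eqn:11.4}. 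On each chart $\R^n(\sigma)$ I will introduce the same two families of cut-off functions $\{\xi_k\}$, $\{\eta_l\}$ as in Step~2 of the proof of Theorem~\ref{thm:10.1}: on $\operatorname{Supp}(\xi_k)$ the factor $f_\sigma$ is of constant sign, and on each $\operatorname{Supp}(\eta_l)$ one hits the zero set of $f_\sigma$. This splits $Z_\pm^{(\sigma)}(s)$ into pieces $I^{(k)}_{\pm,\sigma}(s)$ and $J^{(l)}_{\pm,\sigma}(s)$, now weighted by $\prod_j y_j^{l_f(a^j(\sigma))s+l_g(a^j(\sigma))+\langle a^j(\sigma)\rangle-1}$ because both $f$ and $g$ are monomialized by Theorem~\ref{thm:7.2}.

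Next I will treat the two types of pieces separately at a fixed $\lambda\in-\N$. Each $I^{(k)}_{\pm,\sigma}(s)$ has the form of the model integral $L(s)$ in Section~\ref{sec:9} with exponents $l_j=l_f(a^j(\sigma))$ and $m_j-1=l_g(a^j(\sigma))+\langle a^j(\sigma)\rangle-1$; Proposition~\ref{pro:9.2} then gives a pole at $\lambda$ of order at most $\#A_\lambda(\sigma)$, hence $\le\rho_\lambda$. For the $J^{(l)}_{\pm,\sigma}(s)$, I use Remark~\ref{rem:7.3} to straighten the zero set of $f_\sigma$ near a zero $b\in\tilde T_I(\R^n)$, producing a new coordinate $u_i-b_i$ taking the role of the $y_n$ in Lemma~\ref{lem:9.4}. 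The resulting integral has exactly the shape of $L_\pm(s)$ with $D=\tilde B_l(\sigma)\subsetneq\{1,\ldots,n\}$, so Lemma~\ref{lem:9.4} yields both the desired upper bound $\#A_\lambda(\sigma)+1\le\rho_\lambda+1$ on the pole order at $\lambda$ and, crucially, the sign identity $a_\lambda^+=(-1)^{\lambda-1}a_\lambda^-$ term by term; this identity then passes to the sum because the contributions from the $I$-pieces to the coefficient of $(s-\lambda)^{-\rho_\lambda-1}$ vanish on the grounds that those pieces have pole of order at most $\rho_\lambda$.

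The auxiliary claim that the pole order at $\lambda$ is at most $1$ for $\lambda>-1/d(f,g)$ reduces to showing $A_\lambda(\sigma)=\emptyset$. Indeed, Lemma~\ref{lem:11.2} (and the equivalent form derived in~\eqref{eqn:11.6}) gives the integer inequality $l_g(a^j(\sigma))+\langle a^j(\sigma)\rangle\ge l_f(a^j(\sigma))/d(f,g)$ for every $j\in B(\sigma)$, so that for $\lambda>-1/d(f,g)$ the integer $l_f(a^j(\sigma))\lambda+l_g(a^j(\sigma))+\langle a^j(\sigma)\rangle-1$ is strictly greater than $-1$, hence nonnegative, hence not in $-\N$; thus $\rho_\lambda=0$ in that range, and only the single $(-\N)$-type factor ``$u_i^s$'' from the $J$-pieces can contribute a pole, of order at most $1$.

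I expect the main obstacle to be purely bookkeeping rather than conceptual: one must verify that the shift from the monomial-weight case (where the weight contributes only the explicit power $x^p$) to the $\hat{\mathcal E}$-weight case (where it contributes $\prod_j y_j^{l_g(a^j(\sigma))}\cdot g_\sigma(y)$ with $g_\sigma(0)\neq 0$) leaves the pole-analysis invariant, i.e.\ that the nonvanishing smooth factor $g_\sigma$ merely redefines the cut-off functions $\tilde\chi_\sigma$, $\tilde\xi_k$, $\tilde\eta_l$ used in Section~\ref{sec:9}. Once this is observed, the computations in Lemmas~\ref{lem:9.1} and~\ref{lem:9.4} apply verbatim and the proposition follows.
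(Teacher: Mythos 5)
Your plan is essentially the paper's own proof: the paper disposes of Proposition 11.11 by remarking that the difference between (10.7) and (11.4) does not affect the argument of Proposition 10.8, which is exactly your reduction --- decompose via (11.3)--(11.4) and the cut-offs of Steps 2 and 4 of the proof of Theorem 10.1 into $I$- and $J$-type pieces (the factor $g_{\sigma}$ with $g_{\sigma}(0)\neq 0$ being absorbed into the smooth amplitude), bound the $I$-pieces by Proposition 9.2, and obtain both the order bound and the sign identity from Lemma 9.4 applied to the $J$-pieces, with your verification via (11.6) that $A_{\lambda}(\sigma)=\emptyset$ for $\lambda>-1/d(f,g)$ giving the ``in particular'' clause. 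The only place where you assert slightly more than is literally justified is the inequality $\# A_{\lambda}(\sigma)\leq\rho_{\lambda}$ (because of the cap $\min\{\cdot\,,n-1\}$ in the definition of $\rho_{\lambda}$, an $I$-piece could in principle reach order $\rho_{\lambda}+1$ when $\max_{\sigma}\# A_{\lambda}(\sigma)=n$), but the paper's proof makes the same implicit reduction to the $J$-pieces, so your argument is at the same level of rigor as the original.
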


\begin{proof}
The difference between (\ref{eqn:10.7}) 
and (\ref{eqn:11.4})
does not essentially affect the argument in the proof
of Proposition \ref{pro:10.8}.
The details are left to the readers.
\end{proof}

\section{The case of convenient $f$}\label{sec:12}

Let us consider the poles of $Z_{\pm}(s;\varphi)$
and $Z(s;\varphi)$ in the case when 
the function $f$ is convenient.
In this case, all the results in Section~\ref{sec:11} can be obtained 
without the hypothesis: $g\in\hat{\mathcal E}(U)$.


In this section, 
we only assume that $g$ is a nonflat smooth function defined on 
an open neighborhood $U$ of the origin and 
we use the same notation as in
Section~\ref{sec:11}.
The purpose of this section is to show the following theorem. 

\begin{theorem}\label{thm:12.1}
In Theorems~\ref{thm:11.1},\ref{thm:11.10}  
and Propositions~\ref{pro:11.9},\ref{pro:11.11},
replacing the condition {\rm (ii)}$:$
``$g$ belongs to the class $\hat{\mathcal E}(U)$''
by
the condition:``$f$ is convenient'',
we have the same assertions. 
\end{theorem}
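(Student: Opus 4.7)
The plan is to reduce to the monomial weight case of Section~\ref{sec:10} via Taylor's theorem, since the hypothesis $g \in \hat{\mathcal{E}}(U)$ was used in Section~\ref{sec:11} only to apply Theorem~\ref{thm:7.2} to $g$. For any $N \in \N$, Taylor's formula with integral remainder yields
\begin{equation*}
g(x) = \sum_{|\alpha| < N} c_\alpha x^\alpha + \sum_{|\alpha| = N} x^\alpha \psi_{\alpha,N}(x),
\end{equation*}
where $c_\alpha = \partial^\alpha g(0)/\alpha!$ and $\psi_{\alpha,N} \in C^\infty(U)$. Substituting this into $Z_\pm(s;\varphi)$ produces a finite sum of integrals with weight $x^\alpha$ plus a finite sum of integrals with weight $x^\alpha \psi_{\alpha,N}(x)$, each of the form treated in Theorem~\ref{thm:10.1}, Theorem~\ref{thm:10.7} and Proposition~\ref{pro:10.8}.

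The convenience of $f$ is the key hypothesis that makes the remainder negligible. If $N_1,\ldots,N_n \in \N$ are chosen with $N_i e_i \in \Gamma_+(f)$, then a direct geometric estimate gives $d(f,x^\alpha) \leq \bigl(\sum_{i=1}^n (\alpha_i+1)/N_i\bigr)^{-1}$ for every $\alpha \in \Z_+^n$, which tends to $0$ as $|\alpha|\to\infty$. Therefore, by taking $N$ sufficiently large, every candidate pole associated to the $|\alpha|=N$ remainder terms (as given by Theorem~\ref{thm:10.1}) lies strictly to the left of $-1/d(f,g)-1$, so the remainder contributes neither to the leading pole at $-1/d(f,g)$ nor to any pole in $(-1/d(f,g),0]\cap(-\N)$ relevant to Theorem~\ref{thm:11.1} or Proposition~\ref{pro:11.11}. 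Moreover, Remark~\ref{rem:2.10} shows that when $f$ is convenient every principal face $\gamma_*$ of $\Gamma_+(g)$ is compact, so the $\gamma_*$-part $g_{\gamma_*}(x) = \sum_{\alpha \in \gamma_* \cap \Z_+^n} c_\alpha x^\alpha$ is simply a polynomial, defined independently of any $\hat{\mathcal{E}}$-property.

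For the remaining finite sum over $|\alpha| < N$, one applies the Section~\ref{sec:10} results to each monomial-weighted integral and sums. By Remark~\ref{rem:2.5}, the monomials $x^\alpha$ attaining the leading pole location $-1/d(f,g)$ are precisely those with $\alpha \in \Gamma_0(g)\cap\Z_+^n$, and these are partitioned (by Definition~\ref{def:2.8}) according to which principal face $\gamma_*$ of $\Gamma_+(g)$ contains them. Invoking the structural lemmas of Section~\ref{subsec:11.2}, especially Lemmas~\ref{lem:11.4} and~\ref{lem:11.8} together with the identity $\gamma=\gamma(I(\gamma,\sigma),\sigma)$ from Lemma~\ref{lem:6.2}(iii) applied to $P=\Gamma_+(g)$, one matches the fan-theoretic objects $\tau_*(\sigma),\gamma_*(\sigma)$ with this face-theoretic partition; the sum of the Theorem~\ref{thm:10.7} coefficient formulae over $\alpha\in\Gamma_0(g)\cap\Z_+^n$ then telescopes into (\ref{eqn:11.22})--(\ref{eqn:11.24}), with the factor $\sum_{\alpha \in \gamma_*(\sigma)\cap\Z_+^n} c_\alpha (x^\alpha\circ\pi(\sigma)\circ T^1_{A(\sigma)})(y)$ coinciding with $(g_{\gamma_*(\sigma)}\circ\pi(\sigma))(T^1_{A(\sigma)}(y))$ by compactness of $\gamma_*(\sigma)$. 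The main obstacle lies in this last combinatorial assembly: one must verify that the Jacobian factors, integration regions and the exponents $l_g(a^j(\sigma))=\langle a^j(\sigma),\alpha\rangle$ (for $\alpha\in\gamma_*(\sigma)$ and $j\in A(\sigma)$) combine consistently across the different $\alpha$ contributing to the same $\sigma$, which is precisely what Section~\ref{subsec:11.2} has been set up to guarantee.
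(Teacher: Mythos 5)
Your reduction to the monomial case via Taylor expansion is a genuinely different route from the paper's proof, which never decomposes $g$: it shows (Lemma~\ref{lem:12.2}) that an arbitrary smooth $g$ automatically belongs to $\hat{\mathcal E}[\tilde\Gamma_+(g)](U)$ for the polyhedron cut out by the strictly positive normal directions only, factors $g\circ\pi(\sigma)$ with exponents $\tilde l_g(a^j(\sigma))$ and a polynomial factor $g_\sigma$ with $g_\sigma(0)\neq 0$, and then uses Lemma~\ref{lem:12.4} (convenience of $f$ forces $a\in\N^n$ whenever $l_f(a)\neq 0$) to conclude $\tilde l_g(a^j(\sigma))=l_g(a^j(\sigma))$ for every index that can produce a pole, after which the Section~\ref{sec:11} arguments run verbatim. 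Your route has two genuine gaps. The first concerns the poles at negative integers. Pushing $-1/d(f,x^\alpha)$ far to the left only disposes of the first family in (\ref{eqn:10.1}); the family $(-\N)$ is present for every $N$, so the claim that the remainder contributes to no pole in $(-1/d(f,g),0]\cap(-\N)$ is unjustified, and it is exactly these integer poles that the convenient-$f$ versions of Theorem~\ref{thm:11.1}(a) and Proposition~\ref{pro:11.11} (the order bound $\rho_\lambda+1$ and the parity $a_\lambda^+=(-1)^{\lambda-1}a_\lambda^-$, which drives the cancellations behind Theorem~\ref{thm:4.1}) must control. Proposition~\ref{pro:10.8} applied to a remainder piece $x^\alpha\psi_{\alpha,N}$ gives the parity only for the Laurent coefficient of order $\rho_\lambda(\alpha)+1$ computed with that piece's own data; since $\alpha\notin\Gamma_+(g)$ one does not have $A_\lambda(\sigma,\alpha)\subset A_\lambda(\sigma)$, so neither the order bound nor the parity of the relevant (possibly lower-order) coefficient follows. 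The repair is precisely the observation you never isolate, i.e.\ Lemma~\ref{lem:12.4}: for convenient $f$ the vectors $a^j(\sigma)$ with $l_f(a^j(\sigma))\neq 0$ lie in $\N^n$, hence $\langle a^j(\sigma),\alpha+\1\rangle\geq N$, so for each fixed $\lambda$ and $N$ large (note $N$ must depend on $\lambda$; no single $N$ works for all of $-\N$) one gets $A_\lambda(\sigma,\alpha)=\emptyset$ and a simple pole with the parity relation for the residue.

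The second gap is the coefficient assembly, which you yourself flag as the main obstacle but then only assert. Two points are missing. (a) You need that every lattice point $\alpha$ of a principal face $\gamma_*$ satisfies $\tau_f(\Phi(\alpha))=\tau_*$ (this is true, by maximality in the definition of $m(f,g)$, but must be proved), since only then do all these monomials share $d(f,x^\alpha)=d(f,g)$, $m(f,x^\alpha)=m(f,g)$, the same set $A_p(\sigma)=I(\tau_*,\sigma)$ and a common admissible chart, so that their Section~\ref{sec:10} coefficients can be added inside one integral. (b) More seriously, monomials of $g$ lying on different principal faces are evaluated in charts attached to different cones of $\Sigma_*^{(n)}$, so what linearity actually yields is a sum of face-by-face contributions, whereas the assertions of Proposition~\ref{pro:11.9} and Theorem~\ref{thm:11.10} are single-chart, single-face formulae; identifying your sum with (\ref{eqn:11.22})--(\ref{eqn:11.24}) requires the independence of the choice of $\sigma\in\Sigma_*^{(n)}$, which in Section~\ref{sec:11} is obtained from the cutoff-deformation argument resting on the factorization of $g$ provided by Theorem~\ref{thm:7.2}, i.e.\ on $g\in\hat{\mathcal E}(U)$ --- so it cannot be imported here without circularity, and the combinatorial lemmas of Section~\ref{subsec:11.2} alone do not supply it. For the same reason the nonvanishing statement of Theorem~\ref{thm:11.10} does not follow termwise from Theorem~\ref{thm:10.7}: your assembled coefficient is a sum over all principal faces, while hypothesis (iv) controls the sign of only one face part. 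Until these points are settled, the proposal establishes the pole locations and order bounds (after the repair above) but not the coefficient formulae and the resulting equalities $\beta(f,g)=-1/d(f,g)$, $\eta(f,g)=m(f,g)$, which is the substance of Theorem~\ref{thm:12.1}.
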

After the above exchanging in Theorem~\ref{thm:11.1}, 
the new theorem will be expressed by Theorem~11.1$'$.
The other new theorem and propositions will be expressed 
in a similar fashion. 

\subsection{Why the case of covenient $f$ can be conveniently treated?}

Before exactly proving Theorem~\ref{thm:12.1},
we roughly explain essential reason why the convenience of $f$
can give the assertions 
without strong hypotheses of $g$. 
Hereafter, we assume that 
$\varphi$ is always nonnegative on $U$ and 
$\varphi(0)>0$. 

First, let us recall and consider 
geometrical relationship between 
the Newton distance $d(f,g)$ and
the two Newton polyhedra
$\Gamma_+(f)$ and $\Gamma_+(g)$.
When $d>0$ is large enough, 
the two sets $(1/d)\cdot\partial\Gamma_+(f)$
and $\Gamma_+(g)+\1$ are separate,
where $\partial\Gamma_+(f)$ is the topological 
boundary of $\Gamma_+(f)$ and 
$\1=(1,\ldots,1)$. 
As $d$ tends to zero, 
the set $(1/d)\cdot\partial\Gamma_+(f)$
gradually moves towards the set $\Gamma_+(g)+\1$ and, 
finally, $(1/d)\cdot\partial\Gamma_+(f)$
contacts $\Gamma_+(g)+\1$ from the outside.
The value of this $d$ defines the {\it Newton distance}  $d(f,g)$. 
Theorem~11.11 asserts that
\begin{equation}\label{eqn:12.1}
\mbox{
``the leading pole of 
$Z(s;\varphi)$ exists at  
$s=-1/d(f,g)$''}
\end{equation}
under some appropriate conditions.

In order to understand the validity of the convenience
condition of $f$ for the assertion (\ref{eqn:12.1}), 
let us observe a typical problem occuring in the case when
$f$ is not convenient
and $g$ does not belong to $\hat{\mathcal{E}}(U)$.
Consider the following two-dimensional example:
\begin{equation}
\begin{split}
f(x_1,x_2)=x_1^4; \quad\quad 
g(x_1,x_2)=x_1^{2}x_2^{2}+e^{-1/x_2^2}.
\end{split}
\end{equation}
This example has appeared in Remark~3.4 and 
it is a special case of Example 1 in Section~15
(consider the case when $c=1,p=1,q=0$).
The Newton data about these $f$ and $g$ 
can be easily seen and we have  
\begin{equation*}
\begin{split}
&(1/d)\cdot\Gamma_+(f)=\{(\a_1,\a_2)\in\R_+^2:
\a_1\geq 4/d\}, \\
&\Gamma_+(g)+\1=\{(\a_1,\a_2)\in\R_+^2:
\a_1\geq 3,\a_2\geq 3\}, \\ 
&d(f,g)=4/3.
\end{split}
\end{equation*}
Of course,
the flat function term $e^{-1/x_2^2}$ in $g$
does not give any geometrical information
of the Newton polyhedron of $g$. 
However, 
this term analytically affects the properties of 
$Z(s;\varphi)$ and 
it actually gives an obstruction for the assertion (\ref{eqn:12.1}).
In fact, the leading pole of $Z(s;\varphi)$ exists at $s=-1/4$, 
which does not equal $-1/d(f,g)=-3/4$.

Let us try to forcibly give a geometrical Newton data
induced by the term $e^{-1/x_2^2}$ and
to understand how the leading pole of $Z(s;\varphi)$
appears at $s=-1/4$. 
Since $e^{-1/x_2^2}=O(x_2^N)$ for
any positive integer $N$,
it might be interpreted that
there exists 
something like a {\it speck of dust} 
$\{(0,N)\}$ very far from the origin
outside of the Newton polyhedron of $g$ and, 
more exactly, the set $\Gamma_+(g)+\1$ should be 
approximated as follows.   
$$
\Gamma_+(g)+\1 \approx 
\mbox{convex hull of }
\left(
\{(\a_1,\a_2)\in\R_+^2:\a_1\geq 3,\a_2\geq 3\}
\cup\{(1,N+1)\}
\right),\quad
$$
with $N \gg 1$.
This {\it approximated} Newton polyhedron plays 
a more essential role in the analysis of 
poles of $Z(s;\varphi)$. 
Indeed, let us replace the {\it real} Newton polyhedron of $g$
by the above {\it approximated} one and 
let $d$ tend to zero.
Before contacting with the real polyhedron $\Gamma_+(g)+\1$,
the set $(1/d)\cdot\partial\Gamma_+(f)$ intersects
the approximated one on 
the vertical edge containing the point $\{(1,N+1)\}$. 
The value of this $d$ is $4$, which 
determines the position of leading pole
of $Z(s;\varphi)$ 
from the computation in Section~15.1.
It easily follows from this observation
that these kinds of problems can occur only
when $\Gamma_+(f)$ does not 
intersect some coordinate hyperplane.
In order to refuse these problems
when $f$ is not convenient, 
we need some kind of assumption on $g$ 
for the assertion (\ref{eqn:12.1}), 
which reveals that 
there is {\it completely nothing} outside of the set 
$(1/d(f,g))\cdot\Gamma_+(f)$.
For example, 
the inclusion $\Gamma(g)+\1\subset (1/d(f,g))\cdot\partial\Gamma_+(f)$
implies that  
the condition $g\in\hat{\mathcal{E}}(U)$ is sufficient 
for this necessary assumption.

In the case when $f$ is convenient, 
fortunately,
these kinds of problem cannot occur.
Indeed,
if $f$ is convenient, then
the complement of the set
$(1/d)\cdot\Gamma_+(f)$ in $\R_+^2$ is bounded.
Even if a {\it speck of dust} induced 
from the flat function like the above
$\{(0,N)\}$ exists,  then
since it is always very far from the origin,
the set $(1/d)\cdot\partial\Gamma_+(f)$
does not intersect it  
before contacting with $\Gamma_+(g)+\1$.
This is an essential reason why the case when 
$f$ is convenient can be easily treated. 


\subsection{Proof of Theorem~12.1}

Before proving the above theorem, 
we prepare two lemmas.
For $a\in\Z_+^n$, we define 
\begin{eqnarray}\label{eqn:12.3}
\tilde{l}_g (a)=
\begin{cases}
l_g(a)& \quad \mbox{for $a\in\N^n$}, \\
  0   & \quad \mbox{for $a\in\Z_+^n\setminus\N^n$}.
\end{cases}
\end{eqnarray}
\begin{lemma}\label{lem:12.2}
Suppose that $\sigma \in \Sigma^{(n)}$.
Then we have the following.

{\rm (i)}\quad 
There exists a smooth function $\tilde{g}_{\sigma}$ defined 
on $\pi(\sigma)^{-1}(U)$ such that  
\begin{equation}\label{eqn:12.4}
g(\pi(\sigma)(y))=
\left(\prod_{j=1}^n 
y_j^{\tilde{l}_g(a^j(\sigma))}\right)
\tilde{g}_{\sigma}(y)
\quad \mbox{for $y\in \pi(\sigma)^{-1}(U)$}.
\end{equation}
We remark that $\tilde{g}_{\sigma}$ may vanish at the origin.

{\rm (ii)}\quad
Let $\gamma$ be a compact face of $\Gamma_+(g)$ 
defined by 
$$
\gamma:=\bigcap_{j\in I}H(a^j(\sigma),l_g(a^j(\sigma)))
\cap \Gamma_+(g)\quad 
\mbox{with $I\subset\{j:a_j(\sigma)\in\N^n\}$}
$$
$($In terms of symbols in Section~\ref{subsec:6.2}, 
$\gamma=\gamma(I,\sigma)$ with $P=\Gamma_+(g)$.$)$ 
Then, we have
\begin{equation}\label{eqn:12.5}
g_{\gamma}(\pi(\sigma)(y))=
\left(\prod_{j=1}^n y_j^{\tilde{l}_g(a^j(\sigma))}\right)
\tilde{g}_{\sigma}(T_I(y))
\quad \mbox{for $y\in \pi(\sigma)^{-1}(U)$.}
\end{equation}

{\rm (iii)}\quad 
Let $I$ be as in {\rm (ii)} and 
let $g_{\sigma}$ be defined by
\begin{equation}\label{eqn:12.6}
g_{\sigma}(y)
=\left(\prod_{j\not\in I} 
y_j^{\tilde{l}_g(a^j(\sigma))-l_g(a^j(\sigma))}\right)
\tilde{g}_{\sigma}(T_I(y))
\quad \mbox{for $y\in \pi(\sigma)^{-1}(U)$.}
\end{equation}
Then $g_{\sigma}$ is a polynomial of only variables $y_j$ 
with $j\not\in I$ 
and $g_{\sigma}(0)\neq 0$. 
Note that $g_{\sigma}(T_I(y))=g_{\sigma}(y)$.

{\rm (iv)}\quad
Let $\gamma$, $I$ be as in {\rm (ii)}. 
Then we have
\begin{equation}\label{eqn:12.7}
g_{\gamma}(\pi(\sigma)(T_I^1(y)))=
\left(\prod_{j\not\in I} 
y_j^{l_g(a^j(\sigma))}\right)
g_{\sigma}(T_I(y))
\quad \mbox{for $y\in \pi(\sigma)^{-1}(U)$}.
\end{equation}
\end{lemma}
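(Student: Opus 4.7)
My plan is to derive the whole lemma from Hadamard's lemma applied to $g \circ \pi(\sigma)$, plus a careful accounting of which Taylor indices survive each monomial extraction. I would split $g = g_T + g_F$ via Borel's theorem into a function realizing the Taylor series at $0$ and a flat remainder; the background facts are that every $\alpha$ with $c_\alpha \neq 0$ lies in $\Gamma_+(g)$, and that whenever $a^j(\sigma) \in \N^n$ the map $\pi(\sigma)$ sends the hyperplane $\{y_j = 0\}$ to the origin, so $g_F \circ \pi(\sigma)$ is flat there.

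\medskip

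For (i), formally $g_T(\pi(\sigma)(y)) = \sum_\alpha c_\alpha \prod_j y_j^{\langle a^j(\sigma), \alpha\rangle}$, and $\langle a^j(\sigma), \alpha\rangle \geq l_g(a^j(\sigma)) \geq \tilde{l}_g(a^j(\sigma))$ for every $j$, so the formal series is divisible by $\prod_j y_j^{\tilde{l}_g(a^j(\sigma))}$; the flat part $g_F \circ \pi(\sigma)$ inherits the same divisibility from the flatness above. Iterated Hadamard's lemma (over the indices $j$ with $a^j(\sigma) \in \N^n$; the remaining indices contribute only trivial powers) then yields a smooth $\tilde{g}_\sigma$ on $\pi(\sigma)^{-1}(U)$ realizing (i). For (ii), compactness of $\gamma$ makes $g_\gamma$ the polynomial $\sum_{\alpha \in \gamma \cap \Z_+^n} c_\alpha x^\alpha$, and since $\langle a^j(\sigma), \alpha\rangle = l_g(a^j(\sigma))$ for $\alpha \in \gamma$ and $j \in I$, one factors $\prod_{j\in I} y_j^{l_g(a^j(\sigma))}$ out of $g_\gamma(\pi(\sigma)(y))$ cleanly. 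The same bookkeeping applied to (i) shows that $\tilde{g}_\sigma(T_I(y))$ keeps exactly those Taylor monomials whose index $\alpha$ lies in $\gamma$: any other index acquires a strictly positive $y_j$-exponent for some $j \in I$ and dies under $T_I$, while the flat part of $\tilde{g}_\sigma$ vanishes at $T_I(y)$ because $I \subset \{j : a^j(\sigma) \in \N^n\}$. Comparing the two expressions yields (ii). Inserting (ii) into the definition of $g_\sigma$ from (iii) collapses it to
\[
g_\sigma(y) = \sum_{\alpha \in \gamma \cap \Z_+^n} c_\alpha \prod_{j \notin I} y_j^{\langle a^j(\sigma), \alpha\rangle - l_g(a^j(\sigma))},
\]
a polynomial in $\{y_j : j \notin I\}$ with nonnegative exponents (since $\alpha \in \Gamma_+(g)$), and (iv) is just (ii) applied at $T_I^1(y)$ with $\tilde{g}_\sigma(T_I(y))$ rewritten through this explicit expression for $g_\sigma$.

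\medskip

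The main obstacle will be proving the nonvanishing $g_\sigma(0) \neq 0$ without any $\hat{\mathcal E}$-hypothesis on $g$. The monomials in the polynomial above that survive at $y = 0$ are those whose $\alpha$ solves the full system $\langle a^j(\sigma), \alpha\rangle = l_g(a^j(\sigma))$ for every $j = 1, \ldots, n$, and since $a^1(\sigma), \ldots, a^n(\sigma)$ is a basis of $\R^n$ this linear system admits at most one solution $\alpha_*$. To verify $\alpha_* \in \Gamma_+(g) \cap \Z_+^n$ with $c_{\alpha_*} \neq 0$, I would use that $\Sigma$ is a simplicial subdivision of the combined fan, so the interior of the $n$-dimensional cone $\sigma$ lies in the interior of some maximal cone of $\Sigma_g$, which by Section~6.1 is the dual cone of a vertex of $\Gamma_+(g)$; that vertex satisfies the whole system $\langle a^j(\sigma), \alpha\rangle = l_g(a^j(\sigma))$ because the skeleton vectors $a^j(\sigma)$ lie in the closure of that same dual cone, hence it must coincide with $\alpha_*$. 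Every vertex of $\Gamma_+(g)$ has nonzero Taylor coefficient, giving $g_\sigma(0) = c_{\alpha_*} \neq 0$.
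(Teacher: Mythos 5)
Your proposal is sound, but it reaches the lemma by a genuinely different route than the paper. The paper's proof never splits $g$ into Taylor and flat parts: it introduces the auxiliary polyhedron $\tilde{\Gamma}_+(g)=\bigcap_{a\in\Sigma_+^{(1)}}H^+(a,l_g(a))\cap\R_+^n$ built from the strictly positive one-dimensional cones of $\Sigma$, observes that any smooth $g$ belongs to $\hat{\mathcal E}[\tilde{\Gamma}_+(g)](U)$, and then gets (i) from the decomposition $g=\sum_{p\in S}x^p\psi_p$ of Proposition~\ref{pro:2.17} and (ii) from Lemma~\ref{lem:7.1} (equation (\ref{eqn:7.4})) applied with $P=\tilde{\Gamma}_+(g)$, identifying $g_{\tilde\gamma}=g_\gamma$ by compactness; (iii) and (iv) then follow from the explicit polynomial formula exactly as you write them. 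You instead take $g=g_T+g_F$ via Borel, prove the divisibility in (i) by iterated Hadamard division, and prove (ii) by jet bookkeeping along the hyperplanes that $\pi(\sigma)$ collapses to the origin. What the paper's route buys is that all the analytic content (existence of the smooth quotient and the restriction identity (\ref{eqn:12.5})) is already packaged in Sections~2 and~7, so the proof of the lemma is a few lines; what your route buys is self-containedness and, in part (iii), an explicit justification of the step the paper states tersely, namely that $g_\sigma(0)=c_{\alpha_*}\neq0$ because the system $\langle a^j(\sigma),\alpha\rangle=l_g(a^j(\sigma))$, $j=1,\dots,n$, has the unique solution given by the vertex of $\Gamma_+(g)$ whose dual cone in $\Sigma_g$ contains $\sigma$ (legitimate here since $\Sigma$ refines the fan associated with $\Gamma_+(g)$), and vertices carry nonzero Taylor coefficients. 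One caveat on your side: the Hadamard divisions and the evaluation of $\tilde g_\sigma$ on $T_I(\R^n)$ require the vanishing of the relevant normal derivatives of $g_T\circ\pi(\sigma)$ and the flatness of $g_F\circ\pi(\sigma)$ on the \emph{entire} hyperplanes $\{y_j=0\}$, $j$ with $a^j(\sigma)\in\N^n$, not merely at the origin; this does hold, precisely because $\pi(\sigma)$ maps those hyperplanes to the origin so that all jets there are computed by formally composing the Taylor series of $g$ at $0$ with the monomial map, but your ``careful accounting'' phrase is where that argument must actually be carried out (likewise the negative exponents in (\ref{eqn:12.6}) for $j\notin I$ with $a^j(\sigma)\notin\N^n$ are only resolved once the explicit polynomial expression is in hand, as in the paper).
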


\begin{remark}\label{rem:12.3}
When $g$ does not belong to $\hat{\mathcal E}(U)$,
the smooth function $g_{\sigma}$ cannot always be defined
by applying Lemma~\ref{lem:7.1}.
When we prove Theorem~12.1, 
we use the function $g_{\sigma}$, 
which is a polynomial defined as 
in (iii) in the above lemma.
\end{remark}

\begin{proof}[Proof of Lemma \ref{lem:12.2}]
(i)\quad 
 Let $\Sigma_+^{(1)}$ be the subset of 
$\Sigma^{(1)}$ defined by 
$
\Sigma_+^{(1)}:=\{a\in\Sigma^{(1)}:a\in\N^n\}.
$
We remark that 
the convenience condition of $f$ implies that
$\Sigma_+^{(1)}$ is not an empty set. 
Let
$$
\tilde{\Gamma}_+(g):=
\bigcap_{a\in\Sigma_+^{(1)}}
H^+(a,l_g(a))\cap\R_+^n.
$$
Then, 
$\tilde{\Gamma}_+(g)$ is a polyhedron
containing $\Gamma_+(g)$.
Note that 
$g$ is convenient if and only if
${\Gamma}_+(g)=\tilde{\Gamma}_+(g)$.
It follows from Definition~\ref{def:2.16} 
that
$g$ belongs to $\hat{\mathcal E}[\tilde{\Gamma}_+(g)](U)$.
From Proposition~\ref{pro:2.17}, 
we can obtain the expression of 
the form (\ref{eqn:2.14})
with $f=g$ and $P=\tilde{\Gamma}_+(g)$.
Furthermore, substituting 
$x=\pi(\sigma)(y)$ in (\ref{eqn:5.1}) into this expression, 
we obtain
$$g(\pi(\sigma)(y))=
\sum_{p\in S}
\left(
\prod_{j=1}^n y_j^{\langle a^j(\sigma),p\rangle }
\right)\psi_p(\pi(\sigma)(y)).
$$
Now, let us set 
$$
\tilde{g}_{\sigma}(y):=
\sum_{p\in S}
\left(
\prod_{j=1}^n 
y_j^{\langle a^j(\sigma),p\rangle -\tilde{l}_g(a^j(\sigma))}
\right)\psi_p(\pi(\sigma)(y)),
$$
then $\tilde{g}_{\sigma}$ is a smooth function defined on 
the set $\pi(\sigma)^{-1}(U)$ and satisfies (\ref{eqn:12.4}).

(ii) \quad 
Let $\tilde{\gamma}$ be a face of 
$\tilde{\Gamma}_+(g)$ defined by 
$
\tilde{\gamma}:=\bigcap_{j\in I}H(a^j(\sigma),l_g(a^j(\sigma)))
\cap \tilde{\Gamma}_+(g)
$.
In terms of symbols in Section~\ref{subsec:6.2}, 
this equation is written as 
$\tilde{\gamma}=\gamma(I,\sigma)$ 
with $P=\tilde{\Gamma}_+(g)$. 
Thus, the equation (\ref{eqn:7.4}) in Lemma~7.1  implies that
$g_{\tilde{\gamma}}(\pi(\sigma)(y))$
equals the right hand side of (\ref{eqn:12.5}).
The compactness of $\gamma$ implies 
$g_{\tilde{\gamma}}=g_{\gamma}$. 
So, we obtain (\ref{eqn:12.5}).

(iii)\quad
From Proposition~\ref{pro:2.17} and the compactness of $\gamma$, 
$g_{\gamma}$ can be written as 
$g_{\gamma}(x)=\sum_{p\in\gamma\cap\Z_+^n}x^p\psi_p(0)$,
where $\psi_p(0)\neq 0$ if $p$ is a vertex of $\gamma$.
Substituting $x=\pi(\sigma)(y)$ with (\ref{eqn:5.1}), 
we have
\begin{equation}\label{eqn:12.8}
g_{\gamma}(\pi(\sigma)(y))=
\sum_{p\in\gamma\cap\Z_+^n}
\left(
\prod_{j=1}^n y_j^{\langle a^j(\sigma),p\rangle }
\right)\psi_p(0).
\end{equation}
Putting (\ref{eqn:12.5}), (\ref{eqn:12.8}) together, we have
\begin{equation}\label{eqn:12.9}
\tilde{g}_{\sigma}(T_I(y))=
\sum_{p\in\gamma\cap\Z_+^n}
\left(
\prod_{j\not\in I} y_j^{\langle a^j(\sigma),p\rangle -\tilde{l}_g(a^j(\sigma))}
\right)\psi_p(0).
\end{equation}
Moreover, putting (\ref{eqn:12.6}),(\ref{eqn:12.9}) together, we have
$$
g_{\sigma}(y)=
\sum_{p\in\gamma\cap\Z_+^n}
\left(
\prod_{j\not\in I} y_j^{\langle a^j(\sigma),p\rangle -l_g(a^j(\sigma))}
\right)\psi_p(0).
$$
It directly follows from the above equation that 
$g_{\sigma}$ is a polynomial of $y_j$ with $j\not\in I$ 
and that $g_{\sigma}(0)=\psi_p(0)(\neq 0)$ where $p$ is some 
vertex of $\Gamma_+(g)$.

(iv)\quad
From (\ref{eqn:12.5}), (\ref{eqn:12.6}),
we obtain (\ref{eqn:12.7}).
\end{proof}

\begin{lemma}\label{lem:12.4}
Let $a$ be in $\Z_+^n$.
If $f$ is convenient and $l_f(a)\neq 0$, then 
$a\in\N^n$.
\end{lemma}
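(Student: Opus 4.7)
The plan is to prove the contrapositive: if $a \in \Z_+^n \setminus \N^n$, then $l_f(a) = 0$. So suppose $a = (a_1,\ldots,a_n) \in \Z_+^n$ has at least one zero component; pick $j \in \{1,\ldots,n\}$ with $a_j = 0$.

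The key point is the convenience of $f$: by definition (Section~2.2), $\Gamma_+(f)$ intersects every coordinate axis, so there exists a positive integer $m_j$ such that $m_j e_j \in \Gamma_+(f)$, where $e_j$ is the $j$-th standard basis vector. Plugging this point into the definition (\ref{eqn:7.1}) of $l_f(a)$, we get
\begin{equation*}
l_f(a) = \min\{\langle a,\alpha\rangle : \alpha \in \Gamma_+(f)\} \leq \langle a, m_j e_j\rangle = a_j m_j = 0.
\end{equation*}
On the other hand, $\Gamma_+(f) \subset \R_+^n$ and $a \in \Z_+^n$, so $\langle a,\alpha\rangle \geq 0$ for every $\alpha \in \Gamma_+(f)$, giving $l_f(a) \geq 0$. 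Combining the two bounds yields $l_f(a) = 0$, contradicting the hypothesis $l_f(a) \neq 0$. Hence $a_j > 0$ for all $j$, i.e.\ $a \in \N^n$.

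There is no real obstacle here; the only ingredient is the definition of convenience and the trivial observation that inner products with nonnegative vectors are minimized on the boundary points lying on the coordinate axes. The statement is purely a geometric consequence of $\Gamma_+(f)$ touching every axis, and this argument is what the proof will look like in full.
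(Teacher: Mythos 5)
Your proof is correct and follows the same route as the paper, which simply notes that $l_f(a)=0$ whenever $f$ is convenient and $a\in\Z_+^n\setminus\N^n$; you have merely spelled out the details (the axis point $m_je_j$ giving $l_f(a)\le 0$ and nonnegativity giving $l_f(a)\ge 0$) that the paper leaves implicit.
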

\begin{proof}
Noticing $l_f(a)=0$ when $f$ is convenient and 
$a\in \Z_+^n\setminus \N^n$, 
we can obtain the lemma. 
\end{proof}
\begin{proof}[Proof of Theorem~11.1$'$]
Instead of (\ref{eqn:11.2}), 
the equation (\ref{eqn:12.4}) is 
substituted into (\ref{eqn:11.3}).
Then an integrals analogous to (\ref{eqn:11.4})
can be obtained.
From these integrals, 
we similarly see the set of candidate poles 
as follows.
\begin{equation}\label{eqn:12.10}
\left\{
-\frac{\tilde{l}_g(a^j(\sigma))+
\langle a^j(\sigma)\rangle +\nu}{l_f(a^j(\sigma))}:
\nu\in\Z_+, j\in B({\sigma})
\right\}\cup(-\N)
\end{equation}
Notice that
$j\in B(\sigma)\Longleftrightarrow 
l_f(a^j(\sigma))\neq 0\Longrightarrow 
a^j(\sigma)\in\N^n$ from Lemma~\ref{lem:12.4}, which 
implies 
$\tilde{l}_g (a^j(\sigma))=l_g(a^j(\sigma))$.
Therefore, 
we see that 
the set (\ref{eqn:12.10}) is contained in the set 
(\ref{eqn:11.1}). 
The exchanging in the assumption 
does not affect the rest of the assertions
in Theorem~\ref{thm:11.1}.
\end{proof}


\begin{proof}[Proof of Proposition~11.9$'$]

We show only the case when $m(f,g)<n$ and the hypothesis (a), 
because the other cases can be easily shown by using the proof of 
Proposition~11.9.

Let us compute the limits $\tilde{C}_{\pm}$ in (\ref{eqn:11.17}). 
From the integrals in the proof of Theorem~11.1$'$, 
which are analogous to (\ref{eqn:11.4}), 
we can see that the limits $\tilde{C}_{\pm}$ takes the values 
analogous to $G_{\pm}(f,g,\varphi)$ in (\ref{eqn:11.20}).
Indeed, these values are obtained by exchanging 
$g_{\sigma}$, $l_g(a^j(\sigma))$ 
with $\tilde{g}_{\sigma}$, $\tilde{l}_g(a^j(\sigma))$
in (\ref{eqn:11.20}), 
respectively. 
Substituting the first equation in (\ref{eqn:11.21}) and 
the equation in (\ref{eqn:12.5}) with $\gamma=\gamma_*(\sigma)$ 
and $I=A(\sigma)$, we obtain the equations (\ref{eqn:11.22}).
We remark that at present the nonvanishing of $G_{+}+G_{-}$ 
cannot been shown, 
because $\tilde{g}_{\sigma}(0)$ may vanish. 

Furthermore, substituting the first equation in (\ref{eqn:11.21})
and the equation in (\ref{eqn:12.5}) with $\gamma=\gamma_*(\sigma)$ 
and $I=A(\sigma)$
into (\ref{eqn:11.22}), 
we obtain (\ref{eqn:11.20}). 
\end{proof}


\begin{proof}[Proof of Theorem~11.11$'$]
In a similar fashion to the proof of Theorem~11.11, 
we can obtain Theorem~11.11$'$ by using Proposition~11.9$'$. 
Note that 
the nonvanishing of $C=C_{+}+C_{-}$ can be similarly seen from 
the fact that $g_{\sigma}(0)\neq 0$.
\end{proof}

\begin{proof}[Proof of Proposition~11.12$'$]
This can be similarly shown by 
using the integrals analogous to 
(\ref{eqn:11.4}), 
which were obtained in the proof of Theorem~11.1$'$.
\end{proof}


\section{Certain symmetry properties}\label{sec:13}

We denote by 
$\beta_{\pm}(f,g)$, $\hat{\beta}(f,g)$ 
the largest poles 
of $Z_{\pm}(s;\varphi)$, $Z(s;\varphi)$ and 
by $\eta_{\pm}(f,g)$, $\hat{\eta}(f,g)$ 
their orders, 
respectively.
In this section,
we give some symmetry properties of
$\b_{\pm}(f,g), \hat{\b}(f,g), \eta_{\pm}(f,g), \hat{\eta}(f,g)$.
\begin{theorem}\label{thm:13.1}
Suppose that $f,g$ satisfy the condition $(E)$
and are nonnegative or nonpositive 
on $U$. 
If the support of $\varphi$ is
contained in a sufficiently small neighborhood 
of the origin, then we have 
\begin{equation}\label{eqn:13.1}
\b_{\pm}(x^{\1}f,g)\b_{\pm}(x^{\1}g,f)\leq 1 
\mbox{ and } 
\hat{\b}(x^{\1}f,g)\hat{\b}(x^{\1}g,f)\leq 1
\end{equation}
Moreover, the following two conditions are equivalent$:$ 
\begin{enumerate}
\item The equality holds in each estimate in 
$(\ref{eqn:13.1});$
\item There exists a positive rational number $d$ such that 
$\Gamma_+(x^{\1} f)=d\cdot\Gamma_+(x^{\1} g)$.
\end{enumerate}
If the condition {\rm (i)} or {\rm (ii)} is satisfied, 
then we have 
$\eta_{\pm}(x^{\1}f,g)=\eta_{\pm}(x^{\1}g,f)
=\hat{\eta}(x^{\1}f,g)=\hat{\eta}(x^{\1}g,f)=n$.
\end{theorem}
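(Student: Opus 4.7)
\emph{Plan.} The goal is to reduce everything to a purely combinatorial statement about the Newton polyhedra of $x^{\1}f$ and $x^{\1}g$. By Theorem~\ref{thm:11.10} applied to both pairs $(x^{\1}f,g)$ and $(x^{\1}g,f)$, I aim to establish
\begin{equation*}
\b_{\pm}(x^{\1}f,g)=\hat{\b}(x^{\1}f,g)=-1/d(x^{\1}f,g),\quad \eta_{\pm}(x^{\1}f,g)=\hat{\eta}(x^{\1}f,g)=m(x^{\1}f,g)
\end{equation*}
(and symmetrically with $f,g$ swapped). Then $(\ref{eqn:13.1})$ becomes equivalent to $d(x^{\1}f,g)\cdot d(x^{\1}g,f)\geq 1$, and both the equality case and the multiplicity claim reduce to a geometric computation on $\Gamma_{+}(x^{\1}f)$ and $\Gamma_{+}(x^{\1}g)$.

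\emph{Hypothesis verification.} First I would check that $x^{\1}f$ satisfies condition $(E)$. Membership in $\hat{\mathcal E}(U)$ follows from Proposition~\ref{pro:2.17}: writing $f(x)=\sum_{p\in S}x^{p}\psi_{p}(x)$ gives $x^{\1}f(x)=\sum_{p\in S}x^{p+\1}\psi_{p}(x)$, so $x^{\1}f\in\hat{\mathcal E}[\Gamma_{+}(f)+\1](U)=\hat{\mathcal E}[\Gamma_{+}(x^{\1}f)](U)$. Nondegeneracy transfers by noting that the $(\gamma+\1)$-part of $x^{\1}f$ equals $x^{\1}f_{\gamma}$: if $\nabla(x^{\1}f_{\gamma})(x_{*})=0$ at some $x_{*}\in(\R\setminus\{0\})^{n}$, then Euler's identity $\sum a_{j}x_{j}\d_{j}f_{\gamma}=lf_{\gamma}$ combined with componentwise comparison yields $(l+\langle a\rangle)f_{\gamma}(x_{*})=0$, and since $l+\langle a\rangle\geq 1$ one deduces $f_{\gamma}(x_{*})=0$ and then $\nabla f_{\gamma}(x_{*})=0$, contradicting nondegeneracy of $f$. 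The nonnegativity (or nonpositivity) of $f$ and $g$ passes to their principal face parts via the quasihomogeneous limit $(\ref{eqn:2.12})$, providing the sign hypothesis (iv) of Theorem~\ref{thm:11.10}. The orthant decomposition reduces everything to integrals of the form $\int_{\R_{+}^{n}}(y^{\1}f_{\theta}(y))^{s}g_{\theta}(y)\varphi_{\theta}(y)\,dy$ in which $y^{\1}f_{\theta}\geq 0$; Proposition~\ref{pro:11.9} then applies, with the nonvanishing of $f_{\sigma}$ at the origin from Lemma~\ref{lem:7.1} providing the required hypothesis once the support of $\varphi$ is taken small enough.

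\emph{Geometric inequality and equality case.} Set $P:=\Gamma_{+}(x^{\1}f)=\Gamma_{+}(f)+\1$ and $Q:=\Gamma_{+}(x^{\1}g)=\Gamma_{+}(g)+\1$; these are polyhedra in $\R_{+}^{n}$ whose vertices all have coordinates in $\N$. Remark~\ref{rem:2.5}, together with $\Gamma_{+}(g)+\1=Q$, yields
\begin{equation*}
d_{1}:=d(x^{\1}f,g)=\min\{d>0:dQ\subset P\},\quad d_{2}:=d(x^{\1}g,f)=\min\{d>0:dP\subset Q\}.
\end{equation*}
From $d_{1}Q\subset P$ and $d_{2}P\subset Q$ one obtains $(d_{1}d_{2})P\subset P$. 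The key elementary fact is that any $c>0$ with $cP\subset P$ must satisfy $c\geq 1$: picking a vertex $\alpha$ of $P$ and the supporting pair $(a,\langle a,\alpha\rangle)$ with $a\in\Z_{+}^{n}\setminus\{0\}$, $c\alpha\in P\subset H^{+}(a,\langle a,\alpha\rangle)$ gives $c\langle a,\alpha\rangle\geq \langle a,\alpha\rangle>0$, hence $c\geq 1$. Therefore $d_{1}d_{2}\geq 1$, which is $(\ref{eqn:13.1})$. Equality $d_{1}d_{2}=1$ combined with $d_{2}P\subset Q$ forces $P\subset(1/d_{2})Q=d_{1}Q\subset P$, so $P=d_{1}Q$; conversely, $P=dQ$ implies $d_{1}=d$ and $d_{2}=1/d$ by the same minimality argument, and $d\in\Q_{>0}$ since vertices of $P,Q$ lie in $\N^{n}$.

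\emph{Multiplicity and main obstacle.} When $P=d_{1}Q$, the map $\Phi$ of $(\ref{eqn:2.6})$ for the pair $(x^{\1}f,g)$ satisfies $\Phi(\Gamma_{+}(g))=d_{1}(\Gamma_{+}(g)+\1)=d_{1}Q=P$, so $\Gamma_{0}(x^{\1}f)=\d P\cap P=\d P$ is the entire boundary; in particular every vertex of $P$ lies in $\Gamma_{0}(x^{\1}f)$, and Remark~\ref{rem:2.7} gives $m(x^{\1}f,g)=n$. The sign hypotheses in (iv) and (v) of Theorem~\ref{thm:11.10} then ensure the leading coefficient does not vanish, yielding $\eta_{\pm}=\hat{\eta}=n$; the same reasoning with $f,g$ interchanged completes the proof. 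The main technical obstacle is the hypothesis (iii) of Theorem~\ref{thm:11.10} for the pair $(x^{\1}f,g)$: since $x^{\1}f$ changes sign across coordinate hyperplanes, (iii)(b) of Theorem~\ref{thm:11.10} fails outright, forcing an orthant decomposition and an appeal to Proposition~\ref{pro:11.9} on each $\R_{+}^{n}$ piece, together with a careful verification that the nonvanishing of $f_{\sigma}$ from the toric resolution propagates to the sets $\pi(\sigma)^{-1}(U)$ once $U$ is shrunk.
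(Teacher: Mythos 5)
Your proposal follows essentially the same route as the paper: apply Theorem~\ref{thm:11.10} to both pairs $(x^{\1}f,g)$ and $(x^{\1}g,f)$ to get $\b_{\pm}=\hat{\b}=-1/d$, combine this with the geometric fact $d(x^{\1}f,g)\,d(x^{\1}g,f)\geq 1$ with equality iff $\Gamma_+(x^{\1}f)=d\cdot\Gamma_+(x^{\1}g)$ (the paper's Lemma~\ref{lem:13.3}, cited from \cite{ckn13}, which you reprove directly via the scaling argument $cP\subset P\Rightarrow c\geq1$), and in the equality case note that $\Gamma_0(x^{\1}f)=\partial\Gamma_+(x^{\1}f)$ contains a vertex so that $m=n$ gives the multiplicity statement. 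Your only deviations are self-contained proofs of the two quoted lemmas (the Euler-identity argument replacing Lemma~\ref{lem:13.2}/Lemma~7.8) and an explicit verification, via the orthant decomposition and Proposition~\ref{pro:11.9}, of hypothesis (iii) of Theorem~\ref{thm:11.10} for the sign-changing phase $x^{\1}f$ — a point the paper's proof passes over silently — and both are sound.
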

\begin{proof}
We only consider the case of $Z_{\pm}(s;\varphi)$.

Admitting Lemmas~\ref{lem:13.2} and~\ref{lem:13.3} below, 
we can prove the theorem as follows. 
From the assumptions in the theorem and Lemma~\ref{lem:13.2},
Theorem~\ref{thm:11.10} implies the equations 
$\beta(x^{\1}f,g)=-1/d(x^{\1}f,g)$ 
and $\beta(x^{\1}g,f)=-1/d(x^{\1}g,f)$. 
By Lemma~\ref{lem:13.3}, 
these equations imply the inequalities
(\ref{eqn:13.1}) and the equivalence of (i) and (ii).
The condition (ii) in the theorem implies 
that $d\cdot(\Gamma_+(g)+\1)=\Gamma_+(x^{\1}f)$ 
with $d=d(x^{\1}f,g)$ and, 
moreover, that
$\Gamma_0(x^{\1}f)=\partial\Gamma_+(x^{\1}f)$
for the pair $(x^{\1}f,g)$.  
Thus, each principal face of $\Gamma_+(g)$ 
is a vertex  
(i.e., zero-dimensional face) of 
$\Gamma_+(g)$. 
(See Remark~2.7.)
This means $\eta_{\pm}(x^{\1}f,g)
=m(x^{\1}f,g)=n$. 
Similarly, we see $\eta_{\pm}(x^{\1}g,f)=n$.
\end{proof}

\begin{lemma}\label{lem:13.2}
Let $f$ be a smooth nonflat function
defined on a neighborhood of the origin in $\R^n$. 
Then $f$ is nondegenerate over $\R$
with respect to its Newton polyhedron 
if and only if so is $x^{\1} f$. 
\end{lemma}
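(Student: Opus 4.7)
The key observation is that the Newton polyhedron of $x^{\1} f$ is the translate $\Gamma_+(x^{\1} f) = \Gamma_+(f) + \1$, so the compact faces of $\Gamma_+(x^{\1} f)$ are exactly the sets $\gamma + \1$ for compact faces $\gamma$ of $\Gamma_+(f)$, and a direct expansion of the Taylor series shows $(x^{\1} f)_{\gamma + \1}(x) = x^{\1} f_{\gamma}(x)$. My plan is to handle both directions of the equivalence through a single identity relating $\nabla(x^{\1} f_{\gamma})$ to $f_{\gamma}$ and $\nabla f_{\gamma}$, combined with the Euler identity for the quasi-homogeneous function $f_{\gamma}$.

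A direct calculation yields
$$\frac{\partial}{\partial x_j}\bigl(x^{\1} f_{\gamma}(x)\bigr) = \frac{x^{\1}}{x_j}\left(f_{\gamma}(x) + x_j \frac{\partial f_{\gamma}}{\partial x_j}(x)\right)$$
on $(\R\setminus\{0\})^n$. Since $x^{\1}/x_j$ is nonzero there, the vanishing of $\nabla(x^{\1} f_{\gamma})$ at a point $x_0 \in (\R\setminus\{0\})^n$ is equivalent to the system $f_{\gamma}(x_0) + x_{0,j}\,\frac{\partial f_{\gamma}}{\partial x_j}(x_0) = 0$ for every $j$. Let $(a,l) \in \N^n \times \N$ be a valid pair defining $\gamma$; all $a_j$ are positive since $\gamma$ is compact, and $l > 0$ because $f(0)=0$ (condition (A) of Section~3) forces $\{0\} \notin \Gamma_+(f)$, so no compact face of $\Gamma_+(f)$ contains the origin. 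Multiplying the $j$-th equation by $a_j$, summing, and invoking Euler's identity $\sum_j a_j x_j \frac{\partial f_{\gamma}}{\partial x_j} = l f_{\gamma}$ (obtained by differentiating the quasi-homogeneity relation in Remark~2.14~(ii) in $t$ at $t=1$) produces $(\langle a \rangle + l)\, f_{\gamma}(x_0) = 0$. Hence $f_{\gamma}(x_0) = 0$, and feeding this back into the system gives $\frac{\partial f_{\gamma}}{\partial x_j}(x_0) = 0$ for all $j$, i.e., $\nabla f_{\gamma}(x_0) = 0$.

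With this key implication in hand, both directions are immediate. For the forward implication, if $f$ is nondegenerate and some compact face $\gamma' = \gamma + \1$ of $\Gamma_+(x^{\1} f)$ admits $x_0 \in (\R\setminus\{0\})^n$ with $\nabla((x^{\1} f)_{\gamma'})(x_0) = 0$, then the calculation above produces $\nabla f_{\gamma}(x_0) = 0$, contradicting the nondegeneracy of $f$. For the converse, if $x^{\1}f$ is nondegenerate and $\nabla f_{\gamma}(x_0) = 0$ at some $x_0 \in (\R\setminus\{0\})^n$, then Euler's identity applied directly to $f_{\gamma}$ gives $l f_{\gamma}(x_0) = 0$, hence $f_{\gamma}(x_0) = 0$; substituting into the displayed formula yields $\nabla(x^{\1} f_{\gamma})(x_0) = 0$, contradicting the nondegeneracy of $x^{\1} f$.

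The proof itself is short and essentially algebraic; the only subtlety I foresee is verifying that $l > 0$ holds for every compact face $\gamma$, which rests on the hypothesis $f(0) = 0$ inherited from condition (A). Without this hypothesis the lemma would break down in a trivial way, since for $f(0) \neq 0$ the vertex $\{0\}$ is a compact face of $\Gamma_+(f) = \R_+^n$ on which $f_{\{0\}} = f(0)$ is constant, whereas the corresponding compact face $\{\1\}$ of $\Gamma_+(x^{\1}f)$ has $(x^{\1} f)_{\{\1\}}(x) = f(0)\, x^{\1}$, whose gradient never vanishes on $(\R\setminus\{0\})^n$.
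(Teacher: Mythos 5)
Your proof is correct. The paper disposes of this lemma in one line, citing Lemma~7.8 (``if a product is nondegenerate, so is each factor''), applied to $x^{\1}f$; read literally, that citation only yields the implication ``$x^{\1}f$ nondegenerate $\Rightarrow$ $f$ nondegenerate'', and the converse is exactly what Remark~7.9 warns can fail for general products. Your argument supplies the missing half by exploiting the special monomial factor: the identity $\partial_j(x^{\1}f_{\gamma})=(x^{\1}/x_j)\bigl(f_{\gamma}+x_j\partial_j f_{\gamma}\bigr)$ together with the Euler relation $\sum_j a_j x_j\partial_j f_{\gamma}=l f_{\gamma}$ shows that, on $(\R\setminus\{0\})^n$, the vanishing of $\nabla(x^{\1}f_{\gamma})$ at a point is equivalent to the simultaneous vanishing of $f_{\gamma}$ and $\nabla f_{\gamma}$ there (using only $\langle a\rangle+l>0$), and the vanishing of $\nabla f_{\gamma}$ is equivalent to the same simultaneous vanishing provided $l>0$. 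This is the same Euler-identity mechanism that drives the paper's proof of Lemma~7.8 (via Lemma~7.5), so the method is not new, but your write-up is more complete than the paper's: it proves both directions explicitly, and it isolates the one genuine hypothesis, $f(0)=0$ (condition (A), standing throughout Sections~8--13), which guarantees $l\geq 1$ for every compact face and is needed only for the direction ``$x^{\1}f$ nondegenerate $\Rightarrow$ $f$ nondegenerate''; your constant-vertex example correctly shows that this direction fails when $f(0)\neq 0$, while the other direction --- the one actually invoked in the proof of Theorem~13.1 --- needs only $\langle a\rangle+l>0$ and hence no such hypothesis. The preliminary facts you use tacitly, namely $\Gamma_+(x^{\1}f)=\Gamma_+(f)+\1$, the bijection $\gamma\mapsto\gamma+\1$ between compact faces, and $(x^{\1}f)_{\gamma+\1}=x^{\1}f_{\gamma}$, follow at once from Remark~2.1 and Remark~2.14~(iii), so there is no gap there.
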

\begin{proof}
This is a special case of Lemma~7.8.
\end{proof}
\begin{lemma}\label{lem:13.3}
Let $f$,$g$ be smooth functions defined 
near the origin, then 
we have 
$$d(x^{\1}f,g) d(x^{\1}g,f)\geq 1.$$
The equality holds in the above, if and only if 
there exists a positive rational number $d$ such that
$\Gamma_+(x^{\1} f)=d\cdot\Gamma_+(x^{\1} g)$.
\end{lemma}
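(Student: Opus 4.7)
The plan is to reduce the problem to a single set-theoretic inclusion and then exploit a scaling argument. Write $A := \Gamma_+(x^{\1} f) = \Gamma_+(f) + \1$ and $B := \Gamma_+(x^{\1} g) = \Gamma_+(g) + \1$, so $\Gamma_+(g) + \1 = B$ agrees with the shift appearing in Definition~\ref{def:2.4}. The alternative description of the Newton distance in Remark~\ref{rem:2.5} then reads
\begin{equation*}
d_1 := d(x^{\1}f, g) = \min\{d > 0 : d \cdot B \subset A\},
\qquad
d_2 := d(x^{\1}g, f) = \min\{d > 0 : d \cdot A \subset B\}.
\end{equation*}
A short check via Lemma~\ref{lem:2.2} shows each set on the right is a nonempty closed ray in $(0,\infty)$, so the minima are attained and both $d_1, d_2$ are positive rationals. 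From $d_1 \cdot B \subset A$ and $d_2 \cdot A \subset B$ I will chain the inclusions to obtain
\begin{equation*}
(d_1 d_2) \cdot A = d_1 \cdot (d_2 \cdot A) \subset d_1 \cdot B \subset A.
\end{equation*}

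The heart of the proof is then to verify that $c \cdot A \subset A$ for some $c > 0$ forces $c \geq 1$. Here the multiplier $x^{\1}$ is decisive: every vertex of $A$ has the form $v + \1$ with $v$ a vertex of $\Gamma_+(f)$, so all vertices of $A$ lie in $\R_{>0}^n$. Picking any $a \in \R_{>0}^n$, the linear functional $\alpha \mapsto \langle a, \alpha \rangle$ attains a strictly positive minimum $l_A(a)$ on $A$, and the obvious scaling identity $l_{cA}(a) = c\, l_A(a)$ combined with $cA \subset A$ gives $c\, l_A(a) \geq l_A(a)$, hence $c \geq 1$. Applying this to $c = d_1 d_2$ yields the desired inequality $d(x^{\1}f,g)\, d(x^{\1}g,f) \geq 1$.

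For the equality case I argue in both directions. If $d_1 d_2 = 1$, then rewriting $d_2 \cdot A \subset B$ as $A \subset d_2^{-1} \cdot B = d_1 \cdot B$ and combining with $d_1 \cdot B \subset A$ forces $A = d_1 \cdot B$, so condition (ii) holds with the positive rational $d = d_1$. Conversely, if $A = d \cdot B$ for some positive rational $d$, then $d \cdot B \subset A$ and $d^{-1} \cdot A \subset B$ give directly $d_1 \leq d$ and $d_2 \leq d^{-1}$, so $d_1 d_2 \leq 1$; combined with the inequality already proved this forces $d_1 d_2 = 1$. I expect the main conceptual point — rather than a technical obstacle — to be the scaling step $cA \subset A \Rightarrow c \geq 1$: without the $x^{\1}$ multiplier, $A$ could have a vertex on a coordinate hyperplane, giving $l_A(a) = 0$ for some $a \in \R_{>0}^n$ and causing the argument to collapse, which explains the presence of $x^{\1}$ in the statement.
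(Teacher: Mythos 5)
Your argument is correct, and it is worth noting that the paper itself does not prove Lemma~\ref{lem:13.3} at all: it simply cites Lemma~5.19 of \cite{ckn13}. Your proposal therefore supplies a complete, self-contained argument where the text defers to a reference. The reduction to $A=\Gamma_+(f)+\1$, $B=\Gamma_+(g)+\1$ and the min-characterization of Remark~\ref{rem:2.5}, the chain $(d_1d_2)\cdot A\subset d_1\cdot B\subset A$, and the support-function scaling step $c\cdot A\subset A\Rightarrow c\,l_A(a)\geq l_A(a)\Rightarrow c\geq 1$ (valid because $A\subset \1+\R_+^n$ forces $l_A(a)>0$ for $a\in\R_{>0}^n$, which is exactly the role of the factor $x^{\1}$) are all sound, as is the two-way treatment of the equality case. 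Two small points you should make explicit rather than assert: (1) nonemptiness of $\{d>0: d\cdot B\subset A\}$ follows because $B\subset\1+\R_+^n$, so $d\cdot B$ lies in $\{x: x_j\geq d\ \forall j\}$, which is contained in $A$ once $d\geq\max_j l_j/\langle a^j\rangle$ in a representation $A=\bigcap_j H^+(a^j,l_j)$ from Lemma~\ref{lem:2.2}; and (2) the rationality of $d_1$ (needed to produce the rational $d$ in condition (ii)) follows from the same representation, since $d_1=\max\{l_j/l_B(a^j): a^j\neq 0\}$ with $l_B(a^j)=\min_{\beta\in B}\langle a^j,\beta\rangle$ rational (attained at a rational vertex of $B$). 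With these two lines added, the proof is complete and arguably more elementary and transparent than an appeal to the external lemma.
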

\begin{proof}
See Lemma 5.19 in \cite{ckn13}.
\end{proof}

\section{Proofs of the theorems in Section~4}\label{sec:14}


\subsection{Relationship between $I(t;\varphi)$ and 
$Z_{\pm}(s;\varphi)$}\label{subsec:14.1}

It is known 
(see \cite{igu78}, \cite{agv88}, etc.) 
that 
the study of the asymptotic behavior of the 
oscillatory integral $I(t;\varphi)$ in (\ref{eqn:1.1})
can be reduced to an investigation of the poles
of the functions $Z_{\pm}(s;\varphi)$ in (\ref{eqn:8.1}). 
Let us explain an outline of this reduction. 
Let $f$ and $\varphi$ satisfy 
the conditions (A) and (C) in Section~\ref{sec:3}
respectively
and let $g$ be a smooth function on an
open neighborhood of the origin in $\R^n$
($g$ is possibly flat).
Suppose that the support of $\varphi$ is sufficiently small. 

Define the {\it weighted} Gelfand-Leray function: 
$K:\R \to \R$ as
\begin{equation}\label{14.1}
K(c)=\int_{W_c} g(x) \varphi(x) \omega, 
\end{equation}
where $W_c=\{x\in\R^n: f(x)=c\}$ and $\omega$ is 
the surface element on $W_c$ which is determined by 
$
df\wedge \omega=dx_1\wedge\cdots\wedge dx_n.
$
$I(t;\varphi)$ and $Z_{\pm}(s;\varphi)$ can be expressed by 
using $K(c)$: 
Changing the integral variables 
in (\ref{eqn:1.1}),(\ref{eqn:8.1}), we have 
\begin{equation}\label{eqn:14.2}
\begin{split}
&I(t;\varphi)=\int_{-\infty}^{\infty}
e^{it c}K(c)dc 
=\int_{0}^{\infty}
e^{it c}K(c)dc+
\int_{0}^{\infty}
e^{-it c}K(-c)dc,
\end{split}
\end{equation}
\begin{equation}\label{eqn:14.3}
Z_{\pm}(s;\varphi)
=\int_0^{\infty} c^s K(\pm c)dc,
\end{equation}
respectively. 
Applying the inverse formula of the Mellin transform
to (\ref{eqn:14.3}),  
we have 
\begin{equation}\label{eqn:14.4}
K(\pm c)
=\frac{1}{2\pi i}
\int_{r-i\infty}^{r+i\infty} 
Z_{\pm}(s;\varphi)c^{-s-1}ds,
\end{equation}
where $r>0$ and the integral contour follows
the line Re$(s)=r$ upwards. 
Recall that 
$Z_{+}(s;\varphi)$ and $Z_{-}(s;\varphi)$ are meromorphic functions and 
their poles exist on the negative part of the real axis. 
By deforming the integral contour as $r$ tends 
to $-\infty$ in (\ref{eqn:14.4}), 
the residue formula gives the
asymptotic expansions of $K(c)$ 
as $c\to\pm 0$.
Substituting these expansions of $K(c)$ into 
(\ref{eqn:14.2}), 
we can get an asymptotic expansion of $I(t;\varphi)$ 
as $t\to+\infty$.

Through the above calculation, 
we see more precise relationship for the coefficients. 
If $Z_{+}(s;\varphi)$ and $Z_{-}(s;\varphi)$ 
have the Laurent expansions at $s=-\lambda$:
\begin{equation*}
Z_{\pm}(s;\varphi)=\frac{B_{\pm}}{(s+\lambda)^{\rho}}+
O\left(\frac{1}{(s+\lambda)^{\rho-1}}\right),
\end{equation*}
respectively, 
then the corresponding part in the asymptotic
expansion of $I(t;\varphi)$ has the form
$$
Bt^{-\lambda}(\log t)^{\rho-1}+
O(t^{-\lambda}(\log t)^{\rho-2}).
$$ 
Here a simple computation gives 
the following relationship:
\begin{equation}\label{eqn:14.5}
B=\frac{\Gamma(\lambda)}{(\rho-1)!}
\left[
e^{i\pi \lambda/2}B_+ +e^{-i\pi \lambda/2}B_-
\right],
\end{equation}
where $\Gamma$ is the Gamma function. 

\begin{remark}\label{rem:14.1}
If $\lambda$ is not an odd integer, then
$${\rm Re}(B)=
\frac{2\Gamma(\lambda)\cos(\pi\lambda/2)}
{(\rho-1)!}(B_+ +B_-).$$
In order to decide the vanishing of the coefficient, 
the above equation is helpful. 
\end{remark}


\subsection{The first coefficient in the asymptotics of $I(t;\varphi)$}
\label{subsec:14.2}

From the relationship between $I(t;\varphi)$
and $Z_{\pm}(s;\varphi)$ in the previous section and 
the equation (\ref{eqn:14.5}),
we give explicit formulae for the coefficient of 
the leading term of the asymptotic expansion (\ref{eqn:1.2})
of $I(t;\varphi)$ at infinity   
as follows. 
\begin{theorem}\label{thm:14.2}
If $f$ and $g$ satisfy the conditions $(i),(ii),(iv)$ 
in Theorem~\ref{thm:4.4} and 
the support of $\varphi$ is contained in a
sufficiently small neighborhood of the origin, 
then we have
\begin{equation*}
\begin{split}
&\lim_{t\to\infty}
t^{1/d(f,g)}(\log t)^{-m(f,g)+1}\cdot I(t;\varphi)\\
&\quad\quad=
\frac{\Gamma(1/d(f,g))}{(m(f,g)-1)!}
[
e^{i\pi/(2d(f,g))}C_+ +
e^{-i\pi/(2d(f,g))}C_-
],
\end{split}
\label{eqn:}
\end{equation*}
where $C_{\pm}$ are as in $(\ref{eqn:11.26})$ 
in Theorem~\ref{thm:11.10}.
\end{theorem}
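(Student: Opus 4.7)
The plan is to combine the explicit leading Laurent coefficients $C_\pm$ of $Z_\pm(s;\varphi)$ at $s = -1/d(f,g)$, as computed by Theorem~\ref{thm:11.10} when $g \in \hat{\mathcal E}(U)$ or by its convenient-$f$ variant Theorem~\ref{thm:12.1} when $f$ is convenient, with the Mellin-transform correspondence recalled in Section~\ref{subsec:14.1}. Under hypotheses (i), (ii), (iv) of Theorem~\ref{thm:4.4}, one of these two results applies and delivers
$$
Z_\pm(s;\varphi) = \frac{C_\pm}{(s+1/d(f,g))^{m(f,g)}} + O\!\left(\frac{1}{(s+1/d(f,g))^{m(f,g)-1}}\right)
$$
near $s = -1/d(f,g)$. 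Hypothesis (iv) is additionally used to prevent an $(s+1/d(f,g))^{-m(f,g)-1}$ term, as allowed by Theorem~\ref{thm:11.1}(c): (iv)(a) forces $1/d(f,g) < 1$ and hence $1/d(f,g) \notin \N$; (iv)(b) makes one of $Z_\pm$ identically vanish, reducing the analysis; (iv)(c) explicitly excludes $1/d(f,g)$ being an odd integer.

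Next I would insert this Laurent expansion into the inverse Mellin representation (14.4) of $K(\pm c)$ and deform the integration contour leftward past the pole at $-1/d(f,g)$. The residue there produces the dominant asymptotic term of $K(\pm c)$ as $c \to 0^+$; substituting into (14.2) and invoking the relation (14.5) with $\lambda = 1/d(f,g)$, $\rho = m(f,g)$ and $B_\pm = C_\pm$ reproduces exactly the expression stated in the theorem. All other candidate non-integer poles of $Z_\pm$ in (11.1) lie strictly to the left of $-1/d(f,g)$ by Theorem~\ref{thm:11.1}(b), so they give only $o(t^{-1/d(f,g)}(\log t)^{m(f,g)-1})$.

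The hard part will be controlling the integer poles at $-\lambda \in -\N$, which can lie to the right of $-1/d(f,g)$ whenever $d(f,g) \leq 1$. Here Proposition~\ref{pro:11.11} (or its convenient-$f$ analogue from Theorem~\ref{thm:12.1}) gives the crucial symmetry $a_\lambda^+ = (-1)^{\lambda-1}a_\lambda^-$ on the leading Laurent coefficients, so the bracket in (14.5) collapses to $a_\lambda^+\bigl[e^{i\pi\lambda/2} + (-1)^{\lambda-1}e^{-i\pi\lambda/2}\bigr]$, which is $2a_\lambda^+\cos(\pi\lambda/2)=0$ for odd $\lambda$ and $2i a_\lambda^+\sin(\pi\lambda/2)=0$ for even $\lambda$; hence integer poles contribute nothing at their top Laurent order. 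The most delicate point is the analogous vanishing at sub-leading Laurent orders of each integer pole, which I would handle either by iterating the symmetry argument Laurent-order by Laurent-order or by a uniform Cauchy-type estimate on the remainder produced by the contour deformation of (14.4). Once this cancellation is in hand, dividing through by $t^{-1/d(f,g)}(\log t)^{m(f,g)-1}$ and letting $t \to \infty$ extracts precisely the limit claimed in the theorem.
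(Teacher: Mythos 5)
Your proposal follows essentially the same route as the paper's: Theorem~11.10 (or its convenient-$f$ variant supplied by Theorem~12.1) gives the Laurent data $C_{\pm}$ of $Z_{\pm}(s;\varphi)$ at $s=-1/d(f,g)$, the Mellin correspondence of Section~14.1 together with (14.5) converts that data into the stated limit, and Proposition~11.11 with the symmetry $a_{\lambda}^{+}=(-1)^{\lambda-1}a_{\lambda}^{-}$ cancels the contributions of the integer poles lying to the right of $-1/d(f,g)$. The only comment is that the ``delicate'' sub-leading cancellation you anticipate is vacuous: for $\lambda>-1/d(f,g)$ one has $A_{\lambda}(\sigma)=\emptyset$ for every $\sigma$, hence $\rho_{\lambda}=0$ and those poles are simple, so the top-order symmetry of Proposition~11.11 already annihilates their entire contribution (and, likewise, the exclusion of an $(s+1/d(f,g))^{-m(f,g)-1}$ term is exactly what Theorem~11.10's hypothesis (iii) delivers, rather than something to be argued separately from (iv)(b)).
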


\begin{remark}
In the unweighted case, 
analogous results have been obtained in 
\cite{sch91},\cite{dns05},\cite{gre09}.
\end{remark}

\subsection{Proofs of 
Theorems~\ref{thm:4.1}, \ref{thm:4.4}~ and \ref{thm:4.7}}
\label{subsec:14.3}

Applying the argument in Section~14.1 to the results 
relating to $Z_{\pm}(s;\varphi)$ 
in Sections~\ref{sec:11} and \ref{sec:12}, 
we obtain the theorems in Section~\ref{sec:4}.

\begin{proof}[Proof of Theorem~\ref{thm:4.1}.]
This theorem follows from 
Theorems~\ref{thm:11.1} and \ref{thm:12.1}.
Notice that Proposition~\ref{pro:11.11} 
and the relationship (\ref{eqn:14.5})
induce the cancellation of the coefficients of the term 
whose decay rates are larger than 
$t^{-1/d(f,g)}(\log t)^{m(f,g)-1}$.
\end{proof}
\begin{proof}
[Proof of the estimate (\ref{eqn:4.2}) in Remark~\ref{rem:4.3}]
This estimate follows from Theorem~10.1 by using the
relationship in Section~14.1.
One must consider the cancellation similar to that 
in the proof of Theorem~$\ref{thm:4.1}$.
\end{proof}

\begin{proof}[Proof of Theorem~\ref{thm:4.4}.]
This theorem follows from Theorem~\ref{thm:14.2}  
by considering the assertions 
in Theorems~\ref{thm:11.10} and \ref{thm:12.1}.
Note that the necessity of the condition in (iii-c):
``$-1/d(f,g)$ is not an odd integer'' follows
from Remark~\ref{rem:14.1}. 
\end{proof}
\begin{proof}[Proof of Theorem~\ref{thm:4.7}.]
This theorem follows from Theorem~\ref{thm:13.1}. 
\end{proof}


\subsection{Generalization of Varchenko's results to the Puiseux series case}
By using the results about local zeta type functions with respect to
the class $C_{1/p}^{\infty}(U_+)$ introduced in Section~10.4, 
we can give some kind of generalization of the results about
unweighted oscillatory integrals 
due to Varchenko~\cite{var76} and Kamimoto and Nose~\cite{kn13}.

Let $U$ be an open neighborhood of the origin in $\R^n$
and let $p:=(p_1,\ldots,p_n)\in\N^n$. 
The definitions of the  function spaces $C_{1/p}^{\infty}(U_+)$
and $\hat{\mathcal E}_{1/p}(U_+)$ were given in Section~10.4. 
Recall that every element of $C_{1/p}^{\infty}(U_+)$
or $\hat{\mathcal E}_{1/p}(U_+)$ can be
considered to admit the following fractional power series 
at the origin:
\begin{equation*}
f(x)\sim \sum_{\alpha\in\Z_+^n}c_{\alpha}x^{\alpha/p}:=
\sum_{\alpha\in\Z_+^n}c_{\alpha}x_1^{\alpha_1/p_1}\cdots x_n^{\alpha_n/p_n}.
\end{equation*}
For $f\in C_{1/p}^{\infty}(U_+)$, we define
\begin{equation}\label{eqn:14.7}
\tilde{I}(t;\varphi):=
\int_{\R_+^n}e^{it f(x)}\varphi(x)dx,
\end{equation}
where $t>0$ and $\varphi:U\to\R$ satisfies the condition (C)
in Section~3.

The assertions of the following theorem is 
almost the same as those of Theorems~3.7 and 3.1 in appearance.

\begin{theorem}\label{thm:14.4}
Suppose that $f$ belongs to the class 
$\hat{\mathcal E}_{1/p}(U_+)$ and is nondenegerate
over $\R$ with respect to 
the Newton polyhedron $\Gamma_+(f)$
(see Defintion~10.10 (v)).
\begin{enumerate}
\item
If the support of $\varphi$ is contained in a 
sufficiently small neighborhood of the origin,
then $\tilde{I}(t;\varphi)$ admits an asymptotic expansion
of the form
   \begin{equation*}
   \tilde{I}(t;\varphi)\sim 
   \sum_{\a}\sum_{k=1}^n
   C_{\a k}(\varphi) t^{\a} (\log t)^{k-1} \quad 
   \mbox{as $t \to +\infty$},
   \end{equation*}
where
the progression $\{\a\}$
belongs to finitely many arithmetic progressions,
which are obtained by using the theory of 
toric varieties based on the geometry of
the Newton polyhedron $\Gamma_+(F_f)$,
where $F_f$ is as in Definition~10.10 (i).

Here the {\it oscillation index}  of $(f,1)$
and  the {\it multiplicity}  of its index can
be samely defined as in Definition~1.1, 
which are denoted by $\beta(f,1)$, $\eta(f,1)$, 
respectively;
\item
 $\beta(f,1)\leq -1/d(f);$
\item
 If at least one of the following conditions 
is satisfied:
\begin{enumerate}
\item $d(f)>1;$
\item $f$ is nonnegative or nonpositive on $U;$
\item 
$1/d(f)$ is not an odd integer and $f_{\tau_*}$ 
does not vanish on $U\cap(\R\setminus\{0\})^n$,
\end{enumerate}
then 
$\beta(f,1)=-1/d(f)$ and $\eta(f,1)=m(f)$.
\end{enumerate}
Here the definitions of $d(f)$,$m(f)$, $\tau_*$,
$f_{\tau_*}$ are given 
in Defintion~10.10 (iv).
\end{theorem}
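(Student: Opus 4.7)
The plan is to mirror the argument already carried out for local zeta type functions in Section~10.4 and then translate through the Mellin transform / weighted Gelfand-Leray machinery of Section~\ref{subsec:14.1}. The key device is the substitution $x_j = y_j^{p_j}$, which converts the one-sided oscillatory integral
$$\tilde{I}(t;\varphi) = \int_{\R_+^n} e^{it f(x)}\varphi(x)\,dx = \Bigl(\prod_{j=1}^n p_j\Bigr)\int_{\R_+^n} e^{it F_f(y)}\,\varphi(\Phi_{1/p}^{-1}(y))\, y^{p-\1}\,dy$$
into a weighted oscillatory integral over $\R_+^n$ with smooth phase $F_f\in\hat{\mathcal E}(V)$ and monomial weight $y^{p-\1}$. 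By Lemma~10.11, $F_f$ inherits the nondegeneracy condition with respect to $\Gamma_+(F_f)$ and $d(F_f, x^{p-\1})=d(f)$, so the problem is reduced to studying the right-hand integral by the monomial-weight theory of Section~\ref{sec:10}.

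Next I would make the Mellin-Gelfand-Leray link precise by introducing the one-sided weighted Gelfand-Leray density $\tilde K(c)=\int_{\tilde W_c} y^{p-\1}\varphi(\Phi_{1/p}^{-1}(y))\,\omega$ with $\tilde W_c=\{y\in\R_+^n:F_f(y)=c\}$, so that
$$\tilde I(t;\varphi)=\int_0^\infty e^{itc}\tilde K(c)\,dc+\int_0^\infty e^{-itc}\tilde K(-c)\,dc,\qquad \tilde Z_\pm(s;\varphi)=\int_0^\infty c^s\tilde K(\pm c)\,dc,$$
exactly as in (\ref{eqn:14.2})-(\ref{eqn:14.3}); here $\tilde Z_\pm$ are precisely the one-sided local zeta type functions studied in Section~10.4. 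Shifting the contour in the Mellin inversion (\ref{eqn:14.4}) leftward and invoking Theorem~\ref{thm:10.12}, whose candidate poles (\ref{eqn:10.35}) lie in arithmetic progressions determined by the toric fan associated to $\Gamma_+(F_f)$, yields the asymptotic expansion in (i) with the prescribed progression structure, and the same information gives (ii) once I note that the largest candidate pole is $-1/d(F_f, x^{p-\1})=-1/d(f)$.

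For the equality assertion in (iii), I would combine Theorem~\ref{thm:10.14}, which produces nonvanishing first Laurent coefficients $\tilde C_\pm$ of $\tilde Z_\pm(s;\varphi)$ at $s=-1/d(f)$ of order $m(f)$, with the coefficient formula (\ref{eqn:14.5}) and Remark~\ref{rem:14.1}: under (a) or (b) this already forces a nonzero coefficient of $t^{-1/d(f)}(\log t)^{m(f)-1}$, and under (c) the non-oddness of $1/d(f)$ activates the real-part formula of Remark~\ref{rem:14.1}, so that the sign condition on $f_{\tau_*}$, inherited by $F_f$ on its corresponding principal face under the dilation $\alpha\mapsto \alpha/p$ of the Newton polyhedron, prevents cancellation. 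The analogue of Proposition~\ref{pro:10.8} applied to the pair $(F_f, x^{p-\1})$ controls the residues at integer candidate poles strictly greater than $-1/d(f)$, and its symmetry $a_\lambda^+=(-1)^{\lambda-1}a_\lambda^-$ together with (\ref{eqn:14.5}) ensures those poles contribute no term of decay slower than $t^{-1/d(f)}(\log t)^{m(f)-1}$. The main obstacle I anticipate is the compatibility bookkeeping: verifying that the Newton-geometric invariants $d(f)$, $m(f)$, $\tau_*$, and the face-part $f_{\tau_*}$ defined in Definition~10.10 for $f\in\hat{\mathcal E}_{1/p}(U_+)$ correspond precisely, via $\alpha\mapsto \alpha/p$, to $d(F_f, x^{p-\1})$, $m(F_f, x^{p-\1})$, and the associated principal face data of the pair $(F_f, x^{p-\1})$. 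Only the distance identity is supplied by Lemma~10.11, so the corresponding statements for multiplicity and principal-face-parts require a dedicated (but essentially routine) check using the dilation structure of $\Gamma_+(F_f)=\mathrm{diag}(p)\cdot\Gamma_+(f)$.
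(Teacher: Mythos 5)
Your proposal is correct and follows essentially the same route as the paper: the paper's proof simply invokes the Puiseux-case local zeta results (Theorems~\ref{thm:10.12} and \ref{thm:10.14}, themselves obtained via the substitution $x=\Phi_{1/p}^{-1}(y)$ and the monomial-weight theory of Section~\ref{sec:10}) together with the Mellin/Gelfand--Leray relationship of Section~\ref{subsec:14.1}, which is exactly your reduction, including the use of the residue symmetry of Proposition~\ref{pro:10.8} and Remark~\ref{rem:14.1} for the cancellation and case (iii-c). The dilation bookkeeping you flag ($m(f)$, $\tau_*$, $f_{\tau_*}$ versus the data of the pair $(F_f,x^{p-\1})$) is indeed left implicit in the paper, and your treatment of it is consistent with what the paper relies on.
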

\begin{proof}
By using the relationships in Section~14.1,  
Theorems~10.11 and 10.13 easily imply the above theorem. 
\end{proof}
\begin{remark}
It is expected to exchange
$\Gamma_+(F_f)$ by $\Gamma_+(f)$ in Theorem~14.4 (i). 
But, until now we have not yet verified. 
(See Remark~10.12.)
\end{remark}


\section{Examples}\label{sec:15}

In this section, we give three examples of 
the phase $f$ and the weight $g$ in the oscillatory integral 
(\ref{eqn:1.1}), 
which show the delicate situation around 
the hypothesis: 
\begin{enumerate}
\item[(ii-a)]
$g$ belongs to $\hat{\mathcal E}(U)$
\end{enumerate}
in the main theorems in Section~4.
It is easy to see that 
every phase $f$ treated in this section 
satisfies the conditions (D) and (E) in Section~3. 
Since the above condition (ii-a) 
is not needed when $f$ is convenient 
(see Section~12), 
we consider only the case when $f$ is not convenient. 
We always assume that $\varphi$ satisfies the conditions (C)
in Section~\ref{sec:3}. 

(I)\quad
In the first example, 
the weight is monomial type as in Section~10. 
First, this example shows some kind of necessity of 
the condition (ii-a). 
Indeed, Example~1 shows the existence of 
$f,g$ satisfying that 
$
g\not\in\hat{\mathcal E}(U)$ and  
$\beta(f,g)\neq -1/d(f,g)$. 
On the other hand, in this example, 
the equation $\beta(f,g)=-1/d(f,x^p)$ holds 
even in the case when $g$ is flat in this example. 
This shows the optimality of 
the estimate (\ref{eqn:4.2}) in Remark~\ref{rem:4.3}.

(II)\quad
The second example indicates that
the hypothesis (ii-a) is too strong. 
Indeed, Example~2 shows the existence of
$f,g$ satisfying that
$
g\not\in\hat{\mathcal E}(U)$  but 
the equalities 
$\beta(f,g)= -1/d(f,g)$ and 
$\eta(f,g)=m(f,g)$ hold.
Furthermore, 
observing this example carefully, 
it might be expected to weaken the condition (ii-a) by 
\begin{enumerate}
\item[(ii-a$'$)]
$g$ belongs to 
$\hat{\mathcal E}[\Phi^{-1}(\Gamma_+(f))\cap\R_+^n](U)$.
\end{enumerate}

(III)\quad 
Unfortunately, the third example violates 
this expectation. 
Indeed, Example~3 shows the existence of 
$f,g$ satisfying that the condition (ii-a$'$) 
and $\beta(f,g)=-1/d(f,g)$ but
$\eta(f,g)\neq m(f,g)$.

\begin{remark}\label{rem:15.1}
In this remark, we assume that
$f,g$ satisfy all the conditions in 
the assumptions of Theorem~4.4 
except the condition (ii). 

(1)\quad
At present, there is no example of $f,g$ 
satisfying that the condition (ii-a$'$) holds 
but  
the equality $\beta(f,g)=-1/d(f,g)$
does not hold.

(2)\quad
The following modified condition (ii-a$''$)
is available for Theorem~\ref{thm:4.4}. 
\begin{enumerate}
\item[(ii-a$''$)]
$g$ belongs to $\hat{\mathcal E}[Q(f,g)](U)$.
\end{enumerate}
Here the set  $Q(f,g)$ is defined by 
the convex hull of the set 
$\Phi^{-1}(\Gamma_+(f)\setminus\Gamma^{(n-m(f,g)-1)})\cap \Z_+^n +\R_+^n$,
where $\Gamma^{(k)}$ is the union of the $k$-dimensional
faces of $\Gamma_+(f)$ ($\Gamma^{(-1)}:=\emptyset$). 
By the definition, 
the set $Q(f,g)$ is a polyhedron in $\R_+^n$
containing $\Gamma_+(g)$.
Theorem~\ref{thm:4.4} can be partially improved by replacing (ii-a) by 
(ii-a$''$).
(The proof is omitted.) 
\end{remark}


\subsection{Example 1}\label{subsec:15.1}

Consider the following two-dimensional example 
with parameters $p,q\in\Z_+$ and $c\in\R$: 
\begin{equation}
\begin{split}
&f(x_1,x_2)=x_1^4,
\\
&g(x_1,x_2)=cx_1^{2p}x_2^{2p}+x_1^{2q}x_2^{2q}e^{-1/x_2^2}
(=:g_1(x_1,x_2)+g_2(x_1,x_2)).
\end{split}
\end{equation}
It is easy to see that
\begin{itemize}
\item $\Gamma_+(f)=\{(4,0)\}+\R_+^2$,
\item
If $c\neq 0$, then
$\Gamma_+(g)=\Gamma_+(g_1)
=\{(2p,2p)\}+\R_+^2$, \\ 
If $c=0$, then
$\Gamma_+(g)=\Gamma_+(g_2)=\emptyset$
 (i.e., $g$ is a nonzero flat function), 
\item
$d(f,g)=\frac{4}{2p+1}$, $m(f,g)=1$ if $c\neq 0$, 
$d(f,x_1^{2q}x_2^{2q})=\frac{4}{2q+1}$.
\end{itemize}
Notice that
\begin{eqnarray*}
\begin{split}
&
g\in\hat{\mathcal E}(U)
\Leftrightarrow 
p\leq q \mbox{ and } c\neq 0 
\Leftrightarrow 
d(f,g)=d(f,x_1^{2p}x_2^{2p})\\
&\quad\quad\quad\quad\Leftrightarrow 
\exists h_1\in C^{\infty}(U) 
\mbox{ with $h_1(0)\neq 0$ such that } 
g(x)=x_1^{2p}x_2^{2p} h_1(x); \\
&g\not\in\hat{\mathcal E}(U)
\Leftrightarrow 
p > q \mbox{ or } c = 0 \\
&\quad\quad\quad\quad
\Leftrightarrow \exists h_2\in C^{\infty}(U) 
\mbox{ with $h_2(0)= 0$ such that } 
g(x)=x_1^{2q}x_2^{2q}h_2(x).
\end{split}
\end{eqnarray*}

We decompose $\tilde{Z}_{\pm}(s;\varphi)$ into   
$\tilde{Z}_{\pm}^{(1)}(s;\varphi)+
 \tilde{Z}_{\pm}^{(2)}(s;\varphi)$, where
$$
\tilde{Z}_{\pm}^{(j)}(s;\varphi)=\int_{\R_+^2} 
(f(x))_{\pm}^s g_j(x)\varphi(x)dx \quad\,\,
j=1,2.
$$
Note $\tilde{Z}_{-}^{(j)}(s;\varphi)=0$ for $j=1,2$. 
A simple computation gives 
$$
\tilde{Z}_+^{(1)}(s;\varphi)=
c\int_0^{\infty}\int_0^{\infty} 
x_1^{4s+2p}x_2^{2p} \varphi(x_1,x_2)dx_1dx_2.
$$
Applying Proposition~\ref{pro:9.2}, 
we see that the poles of $Z_+^{(1)}(s;\varphi)$
are simple and they 
are contained in the set 
$\{-\frac{2p+\nu}{4}:\nu\in\N\}$. 
Similarly, the poles of 
$$
\tilde{Z}_+^{(2)}(s;\varphi)=
\int_0^{\infty}\int_0^{\infty} 
x_1^{4s+2q} x_2^{2q} e^{-1/x_2^2}\varphi(x_1,x_2)dx_1dx_2
$$
are simple and they are contained in the set 
$\{-\frac{2q+\nu}{4}:\nu\in\N\}$.
Moreover, applying Proposition~\ref{pro:9.3}, 
we see that
the coefficient of $(s+\frac{2q+1}{4})^{-1}$ 
in the Laurent expansion 
of $\tilde{Z}_+^{(2)}(s;\varphi)$ is 
\begin{equation}\label{eqn:15.2}
\frac{1}{4}\int_0^{\infty} 
x_2^{2q}e^{-1/x_2^2}\varphi(0,x_2)dx_2>0.
\end{equation}

From the property of poles of the above functions, 
we see that the pattern of asymptotic expansion
in this case is the same as that in (\ref{eqn:1.2})
even if $g$ is flat. 
This means that the oscillation index and its multiplicity
can be defined in this case.
 
Needless to say,  
we can get the same result as in Theorem~\ref{thm:4.4}
in the case when $g\in\hat{\mathcal E}(U)$. 
Indeed, the inequality: $p\leq q$ implies that 
$\beta(f,g)=-1/d(f,g)=-\frac{2p+1}{4}$. 
Note that if the support of $\varphi$ is sufficiently small, 
then the value of (\ref{eqn:15.2}) is also very small.

Next, consider the case when $g\not\in\hat{\mathcal E}(U)$.
When $c\neq 0$, we see that 
$\beta(f,g)(=-\frac{2q+1}{4})>-1/d(f,g)(=-\frac{2p+1}{4})$, 
but $\beta(f,g)=-1/d(f,x_1^{2q}x_2^{2q})(=-\frac{2q+1}{4})$.
When $c=0$,
in spite of the fact that $g$ is flat,  
we have $\beta(f,g)=-1/d(f,x_1^{2q}x_2^{2q})
(=-\frac{2q+1}{4})$.

As for the multiplicity, 
it is easy to see 
$\eta(f,g)=m(f,g)=1$
when $g$ is nonflat.

\subsection{Example 2}\label{subsec:15.2}
Consider the following three-dimensional example
with parameters $p,q\in \Z_+$:
\begin{equation*}
\begin{split}
&f(x_1,x_2,x_3)=x_1^4 +x_2^4,\\
&g(x_1,x_2,x_3)=x_1^2 + x_1^px_2^q e^{-1/x_3^2}
(=: g_1(x_1,x_2,x_3)+g_2(x_1,x_2,x_3)).
\end{split}
\end{equation*}
It is easy to see that 
\begin{itemize}
\item
$\Gamma_+(f)=\{\alpha\in\R_+^3:\alpha_1+\alpha_2\geq 4\}$,
\item
$\Gamma_+(g)=\Gamma_+(g_1)=\{\alpha\in\R_+^3:\alpha_1\geq 2\}$,
$\Gamma_+(g_2)=\emptyset$,
\item
$d(f,g)=1$, $m(f,g)=1$.
\end{itemize}
Moreover, we can see that
\begin{eqnarray*}
g\in\hat{\mathcal E}(U)
\Leftrightarrow p\geq 2, 
\quad\quad
g\not\in\hat{\mathcal E}(U)
\Leftrightarrow 
p=0,1.
\end{eqnarray*}

We decompose $\tilde{Z}_{\pm}(s;\varphi)$ into  
$\tilde{Z}_{\pm}^{(1)}(s;\varphi)+
 \tilde{Z}_{\pm}^{(2)}(s;\varphi)$, where
$$
\tilde{Z}_{\pm}^{(j)}(s;\varphi)=\int_{\R_+^3} 
(f(x))_{\pm}^s g_j(x)\varphi(x)dx \quad\,\,
j=1,2.
$$
Note that $\tilde{Z}_{-}^{(j)}(s;\varphi)=0$ for $j=1,2$
and that 
$$
\tilde{Z}_{+}^{(1)}(s;\varphi)
=\int_{\R_+^3}
(x_1^4+x_2^4)^sx_1^2\varphi(x)dx_1dx_2dx_3.
$$
Applying the computation in Sections~\ref{sec:10} and
\ref{sec:11}, 
the poles of 
$\tilde{Z}_{+}^{(1)}(s;\varphi)$ are simple
and are contained in the set 
$\{-1-\nu/4:\nu\in\Z_+\}$.
Moreover, applying 
Proposition~\ref{pro:9.3}, we see that
the coefficient of $(s+1)^{-1}$ 
in the Laurent expansion of $\tilde{Z}_{+}^{(1)}(s;\varphi)$
is
\begin{equation}\label{eqn:15.3}
\frac{1}{4}
\left(\int_0^{\infty}\frac{y_1^2}{y_1^4+1}dy_1\right)
\left(\int_0^{\infty}\varphi(0,0,y_3)dy_3\right).
\end{equation}
Similar computation implies that
the poles of $\tilde{Z}_{+}^{(2)}(s;\varphi)$
are simple and 
are contained in the set 
$\{-\frac{p+q+2+\nu}{4}:\nu\in\Z_+\}$.
Moreover, the coefficient of 
$(s+\frac{p+q+2}{4})^{-1}$ 
in the Laurent expansion of 
$\tilde{Z}_{+}^{(2)}(s;\varphi)$
is
\begin{equation}\label{eqn:15.4}
\frac{1}{4}
\left(
\int_0^{\infty}\frac{y_1^2}{(y_1^4+1)^{\frac{p+q+2}{4}}}dy_1
\right)
\left(\int_0^{\infty}e^{-1/y_3^2}\varphi(0,0,y_3)dy_3\right).
\end{equation}

Observing the above (\ref{eqn:15.3}), (\ref{eqn:15.4}), 
we see that $\beta(f,g)=-1$ holds 
not only in the case when $p\geq 2$ 
($\Leftrightarrow g\in\hat{\mathcal E}(U)$),
but also in the case when $p+q\geq 2$.
Note that 
the support of $\varphi$ is sufficiently small,
then the value of (\ref{eqn:15.4}) 
is smaller than that of (\ref{eqn:15.3}). 
The condition $p+q\geq 2$ is induced from 
the condition: (ii-a$'$)
$g$ belongs to 
$\hat{\mathcal E}[\Phi^{-1}(\Gamma_+(f))\cap\R_+^n](U)$.
Indeed, 
$\Phi^{-1}(\Gamma_+(f))\cap\R_+^n=
(\Gamma_+(f)-\1)\cap\R_+^n
=\{\alpha\in\R_+^2:\alpha_1+\alpha_2\geq 2\}$.

As for the multiplicity, 
it is easy to see $\eta(f,g)=m(f,g)=1$.

\subsection{Example 3}\label{subsec:15.3}
Consider the following three-dimensional example:
\begin{equation*}
\begin{split}
&f(x_1,x_2,x_3)=x_1^4 x_2^4 x_3^4,\\
&g(x_1,x_2,x_3)=
x_1^4 x_2^4 x_3^2 +x_1^2 x_2^2 x_3^4 e^{-1/x_3^{2}}
(=: g_1(x_1,x_2,x_3)+g_2(x_1,x_2,x_3)).
\end{split}
\end{equation*}
It is easy to see that 
\begin{itemize}
\item
$\Gamma_+(f)=\{(4,4,4)\}+\R_+^3$,
\item
$\Gamma_+(g)=\Gamma_+(g_1)=\{(4,4,2)\}+\R_+^3$,
$\Gamma_+(g_2)=\emptyset$,
\item
$d(f,g)=4/3$, $m(f,g)=1$. 
\end{itemize}
Moreover, we can see that 
$$
g\not\in\hat{\mathcal E}(U) \mbox{\,\, but \,\,}
g\in\hat{\mathcal E}[\Phi^{-1}(\Gamma_+(f))\cap\R_+^n](U).
$$
Note that $\Phi^{-1}(\Gamma_+(f))=\{(2,2,2)\}+\R_+^3$.

We decompose $\tilde{Z}_{\pm}(s;\varphi)$ 
into 
$\tilde{Z}_{\pm}^{(1)}(s;\varphi)+
\tilde{Z}_{\pm}^{(2)}(s;\varphi)
$, where 
$$
\tilde{Z}_{\pm}^{(j)}(s;\varphi)=\int_{\R_+^3}
(f(x))_{\pm}^s g_j(x)\varphi(x)dx \quad j=1,2.
$$
Note that $\tilde{Z}_{-}^{(j)}(s;\varphi)=0$ for $j=1,2$ and that
\begin{equation*}
\begin{split}
&
\tilde{Z}_+^{(1)}(s;\varphi)=\int_{\R_+^3}
x_1^{4s+4}x_2^{4s+4}x_3^{4s+2}\varphi(x_1,x_2,x_3)dx,\\
&
\tilde{Z}_+^{(2)}(s;\varphi)=\int_{\R_+^3}
x_1^{4s+2}x_2^{4s+2}x_3^{4s+4}e^{-1/x_3^2}\varphi(x_1,x_2,x_3)dx.
\end{split}
\end{equation*}
Applying Proposition~\ref{pro:9.2}, we see that the poles of 
$\tilde{Z}_+^{(1)}(s;\varphi)$ and $\tilde{Z}_+^{(2)}(s;\varphi)$ 
are contained in the set 
$\{-\frac{3+\nu}{4};\nu\in\Z_+\}$.
From the above computation, the order of pole at $s=-3/4$ of 
$\tilde{Z}_+^{(1)}(s;\varphi)$ is not larger than $1$
and that of $\tilde{Z}_+^{(2)}(s;\varphi)$ is not larger than $2$.
Moreover, applying Proposition~\ref{pro:9.3}, we see that
the coefficient of $(s+3/4)^{-2}$ 
in the Laurent expansion of $\tilde{Z}_+^{(2)}(s;\varphi)$ is
$$
\frac{1}{16}\int_0^{\infty} x_3 e^{-1/x_3^2}\varphi(0,0,x_3)dx_3
\neq 0.
$$ 
Therefore, we obtain
$\b(f,g)=-1/d(f,g)=-3/4$
and $\eta(f,g)=2>m(f,g)=1$.

\vspace{1 em}

{\sc Acknowledgements.}\quad 
The authors would like to express their sincere gratitude 
to Mutsuo Oka and Osamu Saeki for useful discussion 
about the nondegeneracy condition with respect to the 
Newton polyhedron and 
to the referee for his/her careful reading of the manuscript
and giving the authors many valuable comments. 
In particular, 
these comments strongly reflect the contents in 
Sections~7.1, 10.4, 12.1 and 14.4.



\end{document}